\def\th@plain{\slshape}\makeatother
\makeatletter\patchcmd{\th@remark}{\itshape}{\slshape}{}{}\makeatother
\newcounter{bidon}
\newcommand{\rdb}{\refstepcounter{bidon}}
\newcommand\Subsubsection[1]{
\rdb\addcontentsline{toc}{subsubsection}{#1} 
\subsubsection*{#1}}
\renewcommand\paragraph[1]{
\rdb
\addcontentsline{toc}{paragraph}{#1} 
\medskip \noindent $\bullet$ \textbf{#1}}
\newcommand{\DT}{\fD_\gT}
\newcommand{\VT}{\fV_\gT}
\newcommand \He {\mathsf{He}}
\newcommand{\di}{\,{\vert}\,}
\newcommand {\junk}[1]{}
\newcommand\ndsp{\textstyle}
\newcommand \noi {\noindent}
\newcommand \sni {\smallskip\noi}
\newcommand \alb {\allowbreak}
\renewcommand \le{\leqslant}
\renewcommand \leq{\leqslant}
\renewcommand \preceq{\preccurlyeq}
\renewcommand \geq{\geqslant}
\newcommand\sta{^\star}
\newcommand \bl {^\bullet}
\newcommand{\bul}{^{\bullet}}
\newcommand{\Abul}{\gA\bul}
\newcommand \eci {^\circ}
\newcommand \etoz{$^*$}
\newcommand\equidef{\buildrel{{\rm def}}\over{\;\Longleftrightarrow\;}} 
\newcommand\eqdef{\buildrel{\rm def}\over {\;=\;}}
\newcommand\eqdefi{\buildrel{\rm def}\over {\;=\;}}
\newcommand{\pref}[1]{\textup{\hbox{\normalfont(\ref{#1})}}}
\newcommand \aqo[2] {#1\sur{\gen{#2}}\!}
\newcommand \gen[1] {\left\langle{#1}\right\rangle}
\newcommand \so[1] {\left\{{#1}\right\}} 
\newcommand \sO[1]{\big\{{#1}\big\}}
\newcommand \sotq[2]{\so{\,#1\,\vert\,#2\,}}
\newcommand \sotQ[2]{\sO{\,#1\;\big\vert\;#2\,}}
\newcommand \sur[1] {\!\left/#1\right.}
\newcommand \und[1] {\underline{#1}}
\newcommand \U {\mathrm{U}}
\newcommand \som {\sum\nolimits}
\newcommand \Ex {{\exists}}
\newcommand \Tt {{\forall}}
\newcommand\lrb[1] {\llbracket #1 \rrbracket}
\newcommand\lrbn {\lrb{1..n}}
\newcommand\lrbm {\lrb{1..m}}
\newcommand \vda {\,\vdash\,}
\newcommand \dar[1] {\MA{\downarrow \!#1}}
\newcommand \uar[1] {\MA{\uparrow \!#1}}
\newcommand\tsbf[1]{\textsf{\textbf{\textup{#1}}}}
\newcommand\lab[1]{\item[\tsbf{#1}]}
\newcommand\Lab[1]{\rdb\item[\tsbf{#1}]\label{Ax#1}}
\newcommand\Tsbf[1]{\hyperref[Ax#1]{\tsbf{#1}}}
\newcommand\sA[1]{\hbox{\small\usefont{T1}{pzc}{m}{it}#1}\,}
\newcommand\sa[1]{\hbox{\large\usefont{T1}{pzc}{m}{it}#1}\,}
\newcommand\Sa[1]{\hyperref[theorie#1]{\sa{#1}}}
\newcommand \snic[1] {\sni\centerline{$#1$}

\smallskip}
\newcommand \eoe {\hbox{}\nobreak\hfill
\vrule width .5em height .5em depth 0mm \par \smallskip}
\newcommand \ov[1] {\overline{#1}}
\newcommand \wi[1] {\widetilde{#1} }
\newcommand \UneCol[1]{%
\sni\mbox{\hspace{.02\textwidth}%
\parbox[t]{.98\textwidth}{#1}%
}}
\newcommand \DeuxCols[2]{%
\sni\mbox{\hspace{.02\textwidth}%
\parbox[t]{.475\textwidth}{#1}%
\hspace{.03\textwidth}%
\parbox[t]{.475\textwidth}{#2}}}
\newcommand \DeuxRegles[2]{%
\vspace{-1em}\DeuxCols
{\begin{enumerate}  #1
\end{enumerate}
}
{\begin{enumerate}  #2
\end{enumerate}
}
\vspace{-.3em}
}
\newcommand \Regles[1]{%
\vspace{-1em}\UneCol{
\begin{enumerate}
{#1} 
\end{enumerate}
}
\vspace{-.3em}
}
\newcommand \labu {\lab{$\bullet$}}
\def\revddots{\mathinner{\mkern1mu\raise\p@ 
\vbox{\kern7\p@\hbox{.}}\mkern2mu 
\raise4\p@\hbox{.}\mkern2mu\raise7\p@\hbox{.}\mkern1mu}} 
\newcommand \BB{\mathbb {B}}
\newcommand \NN{\mathbb {N}} 
\newcommand \ZZ{\mathbb {Z}}
\newcommand \QQ{\mathbb {Q}} 
\newcommand \RR{\mathbb {R}} 
\newcommand \Bo{\BB\mathrm{o}}
\newcommand \gk {\mathbf{k}}
\newcommand \gA {\mathbf{A}}
\newcommand \gB {\mathbf{B}}
\newcommand \gC {\mathbf{C}}
\newcommand \gK {\mathbf{K}}
\newcommand \gL {\mathbf{L}}
\newcommand \gR {\mathbf{R}}
\newcommand \gS {\mathbf{S}}
\newcommand \gT {\mathbf{T}}
\newcommand \gV {\mathbf{V}}
\newcommand \gZ {\mathbf{Z}}
\newdimen\xyrowsp
\newcommand{\SCO}[6]{
\xymatrix @R = \xyrowsp {
                                  &1 \ar@{-}[dl] \ar@{-}[dr] \\
#3 \ar@{-}[ddr]                   &   & #6 \ar@{-}[ddl] \\
                                  &\bullet\ar@{-}[d] \\
                                  &\bullet   \\
#2 \ar@{-}[ddr] \ar@{-}[uur]      &   & #5 \ar@{-}[ddl] \ar@{-}[uul] \\
                                  &\bullet \ar@{-}[d] \\
                                  &\bullet  \\
#1 \ar@{-}[uur]                   &   & #4 \ar@{-}[uul] \\
                                  & 0 \ar@{-}[ul] \ar@{-}[ur] \\
}
}
\renewcommand \mod {\;\mathrm{mod}\;}
\newcommand \val {\MA{\mathsf{val}}}
\newcommand\MA[1]{\mathop{#1}\nolimits}
\newcommand \Hdim {\MA{\mathsf{Hdim}}}
\newcommand \HeA {{\Heit\gA}}
\newcommand \Heit {\MA{\mathsf{Heit}}}
\newcommand \Jdim {\MA{\mathsf{Jdim}}}
\newcommand \Jspec {\MA{\mathsf{Jspec}}}
\newcommand \jspec {\MA{\mathsf{jspec}}}
\newcommand \Kdim {\MA{\mathsf{Kdim}}}
\newcommand \Max {\MA{\mathsf{Max}}}
\newcommand \Min {\MA{\mathsf{Min}}}
\newcommand \OQC {\MA{\mathsf{Oqc}}}
\newcommand \Spec {\MA{\mathsf{Spec}}}
\newcommand \Sper {\MA{\mathsf{Sper}}}
\newcommand \Spev {\MA{\mathsf{Spev}}}
\newcommand \Reel {\MA{\mathsf{Reel}}}
\newcommand \SpecA {\Spec\gA}
\newcommand \SperA {\Sper\gA}
\newcommand \SpecT {\Spec\gT}
\newcommand \Val {\MA{\mathsf{Val}}}
\newcommand \Zar {\MA{\mathsf{Zar}}}
\newcommand \ZarA {{\Zar\gA}}
\newcommand \cA {\mathcal{A}}
\newcommand \cI {\mathcal{I}}
\newcommand \cL {\mathcal{L}}
\newcommand \cP {\mathcal{P}}
\newcommand \rD {\mathrm{D}}
\newcommand \rJ {\mathrm{J}}
\newcommand \rR {\mathrm{R}}
\newcommand \rU {\mathrm{U}}
\newcommand \DA {\rD_\gA}
\newcommand \JA {\rJ_\gA}
\newcommand \JT {\rJ_\gT}
\newcommand\fa{\mathfrak{a}}
\newcommand\fb{\mathfrak{b}}
\newcommand\fc{\mathfrak{c}}
\newcommand\fA{\mathfrak{A}}
\newcommand\fB{\mathfrak{B}}
\newcommand\fC{\mathfrak{C}}
\newcommand\fD{\mathfrak{D}}
\newcommand\fII{\mathfrak{I}}
\newcommand\fj{\mathfrak{j}}
\newcommand\fJ{\mathfrak{J}}
\newcommand\fF{\mathfrak{F}}
\newcommand\ff{\mathfrak{f}}
\newcommand\fm{\mathfrak{m}}
\newcommand\fp{\mathfrak{p}}
\newcommand\fP{\mathfrak{P}}
\newcommand\fq{\mathfrak{q}}
\newcommand\fR{\mathfrak{R}}
\newcommand\fV{\mathfrak{V}}
\newcommand \vu {\vee} 
\newcommand \vi {\wedge} 
\newcommand \Vu {\bigvee}
\newcommand \Vi {\bigwedge}
\newcommand \Un {\mathbf{1}}
\newcommand \Deux {\mathbf{2}}
\newcommand \ux {{\underline{x}}}
\newcommand \uy{{\underline{y}}}
\newcommand \an {a_1,\ldots,a_n}
\newcommand \xn {x_1,\ldots,x_n}
\newcommand \Xn {X_1,\ldots,X_n}
\newcommand \ym {y_1,\ldots,y_m}
\newcommand \AX {\gA[X]}
\newcommand \AXn {\gA[\Xn]}
\newcommand \lfs {f_1,\ldots,f_s}
\newcommand \Pfe {{\rm P}_{{\rm fe}}}
\renewcommand{\SCO}[6]{
\xymatrix @R = .2em @C =4em{
                                  &1 \ar@{-}[dl] \ar@{-}[dr] \\
#3 \ar@{-}[ddr]                   &   & #6 \ar@{-}[ddl] \\
                                  &\bullet\ar@{-}[d] \\
                                  &\bullet   \\
#2 \ar@{-}[ddr] \ar@{-}[uur]      &   & #5 \ar@{-}[ddl] \ar@{-}[uul] \\
                                  &\bullet \ar@{-}[d] \\
                                  &\bullet  \\
#1 \ar@{-}[uur]                   &   & #4 \ar@{-}[uul] \\
                                  & 0 \ar@{-}[ul] \ar@{-}[ur] \\
}
}
\newcommand{\SCOR}[8]{
\xymatrix @R =.2em @C =4em {
                                  &\bullet \ar@{-}[ddl] \ar@{-}[ddr] \\
                                  &#7\ar@{-}[u] \\
#3 \ar@{-}[ddr]                   &   & #6 \ar@{-}[ddl] \\
                                  &\bullet\ar@{-}[d] \\
                                  &\bullet   \\
#2 \ar@{-}[ddr] \ar@{-}[uur]      &   & #5 \ar@{-}[ddl] \ar@{-}[uul] \\
                                  &\bullet \ar@{-}[d] \\
                                  &\bullet  \\
#1 \ar@{-}[uur]                   &   & #4 \ar@{-}[uul] \\
                                  &#8\ar@{-}[d] \\
                                  &\bullet \ar@{-}[uul] \ar@{-}[uur] \\
}
}
\renewcommand \DeuxRegles[2]{%
\vspace{-1.5em}\DeuxCols
{\begin{enumerate}  #1
\end{enumerate}
}
{\begin{enumerate}  #2
\end{enumerate}
}
\vspace{-.5em}
}
\renewcommand \Regles[1]{%
\vspace{-1.5em}\UneCol{
\begin{enumerate}
{#1} 
\end{enumerate}
}
\vspace{-.5em}
}
\DeclareMathAlphabet{\mathpzc}{OT1}{pzc}{m}{it}
\newcommand\Exists{\boldsymbol{\stixexists}}
\newcommand\VDash{\boldsymbol{\stixvdash}}
\newcommand\Land{\boldsymbol{\stixwedge}}
\newcommand\Top{\boldsymbol{\stixtop}}
\newcommand\Bot{\boldsymbol{\stixbot}}
\newcommand \vdg{\VDash}
\newcommand \Vd {\,\vdg}
\newcommand \vd {\,\,\vdg}
\newcommand \vii{\Land}
\newcommand \vdi[1] {\mathrel{\,\,\vdg_{#1}}}
\newcommand \Vdi[1] {\mathrel{\Vd_{#1}}}
\newcommand \sibrouillon[1]{}
\begin{document}

\thispagestyle{empty}
~ 
\vspace{3cm}

\noindent In this file you find the English version starting on the page  numbered \pageref{beginenglish}.

\medskip \noindent  {\Large \bf Spectral spaces versus distributive lattices: a dictionary}

\bigskip \noindent  
Then the French version begins on the page numbered \pageref{beginfrench}.

\medskip\noindent   {\Large \bf Treillis distributifs et espaces spectraux, un petit dictionnaire} 

\bigskip  This article is a slightly expanded version of the chapter
``Spectral spaces versus distributive lattices: a dictionary",
 in the book \textsl{Advances in rings, modules and factorizations. Selected papers based on the presentations at the international conference on rings and factorizations, Graz, Austria, February 19--23, 2018}, published by Springer in 2020, ISBN 978-3-030-43415-1. We have also fixed some typographical bugs and improved the bibliography.

\newpage
\thispagestyle{empty}

~

\pagestyle{headings}
\patchcmd{\sectionmark}{\MakeUppercase}{}{}{}
\setcounter{page}{0}\renewcommand\thepage{E\arabic{page}}

\pagestyle{headings}
\patchcmd{\sectionmark}{\MakeUppercase}{}{}{}

\def\proofname{\textsl{Proof}}

\begingroup

\title{Spectral spaces versus distributive lattices: a dictionary}
\author{Henri Lombardi 
(\thanks{Laboratoire de Mathématiques de Besançon, Université Marie et Louis Pasteur. \url{http://hlombardi.free.fr/}
email: \texttt{henri.lombardi@univ-fcomte.fr}. 
}~) 
}
\maketitle
\startcontents[english]
\setcounter{tocdepth}{4}
\newcommand \Glio {\MA{\mathsf{Liog}}}

\newcommand{\vou}{\MA{\tsbf{ or }}}
\newcommand{\Vou}{\MA{\tsbf{OR}}}
\newcommand \EXists[1] {\tsbf{Introduce }{#1}\tsbf{ such that }\,}
\newcommand \vet {{\tsbf{,}}\,}
\newcommand \Atcl {\mathrm{Clat}}
\newcommand \tcl {\mathit{Clt}}

\newcommand\comm{\rdb
\noi{\it Comment. }}

\newcommand\COM[1]{\rdb
\noi{\it Comment #1. }}

\newcommand\comms{\rdb
\noi{\it Comments. }}

\newcommand\Pb{\rdb
\noi{\bf Problem. }}

\newcommand \rem{\rdb
\noi{\sl Remark. }}

\newcommand \REM[1]{\rdb
\noi{\sl Remark#1. }}

\newcommand \rems{\rdb
\noi{\sl Remarks. }}

\newcommand \exl{\rdb
\noi{\bf Example. }}

\newcommand \EXL[1]{\rdb
\noi{\bf Example: #1. }}

\newcommand \exls{\rdb
\noi{\bf Examples. }}

\newcommand\gui[1]{``{#1}''}

\newcommand \thref[1] {Theorem~\ref{#1}}
\newcommand \paref[1] {page~\pageref{#1}}
\newcommand \pstfref[1] {Positivstellensatz formel~\ref{#1}}
\newcommand \pstref[1] {Positivstellensatz~\ref{#1}}

\newcommand \num {{n$^{\mathrm{ o}}$}}

\newcommand\subsubsec[1] {\subsubsection*{#1}}

\newcommand \hdr {induction hypothesis\xspace}
\newcommand \ssi {if and only if\xspace}
\newcommand \cnes {necessary and sufficient condition\xspace}
\newcommand \spdg {without loss of generality\xspace}
\newcommand \Propeq {T.F.A.E.\xspace}
\newcommand \propeq {t.f.a.e.\xspace}
\newcommand \disept {17$^{th}$ Hilbert's problem\xspace}

\def \cad {\textit{i.e.}\ }


\newcommand \Amo {$\gA$-module\xspace}
\newcommand \Amos {$\gA$-modules\xspace}

\newcommand \Bmo {$\gB$-module\xspace}
\newcommand \Bmos {$\gB$-modules\xspace}

\newcommand \Zmo {$\gZ$-module\xspace}
\newcommand \Zmos {$\gZ$-modules\xspace}

\newcommand \ZZmo {$\ZZ$-module\xspace}
\newcommand \ZZmos {$\ZZ$-modules\xspace}

\newcommand \Ali {$\gA$-\ali}
\newcommand \Alis {$\gA$-\alis}

\newcommand \Alg {$\gA$-\alg}
\newcommand \Algs {$\gA$-\algs}

\newcommand \kev {$\gk$-vector space\xspace}
\newcommand \kevs {$\gk$-vector spaces\xspace}

\newcommand \Kev {$\gK$-vector space\xspace}
\newcommand \Kevs {$\gK$-vector spaces\xspace}

\newcommand \klg {$\gk$-\alg}
\newcommand \klgs {$\gk$-\algs}

\newcommand \Klg {$\gK$-\alg}
\newcommand \Klgs {$\gK$-\algs}


\newcommand \agq {algebraic\xspace}

\newcommand \alg {algebra\xspace}
\newcommand \algs {algebras\xspace}

\newcommand \agB {Boolean \alg}

\newcommand \algo{algorithm\xspace}
\newcommand \algos{algorithms\xspace}

\newcommand \algq{algorithmic\xspace}

\newcommand \ali {\lin map\xspace}
\newcommand \alis {\lin maps\xspace}

\newcommand \anar {\ari \ri}
\newcommand \anars {\ari \ris}
\newcommand \Anars {\Ari \ris}

\newcommand \ari{arith\-metic\xspace}

\newcommand \auto {automorphism\xspace}
\newcommand \autos {automorphisms\xspace}


\newcommand \cac {algebraically closed field\xspace}
\newcommand \cacs {algebraically closed fields\xspace}

\newcommand \carn{characterization\xspace}  
\newcommand \carns{characterizations\xspace}  

\newcommand \coe {coefficient\xspace}
\newcommand \coes {coefficients\xspace}

\newcommand \coh {coherent\xspace}
\newcommand \cohz {coherent\xspace}

\newcommand \coli {linear combination\xspace}
\newcommand \colis {linear combinations\xspace}

\newcommand \com {comaximal\xspace}

\newcommand \coo {coordinate\xspace}
\newcommand \coos {coordinates\xspace}


\newcommand \ddp {Pr\"ufer domain\xspace}
\newcommand \ddps {Pr\"ufer domains\xspace}

\newcommand \ddk {Krull dimension\xspace}

\newcommand \dfn{definition\xspace}  
\newcommand \dfns{definitions\xspace}  

\newcommand \discri{discriminant\xspace}
\newcommand \discris{discriminants\xspace}

\newcommand \dok {Dedekind domain\xspace}
\newcommand \doks {Dedekind domains\xspace}

\newcommand \dve {divisibility\xspace}

\newcommand \dvz {zero divisor\xspace}
\newcommand \dvzs {zero divisors\xspace}

\newcommand \eco{\com \elts}  

\newcommand \egt{equality\xspace} 
\newcommand \egts{equalities\xspace} 

\newcommand \elr{elementary\xspace}  

\newcommand \elt{element\xspace}  
\newcommand \elts{elements\xspace}  

\def \endo {endomorphism\xspace}
\def \endos {endomorphisms\xspace}

\newcommand \entrel {entailment relation\xspace}
\newcommand \entrels {entailment relations\xspace}

\newcommand \eqv  {equivalent\xspace}

\newcommand \evc{vector space\xspace} 
\newcommand \evcs{vector spaces\xspace} 


\newcommand \fab {bounded \fcn}
\newcommand \fabs {bounded \fcns}

\newcommand \fac {total \fcn}
\newcommand \facz {total \fcnz}

\newcommand \fap {partial \fcn}
\newcommand \faps {partial \fcns}

\newcommand \fcn {factorisation\xspace}
\newcommand \fcns {factorisations\xspace}

\newcommand \fdi{strongly discrete\xspace} 


\newcommand\gmq{geometric\xspace}

\newcommand\gne{generalized\xspace}

\newcommand\gnl{general\xspace}

\newcommand\gnlt{generally\xspace}

\newcommand\gnn{generalisation\xspace}
\newcommand\gnns{generalisations\xspace}

\newcommand\gnq{generic\xspace}

\newcommand\grl{$\ell$-group\xspace}
\newcommand\grls{$\ell$-groups\xspace}

\newcommand \gtr{generator\xspace}  
\newcommand \gtrs{generators\xspace}  


\newcommand \homo {homomorphism\xspace}
\newcommand \homos {homomorphisms\xspace}

\newcommand \id {ideal\xspace}
\newcommand \ids {ideals\xspace}

\newcommand \idd {de\-ter\-mi\-nantal \id}
\newcommand \idds {de\-ter\-mi\-nantal \ids}
\newcommand \iddz {de\-ter\-mi\-nantal \idz}
\newcommand \iddsz {de\-ter\-mi\-nantal \idsz}

\newcommand \idema {maximal \id}
\newcommand \idemas {maximal \ids}

\newcommand \idep {prime \id}
\newcommand \ideps {prime \ids}

\newcommand \idemi {minimal prime\xspace}
\newcommand \idemis {minimal primes\xspace}

\newcommand \idf {Fitting \id}
\newcommand \idfs {Fitting \ids}

\newcommand \idm {idempotent\xspace}
\newcommand \idms {idempotents\xspace}

\newcommand \idtr {indeterminate\xspace}
\newcommand \idtrs {indeterminates\xspace}

\newcommand \ifr {fractional \id}
\newcommand \ifrs {fractional \ids}

\newcommand \itf {\tf \id}
\newcommand \itfs {\tf \ids}

\newcommand \iso {isomorphism\xspace}
\newcommand \isos {isomorphisms\xspace}

\newcommand \iv {invertible\xspace}

\newcommand \lec {reader\xspace}

\newcommand \lgb {local global\xspace}

\newcommand \lin {linear\xspace}

\newcommand \lon {localisation\xspace}
\newcommand \lons {localisations\xspace}

\newcommand \lop {\lot principal\xspace}

\newcommand \losd {\lot \sdz\xspace}

\def \lot {locally\xspace}

\newcommand \mlp {principal \lon matrix\xspace}
\newcommand \mlps {principal \lon matrices\xspace}

\newcommand \mnp {manipulation\xspace}
\newcommand \mnps {manipulations\xspace}
\newcommand \mnr {\elr \mnp}
\newcommand \mnrs {\elr \mnps}

\newcommand \mo {monoid\xspace}
\newcommand \mos {monoids\xspace}
\newcommand \moco {\com \mos}

\newcommand \mpf {\pf module\xspace}
\newcommand \mpfs {\pf modules\xspace}

\newcommand \mpn {\pn matrix\xspace}
\newcommand \mpns {\pn matrices\xspace}

\newcommand \mpr {\pro module\xspace}
\newcommand \mprs {\pro modules\xspace}

\newcommand \mprn {\prn matrix\xspace}
\newcommand \mprns {\prn matrices\xspace}

\newcommand \mptf {\ptf module\xspace}
\newcommand \mptfs {\ptf modules\xspace}

\newcommand \mrc {projective module of constant rank\xspace}
\newcommand \mrcs {projective modules of constant rank\xspace}


\newcommand \ncr{necessary\xspace}

\newcommand \ncrt{necessarily\xspace}

\newcommand \ndz {regular\xspace}

\newcommand \noe {Noetherian\xspace}
\newcommand \noco {\noe\coh}

\newcommand \nst {Nullstellensatz\xspace}
\newcommand \nsts {Nullstellens\"atze\xspace}

\newcommand \odz {Zariski open set\xspace}

\newcommand \oqc {\qc open set\xspace}
\newcommand \oqcs {\qc open sets\xspace}


\newcommand \pa {saturated pair\xspace}
\newcommand \pas {saturated pairs\xspace}

\newcommand \pb{problem\xspace}  
\newcommand \pbs{problems\xspace}

\newcommand \peq {purely equational\xspace}

\newcommand \pf {finitely presented\xspace}

\newcommand \plg {\lgb principle\xspace}
\newcommand \plgs {\lgb principles\xspace}

\newcommand \pn {presentation\xspace}
\newcommand \pns {presentations\xspace}

\newcommand \pol {polynomial\xspace}
\newcommand \pols {polynomials\xspace}

\newcommand \polcar {characteristic \pol}

\newcommand \prc {rank constant \pro}

\newcommand \prmt {précisely\xspace}

\newcommand \prn {projection\xspace}
\newcommand \prns {projections\xspace}

\newcommand \pro {projective\xspace}

\newcommand \proi {potential prime\xspace}
\newcommand \prois {potential primes\xspace}

\newcommand \proc {potential chain\xspace}
\newcommand \procs {potential chains\xspace}

\newcommand \proel {elementary \proc}
\newcommand \proels {elementary \procs}
\newcommand \proelo {\proel of length }
\newcommand \proelos {\proels of length }

\newcommand \prolo {\proc of length }
\newcommand \prolos {\procs of length }

\newcommand \prt {property\xspace}
\newcommand \prts {properties\xspace}

\newcommand \pst {Positivstellensatz\xspace}
\newcommand \psts {Positivstellens\"atze\xspace}

\newcommand \ptf {\tf \pro}


\newcommand \qc {quasi-compact\xspace}

\newcommand \qi {quasi integral\xspace}

\newcommand \rcf {real closed field\xspace}
\newcommand \rcfs {real closed fields\xspace}

\newcommand \rdl {linear dependance relation\xspace}
\newcommand \rdls {linear dependance relations\xspace}

\newcommand \rdi {integral dependance relation\xspace}
\newcommand \rdis {integral dependance relations\xspace}

\newcommand \ri {ring\xspace}
\newcommand \ris {rings\xspace}


\newcommand \sad {dynamical algebraic structure\xspace}
\newcommand \sads {dynamical algebraic structures\xspace}

\newcommand \sdz {without \dvz}
\newcommand \sdzz {without \dvzz}

\newcommand \sgr {\gtr set\xspace}
\newcommand \sgrs {\gtr sets\xspace}

\newcommand \sli {\lin \sys}
\newcommand \slis {\lin \syss}

\newcommand \sys {system\xspace}
\newcommand \syss {systems\xspace}

\newcommand \talg {Horn theory\xspace}
\newcommand \talgs {Horn theories\xspace}

\newcommand \tco {coherent theory\xspace}
\newcommand \tcos {coherent theories\xspace}

\newcommand \tdy {dynamical theory\xspace}
\newcommand \tdys {dynamical theories\xspace}

\newcommand \tel {regular theory\xspace}
\newcommand \tels {regular theories\xspace}

\newcommand \telri {cartesian theory\xspace}
\newcommand \telris {cartesian theories\xspace}

\newcommand \tf {finitely generated\xspace}

\newcommand \tfo {formal theory\xspace}
\newcommand \tfos {theory formelles\xspace}

\newcommand \tgm {\gmq theory\xspace}
\newcommand \tgms {\gmq theories\xspace}

\newcommand \Tho {Theorem\xspace}
\newcommand \tho {theorem\xspace}
\newcommand \thos {theorems\xspace}

\newcommand \tpe {purely equational theory\xspace}

\newcommand \trdi {distributive lattice\xspace}
\newcommand \trdis {distributive lattices\xspace}

\newcommand \vfn {verification\xspace}
\newcommand \vfns {verifications\xspace}

\newcommand \zed {zero-dimensional\xspace}


\newcommand \cov {constructive\xspace}

\newcommand \coma {\cov \maths}
\newcommand \clama {classical \maths}

\renewcommand \cot {constructively\xspace}

\newcommand \mathe {mathematical\xspace}
\newcommand \maths {mathematics\xspace}

\newcommand \matn {mathematician\xspace}

\newcommand \pte {excluded middle principle\xspace}

\newcommand \prco {\cov proof\xspace}
\newcommand \prcos {constructive proofs\xspace}

\newcommand \tcg {compactness theorem\xspace}
\newcommand \Tcgi {The \tcg implies the following result. }
%



\theoremstyle{plain}
\newtheorem{theorem}{Theorem}[section]
\newtheorem{thdef}[theorem]{Theorem and definition}
\newtheorem{lemma}[theorem]{Lemma}
\newtheorem{corollary}[theorem]{Corollary}
\newtheorem{proposition}[theorem]{Proposition}
\newtheorem{propdef}[theorem]{Proposition and definition}
\newtheorem{plcc}[theorem]{Concrete local-global principle}
\newtheorem{fact}[theorem]{Fact}
\newtheorem*{Principleofcoveringbyquotients}{Principle of covering by quotients}
\theoremstyle{definition}
\newtheorem{conjecture}[theorem]{Conjecture}
\newtheorem{definition}[theorem]{Definition}
\newtheorem{definitions}[theorem]{Definitions}
\newtheorem{notation}[theorem]{Notation}
\newtheorem{definota}[theorem]{Definition and notation} 
\newtheorem{convention}[theorem]{Convention}
\newtheorem{problem}[theorem]{Problem}
\newtheorem{question}[theorem]{Question}

\theoremstyle{remark}
\newtheorem{remark}[theorem]{Remark}
\newtheorem{remarks}[theorem]{Remarks}
\newtheorem{comment}[theorem]{Comment}
\newtheorem{comments}[theorem]{Comments}
\newtheorem{example}[theorem]{Example}
\newtheorem{examples}[theorem]{Examples}

\rdb
\label{beginenglish}

\begin{abstract}
The category of \trdis is, in \clama, anti\-equi\-valent to the category of spectral spaces. We give here some examples and a short dictionary for this
antiequivalence. We propose a translation of several abstract theorems (in \clama) into constructive ones, even in the case where points of a spectral space have no clear constructive content. 
\end{abstract}

\noindent Note: This article is a slightly expanded version of the chapter
\gui{Spectral spaces versus distributive lattices: a dictionary},
 in the book \textsl{Advances in rings, modules and factorizations. Selected papers based on the presentations at the international conference on rings and factorizations, Graz, Austria, February 19--23, 2018}, published by Springer in 2020, ISBN 978-3-030-43415-1. We have also fixed some typographical bugs and improved the bibliography.

\newpage

\printcontents[english]{}{1}{}

\section*{Introduction} 
\addcontentsline{toc}{section}{Introduction}

This paper is written in Bishop's style of \coma   (\cite{Bi67,BB85,BR1987,CACM,MRR,Yen2015}).
We give a short dictionary between classical and \coma w.r.t.\ \prts of spectral spaces and of the associated dual \trdis.
We give several examples of how this works.

\section{Distributive lattices and spectral spaces: some general facts}\label{secdival0}
References: 
\cite{CC00,CL05,DST2019,Lom06,Lom-tgac,Sto37},  \cite[Chapter 4]{BW74} and \cite[Chapters XI and XIII]{CACM}.

\subsection{The seminal paper by Stone}

In \clama, a \textsl{prime ideal} $\fp$ of a \trdi $\gT\neq \Un$ is an ideal whose complement $\ff$ is a filter ({\sl
a prime filter}).  The quotient lattice $\gT/(\fp=0,\allowbreak\ff=1)$ is isomorphic to~$\Deux$.  Giving a \idep of~$\gT$ is the same thing as giving a lattice morphism $\gT\rightarrow \Deux$.
We will write $\theta_\fp:\gT\to\Deux$ the morphism
corresponding to $\fp$.

If $S$ is a system of generators for a 
\trdi $\gT$, a \idep~$\fp$ of $\gT$ is characterised by its trace $\fp\cap S$ (cf.\  \cite{CC00}).

\smallskip The (Zariski) \textsl{spectrum of the \trdi $\gT$} is the set $\Spec
\gT$ whose elements are \ideps of $\gT$, with the following topology: an open basis is provided by the subsets $\DT(a)\eqdefi\sotq{\fp\in\Spec
\gT}{a\notin\fp}=\sotq{\fp}{\theta_\fp(a)=1} .
$
One has
\begin{equation} \label{eqDa}
\left.\begin{array}{rclcrcl}
  \DT(a\vi b)   & =  & \DT(a)\cap \DT(b) ,&\quad & \DT(0)  & =  & 
\emptyset  ,\\
  \DT(a\vu b)   & =  & \DT(a)\cup \DT(b) ,&&  \DT(1) & =  &  
\Spec\gT.
  \end{array}
\right\}
\end{equation}

The complement of $\DT(a)$ is a \textsl{basic closed set} denoted by $\VT(a)$.
This notation is extended to $I\subseteq\gT$: we let
$\VT(I)\eqdefi\bigcap_{x\in I}\VT(x)$.  {If $\fII$} is the ideal generated by $I$, one
has $\VT(I)=\VT(\fII)$.  The closed set $\VT(I)$ is also called  \textsl{the subvariety of $\SpecT$ defined by $I$}.

The closure of a point $\fp\in\SpecT$ is provided by all $\fq\supseteq \fp$. Maximal ideals are the closed points of $\SpecT$. The spectrum $\SpecT$ is empty iff $0=_\gT1$.

The  spectrum of a \trdi is the paradigmatic example of a \textsl{spectral space}.
Spectral spaces can be characterised as the topological spaces satisfying the following \prts:
\begin{itemize}
\item the space is \qc,\footnote{The nowadays standard terminology is \qc, as in Bourbaki and Stacks, rather than compact.}
\item every open set is a union of \oqcs,
\item the intersection of two \oqcs is a \oqc,
\item for two distinct points, there is an open set containing one of them but not the other,
\item every irreducible closed set is the closure of a point.
\end{itemize}
The \oqcs then form a \trdi, the supremum and the infimum being the union and the intersection, respectively. A continuous map between spectral spaces is said to be \textsl{spectral} if the inverse image of every \oqc is a \oqc.

Stone's fundamental result  \cite{Sto37}  can be stated as follows.
{\sl The category of \trdi is, in \clama, antiequivalent to the category of spectral spaces.}

Here is how this works.
{\sl \begin{enumerate}
\item The \oqcs of $\SpecT$ are exactly the $\fD_\gT(u)$'s.
\item The map $u\mapsto \fD_\gT(u)$ is well-defined and it is an \iso of \trdis.
\end{enumerate}
}\label{NOTASpecT}
In the other direction, if $X$ is a spectral space we let $\OQC(X)$ be the \trdi formed by its \oqcs.
If $\xi:X\to Y$ is a spectral map, the map 
$${\OQC(\xi):\OQC(Y)\to\OQC(X),\quad U\mapsto\xi^{-1}(U)}
$$
is a morphism of \trdis.
This defines $\OQC$ as a contravariant functor.

\medskip Johnstone calls  \textsl{coherent spaces} the spectral spaces (\cite{Joh1986}). \cite{BW74} give them the name \textsl{Stone space}. The name \textsl{spectral space} is given by Hochster in a famous paper \cite{Hoc1969} where he proves that all spectral spaces can be obtained as Zariski spectra of commutative rings.

\smallskip In \coma, spectral spaces may have no points.
So it is necessary to translate the classical stuff about spectral spaces into a constructive rewriting about \trdis.
It is remarkable that all useful spectral spaces in the literature correspond to simple \trdis. 

\smallskip  Two other natural spectral topologies can be defined on $\SpecT$ by changing the definition of basic open sets.
When one chooses the $\fV(a)$'s as basic open sets, one gets the spectral space corresponding to $\gT\eci$ (obtained by reversing the order).
When one chooses boolean combinations of the $\fD(a)$'s as basic open sets
one gets the constructible topology (also called the patch topology). 
This spectral space can be defined as the spectrum of
 $\Bo(\gT)$ (the \agB generated by $\gT$). 

\Subsubsection{Spectral subspaces versus quotient lattices} 

The following theorem explains that the notion of \textsl{spectral subspace} is translated by the notion of \textsl{quotient \trdi}. Some détails are added. See also the \thref{th-dico-trdi-spec-mor1}.

\begin{theorem}[Subspectral spaces]
\label{propSESP}  Let $\gT'$ be a quotient lattice of $\gT$ and $\pi:\gT\to\gT'$ the quotient morphism.  Let us write $X'=\Spec\gT'$, $X=\Spec\gT$ and
$\pi^\star:X'\to X$ the dual map of $\pi$.  
\begin{enumerate}
\item \sloppy  $\pi^\star$ identifies  $X'$ with a \emph{topological subspace} of $X$. 
Moreover $\OQC(X')=\allowbreak{\sotq{U\cap X'}{U\in\OQC(X)}}$. 
We say that \emph{$X'$ is a subspectral space of~$X$.}
\item A subset $X'$ of  $X$ is a subspectral space of~$X$ \ssi \\
-- the induced topology by $X$ on $X'$ is spectral, and\\
--  $\OQC(X')=\sotq{U\cap X'}{U\in\OQC(X)}$.
\item A subset $X'$ of  $X$ is a subspectral space \ssi
it is closed for the patch topology.
\item If $Z$ is an arbitrary subset of 
$X=\Spec\gT$, its closure for the patch topology
is given by 
$X'=\Spec\gT'$, where
$\gT'$ is the quotient lattice of~$\gT$ defined by the following preorder
$\preceq$:
\begin{equation} \label{eqSSES}
a\preceq b\quad \Longleftrightarrow\quad (\DT(a)\cap Z)\subseteq 
(\DT(b)\cap Z)
\end{equation}
\end{enumerate}
\end{theorem}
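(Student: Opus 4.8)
The plan is to lean throughout on Stone's antiequivalence as stated above, namely the \iso $\gT\cong\OQC(X)$, $u\mapsto\DT(u)$, on the description of the dual map, and on the fact recalled in the excerpt that the patch topology on $X$ is the spectrum of $\Bo(\gT)$, hence compact Hausdorff.

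\textbf{Part 1.} First I would unwind $\pi^\star$: for a prime $\fp'$ of $\gT'$ one has $\pi^\star(\fp')=\pi^{-1}(\fp')$, equivalently $\theta_{\pi^\star(\fp')}=\theta_{\fp'}\circ\pi$; since $\pi$ is onto, $\pi^\star$ is injective. The computational heart is the identity $(\pi^\star)^{-1}(\DT(a))=\fD_{\gT'}(\pi(a))$, which follows from $\pi^\star(\fp')\in\DT(a)\iff a\notin\pi^{-1}(\fp')\iff\pi(a)\notin\fp'\iff\fp'\in\fD_{\gT'}(\pi(a))$. This already gives continuity; and because $\pi$ is onto, every basic open $\fD_{\gT'}(b)$ equals $(\pi^\star)^{-1}(\DT(a))$ for some $a$ with $\pi(a)=b$, so $\pi^\star$ is a homeomorphism onto its image $X'$ and the trace topology is generated by the $\DT(a)\cap X'$. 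Passing to quasi-compact opens yields $\OQC(X')=\sotq{\DT(a)\cap X'}{a\in\gT}=\sotq{U\cap X'}{U\in\OQC(X)}$.

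\textbf{Part 2.} The direct implication is Part 1. For the converse, assume $X'\subseteq X$ carries a spectral trace topology with $\OQC(X')=\sotq{U\cap X'}{U\in\OQC(X)}$. Then $U\mapsto U\cap X'$ is a surjective lattice morphism $\OQC(X)\to\OQC(X')$; composing with $\gT\cong\OQC(X)$ exhibits $\gT'\eqdefi\OQC(X')$ as a quotient lattice of $\gT$. As $X'$ is spectral, $X'\cong\Spec\OQC(X')=\Spec\gT'$, and one checks this identification is the one induced by $X'\hookrightarrow X$; hence $X'$ is subspectral.

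\textbf{Parts 3 and 4} are the crux and I would prove them together, since the converse of Part 3 is read off from Part 4. I would first settle the implication \emph{subspectral $\Rightarrow$ patch-closed}: equip $X'=\Spec\gT'$ with its own patch topology (compact, being $\Spec\Bo(\gT')$); by the identity of Part 1 the map $\pi^\star$ pulls the subbasic clopens $\DT(a)$ and $\VT(a)$ back to $\fD_{\gT'}(\pi(a))$ and $\fV_{\gT'}(\pi(a))$, so it is patch-continuous, whence $X'=\pi^\star(\Spec\gT')$ is patch-compact and therefore patch-closed in the Hausdorff patch space $X$. For Part 4, I would check that the relation $\preceq$ of \pref{eqSSES} is a preorder refining the order of $\gT$ and compatible with $\vi,\vu$; this uses only \pref{eqDa} together with $\DT(a\vi b)=\DT(a)\cap\DT(b)$ and $\DT(a\vu b)=\DT(a)\cup\DT(b)$ intersected with $Z$, so the antisymmetrisation $\gT'$ of $(\gT,\preceq)$ is a genuine quotient lattice with morphism $\pi$. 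Each $z\in Z$, viewed as $\theta_z:\gT\to\Deux$, then factors through $\pi$: if $a\preceq b$ and $\theta_z(a)=1$ then $z\in\DT(a)\cap Z\subseteq\DT(b)\cap Z$, so $\theta_z(b)=1$; hence $Z\subseteq X'=\Spec\gT'$, and $X'$ is patch-closed by the paragraph above.

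It remains to prove minimality, the genuinely delicate point. Writing $\preceq_Y$ for the preorder attached by \pref{eqSSES} to a subset $Y$, I would observe that a patch-closed $W$ is subspectral (by taking $Z=W$, for which the patch-closure is $W$ itself), and that then $W\cong\Spec(\gT/{\preceq_W})$. From $Z\subseteq W$ one gets $\DT(a)\cap Z=\DT(a)\cap W\cap Z$, so $a\preceq_W b\Rightarrow a\preceq b$; thus $\preceq$ is coarser, $\gT'$ is a further quotient of $\gT/{\preceq_W}$, and dually $X'\subseteq W$. This makes $X'$ the smallest patch-closed set containing $Z$, \cad its patch-adherence, proving Part 4; specialising to a patch-closed $X'$ (take $Z=X'$) supplies the converse of Part 3. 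The main obstacle is precisely this interlocking of Parts 3 and 4, which I resolve by isolating the patch-compactness direction first and feeding it into the construction of the quotient, rather than arguing the two characterisations independently.
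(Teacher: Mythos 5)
Parts 1 and 2 of your proposal are sound, and your proof that a subspectral space is patch-closed (patch-continuity of $\pi^\star$ from the compact space $\Spec\Bo(\gT')$ into the Hausdorff patch space of $X$) is correct. Note that the paper itself gives no proof of this theorem (it points to the literature at the head of the section), so your attempt must stand on its own — and it has one genuine gap, exactly at the point you yourself call \emph{the genuinely delicate point}. Your minimality argument rests on the claim that a patch-closed set $W$ satisfies $W=\Spec(\gT/{\preceq_W})$, and your justification is \emph{``by taking $Z=W$, for which the patch-closure is $W$ itself''}. This is circular: knowing that $W$ equals its own patch-closure says nothing about $\Spec(\gT/{\preceq_W})$ unless you already know that $\Spec(\gT/{\preceq_W})$ \emph{is} the patch-closure of $W$ — which is precisely Part 4, the statement being proved. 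What you have established non-circularly is: (i) subspectral $\Rightarrow$ patch-closed; (ii) $Z\subseteq\Spec(\gT/{\preceq_Z})$ and this set is patch-closed; (iii) the monotonicity $Z\subseteq W\Rightarrow\Spec(\gT/{\preceq_Z})\subseteq\Spec(\gT/{\preceq_W})$. None of these yields the inclusion $\Spec(\gT/{\preceq_W})\subseteq W$ for patch-closed $W$, which is the real content of ``patch-closed $\Rightarrow$ subspectral'' and cannot be obtained by formal manipulation of the preorders.

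The missing step needs patch-compactness in an essential way, not merely Hausdorffness. Concretely: let $W$ be patch-closed and let $\fp$ be a prime ideal of $\gT$ compatible with $\preceq_W$, i.e.\ a point of $\Spec(\gT/{\preceq_W})$. Consider the family of patch-closed subsets $\{W\}\cup\sotq{\DT(a)}{a\notin\fp}\cup\sotq{\VT(b)}{b\in\fp}$. Any finite intersection reduces, using that the complement of $\fp$ is a filter (so $a=a_1\vi\dots\vi a_n\notin\fp$) and that $\fp$ is an ideal (so $b=b_1\vu\dots\vu b_m\in\fp$), to a set of the form $W\cap\DT(a)\cap\VT(b)$ with $a\notin\fp$, $b\in\fp$; if this were empty one would have $\DT(a)\cap W\subseteq\DT(b)\cap W$, i.e.\ $a\preceq_W b$, forcing $b\notin\fp$ by compatibility — a contradiction. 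By compactness of the patch topology the whole family has a common point $\fq\in W$; then $a\notin\fq$ for every $a\notin\fp$ and $b\in\fq$ for every $b\in\fp$, hence $\fq=\fp$ and $\fp\in W$. With this lemma in place, your monotonicity step does finish Parts 3 and 4 exactly as you describe; without it, the interlocking of the two parts that you set up never gets off the ground.
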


\begin{proposition}[Basic open and closed subsets]
\label{propositionOFBSES}  Let $\gT$ be a \trdi and  $X=\Spec\,\gT$.
\begin{enumerate}
\item $\DT(a)$ is a spectral subspace of $X$ canonically homeomorphic to
 $
\Spec(\gT/(a=1))$.
\item  $\VT(b)$ is a spectral subspace of $X$ canonically homeomorphic to
 $
\Spec(\gT/(b=0))$.
\end{enumerate}
\end{proposition}

\begin{proposition}[Closed subsets of $\Spec\,\gT$]
\label{propositionFSES} ~
\begin{enumerate}
\item Any closed subset of $\Spec\,\gT$ hs the form
$\VT(\fJ):=\bigcap_{x\in \fJ}\VT(x)$ where $\fJ$ is an \id of $\gT$.
It is a spectral subspace and corresponds to the quotient lattice  
$\gT/(\fJ=0)$.
\item
The intersection of a family of closed subsets corresponds to the sup of the family of corresponding ideals. The union of two closed subsets  corresponds to the intersection of the corresponding ideals.
\item The closure of $\DT(x)$ corresponds to the quotient 
$\gT/((0:x)=0)$.
\item\label{enumtra}
The quotient lattice $\gT/((a:b)=0)$ corresponds to 
$\ov{\VT(a)\cap
\DT(b)}$.
\end{enumerate}
\end{proposition}

\Subsubsection{Gluing \trdis and spectral subspaces} 

\smallskip  \noindent Let $(\xn)$ be a system of \eco in a commutative ring $\gA$.
Then the canonical morphism $\gA\to\prod_{i\in\lrbn}\gA[1/x_i]$ identifies $\gA$ with a finite subproduct of localisations of $\gA$. 

\smallskip Similarly a \trdi 
can be recovered from a finite number of good quotient lattices.

\begin{definition}
\label{defRecolTD}
Let $\gT$ be a \trdi and $(\fa_i)_{i\in\lrbn}$ (resp. 
$(\ff_i)_{i\in\lrbn}$)
a  finite family of \ids (resp. of filters)  of $\gT$.  We say that the \ids
$\fa_i$ \textsl{cover $\gT$} if~~$\bigcap_i\fa_i=\so{0}$. Similarly we say that
the filters $\ff_i$ \textsl{cover $\gT$} if~~$\bigcap_i\ff_i=\so{1}$.
\end{definition}

Let $\fb$ be an ideal of $\gT$; we write $x\equiv y\mod\fb$ as meaning  $x\equiv y\mod (\fb=0)$. Let us recall that for $s\in\gT$ the quotient $\gT/(s=0)$ is isomorphic to the principal filter $\uar s$ (one sees this filter as a \trdi with $s$ as $0$ element).

\begin{fact}
\label{factRecolTD}
Let $\gT$ be a \trdi, $(\fa_i)_{i\in\lrbn}$ a finite family 
of principal ideals ($\fa_i=\dar s_i$) and $\fa=\bigcap_i\fa_i$.
\begin{enumerate}
\item If $(x_i)$ is a family in $\gT$ s.t.\ for each $i,j$ one has $x_i\equiv x_j\,\mod\,\fa_i\vu\fa_j$, then there exists a unique $x$ 
modulo
$\fa$ satisfying:  $x\equiv x_i\,\mod\,\fa_i\;(i\in\lrbn)$.
\item Let us write $\gT_i=\gT/(\fa_i=0)$, 
$\gT_{ij}=\gT_{ji}=\gT/(\fa_i\vu\fa_j=0)$,
$\pi_i:\gT\to\gT_i$ and $\pi_{ij}:\gT_i\to\gT_{ij}$ the canonical maps.
If the ideals $\fa_i$ cover $\gT$, the system $(\gT,(\pi_i)_{i\in\lrbn})$  is the inverse limit of the diagram $$((\gT_i)_{1\leq i\leq n},(\gT_{ij})_{1\leq 
i<j\leq
n};(\pi_{ij})_{1\leq i\neq j\leq n}).$$
\item The analogous result works with quotients by principal filters.

 {\small\hspace*{10em}{
\xymatrix @R=2em @C=7em{
          &  \gT \ar[rd]^{\pi _{k}}\ar[d]^{\pi _{j}}\ar[ld]_{\pi _{i}}\\
 \gT _i\ar[d]_{\pi _{ij}}\ar@/-0.75cm/[dr]^{\pi _{ik}} &
     \gT _j\ar@/-.8cm/[dl]_{\pi _{ji}}\ar@/-.8cm/[dr]^{\pi _{jk}} &
        \gT _k\ar@/-0.75cm/[dl]_{\pi _{ki}}\ar[d]^{\pi _{kj}} &
\\
 \gT _{ij}  & 
    \gT _{ik}   & 
      \gT _{jk}   
}
}}

\end{enumerate}
\end{fact}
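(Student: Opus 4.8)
The plan is to unwind the quotients explicitly in terms of the generators, reduce everything to join/meet identities, and then exhibit a closed gluing formula. Write $\fa_i=\dar{s_i}$, and recall from the preamble that $\gT/(\fa_i=0)\cong\uar{s_i}$, so that for $x,y\in\gT$ the relation $x\equiv y\mod\fa_i$ means precisely $x\vu s_i=y\vu s_i$. Since the join of principal ideals is $\fa_i\vu\fa_j=\dar{(s_i\vu s_j)}$, the relation $x\equiv y\mod\fa_i\vu\fa_j$ reads $x\vu s_i\vu s_j=y\vu s_i\vu s_j$; and with $s:=\Vi_i s_i$ we have $\fa=\bigcap_i\fa_i=\dar{s}$, so $x\equiv y\mod\fa$ means $x\vu s=y\vu s$. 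With this dictionary, (1) asks: given $(x_i)$ with $x_i\vu s_i\vu s_j=x_j\vu s_i\vu s_j$ for all $i,j$, produce $x$, unique up to join with $s$, such that $x\vu s_i=x_i\vu s_i$ for all $i$.

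The core of the argument is (1), and the hard part is simply to guess the right glueing element; once found, everything is forced by distributivity. I claim the element
\[
x\;:=\;\Vi_{i\in\lrbn}\,(x_i\vu s_i)
\]
works. Indeed, using that $(-)\vu s_k$ distributes over finite meets, then the compatibility hypothesis in the form $x_i\vu s_i\vu s_k=x_k\vu s_i\vu s_k$, and finally $s=\Vi_i s_i\le s_k\le x_k\vu s_k$, one computes
\[
x\vu s_k\;=\;\Vi_i (x_i\vu s_i\vu s_k)\;=\;\Vi_i (x_k\vu s_i\vu s_k)\;=\;(x_k\vu s_k)\vu\Vi_i s_i\;=\;x_k\vu s_k,
\]
which is exactly $x\equiv x_k\mod\fa_k$. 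For uniqueness modulo $\fa$: if $x\vu s_i=x_i\vu s_i=x'\vu s_i$ for all $i$, taking the meet over $i$ and using $\Vi_i(z\vu s_i)=z\vu\Vi_i s_i=z\vu s$ gives $x\vu s=x'\vu s$, i.e.\ $x\equiv x'\mod\fa$. This disposes of (1) completely.

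For (2), the covering hypothesis $\bigcap_i\fa_i=\so{0}$ means $s=\Vi_i s_i=0$, hence $\fa=\so 0$ and "unique modulo $\fa$" becomes "unique in $\gT$". Writing $L=\sotq{(y_i)\in\textstyle\prod_i\gT_i}{\pi_{ij}(y_i)=\pi_{ji}(y_j)\ (i\neq j)}$ for the inverse limit of the diagram (with $\pi_i\colon x\mapsto x\vu s_i$ and $\pi_{ij}\colon y\mapsto y\vu s_j$ under the identifications $\gT_i=\uar{s_i}$, $\gT_{ij}=\uar{(s_i\vu s_j)}$), the canonical map $\gT\to L,\ x\mapsto(\pi_i(x))$, is a lattice morphism. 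Given a compatible family $(y_i)\in L$, set $x_i:=y_i\in\uar{s_i}\subseteq\gT$; the constraint $\pi_{ij}(y_i)=\pi_{ji}(y_j)$ is $y_i\vu s_j=y_j\vu s_i$, which is exactly the hypothesis of (1), so (1) yields a unique $x$ with $x\vu s_i=x_i\vu s_i=y_i$ (as $y_i\ge s_i$), i.e.\ $\pi_i(x)=y_i$. Thus the canonical map is a bijective morphism, hence an isomorphism, establishing that $(\gT,(\pi_i)_{i\in\lrbn})$ is the inverse limit. Finally, (3) is the order-dual statement: applying (1)--(2) to $\gT\eci$ (equivalently, swapping $\vu\leftrightarrow\vi$, $0\leftrightarrow 1$, ideals$\leftrightarrow$filters), the glueing element for principal filters $\ff_i=\uar{t_i}$ becomes $x=\Vu_i(x_i\vi t_i)$ and all verifications carry over verbatim. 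The only genuinely delicate point in the whole proof is recognizing the formula $x=\Vi_i(x_i\vu s_i)$; after that it is routine distributive-lattice algebra.
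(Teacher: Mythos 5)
Your proof is correct, and it is essentially the intended argument: the paper states this Fact without proof, and your reduction via the identification $\gT/(\fa_i=0)\cong\uar s_i$ (so that congruence mod $\fa_i$ becomes $x\vu s_i=y\vu s_i$), the explicit gluing element $x=\Vi_i(x_i\vu s_i)$, and the distributivity computations fill the omission exactly as one would expect. The translation of the covering hypothesis into $\Vi_i s_i=0$, the identification of the inverse limit with compatible families, and the dualisation for item \emph{3} are all handled correctly.
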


We have also a gluing procedure described in the following proposition.\footnote{In commutative algebra, a similar procedure works for \Amos (\cite[XV-4.4]{CACM}) But in order to glue commutative rings, it is necessary to pass to the category of Grothendieck schemes.}

\begin{proposition}[Gluing \trdis]
\label{propRecolTD} 
Let~$I$ be a finite set and a diagram of \trdis  
$${\big((\gT_i)_{i\in I},(\gT_{ij})_{i<j\in I},(\gT_{ijk})_{i<j<k\in I};
(\pi_{ij})_{i\neq j},(\pi_{ijk})_{i< j, j\neq k\neq i}\big)}
$$
and a family of \elts 
${(s_{ij})_{i\neq j\in I}\in \prod\nolimits_{i\neq j\in I}\gT_{i}}$
 satisfying the following \prts
\begin{itemize}
\item the diagram is commutative, 
\item if $i\neq j$, $\pi_{ij}$ is a quotient morphism w.r.t. the ideal  $\dar s_{ij}$,
\item if $i$, $j$, $k$ are distinct, $\pi_{ij}(s_{ik})=\pi_{ji}(s_{jk})$ and $\pi_{ijk}$ is a quotient morphism w.r.t. the ideal $\dar\pi_{ij}(s_{ik})$.   
\end{itemize}

\medskip  {\small\hspace*{10em}
\xymatrix @R=2em @C=7em{
 \gT_i\ar[d]_{\pi _{ij}}\ar@/-0.75cm/[dr]^{\pi _{ik}} &
     \gT_j\ar@/-.8cm/[dl]_{\pi _{ji}}\ar@/-.8cm/[dr]^{\pi _{jk}} &
        \gT_k\ar@/-0.75cm/[dl]_{\pi _{ki}}\ar[d]^{\pi _{kj}} &
\\
 ~\gT_{ij}~ \ar[rd]_{\pi _{ijk}} & 
    ~\gT_{ik}~  \ar[d]^{\pi _{ikj}} & 
      ~\gT_{jk}~  \ar[ld]^{\pi _{jki}} 
\\
   &  ~\gT_{ijk}~ 
\\
}
}

\smallskip \noindent Let $\big(\gT\,;\,(\pi_i)_{i\in I}\big)$ be the limit of the diagram. Then there exist $s_i$'s in $\gT$ such that the principal ideals $\dar s_i$ cover $\gT$ and the diagram is isomorphic to the one in Fact~\ref{factRecolTD}.
More precisely  each~$\pi_i$ is a quotient morphism  w.r.t. the \id $\dar s_i$ and $\pi_i(s_j)=s_{ij}$ for all $i\neq j$.

\noindent The analogous result works with quotients by principal filters.
\end{proposition}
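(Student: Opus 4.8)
The plan is to realise $\gT$ explicitly as the equalizer
$$\gT=\sotq{(x_m)_{m\in I}\in\prod\nolimits_{m\in I}\gT_m}{\pi_{mn}(x_m)=\pi_{nm}(x_n)\ \text{for all}\ m\neq n},$$
a sublattice of the product (the defining equations are preserved by $\vi$ and $\vu$ because the $\pi_{mn}$ are lattice morphisms), with $\pi_i$ the restriction of the $i$-th projection. Since the diagram commutes and each $\pi_{ijk}$ is already determined as a quotient of $\gT_{ij}$, the triple objects impose no constraint beyond the pairwise ones, so this equalizer is the announced limit. I then define the candidate gluing data $s_i\in\prod_m\gT_m$ by $(s_i)_i=0$ and $(s_i)_m=s_{mi}$ for $m\neq i$.

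First I would carry out the easy verifications. To see $s_i\in\gT$: for $m,n\neq i$ the required identity $\pi_{mn}(s_{mi})=\pi_{nm}(s_{ni})$ is exactly the cocycle hypothesis $\pi_{ij}(s_{ik})=\pi_{ji}(s_{jk})$ taken with $(i,j,k)=(m,n,i)$; and for $m=i$ it reads $\pi_{in}(0)=\pi_{ni}(s_{ni})$, which holds because both sides are $0$ (the left since morphisms preserve $0$, the right since $\pi_{ni}$ is the quotient by $\dar s_{ni}$, which kills $s_{ni}$). The $m$-th component of $\Vi_{i} s_i$ is $\Vi_{i}(s_i)_m$, and this vanishes because its $i=m$ term is $0$; hence $\Vi_{i}s_i=0_\gT$ and the ideals $\dar s_i$ cover $\gT$. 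Finally $\pi_i(s_j)=(s_j)_i=s_{ij}$ by construction.

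The core is to show that each $\pi_i$ is a quotient morphism with respect to $\dar s_i$, that is, that $\pi_i$ induces an isomorphism $\gT/(s_i=0)\simeq\gT_i$. The congruence is straightforward: if $\pi_i(x)=\pi_i(y)$, i.e.\ $x_i=y_i$, then for $m\neq i$ the compatibility relations give $\pi_{mi}(x_m)=\pi_{im}(x_i)=\pi_{im}(y_i)=\pi_{mi}(y_m)$, whence $x_m\vu s_{mi}=y_m\vu s_{mi}$ (as $\gT_{mi}=\gT_m/(s_{mi}=0)$); together with the $i$-th component this says precisely $x\vu s_i=y\vu s_i$, and the converse is read off the $i$-th component. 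So $\pi_i$ and the projection $\gT\to\gT/(s_i=0)$ have the same congruence, and the induced map $\gT/(s_i=0)\to\gT_i$ is injective.

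The main obstacle is \emph{surjectivity} of $\pi_i$, a genuine gluing problem. Given $a\in\gT_i$, the naive coordinatewise lift $x_m:=\pi_{im}(a)$, viewed in $\gT_m$ through $\gT_{im}=\uar{s_{mi}}$, satisfies $\pi_{mi}(x_m)=\pi_{im}(a)$; but for $m,n\neq i$ a computation with the commutativity of the diagram shows that $\pi_{mn}(x_m)$ and $\pi_{nm}(x_n)$ agree only in the triple quotient $\gT_{imn}$, not yet in $\gT_{mn}$, so $(x_m)$ need not lie in $\gT$. This is exactly the situation of a finite closed cover $\Spec\gT=\bigcup_i\VT(s_i)$ (Theorem~\ref{propSESP}), and the defect is repaired by the sheaf-type gluing of Fact~\ref{factRecolTD}(1). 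I would organise this as an induction on $|I|$: splitting $I=\{i\}\sqcup J$, the inductive hypothesis equips the limit $\gT_J$ over the subdiagram on $J$ with its own covering family, and Fact~\ref{factRecolTD}(1) glues the compatible elements $(s_{ji})_{j\in J}$ into a single $t\in\gT_J$ with image $s_{ji}$ in each $\gT_j$; this $t$ represents the overlap of the union-patch $J$ with patch $i$, so that $\gT$ identifies with the two-patch gluing of $\gT_i$ and $\gT_J$ along it. The two-patch case carries no cocycle obstruction (any lift of the restriction of $a$ glues directly), which closes the induction and yields surjectivity. Once $\pi_i\colon\gT/(s_i=0)\to\gT_i$ is an isomorphism, the remaining identifications are formal: $\gT/(s_i\vu s_j=0)=\big(\gT/(s_i=0)\big)/(\overline{s_j}=0)\simeq\gT_i/(s_{ij}=0)=\gT_{ij}$ because $\pi_i(s_j)=s_{ij}$, and likewise $\gT/(s_i\vu s_j\vu s_k=0)\simeq\gT_{ijk}$; these isomorphisms commute with all structure maps, so the given diagram is isomorphic to the one of Fact~\ref{factRecolTD} and $\big(\gT,(\pi_i)\big)$ is its inverse limit. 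The statement for principal filters follows by the order-reversing duality $\gT\mapsto\gT\eci$.
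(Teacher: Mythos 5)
Your explicit description of the limit, the definition of the elements $s_i$, the verification that $s_i\in\gT$, the covering $\Vi_{i}s_i=0$, the equalities $\pi_i(s_j)=s_{ij}$, and the identification of the congruence of $\pi_i$ with the congruence modulo $\dar{s_i}$ are all correct. The proof fails exactly at the point you yourself single out as the core, and it fails because of a false claim: the canonical filter lifts do \emph{not} have a cocycle defect. Take, as you do, $x_m$ to be the unique element of $\uar{s_{mi}}\subseteq\gT_m$ with $\pi_{mi}(x_m)=\pi_{im}(a)$. By commutativity, $\pi_{mn}(x_m)$ and $\pi_{nm}(x_n)$ have the same image in $\gT_{imn}$ (both equal the image of $a$ there). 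Now invoke the third hypothesis of the proposition, which you never use: the map $\gT_{mn}\to\gT_{imn}$ is the quotient morphism w.r.t.\ the principal ideal $\dar{\pi_{mn}(s_{mi})}$, hence it is \emph{injective on the filter} $\uar{\pi_{mn}(s_{mi})}$. Both elements lie in that filter, since $x_m\geq s_{mi}$ gives $\pi_{mn}(x_m)\geq \pi_{mn}(s_{mi})$, and $x_n\geq s_{ni}$ gives $\pi_{nm}(x_n)\geq \pi_{nm}(s_{ni})=\pi_{mn}(s_{mi})$ by the cocycle relation. Therefore $\pi_{mn}(x_m)=\pi_{nm}(x_n)$, the family $(x_m)$ lies in $\gT$, and $\pi_i$ is onto. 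In other words, the hypothesis that the triple maps are quotients by precisely $\dar{\pi_{ij}(s_{ik})}$ is there exactly to upgrade agreement in the triple quotient to agreement in $\gT_{mn}$ for elements above the overlap element; missing this is what sends you into the detour.

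The inductive replacement does not close the gap. Its pivotal step, ``$\gT$ identifies with the two-patch gluing of $\gT_i$ and $\gT_J$ along $t$,'' amounts to the assertion that the induced map $\beta:\gT_J\to\gT_J/(t=0)\simeq \lim_{j\in J}\gT_{ij}\simeq \gT_i/\big(\Vi_{j\in J}s_{ij}=0\big)$ is \emph{surjective} (once that holds, surjectivity of $\pi_i$ indeed follows as in your two-patch case). But proving surjectivity of $\beta$ is word-for-word the lifting problem you declared obstructed: given a compatible family $(q_j)_j$ with $q_j\in\gT_{ij}$, one must produce a compatible family in $\prod_{j\in J}\gT_j$ lifting it, and nothing in the inductive hypothesis on $J$ supplies this, since that hypothesis concerns the internal covering of $\gT_J$ and not the new maps $\pi_{ji}$ towards $i$. (Note also that you cannot apply the inductive hypothesis to the diagram $(\gT_{ij})_{j\in J}$ itself when $|J|\geq 3$: its triple overlaps are not part of the given data; one must use Fact~\ref{factRecolTD}(1) inside $\gT_i$ instead.) Once this missing surjectivity is proved — by the filter-lift argument above — the induction becomes superfluous: that argument establishes the proposition directly for arbitrary finite $I$. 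The rest of your write-up (the formal identifications $\gT/(s_i\vu s_j=0)\simeq\gT_{ij}$, $\gT/(s_i\vu s_j\vu s_k=0)\simeq\gT_{ijk}$, and the passage to filters by order reversal) is fine once surjectivity is in place.
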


\begin{remark} \label{remRecSSSpec} 
{\rm The reader can translate the previous result in gluing of spectral spaces.   
}\end{remark}

\Subsubsection{Heitmann lattice and J-spectrum} 

\smallskip \noindent An \id $\fm$ of a  \trdi $\gT$ is
 \textsl{maximal} when $\gT/(\fm=0) \simeq \Deux$, \cad if $1\notin\fm$ and
$\Tt x\in\gT\;(x\in\fm$ or $\Ex y\in \fm \;x\vu y=1)$.

In \clama we have the following result.
\begin{lemma}
\label{lemHspec1}
The intersection  of all \idemas containing
an \id  $\fJ$ is  called 
the \emph{Jacobson radical of  $\fJ$} and is equal to 
\begin{equation} \label{eqRJJ}
\JT(\fJ)\,=\, \sotq{a\in\gT}{\forall x\in\gT \;(a\vu x  = 1 
\Rightarrow \Ex
z\in \fJ \;\;z\vu x=1)}.
\end{equation}
We write $\JT(b)$ for $\JT(\dar b)$.
The ideal $\JT(0)$ is the \emph{Jacobson radical of $\gT$}.
\end{lemma}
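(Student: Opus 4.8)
The plan is to set $M:=\bigcap\sotq{\fm}{\fJ\subseteq\fm,\ \fm\text{ maximal}}$ and to show that $M$ coincides with the set $R$ defined by the right-hand side of~\eqref{eqRJJ}, proving the two inclusions separately. The only ingredients are the characterisation of maximality recalled just above (namely $1\notin\fm$ together with $\Tt x\,(x\in\fm$ \vou $\Ex y\in\fm\ x\vu y=1)$, equivalently $\gT/(\fm=0)\simeq\Deux$) and, for one direction only, Zorn's lemma.

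For $R\subseteq M$, I would take $a\in R$ and an arbitrary maximal \id $\fm\supseteq\fJ$, and argue by contradiction. If $a\notin\fm$, maximality provides $y\in\fm$ with $a\vu y=1$; applying the defining property of $R$ with $x=y$ yields $z\in\fJ$ with $z\vu y=1$. But $z\in\fJ\subseteq\fm$ and $y\in\fm$, so $1=z\vu y\in\fm$, contradicting $1\notin\fm$. Hence $a\in\fm$, and since $\fm$ was arbitrary, $a\in M$.

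For $M\subseteq R$ I would argue contrapositively. If $a\notin R$, there is $x$ with $a\vu x=1$ while $z\vu x\neq 1$ for every $z\in\fJ$. The key observation is that this last condition says \emph{exactly} that the \id $\fJ\vu\dar x$ is proper: an \elt of $\fJ\vu\dar x$ is $\le z\vu x$ for some $z\in\fJ$, so $1\in\fJ\vu\dar x$ would force $z\vu x=1$ for some such $z$. Now extend $\fJ\vu\dar x$ to a maximal \id: by Zorn's lemma it is contained in an \id $\fm$ maximal among proper \ids (the union of a chain of proper \ids is again proper, since $1$ lies in none of them). Such an $\fm$ has quotient $\simeq\Deux$, for if $\gT/(\fm=0)$ had an \elt $c$ strictly between $0$ and $1$, its down-set would pull back to an \id strictly between $\fm$ and $\gT$, contradicting maximality. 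Thus $\fm$ is maximal in the sense of the lemma, contains $\fJ$ and $x$, yet $a\notin\fm$ (otherwise $1=a\vu x\in\fm$); hence $a\notin M$.

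The main obstacle is this second direction: beyond reformulating ``$a\notin R$'' as properness of $\fJ\vu\dar x$, one must manufacture a maximal \id of the prescribed kind, which is where Zorn's lemma and the verification that maximal proper \ids have quotient $\Deux$ enter. This is \cad the genuinely non\-constructive step, in keeping with the fact that in \coma a spectral space may have no points at all.
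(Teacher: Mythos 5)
Your proof is correct. There is, however, no proof in the paper to compare it with: the lemma is stated as a fact of \clama, and the paper's point is precisely that in \coma one takes the right-hand side of \pref{eqRJJ} as the \emph{definition} of $\JT(\fJ)$, since \idemas (and hence the left-hand side) may fail to exist constructively. Your argument supplies the standard classical justification that the paper leaves implicit, and it is sound on both sides: the inclusion of the set $R$ in the intersection uses only the first-order characterisation of maximality recalled just before the lemma ($1\notin\fm$ and, for each $x$, either $x\in\fm$ or $x\vu y=1$ for some $y\in\fm$); the reverse inclusion correctly isolates the two nonconstructive ingredients, namely the reformulation of \gui{$a\notin R$} as properness of the \id generated by $\fJ$ and $x$ (valid because every \elt of that \id lies below some $z\vu x$ with $z\in\fJ$, \ids being closed under finite joins), and the Zorn completion to a maximal proper \id together with the verification that such an \id has quotient $\Deux$. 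Your closing remark that this last step is the genuinely nonconstructive one is exactly in keeping with the paper's decision to adopt \pref{eqRJJ} as the constructive definition.
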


In \coma,  \egt \pref{eqRJJ} is used as \dfn.

The \textsl{Heitmann lattice of $\gT$}, denoted by $\He(\gT)$, 
is the quotient of~$\gT$ corresponding to the following preorder 
$\preceq_{\He(\gT)}$: 
\begin{equation} \label{eqdefHeT}
\begin{array}{rclcl}\qquad 
a\preceq_{\He(\gT)} b & \equidef  &   \JT(a)\subseteq\JT(b)&\Longleftrightarrow&a\in \JT(b)  
  \end{array}
  \end{equation}
Elements of $\He(\gT)$ can be identified with 
\ids $\JT(a)$, via the canonical map
$$ \gT\longrightarrow \He(\gT),\quad a\longmapsto \JT(a)$$

The next definition follows the remarkable paper \cite{Hei84}. 
In this paper, R. Heitmann explains that the usual  notion
of j-spectrum of a commutative ring is not the good one in the non-Noetherian case because it does not correspond to a spectral space in the sense of Stone. He introduces the following modification of the usual \dfn: instead of considering the set of \ideps which are intersections of \idemas, he proposes to consider the closure of the maximal spectrum in the prime spectrum, the closure being taken for the patch topology.

\begin{definition}
\label{defHspec1}
Let $\gT$ be a \trdi.
\begin{enumerate}
\item The \textsl{maximal spectrum of~$\gT$}, denoted by  $\Max\gT$, is the topological subspace of $\Spec\gT$ provided by the \idemas of~$\gT$. 
\item  The \textsl{j-spectrum of $\gT$}, denoted by $\jspec\gT$, is the topological subspace of $\Spec\gT$ provided by the primes $\fp$ s.t. \hbox{$\JT(\fp)=\fp$}, \cad the \ideps $\fp$ 
which are intersections of \idemas.
\item The \textsl{Heitmann $\rJ$-spectrum  of $\gT$}, denoted by 
$\Jspec\gT$, is the closure of $\Max\gT$ in $\Spec\gT$ 
for the patch topology. It is a spectral subspace of $\Spec\gT$, canonically homeomorphic to $\Spec(\He(\gT))$. 
\item The \textsl{minimal spectrum of~$\gT$}, denoted by $\Min\gT$, is the topological subspace of $\Spec\gT$ provided by \idemis of~$\gT$. 
\end{enumerate}
\end{definition}

In general $\Max\gT$,  $\jspec\gT$ 
and $\Min\gT$ are not spectral spaces.

\begin{theorem}
\label{thDK3}
$\Jspec\gT$ is a spectral subspace of 
$\Spec\gT$ canonically homeomorphic to $\Spec(\He(\gT))$. 
\end{theorem}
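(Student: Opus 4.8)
The plan is to realize $\Jspec\gT$ through the general description of patch-closures in Theorem~\ref{propSESP} and then to recognize the quotient lattice it produces as $\He(\gT)$. Since $\Jspec\gT$ is by definition the adherence of $\Max\gT$ in $\Spec\gT$ for the patch topology, point~3 of Theorem~\ref{propSESP} already guarantees that it is a spectral subspace; so the genuinely new content is the identification with $\Spec(\He(\gT))$.

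First I would apply point~4 of Theorem~\ref{propSESP} with $Z=\Max\gT$. This presents $\Jspec\gT$ as $\Spec\gT'$, where $\gT'$ is the quotient of $\gT$ by the preorder
\begin{equation*}
a\preceq b\quad\Longleftrightarrow\quad \big(\DT(a)\cap\Max\gT\big)\subseteq\big(\DT(b)\cap\Max\gT\big).
\end{equation*}
The heart of the proof is then to check that this preorder coincides with the Heitmann preorder $\preceq_{\He(\gT)}$ of~\pref{eqdefHeT}. Unwinding the definition of $\DT$, the inclusion $\DT(a)\cap\Max\gT\subseteq\DT(b)\cap\Max\gT$ says exactly that every \idema $\fm$ of $\gT$ which contains $b$ also contains $a$ (the implication $a\notin\fm\Rightarrow b\notin\fm$ is, by contraposition, the same as $b\in\fm\Rightarrow a\in\fm$). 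By Lemma~\ref{lemHspec1}, $\JT(b)$ is precisely the intersection of all \idemas containing $b$, so this last condition is equivalent to $a\in\JT(b)$, which is one of the equivalent forms of $a\preceq_{\He(\gT)}b$ recorded in~\pref{eqdefHeT}. Hence the two preorders agree and $\gT'=\He(\gT)$ as quotient lattices of~$\gT$.

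Finally I would conclude that $\Jspec\gT=\Spec\gT'=\Spec(\He(\gT))$, the homeomorphism being the one furnished by point~1 of Theorem~\ref{propSESP} applied to the canonical quotient morphism $\gT\to\He(\gT)$: its dual map identifies $\Spec(\He(\gT))$ with a topological subspace of $\Spec\gT$, and by the preorder computation its image is exactly the patch-closure of $\Max\gT$, namely $\Jspec\gT$. This also re-proves, via point~3, that $\Jspec\gT$ is a spectral subspace.

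The main obstacle is the preorder-matching step, and it is mild: it rests entirely on the two descriptions of $\JT(b)$, the formula~\pref{eqRJJ} used as a definition \cot and the classical \gui{intersection of \idemas} description of Lemma~\ref{lemHspec1}. Classically these agree, so the argument reduces to taking contrapositives. The genuinely delicate point is conceptual rather than computational: \cot the space $\Max\gT$ may carry very few points, so the passage through $Z=\Max\gT$ in point~4 of Theorem~\ref{propSESP} must be understood as a statement about the quotient lattice it produces. The virtue of the final identification $\Jspec\gT\cong\Spec(\He(\gT))$ is precisely that $\He(\gT)$ is defined directly by the preorder~\pref{eqdefHeT}, so the homeomorphism remains meaningful even when points are scarce.
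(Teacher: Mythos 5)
Your proposal is correct and is exactly the argument the paper intends: the paper states Theorem~\ref{thDK3} without writing out a proof, precisely because it follows by assembling Theorem~\ref{propSESP} (items 1, 3, 4 with $Z=\Max\gT$), the maximal-ideal description of $\JT(b)$ from Lemma~\ref{lemHspec1}, and the definition~\pref{eqdefHeT} of $\He(\gT)$. Your preorder-matching computation (contraposition turning $\DT(a)\cap\Max\gT\subseteq\DT(b)\cap\Max\gT$ into \gui{every \idema containing $b$ contains $a$}, i.e.\ $a\in\JT(b)$) is the whole content, and it is carried out correctly.
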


\subsection{Distributive lattices and \entrels}

A particularly important rule for distributive lattices, known as
 \textsl{cut}, is
\begin{equation}\label{coupure1}
 \bigl(\,x\vi a\; \leq\;  b\,\bigr)\quad\&\quad  \bigl(\,a\; \leq\; x\vu  b\,\bigr)
\quad \Longrightarrow \quad a \leq\;  b.
\end{equation}

For $A\in\Pfe(\gT)$ (finitely enumerated subsets of~$\gT$)  we write
$$\ndsp \Vu  A:=\Vu _{x\in A}x\qquad {\rm and}\qquad \Vi  A:=\Vi _{\!x\in A}x.
$$
We denote by $A \vda B$ or $A \vdash_\gT B$ the relation defined as follows over the set $\Pfe(\gT)$
$$
A \vda B \; \; \equidef\; \; \Vi A\;\leq \;
\Vu B.
$$

This relation satisfies the following axioms, in which we write $x$ for $\{x\}$ and $A, B$  for $A\cup B$.
$$\arraycolsep3pt\begin{array}{rcrclll}
&    & a  &\vda& a    &\; &(R)     \\[1mm]
A \vda B &   \; \Longrightarrow \;  & A,A' &\vda& B,B'   &\; &(M)     \\[1mm]
(A,x \vda B)\;\;
\&
\;\;(A \vda B,x)  &  \; \Longrightarrow \; & A &\vda& B &\;
&(T).
\end{array}$$
\rdb
We say that the relation is \textsl{reflexive}, \label{remotr} \textsl{monotone} and
\textsl{transitive}.
The third rule (transitivity) can be seen as a version of  rule \pref{coupure1} and is also called the \textsl{cut} rule.

\begin{definition}
\label{defEntrel}
For an arbitrary set $S$, a relation over $\Pfe(S)$ which is reflexive, monotone and transitive is called an \textsl{\entrel}.
\end{definition}

The following \tho is fundamental. It says that the three \prts of \entrels are exactly what is needed for the interpretation in the form of a \trdi to be adequate.

\begin{theorem}[Fundamental \tho of  \entrels] \label{thEntRel1} 
See  \cite[Theorem 1]{CC00}, \cite[\hbox{XI-5.3}]{CACM}, \cite[Satz 7]{Lor1951}. \\
Let $S$ be a set with an \entrel $\vdash_S$ on $\Pfe(S)$. We consider the \trdi~$\gT$ defined by \gtrs and relations as follows: the \gtrs are the \elts of $S$ and the relations are the
$$ A\; \vdash_\gT \;  B
$$
each time that $A\; \vdash_S \; B$.  Then, for all $A$,
 $B$ in $\Pfe(S)$, we have
$$  A\; \vdash_\gT \;  B
\; \Longrightarrow \; A\; \vdash_S \;  B.
$$
\end{theorem}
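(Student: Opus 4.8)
The plan is to establish the stated (non-obvious) implication; its converse $A\vdash_S B\Rightarrow A\vdash_\gT B$ holds by construction, since the relations $A\vdash_\gT B$ were imposed exactly for the pairs with $A\vdash_S B$. The strategy is to build a concrete distributive lattice $\gT'$ out of $\vdash_S$ itself, in which the order is visibly controlled by $\vdash_S$, and then to obtain a lattice morphism $\gT\to\gT'$ from the universal property of the presentation. Because a lattice morphism preserves $\le$, this transfers the inequality $\Vi A\le\Vu B$ from $\gT$ to $\gT'$, where it will read exactly $A\vdash_S B$.

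Concretely, I would take as underlying set $\Pf(\Pfe(S))$, a finite set $X=\{A_1,\dots,A_n\}$ of finite subsets being read as the disjunctive normal form $\Vu_{A\in X}\Vi A$. Put $X\vu Y:=X\cup Y$ and $X\vi Y:=\{\,A\cup A' : A\in X,\ A'\in Y\,\}$, with bottom $\emptyset$ and top $\{\emptyset\}$. For $A\in\Pfe(S)$ and $Y\in\Pf(\Pfe(S))$ say that $A\rhd Y$ when $A\vdash_S\Im\tau$ for every transversal $\tau$ of $Y$ (a choice $\tau(B)\in B$ for each $B\in Y$, with image $\Im\tau\in\Pfe(S)$), and set $X\preceq Y$ iff $A\rhd Y$ for all $A\in X$. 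This is the syntactic reading of the distributive-lattice computation $\Vi A\le\Vu_{B\in Y}\Vi B=\Vi_\tau\Vu_{B}\tau(B)$. Reflexivity of $\preceq$ follows from $(R)$ and $(M)$ (a transversal of $X$ meets each $A\in X$), and $\vi,\vu$ are the expected bounds; granting that $\preceq$ is a preorder compatible with the operations, the quotient $\gT':=\Pf(\Pfe(S))/{\approx}$ (with $X\approx Y$ meaning $X\preceq Y\preceq X$) is a distributive lattice, distributivity being built into the normal form. One checks that $x\mapsto[\{\{x\}\}]$ sends each relation $A\vdash_S B$ to a valid inequality, since $\Vi_{a\in A}\{\{a\}\}=\{A\}$, $\Vu_{b\in B}\{\{b\}\}=\{\{b\}:b\in B\}$, and this last family has a single transversal, of image $B$, so that $\{A\}\preceq\{\{b\}:b\in B\}$ is exactly $A\vdash_S B$. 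The universal property then yields a morphism $\phi:\gT\to\gT'$, and applying $\phi$ to $\Vi A\le\Vu B$ gives $\{A\}\preceq\{\{b\}:b\in B\}$, i.e.\ $A\vdash_S B$, as desired.

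The one substantial point — and the expected main obstacle — is the transitivity of $\preceq$, which is where the cut rule $(T)$ is indispensable. It reduces to the following lemma, proved by induction on $|Y|$: if $A,C\in\Pfe(S)$ and $Y=\{B_1,\dots,B_m\}$ satisfy $A\vdash_S\Im\tau\cup C$ for every transversal $\tau$ of $Y$ and $B_k\vdash_S C$ for each $k$, then $A\vdash_S C$. The inductive step isolates the last factor $B_m$ and repeatedly applies $(T)$ (together with $(M)$) to cut out, one by one, the elements of $B_m$ from the entailments $A\vdash_S D\cup\{b\}\cup C$ against $B_m\vdash_S C$, reducing $Y$ to $\{B_1,\dots,B_{m-1}\}$; feeding the generated relations back into the induction hypothesis finishes the argument. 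Transitivity of $\preceq$ is the instance where $C$ is the image of a transversal of $Z$ and the hypotheses come from $X\preceq Y$ and $Y\preceq Z$. The remaining verifications (monotonicity of $\vi,\vu$ for $\preceq$, the absorption and distributivity laws) are routine set-theoretic manipulations and will be only sketched.
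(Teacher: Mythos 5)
Your proof is correct and follows essentially the same route as the proofs in the references the paper cites for this theorem (\cite{CC00}, \cite[XI-5.3]{ACMC}) --- the paper itself only states the result and defers to those sources: one realizes the lattice concretely on $\Pf(\Pfe(S))$ read as disjunctive normal forms, with the preorder defined from $\vdash_S$ via transversals, the only substantial verification being transitivity, which is exactly where the cut rule $(T)$ enters, and one then concludes by the universal property of the presentation. Your key lemma (cutting out a whole block $B_m$ by iterated applications of $(T)$ and $(M)$, then feeding the result back into the induction) is the same argument as in those references, so there is nothing to object to beyond the routine verifications you acknowledge and correctly describe as such.
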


\smallskip 
\rem The relation $x\vdash_S y$ is a priori a preorder, and not an order, on~$S$. Let us denote by $\ov x$ the \elt $x$ seen in the ordered set~$\ov{S}$ defined by this preorder. For a subset~$A$ of~$S$ let us denote $\ov A=\sotq{\ov x}{x\in A}$.
In the \tho we consider a \trdi $\gT$ which gives on $S$ the same \entrel as $\vdash_S$.
Strictly speaking, we should have written  $ \ov A\; \vdash_\gT \; \ov B$
instead of $A\; \vdash_\gT \;  B$ since the \egt in $\gT$ is coarser than in $S$. In particular, it is $\ov{S}$, and not $S$, which can be identified with a subset of $\gT$.
\eoe

\section{Spectral spaces in \alg}

The usual spectral spaces in algebra are (always?) understood as spectra of distributive lattices associated to coherent theories describing relevant
algebraic structures.
We describe this general situation and give some examples.

\subsection{Dynamical algebraic structures, \trdis and associated spectra}

References: 
\cite{CLR01} and \cite{Lom-tgac}.
The paper \cite{CLR01} introduces the general notion of \gui{dynamical theory} and of \gui{dynamical proof}.
See also the paper \cite{BC2005} which illustrates the usefulness of these notions.

\Subsubsection{Dynamical theories and \sads}

\smallskip  \noindent Dynamical theories are a version \gui{without logic, purely computational} of \textsl{coherent theories} (here, we say \gui{theory} for \gui{first order formal theory}).

Dynamical theories use only dynamical rules, \cad deduction rules of the form
\[
\Gamma  \vd   \Exists{\und{y^1}}\, \Delta_1
\vou \cdots\vou \Exists{\und{y^m}}\,\Delta_m
\]
where $\Gamma$ and the $\Delta_i$'s are lists of atomic formulae in the language $\cL$ of the theory $\sa{T}=(\cL,\cA)$.

The computational meaning of \gui{$\Exists{\und{y}}\, \Delta$} is \gui{$\EXists{\hbox{fresh variables\ }\und{y}}\, \Delta$}.
The computational meaning of \gui{$ U\;\tsbf{or}\; V\;\tsbf{or}\; W$} is \gui{open three branches of computations, in the first one, the predicates in $U$ are valid \dots}. 

Axioms (elements of $\cA$) are dynamical rules, and theorems are valid dynamical rules (validity is described in a simple way and uses only a computational machinery) 

A \textsl{\sad} for a \tdy $\sa{T}$ is given through a presentation $(G,R)$ by generators and relations. Generators are the element of $G$ and they are added to the constants in the language. Relations are the elements of $R$. 
They are dynamical rules without free variables and they are added to the axioms of the theory.

A \sad is intuitively thought of as an incompletely specified algebraic structure. The notion corresponds to lazy evaluation in Computer Algebra.

Purely equational algebraic structures correspond to the case where the only predicate is equality and the axioms are Horn rules.

Dynamical theories whose axioms  contain neither $\vou$ nor $\,\Exists\,$ nor $\Bot$ are called \textsl{Horn theories} (algebraic theories in \cite{CLR01}).
E.g.\ theories of  absolutely flat rings and of pp-rings can be given as Horn theories.  

\smallskip
A coherent theory is a \textsl{finitary geometric theory}. In general geometric theories
we accept  dynamical rules that use infinite disjunctions at the right of $\vd$. In this paper we speak only of finitary \tgms.

A fundamental result about finitary \tdys says that adding the classical  logic to a \tdy does not change valid rules: first order classical logic is conservative over \tdys \cite[Theorem 1.1]{CLR01}.  

\Subsubsection{Distributive lattices associated to a \sad}

\smallskip \noindent Let  $\gA=\big((G,R),\sa{T}\big)$ be a \sad for $\sa{T}=(\cL,\cA)$.

\paragraph{First example.} If $P(x,y)$ belongs to $\cL$ and if  $\tcl=\mathrm{Clt}(\gA)$ is the set of closed terms of~$\gA$, we get the following \entrel $\vdash_{\gA,P}$ for $\tcl\times \tcl$: 
\begin{equation} \label {eq1}
\begin{aligned} 
 (a_1,b_1),\dots,(a_n,b_n)  &\,\,\vdash_{\gA,P}    (c_1,d_1),\dots,(c_m,d_m)   
  \qquad\quad     \equidef     \\[.3em] 
   P(a_1,b_1)\vet \dots\vet  P(a_n, b_n) & \Vdi{\gA}   P(c_1, d_1)\vou \dots\vou   P(c_m, d_m) 
 \end{aligned}
\end{equation}

Intuitively the \trdi $\gT$ generated by this \entrel represents the \gui{truth values} of~$P$ in the \sad $\gA$.
In fact to give an \elt $\alpha:\gT\to\Deux$ of $\Spec\gT$ amounts to giving the value $\Top$ (resp. $\Bot$) to $P(a,b)$ 
when $\alpha(a,b)=1$  (resp. $\alpha(a,b)=0$). 

\paragraph{Second example, the Zariski lattice of a commutative ring.}~

\smallskip \noindent Let \sa{Lr} be a \tdy of nontrivial local rings, e.g.\ with signature $$(\,\cdot=0,\U(\cdot)\mathrel{;}\cdot+\cdot,\cdot\times \cdot,-\,\cdot,0,1\,).$$
This is an extension of the \peq theory of commutative rings.
The predicate $\U(x)$ is defined as meaning the invertibility of $x$, 

\DeuxRegles{
\labu $\,\,\U(x)\vd \Exists y\,  xy=1$
}
{
\labu $\,\,xy=1\vd \U(x)$
}

\noindent and the axioms of nontrivial local rings are written as

\DeuxRegles {\Lab{LR} $\,\, \U(x+y) \Vd \U(x) \vou \U(y)$
}
{
\labu $\,\,\U(0)\vd \,\Bot$}
  
The \textsl{Zariski lattice} $\ZarA$
of a commutative ring $\gA$ 
is defined as the \trdi generated by the \entrel $\vdash_{\ZarA}$ for $\gA$ defined as
\begin{equation} \label {eqZarclass}
\begin{aligned} 
 a_1,\dots,a_n  &\,\,\vdash_{\ZarA}    c_1,\dots,c_m   
  \qquad\quad     \equidef     \\[.3em] 
\U(a_1)\vet \dots\vet  \U(a_n) & \Vdi{\sA{Lr}(\gA)}   \U(c_1)\vou \dots\vou   \U(c_m) 
 \end{aligned}
\end{equation}

Here $\sa{Lr}(\gA)$ is the \sad of type $\sa{Lr}$ over $\gA$.

We get the following equivalence (we call it a \textsl{formal \nst}): 
$$
a_1,\dots,a_n \,\vdash_{\ZarA}    c_1,\dots,c_m \Longleftrightarrow \exists k>0\;\;(a_1 \cdots a_n)^k\in\gen{c_1,\dots,c_m}
$$

So,  $\ZarA$ can be identified with the set of ideals $\DA(\ux)=\sqrt[\gA]{\gen{\ux}}$, with
$\DA(\fj_1)\vi\DA(\fj_2)=\DA(\fj_1\fj_2)$ and
$\DA(\fj_1)\vu\DA(\fj_2)=\DA(\fj_1+\fj_2)$.

Now, the usual Zariski spectrum  $\SpecA$ is canonically homeomorphic to $\Spec(\ZarA)$. Indeed, to give a point of $\SpecA$ (a \idep) amounts to giving an epimorphism $\gA\to\gB$ where $\gB$ is a local ring, 
or also, that is the same thing, to giving a minimal model of $\sa{Lr}(\gA)$.
This corresponds to the intuition of \gui{forcing the ring to be a local ring}.

\paragraph{More generally.} Let us consider a set $S$ of closed atomic formulae of the \sad~$\gA=\big((G,R),\sa T\big)$. We define a corresponding \entrel (with the $A_i$'s and $B_j$'s in $S$):

\vspace{-.6em}
\begin{equation} \label {eq2}
\begin{aligned}
 A_1,\dots,A_n  &\,\,\vdash_{\gA,S} B_1,\dots,B_m   
   \qquad\quad \equidef     \\[.3em] 
    A_1\vet \dots\vet  A_n &\Vdi{\gA} B_1\vou\dots\vou B_m. 
 \end{aligned}
\end{equation}
We may denote by $\Zar(\gA,S)$ this \trdi.

\paragraph{Points of a spectrum and models in \clama.} 
With a good choice of predicates in the language, to give a point of the spectrum of the corresponding lattice amounts often to giving a minimal model of the \sad.
This is the case when all existence axioms in the theory imply unique existence.
The topology of the spectrum is in any case strongly dependent on the choice of predicates.

\paragraph{The complete Zarisiki lattice of a \sad $\gA$} is defined by choosing for $S$ the set 
 $\Atcl(\gA)$ of all closed atomic formulas of~$\gA$. 
When the theory has no existential axioms, this lattice corresponds to the \entrel for  $\Atcl(\gA)$ generated by the axioms of $\sa{T}$, replacing the variables with arbitrary closed terms of $\gA$.

\subsection{A very simple case} 
\label{subsecspecbanal}

Let $\sa{T}$ be a \talg.
Any \sad $\gS=((G,R),\sa{T})$ of type \sa{T} defines an ordinary \agq structure $\gB$ and there is no significant difference between \sads and ordinary \agq structures.

The minimal models df $\gS$ are (identifiable with) the quotient structures
 $\gC=\gB /\!\!\sim$.
Let us consider the complete Zariski lattice $\Zar (\gB) $.
The points of \(\Spec(\gB):=\Spec(\Zar(\gB))\) are  identified with these quotient structures of $\gB$. An element of \(\Zar(\gS)=\Zar(\gB)\) can be identified with a quotient of \(\gB\) when we add only a finite number of relations in the \pn.  

\smallskip For example, let us consider the \tpe $\sa{T}=\sa{Mod}_\gA$ (the theory of modules over a fixed ring  $\gA$). For an \(\gA\)-module \(M\) we get the lattice $\Zar (M)$ generated by the  \entrel $\ym\vdash \xn$ on~$M$ defined by:

\Regles{\labu $\,\, y_1= 0,\dots\vet y_m= 0\vd x_1= 0\vou\dots\vou x_n= 0$}

\noindent or by:
 
\Regles{\labu 
$\,\,$one $x_k$ is in the module $\gA y_1+\dots+\gA y_m=\gen{\ym}$ (\textsl{formal \nst for \lin \alg}).}

Points of \(\Spec(M)\) can be identified with the quotient structures \(M/N\) of \(M\). Elements of \(\Zar M\) define the \oqcs in \(\Spec(M)=\Spec(\Zar M)\). Finitely generated submodules of \(M\)  give in this way a basis of \oqcs of \(\Spec(M)\).
$$
\fD(y_1\vi\dots\vi y_m):=\fD(y_1)\cap\dots\cap\fD(y_m) =\sotQ{N\subseteq M}{\gen{y_i}_{i\in\lrbm}\subseteq N} \quad  \hbox{where }y_i\hbox{'s}\in M.
$$ 

\smallskip It might be that these  kinds of lattices and spectra are too simple to lead to interesting results in algebra.

\subsection{The real spectrum of a commutative ring} 
\label{subsecspecreel}

The real spectrum  $\Sper\gA$ of a commutative ring corresponds to the intuition of \gui{forcing the ring $\gA$ to be an ordered (discrete\footnote{We ask the order relation to be decidable.})} field. 

A point of $\Sper\gA$ can be given as an epimorphism
 \hbox{$\varphi:\gA\to\gK$}, where $(\gK,\gC)$ is an ordered field.\footnote{$\gC$ is the cone of nonnegative \elts.} Moreover two such morphisms $\varphi:\gA\to\gK$ and $\varphi':\gA\to\gK'$ define the same point of the spectrum if  there exists an \iso of ordered fields $\psi:\gK\to\gK'$ making the suitable diagram commutative.

We write \gui{$x\geq 0$} the predicate over  $\gA$ corresponding to \gui{$\varphi(x)\geq 0$ in $\gK$}. We get the following axioms:

\DeuxRegles{
\labu $\vd x^2\geq 0$
\labu $\,\,x\geq0\vet y\geq0\vd x+y\geq0$
\labu $\,\,x\geq0\vet y\geq0\vd xy\geq0$
}
{
\labu $\,\,-1\geq0\vd \Bot$
\labu $\,\,-xy\geq0\vd x\geq 0\vou y\geq 0$
}

This means that $\sotq{x\in\gA}{x\geq 0}$ is a prime cone: to give a model of this theory is the same thing as to give a point of $\Sper\gA$.

In order to get the usual topology of $\Sper\gA$, it is necessary to use the opposite predicate $x<0$. For the sake of comfort, we take $x>0$.  
This predicate satisfies the dual axioms to those for $-x\geq 0$:

\DeuxRegles{
\labu $\,\,-x^2>0\vd \Bot$
\labu $\,\,x+y>0\vd x>0\vou y>0$
\labu $\,\,xy>0\vd x>0 \vou -y>0$
}
{
\labu $\vd 1 >0$
\labu $\,\,x>0\vet y>0\vd xy>0$
}

So the \textsl{real lattice} of $\gA$, denoted by $\Reel(\gA)$,
is the \trdi generated by the minimal  \entrel for $\gA$ satisfying  the following relations  (we write $\rR(a)$ instead of $a$):

\DeuxRegles{
\labu $\,\,\rR(-x^2) \vdash  $
\labu $\,\,\rR(x+y) \vdash \rR(x ), \rR(y) $
\labu $\,\,\rR(xy) \vdash \rR(x)  , \rR(-y) $
}
{
\labu $\,\,\vdash \rR(1)  $
\labu $\,\,\rR(x) , \rR(y) \vdash \rR(xy) $
}

So $\Spec(\Reel\gA)$ is isomorphic to $\Sper\gA$, viewed as the set of prime cones of $\gA$. The spectral topology
admits the basis of open sets $$\fR(\an)=\sotq{\fc\in \Sper\gA}{\&_{i=1}^n -a_i\notin \fc}.$$

This approach to the real spectrum was proposed in~\cite{CC00}. 

An important point is the following \textsl{formal Positivstellensatz}.
\begin{theorem}[Formal Positivstellensatz for ordered fields] \label{thPstformelreel}~ 
\Propeq 
\begin{enumerate}
\item We have $
\rR(x_1),\dots,\rR(x_k)\vdash\rR(a_1),\dots,\rR(a_n) 
$
in the lattice $\Reel \gA$.
\item We have $
x_1>0\vet\dots \vet x_k>0\vd a_1> 0\vou\dots\vou  a_n> 0 
$ 
in the theory of ordered fields over  $\gA$.
\item We have $
x_1>0\vet\dots \vet x_k>0\vet a_1\leq  0\vet\dots\vet  a_n\leq  0\vd \Bot 
$ 
in the theory of ordered fields over~$\gA$.
\item We have an \egt $s+p=0$  in $\gA$, with $s$ in the  \mo generated by the $x_i$'s and~$p$ in the cone generated by the~$x_i$'s and the~$-a_j$'s.
\end{enumerate}
\end{theorem}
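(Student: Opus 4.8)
The plan is to prove the cycle $(1)\Rightarrow(2)\Rightarrow(3)\Rightarrow(4)\Rightarrow(1)$, with the implications $(1)\Rightarrow(2)$, $(2)\Leftrightarrow(3)$ and $(4)\Rightarrow(1)$ forming the soft part and $(3)\Rightarrow(4)$ carrying the arithmetic content. For $(1)\Rightarrow(2)$ I would observe that, under the dictionary $\rR(a)\leftrightarrow(a>0)$, each of the five relations defining $\Reel\gA$ is a valid dynamical rule of the theory of ordered fields; since dynamical provability is reflexive, monotone and closed under cut it is an \entrel (Definition~\ref{defEntrel}) satisfying these five relations, and $\Reel\gA$ is generated by the \emph{minimal} such \entrel, whose entailment coincides with the lattice order by Theorem~\ref{thEntRel1}. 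Hence every entailment of $(1)$ is in particular dynamically provable.

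For $(2)\Leftrightarrow(3)$ I would use totality of the order, available as $\vd(a>0)\vou(a\le 0)$ together with $(a>0),(a\le 0)\vd\Bot$. Given $(2)$, adjoining the hypotheses $a_j\le 0$ and cutting each disjunct $a_j>0$ against $a_j\le 0$ produces $\Bot$, which is $(3)$; conversely, from the $x_i>0$ one branches on $(a_j>0)\vou(a_j\le 0)$ for every $j$, closes by $(3)$ the single branch in which all $a_j\le 0$, and reads off $a_1>0\vou\dots\vou a_n>0$ from the others. Only the structural rules, cut and totality intervene.

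For $(4)\Rightarrow(1)$ I would turn the identity $s+p=0$ into the inequality $m:=\Vi_i\rR(x_i)\le\Vu_j\rR(a_j)$ using only the five generators and distributivity. From $\rR(x)\vi\rR(x)\le\rR(x^2)$ and the product rule one gets $m\le\rR(s)$, and since $s=-p$ in $\gA$ the additive rule expands $\rR(-p)=\rR(s)$ into a join of terms $\rR\big(-g^2\textstyle\prod_i x_i^{e_i}\prod_j(-a_j)^{f_j}\big)$ with $g^2$ a square summand of the sum of squares. Meeting such a term with $m$ and inducting on $\sum_j f_j$ finishes: when no factor $-a_j$ occurs, the term collapses to $0$ through $\rR(-u^2)=0$ and the derived rule $\rR(u)\vi\rR(-u)\le\rR(-u^2)=0$; otherwise peeling off one factor $-a_{j_0}$ rewrites the term as $a_{j_0}\cdot r$ with $r$ of smaller degree in the $-a_j$, so that the multiplicative branching $\rR(a_{j_0}\cdot r)\le\rR(a_{j_0})\vu\rR(-r)$ and the \hdr, combined by distributivity, absorb it into $\Vu_j\rR(a_j)$.

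The real work is $(3)\Rightarrow(4)$: manufacturing the explicit certificate out of a dynamical proof of $\Bot$. I would argue by induction on the proof tree, reading certificates from the leaves toward the root and maintaining the invariant that a proof of $\Bot$ from strict hypotheses $u>0$ and non-strict hypotheses $v\ge 0$ yields $s+p=0$ with $s$ in the monoid of the $u$'s and $p$ in the cone of the $u$'s and the $v$'s. The leaf axiom $-u^2>0\vd\Bot$ gives the base certificate $(-u^2)+u^2=0$, and the multiplicative rules keep $s$ and $p$ inside the monoid and the cone. \textbf{The main obstacle is the branching/cut step}: whenever the proof splits through a disjunctive axiom — typically $x+y>0\vd x>0\vou y>0$ or $xy>0\vd x>0\vou -y>0$ — I must merge the two branch certificates into a single one from which the auxiliary element $x$, appearing as a positive generator in one branch and through $-x$ in the other, has been eliminated. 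Grouping each branch certificate by the powers of $x$ over coefficients lying in the cone of the remaining generators turns this into an elimination between an \gui{$x\ge 0$} certificate and an \gui{$x\le 0$} certificate, and carrying it out while remaining inside the prescribed monoid-times-cone shape is exactly where the closure properties of cones (sums of squares, products) are essential. A secondary point to check is that the field-inverse (existential) axioms do not enlarge the stock of provable $\gA$-entailments: any use of an inverse in a sign proof can be cleared by multiplying through, since the sign of $x^{-1}$ equals that of $x$.
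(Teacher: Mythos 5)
The paper itself states this theorem without proof (it is imported from the cited literature, essentially \cite{CC00} and \cite{CLR01}), so there is no internal argument to compare yours against; judged on its own merits, your architecture is the standard one from those sources. Your three soft implications are correct and essentially complete: $(1)\Rightarrow(2)$ by minimality of the generated \entrel against the dynamical provability relation, $(2)\Leftrightarrow(3)$ by totality plus cut, and $(4)\Rightarrow(1)$ by the peeling induction on the number of factors $-a_j$, which works exactly as you describe.

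The genuine gap is inside $(3)\Rightarrow(4)$: the merging step that you yourself flag as \emph{the main obstacle} is the entire arithmetic content of the theorem, and you describe its expected shape without ever producing it. It must be exhibited, and it is short. Writing $M$ for the monoid and $C$ for the cone of the current hypotheses, grouping by parity of the powers of $x$ (even powers are squares, hence absorbed into $C$) puts the two branch certificates in the form
\[
x^{e}m_1 + A_1 + B_1 x = 0, \qquad m_2 + A_2 - B_2 x = 0 \qquad (m_i \in M,\ A_i, B_i \in C),
\]
and the elimination is: for $e \geq 1$, multiply the first identity by $B_2^{e}$, rewrite $B_2^{e}x^{e}m_1+B_2^{e}B_1x$ as $(B_2x)^{e}m_1+B_2^{e-1}B_1(B_2x)$ and substitute $B_2x = m_2+A_2$, which after expansion gives $m_2^{e}m_1 + c = 0$ with $c \in C$; for $e=0$, multiply the two identities in the form $-B_1x = m_1+A_1$ and $B_2x = m_2+A_2$ to get $m_1m_2 + c = 0$ with $c \in C$ (using $x^2 \in C$). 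Without this lemma the induction on proof trees does not close. Two further points you gloss over: first, your description of the merge (strict $x$ against weak $-x$) does not literally match the branchings you cite ($x+y>0\,\vdash\,x>0\vee y>0$ adds a \emph{strict} hypothesis on each branch); the clean fix, which is the organization of \cite{CC00}, is to prove once that the relation of item \emph{4} is an \entrel --- cut being exactly the identity above --- and that each axiom has a one-line certificate (e.g.\ $(x+y)+((-x)+(-y))=0$), after which every disjunctive axiom in a dynamical proof is absorbed by cut and no ad hoc merging is needed. Second, your one-sentence disposal of the existential field axioms (\emph{clear inverses by multiplying through}) hides another induction on proofs (conservativity of adding inverses of nonzero elements over the order-theoretic certificates); it is standard, but it is a lemma, not an observation.
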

%

\subsection{Linear spectrum of a lattice-group} 

The theory of lattice-groups, denoted by \sa{Lgr} is a \tpe over the signature \hbox{$(\cdot=0;\cdot+\cdot,-\cdot,\cdot\vu\cdot,0)$}.
The following rules express that $\vu$ defines a join semilattice and the compatibility of $\vu$ with $+$:

\DeuxRegles{
\lab{sdt1} $\vd x\vu x=x $
\lab{sdt2} $\vd  x\vu y=y\vu x$
}
{
\lab{sdt3} $\vd  (x\vu y)\vu z=x\vu (y \vu z)$
\lab{grl} $\vd  x+(y\vu z)=(x+y)\;\vu\;(x+z)$
}

We get  the theory \sa{Liog} by adding to \sa{Lgr} the axiom $\Vd x\geq 0 \vou -x\geq 0$.

The linear spectrum of an \grl $\Gamma$ corresponds to the intuition of \gui{forcing the group to be linearly ordered}. So a point of this spectrum can be given as a minimal model of the \sad $\sa{Liog}(\Gamma)$, or equivalently by a linearly ordered group $G$ quotient of~$\Gamma$, or as the kernel $H$ of the canonical morphism~\hbox{$\pi:\Gamma\to G$}.
This subgroup $H$ is a \textsl{prime solid subgroup} of $\Gamma$.

The \textsl{linear lattice} of $\Gamma$, denoted by $\Glio(\Gamma)$, is generated by the \entrel for $\Gamma$ defined in the following way:
\[ 
\begin{aligned} 
 a_1,\dots,a_n    &\,\,\vdash_{\Glio \Gamma } \;b_1,\dots,b_m   
\qquad  \equidef     \\ 
 a_1\geq 0\vet \dots\vet a_n\geq 0 &\vdi{\sA{Liog}(\Gamma)} 
 \, b_1\geq 0\vou\dots\vou b_m\geq 0
 \end{aligned}
\]
The spectral space previously defined is (isomorphic to) $\Spec(\Glio\Gamma)$.   
We have a \textsl{formal \pst} for this \entrel ($m,n\neq 0$): with \(a^-= (-a) \vu 0\), we get
\[ 
\begin{aligned} 
 a_1,\dots,a_n\vdash_{\Glio(\Gamma)} b_1,\dots,b_m   
\;\Longleftrightarrow\;      
\exists k>0\;     (b_1^- \vi \dots\vi b_m^-)\leq k(a_1^- \vu \dots\vu a_n^-)
 \end{aligned}
\]

\subsection{Valuative spectrum of a commutative ring}
\label{subsecspecval}

The valuative spectrum  $\Spev\gA$ of a commutative ring
corresponds to the intuition of \gui{forcing the ring to be a valued field}. 
A point of this spectrum is given by an epimorphism  $\varphi:\gA\to\gK$ where $(\gK,\gV)$ is a valued field.\footnote{$\gV$ is a valuation ring of $\gK$.}  Moreover two such morphisms $\varphi:\gA\to\gK$ and $\varphi':\gA\to\gK'$ define the same point of the spectrum if  there exists an \iso of valued fields $\psi:\gK\to\gK'$ making the suitable diagram commutative.

We denote by $x\di y$ the predicate over  $\gA\times \gA$ corresponding to \gui{$\varphi(x)$ divides\footnote{I.e., $\exists z\in \gV\,z\varphi(x)=\varphi(y)$.} $\varphi(y)$ in $\gK$}. We get the following axioms for the dynamical theory \(\sa{val}\):

\DeuxRegles{
\labu $\vd 1 \di  0$
\labu $\vd -1 \di  1$
\labu $\,\,a \di  b \vd ac \di  bc$
\labu $\vd a \di  b \vou b\di a$
}
{
\labu $\,\,0 \di  1\vd \Bot$ 
\labu $\,\,a \di  b \vet b \di  c \vd a \di  c$
\labu $\,\,a \di  b \vet a \di  c \vd a \di  b + c$
\labu $\,\,ax \di  bx  \vd a \di  b \vou 0 \di x$
}

 Let \(\gA\) be a commutative ring. The \sad $\sa{val}(\gA)$ is defined by adding the positive diagram of the commutative ring \(\gA\).

Any predicate $x\di y$ over $\gA\times \gA$ satisfying the axioms of \(\sa{val}(\gA)\) defines a point in $\Spev\gA$. 
So we define the \textsl{valuative lattice} of $\gA$, denoted by $\val(\gA)$
as generated by the minimal \entrel for $\gA\times \gA$ satisfying the following relations.

\DeuxRegles{
\labu $\,\,\vdash (1 ,  0)$
\labu $\,\,\vdash (-1 ,  1)$
\labu $\,\,(a ,  b) \vdash (ac ,  bc)$
\labu $\,\,\vdash (a ,  b) , (b, a)$
}
{
\labu $\,\,(0,1)\vdash $  
\labu $\,\,(a ,  b) , (b ,  c) \vdash (a ,  c)$
\labu $\,\,(a ,  b) , (a ,  c) \vdash (a ,  b + c)$
\labu $\,\,(ax ,  bx)  \vdash (a ,  b) , (0 , x)$
}

The two spectral spaces $\Spec(\val \gA )$ and $\Spev\gA$ can be identified. The spectral topology of $\Spec(\val \gA )$ corresponds in $\Spev\gA$ to the topology generated by the basic open sets  $\fD(a,b)=\sotq{\varphi\in \Spev\gA}{\varphi(a)\di \varphi(b)}.$

We have a formal Valuativstellensatz.

\begin{theorem}[Formal Valuativstellensatz 
for a \sad \(\sa{val}(\gA)\)] \label{thPstformelVal}~\\
Let $\gA$ be a commutative ring, \propeq 
\begin{enumerate}
\item One has $
\,\,  a_1\di b_1\vet \dots\vet a_n\di b_n  \vd    c_1\di d_1 \vet \dots\vet c_m\di d_m  
$  in the \sad~\(\sA{val}(\gA)\).
\item One has $
 (a_1,b_1),\dots,(a_n,b_n)  \,\vdash \,   (c_1,d_1),\dots,(c_m,d_m) 
$
in the lattice $\val\gA$.
\item Introducing \idtrs $x_i$'s ($i\in\lrbn$) and $y_j$'s ($j\in\lrbm$)
we have in the ring  $\gA[\ux,\uy]$ an \egt  
$$
 d \big(1+\som_{j=1}^my_jP_j(\ux,\uy)\big)\in \gen{(x_ia_i-b_i)_{i\in\lrbn},(y_jd_j-c_j)_{j\in\lrbm}} 
$$ 
where $d$ is in the  \mo generated by the $d_j$'s, and the $P_j(\xn,y_1,\alb\dots,y_m)$'s are in $\ZZ[\ux,\uy]$.
\end{enumerate}
\end{theorem}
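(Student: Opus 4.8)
The plan is to read item (2) as defining a relation $\rhd$ on $\Pfe(\gA\times\gA)$, namely $(a_i,b_i)_{i\in\lrbn}\rhd(c_j,d_j)_{j\in\lrbm}$ iff the stated certificate exists, so that the theorem asserts exactly $\vdash_{\Val\gA}\;=\;\rhd$. I would obtain $\rhd\,\subseteq\,\vdash_{\Val\gA}$ (this is $(2)\Rightarrow(1)$) by a soundness argument, and $\vdash_{\Val\gA}\,\subseteq\,\rhd$ (this is $(1)\Rightarrow(2)$) from the minimality of the generating \entrel, invoking Theorem~\ref{thEntRel1}.

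For $(2)\Rightarrow(1)$ I would argue semantically, using that a lattice relation can be tested on $\Spec(\Val\gA)=\Spev\gA$, equivalently that classical logic is conservative over the dynamical theory (\cite{CLR01}). Fix a valued field $(\gK,\gV)$ with maximal ideal $\fm$ and a morphism $\varphi:\gA\to\gK$ making every $a_i\di b_i$ true, and suppose for contradiction that no $c_j\di d_j$ holds. Each hypothesis yields $z_i\in\gV$ with $z_i\varphi(a_i)=\varphi(b_i)$, and by totality the failure of $c_j\di d_j$ yields $w_j\in\gV$ with $w_j\varphi(d_j)=\varphi(c_j)$; moreover $w_j$ is a non-unit (were it invertible in $\gV$, then $c_j\di d_j$ would hold), so $w_j\in\fm$. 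Substituting $x_i\mapsto z_i$ and $y_j\mapsto w_j$ annihilates every generator of the ideal, so the certificate evaluates to $\prod_j\varphi(d_j)^{e_j}\cdot\bigl(1+\sum_j w_jP_j(\uz,\underline{w})\bigr)=0$ in $\gK$. As $\gV$ is a ring and the $P_j$ have integer coefficients, $P_j(\uz,\underline{w})\in\gV$, whence $\sum_j w_jP_j\in\fm$ and $1+\sum_j w_jP_j\in 1+\fm\subseteq\gV^\times$. Therefore some $\varphi(d_j)=0$, and then $w_j\varphi(d_j)=\varphi(c_j)$ forces $\varphi(c_j)=0$, so $c_j\di d_j$ holds after all, a contradiction. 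Thus the rule is valid at every point and holds in $\Val\gA$; the empty cases $m=0$ and $n=0$ are covered by the same computation. (Alternatively, the membership can be decoded directly into a chain of applications of the divisibility rules, giving a constructive reading.)

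For $(1)\Rightarrow(2)$ I would verify that $\rhd$ is an \entrel satisfying the eight defining rules of $\Val\gA$; minimality then gives $\vdash_{\Val\gA}\,\subseteq\,\rhd$. Monotonicity is immediate, since enlarging the families only enlarges the ideal and one may pad with $P_j=0$. Reflexivity $(a,b)\rhd(a,b)$ holds with $d=b$, $P_1=-x_1$, because $b(1-x_1y_1)=-(x_1a-b)-x_1(y_1b-a)\in\gen{x_1a-b,\ y_1b-a}$. Each of the eight rules reduces to one such short computation, found by writing the hypothesis quotients $x_i$ and conclusion quotients $y_j$, noting that the intended conclusion holds with an explicit product of the $x_i,y_j$ as quotient, and taking $P$ to be minus that product: e.g. $(a,b)\vdash(ac,bc)$ uses $d=bc$, $P=-x$ via $bc(1-xy)=-c(xa-b)-x(bcy-ac)$; totality $\vdash(a,b),(b,a)$ uses $d=a$, $P_1=-y_2$, $P_2=0$ via $a(1-y_1y_2)=-(y_1b-a)-y_1(y_2a-b)$; and $\vdash(1,0)$, $(0,1)\vdash$ hold because the relevant ideal is already the unit ideal. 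These are routine.

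The real obstacle is transitivity, i.e. the cut rule $(T)$ for $\rhd$: from certificates for $A,(e,f)\rhd B$ and for $A\rhd B,(e,f)$ I must manufacture one for $A\rhd B$, eliminating the cut pair $(e,f)$. In the first certificate $(e,f)$ enters as a hypothesis, contributing a generator $we-f$ ($w$ playing the role of $f/e$); in the second it enters as a conclusion, contributing $tf-e$ ($t$ playing $e/f$) together with a factor $f$ absorbed into the monoid element $d$. The governing algebraic fact is that these two auxiliary quotients are mutually inverse modulo $f$-torsion: $f(wt-1)=(we-f)+w(tf-e)\in\gen{we-f,\ tf-e}$. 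The elimination then uses the membership from the second certificate to rewrite the powers of $f$ created by the first (and conversely), so as to clear $w$ and $t$ simultaneously; the delicate part is controlling the resulting $f$-powers so that the final $d$ is a product of the $d_j$ of $B$ alone, and reassembling the two polynomial families into a single $(P_j)$ over the combined indeterminates. I expect essentially all the work of the theorem to lie in this bookkeeping. Once cut is established, $\rhd$ is an \entrel containing the eight generators, and the proof is complete.
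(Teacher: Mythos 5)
Your overall architecture is the right one, and it is essentially the architecture behind this theorem in the cited literature (the paper itself states Theorem~\ref{thPstformelVal} without proof, deferring to \cite{CLR01}): read item \emph{(2)} as a relation $\rhd$ on $\Pfe(\gA\times\gA)$, prove \emph{(1)}$\Rightarrow$\emph{(2)} by showing $\rhd$ is an entailment relation containing the eight generating rules and invoking minimality of $\vdash_{\Val\gA}$, and prove \emph{(2)}$\Rightarrow$\emph{(1)} by soundness. The parts you carried out are correct: the explicit certificates for reflexivity, monotonicity, and the sample rules all check out (and the remaining rules are indeed computations of the same kind), and the semantic soundness argument is valid in classical mathematics, correctly exploiting the hypothesis $P_j\in\ZZ[\ux,\uy]$ --- with coefficients from $\gA$ the evaluation $P_j(\uz,\underline{w})$ need not land in $\gV$ and the argument would break. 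One caveat: testing a lattice relation on the points of $\Spec(\Val\gA)$ is the classical representation theorem, so in the constructive setting of this paper your \emph{(2)}$\Rightarrow$\emph{(1)} is only classical; the constructive decoding you mention parenthetically is left undone. That, however, is minor compared to what follows.

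The genuine gap is the cut rule for $\rhd$, which you leave as ``bookkeeping'' while yourself observing that ``essentially all the work of the theorem'' lies there. That observation is accurate, and it is why the proposal is not a proof: you have reduced the theorem to its hard kernel and stopped. Moreover, the sketch you give does not obviously go through. Reasoning modulo the combined ideal $K=I+\gen{we-f,\,tf-e}$ loses too much: for a polynomial $h$ not involving $w,t$, membership $h\in K$ only yields $h\in I+\gen{e,f}$ (substitute $w=t=0$), which is far weaker than the needed $h\in I$. The alternative of substituting $w:=f/e$ in the first certificate and $t:=e/f$ in the second forces localisation at $e$ and at $f$; clearing denominators then multiplies the would-be certificate by powers of $e$ and $f$, which do not belong to the monoid generated by the $d_j$'s of $B$, and it pushes $e,f$ into the coefficients of the resulting polynomials, violating the $\ZZ[\ux,\uy]$ constraint. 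Eliminating these spurious factors by playing the two certificates against each other, degree by degree in the auxiliary variables, while maintaining both the monoid and the coefficient constraints, is exactly the content of the simultaneous-collapse lemmas for valued fields in \cite{CLR01} (where the combination is organised per branching axiom, totality and $ax\di bx\vdash a\di b,\,0\di x$, rather than as one generic cut), and it is the bulk of the published proof. Until that elimination is actually carried out, direction \emph{(1)}$\Rightarrow$\emph{(2)} remains unproven.
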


\subsection{Heitmann lattice and J-spectrum of a commutative ring}

In a commutative ring the \textsl{Jacobson radical of an \id 
$\fJ$} denoted by $\JA(\fJ)
$ is defined in \clama as the intersection of the \idemas containing~$\fJ$. In \coma we use the classically equivalent \dfn
\begin{equation} \label{eqRadJac}
\JA(\fJ)\eqdefi\sotq{x\in\gA}{\Tt y\in\gA,\;\; 1+xy
\hbox{ is invertible modulo } \fJ}.
\end{equation}
We write $\JA(x_1,\ldots ,x_n)
$ for
$\JA(\gen{x_1,\ldots ,x_n})$.  The \id  $\JA(0)$ is called
\textsl{the Jacobson radical of~$\gA$}.

The \textsl{Heitmann lattice of $\gA$} is
$\He(\ZarA)$, denoted by $\HeA$; it is a quotient of $\ZarA$.  In fact $\HeA$ can be identified with the set of ideals $\JA(x_1,\ldots ,x_n)$, with
$\JA(\fj_1)\vi\JA(\fj_2)=\JA(\fj_1\fj_2)$ and
$\JA(\fj_1)\vu\JA(\fj_2)=\JA(\fj_1+\fj_2)$.

We denote by $\Jspec(\gA)$ the spectral space $\Spec(\HeA)$. In \clama it is the closure (for the patch topology) of the maximal spectrum in $\SpecA$. We call it the (Heitmann) J-spectrum of $\gA$. It is a subspectral space of $\SpecA$.
When $\gA$ is \noe, $\Jspec(\gA)$ coincides with the subspace
 $\jspec(\gA)$ of $\SpecA$ made of the \ideps
which are  intersections of \idemas.

\smallskip \rem 
 $\JA(x_1,\ldots ,x_n)$ is a radical ideal 
but not generally the radical of a \itf. \eoe

\section{A short dictionary}
\label{secAntiEquiv}

References: \cite{BW74,CC00,CL2001-2018}.

\smallskip In this section we consider the following context:  $f:\gT\to\gT'$ is a morphism of \trdis and $\Spec(f)$, denoted by $f\sta: X'=\Spec\gT'\to X=\Spec\gT$, is the dual morphism.

When results are given for \trdis they  are also valid for commutative rings, considering each time the lattice or the morphism $\Zar(\bullet)$.

\smallskip We first recall some usual \dfns in \clama.
\begin{itemize}
\item The morphism $f$ is said to be  \textsl{lying over} when~$f\sta$ is onto: any \idep of $\gT$ is the preimage of a \idep in~$\gT'$.
\item The morphism $f$ is said to be  \textsl{going up} when one has: \textsl{if $\fq\in X'$, $f\sta(\fq)=\fp$, and $\fp\subseteq\fp_2$ in~$ X $, then there exists  $\fq_2\in X'$ such that
$\fq\subseteq\fq_2$ and $f\sta(\fq_2)=\fp_2$}.
\item In a similar way $f$ is said to be  \textsl{going down} when one has: \textsl{if $\fq\in X'$, $f\sta(\fq)=\fp$, and $\fp\supseteq\fp_2$ in~$ X $, then there exists  $\fq_2\in X'$ such that $\fq\supseteq\fq_2$ and  $f\sta(\fq_2)=\fp_2$}.
\item The morphism $f$ \textsl{has the incomparability property} when one has: if $\fq_1\subseteq \fq_2\in X$ and $f\sta(\fq_1)=f\sta(\fq_2)$ in~$X'$ then $\fq_1= \fq_2$.
\item  The spectral space $ X $ is said to be  \textsl{normal} if for all \(x\), the closure  \(\ov{\so z}\) contains a unique closed point.
\item The spectral space  $\SpecT$ is said to be  \textsl{completely normal} if for all $x,y,z$ such that $x\in\ov{\so z}$ and $y\in\ov{\so z}$ one has $x\in\ov{\so y}$ or $y\in\ov{\so x}$. 
\end{itemize}

\subsection{Properties of morphisms}

\begin{theorem} \label{th-dico-trdi-spec-mor1} \emph{\cite[Theorem~IV-2.6]{BW74}}
In \clama we have the following equivalences.   
\begin{enumerate}
\item $f\sta$ is onto ($f$ is lying over) $\Longleftrightarrow$  $f$ is injective $\Longleftrightarrow$ $f$ is a monomorphism $\Longleftrightarrow$ $f\sta$ is an epimorphism.
\item $f$ is an epimorphism $\Longleftrightarrow$ $f\sta$ is a monomorphism $\Longleftrightarrow$ $f\sta$ is injective.
\item $f$ is onto\footnote{In other words, $f$  is a quotient morphism.} $\Longleftrightarrow$ $f\sta$ is an \iso on its image, which is a subspectral space of $X$.
\end{enumerate} 
\end{theorem}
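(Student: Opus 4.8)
The plan is to let Stone's antiequivalence carry the two purely categorical statements across the duality, and to supply the order-theoretic content by the classical prime ideal separation lemma: if $\fa$ is an \id and $\ff$ a filter of a \trdi with $\fa\cap\ff=\emptyset$, then by Zorn's lemma there is a \idep $\fp$ with $\fa\subseteq\fp$ and $\fp\cap\ff=\emptyset$. Throughout I identify a point $\fp\in\Spec\gT$ with the morphism $\theta_\fp:\gT\to\Deux$ and recall that $f\sta(\fp')=f^{-1}(\fp')$. An antiequivalence is an equivalence onto the opposite category, so it exchanges monomorphisms with epimorphisms and preserves and reflects \isos; granting Stone's result, this immediately gives \gui{$f$ is a monomorphism $\Leftrightarrow$ $f\sta$ is an epimorphism} (item 1) and \gui{$f$ is an epimorphism $\Leftrightarrow$ $f\sta$ is a monomorphism} (item 2).

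It remains to replace \gui{mono} and \gui{epi} by concrete conditions. In the variety of \trdis, monomorphisms are exactly the injective morphisms: injective $\Rightarrow$ mono is formal, and for the converse I test against the free bounded \trdi on one generator, namely the three-element chain $0<x<1$; sending $x$ to $a$ and to $b$ produces two morphisms equalized by $f$, so $f(a)=f(b)$ forces $a=b$. Dually, in spectral spaces monomorphisms are exactly the injective spectral maps: testing against the one-point space $\Spec\Deux$ (the terminal object, since $\Deux$ is initial in \trdis) shows a monic $f\sta$ is injective, the reverse being formal. This settles the inner equivalences of item 1 and all of item 2, leaving only the link \gui{$f$ injective $\Leftrightarrow$ $f\sta$ onto}.

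For this last link I use separation directly. If $f$ is injective, fix $\fp\in\Spec\gT$ and form the \id $\fa'$ of $\gT'$ generated by $f(\fp)$ and the filter $\ff'$ generated by $f(\gT\setminus\fp)$. Any element of $\fa'\cap\ff'$ yields, after using that $\fp$ is prime so that $\gT\setminus\fp$ is a filter, an inequality $f(q)\leq f(p)$ with $p\in\fp$ and $q\notin\fp$; injectivity turns it into $q\leq p$, hence $q\in\fp$, a contradiction. So $\fa'\cap\ff'=\emptyset$, the lemma produces $\fp'$ with $\fa'\subseteq\fp'$ and $\fp'\cap\ff'=\emptyset$, and one checks $f^{-1}(\fp')=\fp$; thus $f\sta$ is onto. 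Conversely, if $f(a)=f(b)$ with $a\neq b$, say $a\not\leq b$, then $\dar b$ and $\uar a$ are disjoint, so some prime $\fp$ has $b\in\fp$ and $a\notin\fp$; writing $\fp=f^{-1}(\fp')$ by surjectivity of $f\sta$ gives $f(b)\in\fp'$ and $f(a)\notin\fp'$, contradicting $f(a)=f(b)$. The symmetric case $b\not\leq a$ is identical, so $a=b$ and $f$ is injective.

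Item 3 is where the real subtlety lies. If $f$ is onto, then $\gT'$ is a quotient lattice of $\gT$ and Theorem~\ref{propSESP}(1) says precisely that $f\sta$ identifies $X'$ with a subspectral space of $X$, i.e.\ $f\sta$ is an \iso onto its image and that image is subspectral. Conversely, assume $f\sta$ is an \iso onto a subspectral image $Y\subseteq X$. By Theorem~\ref{propSESP}, $Y=\Spec\gT''$ for a quotient morphism $q:\gT\to\gT''$ whose dual $q\sta$ is the inclusion $Y\hookrightarrow X$; the corestriction $X'\to Y=\Spec\gT''$ is an \iso of spectral spaces, hence by full faithfulness of $\Spec$ equals $g\sta$ for an \iso $g:\gT''\to\gT'$, and $f\sta=q\sta\circ g\sta=\Spec(g\circ q)$. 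Faithfulness then gives $f=g\circ q$, which is onto since $q$ is and $g$ is an \iso. The main obstacle is conceptual rather than computational: a lattice epimorphism need not be surjective, so item 2 already shows $f\sta$ injective is equivalent only to $f$ epi, which is strictly weaker than $f$ onto. The extra hypothesis that the image be subspectral is exactly what upgrades injectivity of $f\sta$ to surjectivity of $f$, and Theorem~\ref{propSESP} — recognizing subspectral subspaces as duals of genuine quotients — is the tool that makes this upgrade possible.
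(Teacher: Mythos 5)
There is nothing in the paper to compare your argument against: Theorem~\ref{th-dico-trdi-spec-mor1} is quoted from \cite[Theorem~IV-2.6]{BW74} without proof, so your proposal stands on its own --- and it is correct. You decompose the statement into (a) the purely formal exchanges mono/epi under Stone's antiequivalence, (b) the identification of monomorphisms with injective morphisms on each side, via the free \trdi on one generator $\{0<x<1\}$ for lattices and via maps from the one-point space $\Spec\Deux$ for spectral spaces, (c) the equivalence \gui{$f$ injective $\Leftrightarrow$ $f\sta$ onto} by the prime separation lemma, and (d) the reduction of item \emph{3} to Theorem~\ref{propSESP}. All four steps are sound. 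In (c), the verification that the \id generated by $f(\fp)$ and the filter generated by $f(\gT\setminus\fp)$ are disjoint correctly exploits primeness (so that $\gT\setminus\fp$ is closed under meets) and the fact that $f(q)\leq f(p)$ plus injectivity forces $q\leq p$; the converse direction is the usual separation of $a\not\leq b$ by a prime, pulled back along a preimage under $f\sta$. In (d), the converse is the delicate direction and you invoke exactly the right tool: items \emph{3} and \emph{4} of Theorem~\ref{propSESP} realize the subspectral image $Y$ as $\Spec\gT''$ for a quotient morphism $q:\gT\to\gT''$ whose dual is the inclusion $Y\hookrightarrow X$, after which fullness and faithfulness of the duality give $f=g\circ q$ with $g$ an \iso, hence $f$ onto. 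Your closing observation --- that \gui{epimorphism} is strictly weaker than \gui{onto}, so the subspectral hypothesis on the image is what carries the extra content of item \emph{3} --- is precisely the point of the paper's remark immediately after the theorem, that $\gT\to\Bo(\gT)$ is an injective epimorphism which is rarely onto. Only cosmetic quibbles: in (c) you overload the letters $p,q$ as both elements and primes, and in (d) it would be cleaner to cite items \emph{3} and \emph{4} of Theorem~\ref{propSESP} explicitly rather than the theorem as a whole.
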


There are bijective morphisms of spectral spaces that are not isomorphisms.  E.g.\ the morphism  $\Spec(\Bo(\gT))\to\Spec\gT$ is rarely an \iso and the lattice morphism $\gT\to\Bo(\gT)$ is an injective epimorphism which is rarely onto.

\begin{lemma} \label{lemLYO}
Let  $S$ be a  system of generators for $\gT$. The morphism  $f$ is lying over \ssi for all
$a_1,\dots,a_n,b_1,\dots,b_m\in S$ we have
$$
f(a_1),\dots,f(a_n)\vdi {\gT'} f(b_1),\dots,f(b_m)
\;\;\Rightarrow\;\;
a_1,\dots,a_n\vdi {\gT} b_1,\dots,b_m
$$ 
\end{lemma}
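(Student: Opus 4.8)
The plan is first to invoke Theorem~\ref{th-dico-trdi-spec-mor1}, which identifies \gui{$f$ is lying over} with \gui{$f$ is injective}; it then suffices to show that injectivity of $f$ is equivalent to the displayed reflection condition on generators. Throughout I will use that $A \vdi{\gT} B$ abbreviates $\Vi A \le \Vu B$ (and similarly in $\gT'$), and that $f$, being a morphism of \trdis, commutes with finite meets and joins.

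For the direct implication I would argue that an injective lattice morphism reflects the order. Given $a_i,b_j\in S$ with $f(a_1),\dots,f(a_n)\vdi{\gT'}f(b_1),\dots,f(b_m)$, set $u=a_1\vi\cdots\vi a_n$ and $v=b_1\vu\cdots\vu b_m$; the hypothesis reads $f(u)\le f(v)$, so $f(u\vu v)=f(u)\vu f(v)=f(v)$, and injectivity forces $u\vu v=v$, i.e.\ $u\le v$. This is exactly $a_1,\dots,a_n\vdi{\gT}b_1,\dots,b_m$, and in fact this half needs no assumption on $S$.

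For the converse the key is the normal-form description of a distributive lattice generated by $S$. I would take $x,y\in\gT$ with $f(x)=f(y)$ and write $x$ as a finite join of finite meets of generators and $y$ as a finite meet of finite joins of generators,
$$x=\Vu_{k}\Vi_{i\in I_k}a_{k,i},\qquad y=\Vi_{l}\Vu_{j\in J_l}b_{l,j},$$
with all $a_{k,i},b_{l,j}\in S$, all index sets finite, and the usual conventions for empty meets ($=1$) and empty joins ($=0$). Applying $f$ and using preservation of finite meets and joins, the inequality $f(x)\le f(y)$ unwinds, via the universal properties of finite suprema and infima, into the family of inequalities $\Vi_{i\in I_k}f(a_{k,i})\le\Vu_{j\in J_l}f(b_{l,j})$, one for each pair $(k,l)$; each such inequality is an entailment $\vdi{\gT'}$ between images of generators. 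Applying the displayed hypothesis to every pair $(k,l)$ yields $\Vi_{i\in I_k}a_{k,i}\le\Vu_{j\in J_l}b_{l,j}$, and reassembling (join over $k$, meet over $l$) gives $x\le y$. The symmetric argument starting from $f(y)\le f(x)$ gives $y\le x$, hence $x=y$ and $f$ is injective.

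The main obstacle is not the lattice manipulation, which is the routine use of the identity \gui{$\Vu_k P_k\le\Vi_l Q_l$ if and only if $P_k\le Q_l$ for all $k,l$}, but rather making the passage to normal forms precise: I must justify that over the generating set $S$ every element of $\gT$ is simultaneously a finite join of finite meets and a finite meet of finite joins of elements of $S$, being careful with the empty-index conventions that produce $0$ and $1$ and with the possibly empty left- or right-hand sides of the entailments. Once this structural fact is recorded, both implications follow by purely formal computation.
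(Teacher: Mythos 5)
Your proof is correct, and there is in fact nothing in the paper to compare it against: Lemma~\ref{lemLYO} is stated without proof, the work being implicitly deferred to the references \cite{CC00,cl2}. Your route is the natural one and supplies exactly what is left implicit. Identifying lying over with injectivity via Theorem~\ref{th-dico-trdi-spec-mor1} is legitimate (in the constructive sources this order-reflection property \emph{is} the definition of lying over for lattice morphisms, so nothing classical is really consumed there), and the reduction of injectivity to the generator condition by writing one element in disjunctive and the other in conjunctive normal form is in substance the same cut/normal-form computation that underlies the fundamental theorem of entailment relations (Theorem~\ref{thEntRel1}); the two statements are proved by the same mechanism rather than one following formally from the other in a single line. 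The structural fact you flag as the main obstacle is routine: the set of finite joins of finite meets of elements of $S$ contains $S$, $0$, $1$, is closed under $\vu$ trivially and under $\vi$ by distributivity, hence is a bounded sublattice and therefore all of $\gT$; dually for meets of joins.

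One point you raise in passing deserves emphasis because it is not optional: the hypothesis must range over $\Pfe(S)$ \emph{including empty lists} ($n=0$ or $m=0$), and your empty-meet/empty-join conventions are what make the argument go through. If one restricted to nonempty lists the lemma would be false. For instance, let $\gT$ be the three-element chain $0<a<1$ with $S=\so{a}$, and $f:\gT\to\Deux$ the morphism with $f(a)=1$. Every instance of the condition with nonempty lists is the trivial $a\vdash_{\gT} a$, yet $f$ is not injective; what fails is the instance with empty left-hand side, since $\vdash_{\gT'} f(a)$ holds (it reads $1\leq f(a)$) while $\vdash_{\gT} a$ (that is $1\leq a$) does not. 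Your normal-form bookkeeping handles precisely this case, so the proof stands as written.
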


\begin{proposition}[Going up versus lying over] \label{propGu} 
In \clama \propeq (see \cite{CL2001-2018}):
\begin{enumerate}
\item  For each \idep $\fq$ of $\gT' $ and $\fp=f^{-1}(\fq)$, the morphism 
   $f':\gT/(\fp=0)\to \gT'/(\fq=0)$  
is lying over.
\item  For each ideal $I$ of $\gT'$ and $J:=f^{-1}(I)$, the morphism 
$f_I:\gT/(J=0)\to \gT'/(I=0)$ is lying over. 
\item  For each  $y\in \gT'$ and $J=f^{-1}(\dar y)$, the morphism 
$f_y:\gT/(J=0)\to \gT'/(y=0)$ is lying over. 
\item  For each $a,c\in \gT$ and $y\in \gT'$ we have
$$
f(a)\,\vdash_{\gT'}\, f(c),\, y \quad\Longrightarrow\quad\exists x\in \gT \;\;\; a \vdash_{\gT}\, c ,\, x \;\;\; \hbox{and}\;\;\; f(j)\leq_{\gT'} x.
$$
\end{enumerate} 
\end{proposition}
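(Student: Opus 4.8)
The plan is to run the cycle $(1)\Rightarrow(2)\Rightarrow(3)\Rightarrow(4)\Rightarrow(1)$, so that all the prime-ideal machinery is concentrated in a single implication. Two standard facts will be used repeatedly. First, by Theorem~\ref{th-dico-trdi-spec-mor1}, for any lattice morphism $g$ the phrase \emph{$g$ is lying over} means exactly that $g\sta$ is onto. Second, for an ideal $K$ of a \trdi $\gL$ one has $\ov u\le\ov v$ in $\gL/(K=0)$ \ssi $u\le v\vu w$ for some $w\in K$; hence $\ov{u_1},\dots\vdash_{\gL/(K=0)}\ov{v_1},\dots$ \ssi $u_1,\dots\vdash_{\gL} v_1,\dots,w$ for some $w\in K$. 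Because an entailment $A\vdash B$ depends only on $\Vi A$ and $\Vu B$ and $f$ commutes with $\vi$ and $\vu$, whenever I verify lying over through Lemma~\ref{lemLYO} I may assume a single element on each side.

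The implications not involving~(1) are bookkeeping. For $(2)\Rightarrow(3)$ I specialise the ideal $I$ to the principal ideal $\dar y$, so that $f_I$ becomes $f_y$. For $(3)\Rightarrow(4)$ I apply Lemma~\ref{lemLYO} to $f_y$ with $J=f^{-1}(\dar y)$: using the quotient description above, reflection of the single-element entailment $f(a)\vdash_{\gT'/(y=0)}f(c)$ reads \mbox{$f(a)\vdash_{\gT'}f(c),y$} on the left and ``there is $x\in\gT$ with $a\vdash_{\gT}c,x$ and $f(x)\le y$'' on the right, which is precisely~(4). Reading the same computation backwards gives $(4)\Rightarrow(1)$: for a prime $\fq$ with $\fp=f^{-1}(\fq)$, an entailment $f(a)\vdash_{\gT'/(\fq=0)}f(c)$ supplies some $y\in\fq$ with $f(a)\vdash_{\gT'}f(c),y$; then~(4) yields $x$ with $f(x)\le y\in\fq$, so $x\in f^{-1}(\fq)=\fp$ and $a\vdash_{\gT}c,x$, i.e.\ $a\vdash_{\gT/(\fp=0)}c$, whence $f'$ reflects entailment and is lying over by Lemma~\ref{lemLYO}.

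The real work is $(1)\Rightarrow(2)$, and this is where classical logic enters. Fix an ideal $I$ of $\gT'$, set $J=f^{-1}(I)$, and let $\fp\supseteq J$ be an arbitrary prime of $\gT$; I must produce a prime $\fq\supseteq I$ of $\gT'$ with $f^{-1}(\fq)=\fp$, since that is exactly surjectivity of $(f_I)\sta$. Writing $V=\gT\setminus\fp$ for the complementary prime filter and $F$ for the filter of $\gT'$ generated by $f(V)$, the hypothesis $J\subseteq\fp$ gives $I\cap F=\emptyset$: an element $z\in I\cap F$ satisfies $z\ge f(v)$ for some $v\in V$, forcing $f(v)\in I$, i.e.\ $v\in J\subseteq\fp$, against $v\in V$. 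By the prime separation theorem for \trdis (a form of the prime ideal theorem) there is a prime $\fq_0\supseteq I$ of $\gT'$ with $\fq_0\cap F=\emptyset$; then $f^{-1}(\fq_0)$ meets no element of $V$, so $\fp_0:=f^{-1}(\fq_0)\subseteq\fp$. Now I invoke~(1) at the prime $\fq_0$: the map $\gT/(\fp_0=0)\to\gT'/(\fq_0=0)$ is lying over, and as $\fp$ is a point of $\Spec(\gT/(\fp_0=0))$ (it contains $\fp_0$), surjectivity delivers a prime $\fq\supseteq\fq_0\supseteq I$ with $f^{-1}(\fq)=\fp$, as wanted.

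The one genuine obstacle is this last step: before I can go up to $\fp$, I must manufacture a first lift $\fq_0$ lying over some prime $\fp_0\subseteq\fp$, and producing it requires the prime separation theorem --- which is precisely why the proposition is asserted only in \clama. The disjointness $I\cap F=\emptyset$ that drives the separation is nothing but a reformulation of $J\subseteq\fp$; once $\fq_0$ is in hand, the going-up hypothesis~(1) does the rest, and every other implication is a mechanical translation through Lemma~\ref{lemLYO} and the quotient-lattice description of entailment.
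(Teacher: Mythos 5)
Your proof is correct. Note first that the paper itself contains no proof of Proposition~\ref{propGu} --- it is stated with a pointer to \cite{cl2} --- so your argument can only be compared with the route that the surrounding results suggest. You also, rightly, read the misprint ``$f(j)\leq_{\gT'}x$'' in item \emph{4} as ``$f(x)\leq_{\gT'}y$'', i.e.\ as the going-up condition of Theorem~\ref{th-dico-trdi-spec-mor2}; your whole proof is for that (intended) statement.

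All your steps check out: the reduction of lying over to reflection of one-element entailments via Lemma~\ref{lemLYO} is legitimate because an entailment depends only on $\Vi A$ and $\Vu B$; the description $\ov u\leq\ov v$ in $\gL/(K=0)$ \ssi $u\leq v\vu w$ for some $w\in K$ is the correct one; and in $(1)\Rightarrow(2)$ the disjointness $I\cap F=\emptyset$ is indeed just a restatement of $J\subseteq\fp$, the separation theorem gives $\fq_0\supseteq I$ with $\fp_0=f^{-1}(\fq_0)\subseteq\fp$, and the second invocation of hypothesis~\emph{(1)} at $\fq_0$ lifts $\fp$ to the required $\fq\supseteq I$. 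Where your decomposition differs from the most economical one: classically a \idep is in particular an ideal, so $(2)\Rightarrow(1)$ is as trivial a specialisation as $(2)\Rightarrow(3)$; the natural plan is therefore to prove the purely constructive equivalences $(2)\Leftrightarrow(3)\Leftrightarrow(4)$ (for $(4)\Rightarrow(2)$ one observes that an inequality in $\gT'/(I=0)$ already holds in $\gT'/(y=0)$ for a single $y\in I$, since $I$ is an ideal) and to spend classical logic exactly once, on $(1)\Rightarrow(4)$ or $(1)\Rightarrow(2)$. Your cycle $(1)\Rightarrow(2)\Rightarrow(3)\Rightarrow(4)\Rightarrow(1)$ instead proves $(4)\Rightarrow(1)$ constructively (correct: $f(x)\leq y\in\fq$ forces $x\in\fp$) and concentrates the classical content in $(1)\Rightarrow(2)$. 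This buys something real: a direct classical proof of $(1)\Rightarrow(4)$ needs prime separation twice (once to produce a prime of $\gT$ witnessing $\ov a\not\leq\ov c$ in $\gT/(J=0)$, once to descend in $\gT'$), whereas your argument uses separation once plus one lift; the only thing you forgo is the free implication $(2)\Rightarrow(1)$, which your cycle recovers as a constructive composite anyway.
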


\begin{theorem} \label{th-dico-trdi-spec-mor2} 
In \clama we have the following equivalences  (\cite{CL2001-2018}).   
\begin{enumerate}
\item $f$ is going up $\Longleftrightarrow$  for each $a,c\in\gT$ and $y\in\gT'$ we have
$$
f(a)\leq f(c)\vu y \;\Rightarrow\;\exists x\in\gT\; (a\leq c \vu x \hbox{ and } f(x)\leq y).
$$ 
\item $f$ is going down $\Longleftrightarrow$  for each $a,c\in\gT$ and $y\in\gT'$ we have
$$
f(a)\geq f(c)\vi y \;\Rightarrow\;\exists x\in\gT\; (a\geq c \vi x \hbox{ and } f(x)\geq y).
$$
\item $f$ has the \prt of incomparability $\Longleftrightarrow$ $f$ is zero-dimensional.\footnote{See \Tho \ref{th-dico-trdi-spec-dim2}.}   
\end{enumerate} 
\end{theorem}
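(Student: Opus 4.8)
The plan is to handle the three items separately: items (1) and (2) reduce cleanly to Proposition~\ref{propGu} (and its order-dual), while item (3) genuinely needs the relative-dimension dictionary of Theorem~\ref{th-dico-trdi-spec-dim2}. Recall first how the properties read on points: a point of $\Spec\gT$ is a \idep $\fp$, the specialization order is inclusion (the adherence of $\fp$ consists of the $\fq\supseteq\fp$), and $f\sta(\fq)=f^{-1}(\fq)$. \emph{Going up} for $f\sta$ means that for every $\fq\in\Spec\gT'$, writing $\fp=f^{-1}(\fq)$, every $\fp'\supseteq\fp$ has the form $f^{-1}(\fq')$ for some $\fq'\supseteq\fq$. \emph{Going down} is the same with both inclusions reversed, and \emph{incomparability} says that two distinct comparable primes of $\gT'$ cannot share the same image under $f\sta$.

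For item (1), fix $\fq\in\Spec\gT'$ and put $\fp=f^{-1}(\fq)$. By the correspondence between quotient lattices and subspectral spaces (Theorem~\ref{propSESP}), the dual of the quotient map identifies $\Spec(\gT/(\fp=0))$ with the closed subspace $\VT(\fp)=\sotq{\fp'\in X}{\fp'\supseteq\fp}$, and likewise $\Spec(\gT'/(\fq=0))$ with $\sotq{\fq'\in X'}{\fq'\supseteq\fq}$; moreover $f$ induces $f':\gT/(\fp=0)\to\gT'/(\fq=0)$ whose dual is the corresponding restriction of $f\sta$. The clause of the going-up property attached to this $\fq$ then says exactly that $(f')\sta$ is onto $\Spec(\gT/(\fp=0))$, that is (Theorem~\ref{th-dico-trdi-spec-mor1}, item~1), that $f'$ is lying over. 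Quantifying over all $\fq$, I get that \emph{$f$ is going up} is equivalent to item~(1) of Proposition~\ref{propGu}; by that proposition it is equivalent to item~(4), which is precisely the displayed implication $f(a)\le f(c)\vu y\Rightarrow\exists x\,(a\le c\vu x\text{ and }f(x)\le y)$. This settles item~(1).

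Item (2) follows from item (1) by order duality. Passing to the opposite lattices $\gT\eci,\gT'\eci$ exchanges $\vu$ with $\vi$ and $\le$ with $\ge$, and reverses the specialization order on both spectra, so that going down for $f\sta$ becomes going up for $(f\eci)\sta$. Applying item~(1) to $f\eci:\gT\eci\to\gT'\eci$ and translating the resulting condition back into $\gT,\gT'$ turns $f(a)\le f(c)\vu y\Rightarrow\exists x\,(a\le c\vu x,\ f(x)\le y)$ into $f(a)\ge f(c)\vi y\Rightarrow\exists x\,(a\ge c\vi x,\ f(x)\ge y)$, which is the statement of item~(2). (Equivalently, one reruns the proof of Proposition~\ref{propGu} verbatim in the dual lattice.)

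For item (3), the plan is to read incomparability fiberwise: it says that for each $\fp\in X$ the fiber $(f\sta)^{-1}(\fp)$ admits no strict specialization, i.e.\ it is a spectral space of Krull dimension $\le 0$. The lattice-theoretic meaning of \gui{$f$ is zero-dimensional} is set up in Theorem~\ref{th-dico-trdi-spec-dim2} precisely so as to encode \gui{every fiber of $f\sta$ has dimension $\le 0$}, so the equivalence is obtained by exhibiting each fiber over $\fp$ as the spectrum of the appropriate relative quotient lattice and matching its dimension against that definition. The main obstacle is exactly here: unlike items~(1) and~(2), item~(3) cannot be reduced to results already available in the excerpt and genuinely relies on the relative-dimension dictionary; the delicate point is to describe the fiber concretely as a $\Spec$ of an explicit \trdi so that \gui{dimension $\le 0$} becomes the stated zero-dimensionality of~$f$.
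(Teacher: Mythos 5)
A preliminary remark: the paper contains no proof of this theorem --- it is a dictionary entry cited to \cite{cl2}, and the only proofs written out in this section are those of Lemma~\ref{lemcroiss} and Theorem~\ref{th-dico-trdi-spec-mor3} --- so your proposal can only be measured against the surrounding apparatus. On that basis your items \emph{1} and \emph{2} are correct and follow what is clearly the intended organisation: Proposition~\ref{propGu} is placed immediately before the theorem precisely to act as the bridge. Your reduction is sound: by Theorem~\ref{propSESP} the dual of the quotient map identifies $\Spec(\gT/(\fp=0))$ with the set of \ideps containing $\fp$; the morphism $f$ descends to $f':\gT/(\fp=0)\to\gT'/(\fq=0)$ because $f(f^{-1}(\fq))\subseteq\fq$; and surjectivity of $(f')\sta$, i.e.\ lying over by Theorem~\ref{th-dico-trdi-spec-mor1}, is exactly the going-up clause at $\fq$. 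Quantifying over $\fq$ gives item \emph{1} of Proposition~\ref{propGu}, and item \emph{4} of that proposition (whose statement carries a typo: $f(j)\leq_{\gT'}x$ must be read as $f(x)\leq_{\gT'}y$, as you tacitly did) is the displayed condition. The duality argument for item \emph{2} is also correct, since complementation $\fp\mapsto\gT\setminus\fp$ identifies $\Spec(\gT\eci)$ with $\Spec\gT$ with reversed specialization order.

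Item \emph{3}, by contrast, is a plan and not a proof --- you say so yourself --- and this is the genuine gap, although it is smaller than you make it out to be. Two ingredients are missing, neither of which is delicate. First, the classical meaning of \gui{$f$ is zero-dimensional}, i.e.\ of item \emph{1} of Theorem~\ref{th-dico-trdi-spec-dim2} with $n=0$: classically, the Krull dimension of a spectral map $g:X'\to X$ is by definition the supremum over $\fp\in X$ of $\Kdim g^{-1}(\fp)$. Theorem~\ref{th-dico-trdi-spec-dim2} relates this number to a lattice-theoretic condition but says nothing about fibers, so it cannot by itself \gui{encode} what you want; you must invoke the definition. Second, the fiber $(f\sta)^{-1}(\fp)$, with its induced topology, has inclusion of \ideps as specialization order, hence it contains a chain of length $\geq 1$ \ssi there exist \ideps $\fq\subsetneq\fq'$ of $\gT'$ with $f^{-1}(\fq)=f^{-1}(\fq')=\fp$. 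Given these two points, incomparability $\Longleftrightarrow$ every fiber has Krull dimension $\leq 0$ $\Longleftrightarrow$ $\Kdim\Spec(f)\leq 0$ $\Longleftrightarrow$ $f$ is zero-dimensional, and item \emph{3} closes in three lines. If one insists on realising the fiber as a spectrum (your \gui{delicate point}), note that it is $\Spec$ of the quotient of $\gT'$ obtained by forcing $f(\fp)=0$ and $f(\gT\setminus\fp)=1$, whose \ideps are exactly the $\fq$ with $f^{-1}(\fq)=\fp$; but even this is not needed for the classical equivalence.
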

%

\begin{theorem} \label{th-dico-trdi-spec-mor3} 
In \clama \propeq   
\begin{enumerate}
\item $\Spec(f)$ is an open map.
\item  There exists a map $\wi f:\gT'\to \gT$
with the following \prts.
\begin{enumerate}
\item \label{i2a} For  $c\in\gT$ and $b\in\gT'$, one has $b\leq f(c) \Leftrightarrow \wi f(b)\leq c$. \\
In particular, $b\leq f(\wi f(b))$ and $\wi f(b_1\vu b_2)=\wi f(b_1)\vu \wi f(b_2)$.
\item \label{i2b} For  $a,c\in\gT$ and $b\in\gT'$, one has \hbox{$f(a)\vi b\leq f(c) \Leftrightarrow a\vi\wi f(b)\leq c $}.
\item \label{i2c} For  $a\in\gT$ and $b\in\gT'$, one has $\wi f(f(a)\vi b)=a\vi \wi f(b)$.
\item \label{i2d} For  $a\in\gT$, one has $\wi f(f(a))=\wi f(1)\vi a$. 
\end{enumerate}
\item  There exists a map $\wi f:\gT'\to \gT$
satisfying \prt \ref{i2b}.  
\item  For  $b\in \gT$ the g.l.b.\ {$\Vi\limits_{b\leq f(c)} c$} exists, and if we write it $\wi f(b)$, 
the \prt \ref{i2b} holds.  
\end{enumerate} 
\end{theorem}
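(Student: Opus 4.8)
The plan is to prove the three lattice-theoretic statements $(2)$, $(3)$, $(4)$ equivalent by elementary manipulations, and then to bridge to the topological statement $(1)$ through the single formula
\[
f\sta\big(\fD_{\gT'}(b)\big)=\DT(\wi f(b)),\qquad b\in\gT'.
\]
Throughout I would use Stone's antiequivalence (\paref{NOTASpecT}): the map $c\mapsto\DT(c)$ is an isomorphism of $\gT$ onto the lattice of \oqcs of $\Spec\gT$, so that $u\leq v\Leftrightarrow\DT(u)\subseteq\DT(v)$, the \oqcs are exactly the $\DT(c)$, and $(f\sta)^{-1}(\DT(a))=\fD_{\gT'}(f(a))$.

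For the lattice block, $(2)\Rightarrow(3)$ is trivial and $(4)\Rightarrow(3)$ merely forgets the description of $\wi f$. The content is $(3)\Rightarrow(2)$ and $(3)\Rightarrow(4)$. Assuming a map $\wi f$ with \pref{i2b}, I would first set $a=1$ and use $f(1)=1$ to get $b\leq f(c)\Leftrightarrow\wi f(b)\leq c$, which is \pref{i2a}; this exhibits $\wi f$ as the left adjoint of $f$, whence $\wi f(b)=\Vi_{b\leq f(c)}c$ (giving $(4)$), $b\le f(\wi f(b))$, and preservation of joins. For \pref{i2c} I would chain \pref{i2a} and \pref{i2b}: for every $c$, $\wi f(f(a)\vi b)\leq c\Leftrightarrow f(a)\vi b\leq f(c)\Leftrightarrow a\vi\wi f(b)\leq c$, so the two elements dominate the same principal ideals and are equal; \pref{i2d} is then \pref{i2c} at $b=1$. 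This settles $(2)\Leftrightarrow(3)\Leftrightarrow(4)$.

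For $(1)\Rightarrow(3)$ I would, given that $f\sta$ is open, observe that each $f\sta(\fD_{\gT'}(b))$ is open (by openness) and \qc (continuous image of the \qc set $\fD_{\gT'}(b)$), hence equals $\DT(\wi f(b))$ for a unique $\wi f(b)\in\gT$. To verify \pref{i2b} I would apply the set identity $h\big[h^{-1}(U)\cap V\big]=U\cap h[V]$ with $h=f\sta$, $U=\DT(a)$, $V=\fD_{\gT'}(b)$, obtaining $f\sta\big(\fD_{\gT'}(f(a))\cap\fD_{\gT'}(b)\big)=\DT(a\vi\wi f(b))$; reading the inclusion $\fD_{\gT'}(f(a))\cap\fD_{\gT'}(b)\subseteq\fD_{\gT'}(f(c))$ (that is, $f(a)\vi b\leq f(c)$) through this identity and the order-inclusion dictionary gives exactly $a\vi\wi f(b)\leq c$, and conversely.

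The main obstacle is $(2)\Rightarrow(1)$, which I would reduce to proving the displayed formula $f\sta(\fD_{\gT'}(b))=\DT(\wi f(b))$; granting it, the image of a basic open is a basic open, so the image of any open (a union of basic opens) is open. The inclusion $\subseteq$ is easy: if $\fq$ is a \idep of $\gT'$ with $f^{-1}(\fq)=\fp$ and $b\notin\fq$, then since $b\leq f(\wi f(b))$ and $\gT'\setminus\fq$ is a filter, $f(\wi f(b))\notin\fq$, i.e.\ $\wi f(b)\notin\fp$. The reverse inclusion is the crux and is where classical logic enters. Given a \idep $\fp$ of $\gT$ with $\wi f(b)\notin\fp$, write $F=\gT\setminus\fp$ and seek a prime ideal of $\gT'$ containing $\gen{f(\fp)}$ and disjoint from the filter $G=\gen{f(F),b}$; the prime separation lemma for \trdis (valid in \clama) produces such a $\fq$ as soon as $\gen{f(\fp)}\cap G=\emptyset$, and that $\fq$ then satisfies $f^{-1}(\fq)=\fp$ and $b\notin\fq$. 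To check the disjointness, a common element would yield $p\in\fp$ and $g\in F$ with $f(g)\vi b\leq f(p)$; \pref{i2b} converts this into $g\vi\wi f(b)\leq p$, contradicting $g\vi\wi f(b)\in F$ (a filter, since $\wi f(b)\in F$) versus $p\in\fp$ (an ideal). Thus \pref{i2b} is precisely the hypothesis feeding the separation lemma, which is the heart of the argument.
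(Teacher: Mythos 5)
Your proof is correct, and its architecture matches the paper's own: the same decomposition into implications, the same adjointness manipulations for \emph{2}$\,\Leftrightarrow\,$\emph{3}$\,\Leftrightarrow\,$\emph{4}, and the same bridging formula $f\sta(\fD_{\gT'}(b))=\fD_{\gT}(\wi f(b))$ carrying both \emph{1}$\,\Rightarrow\,$\emph{3} and \emph{2}$\,\Rightarrow\,$\emph{1}. The genuine divergence is in the crux you correctly isolate, namely the inclusion $\fD_{\gT}(\wi f(b))\subseteq f\sta(\fD_{\gT'}(b))$. You obtain the required \idep $\fq$ of $\gT'$ by one application of the prime separation lemma, applied to the \id $\gen{f(\fp)}$ and the filter generated by $f(F)\cup\so{b}$, with \pref{i2b} providing the disjointness check in one line. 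The paper instead constructs a candidate explicitly: it takes $\fq={\wi f}^{-1}(\fp)$, checks it is an \id, computes $\fq=\sotq{y\in\gT'}{\exists z\in \fp\;\,y\leq f(z)}$ and $f^{-1}(\fq)=\fp$ (this uses \pref{i2a}, \pref{i2b} and \pref{i2d}), and then---since this $\fq$ need not be prime---passes to an \id $\fq'$ maximal among those with $f^{-1}(\fq')=\fp$ and $b\notin\fq'$, proving primality of $\fq'$ by hand from primality of $\fp$. Both arguments spend classical logic in the same place (Zorn); yours is shorter and more modular, hiding Zorn inside a standard lemma and letting \pref{i2b} do exactly one job, while the paper's is self-contained (it never invokes the separation lemma) and yields extra information, namely the explicit description of ${\wi f}^{-1}(\fp)$ and the identity $f^{-1}({\wi f}^{-1}(\fp))=\fp$, which shows concretely how the adjoint $\wi f$ pulls prime ideals back.
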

For this result in locales' theory see \cite[section 1.6]{Bor3}. We give now a proof for spectral spaces.
Implications concerning item \textsl{1} need \clama. The other equivalences are \cov.

\begin{lemma} \label{lemcroiss}
Let $f:A\to A'$ be a nondecreasing map between ordered sets ${(A,\leq )}$ and $(A',\leq')$ and $b\in A'$. An \elt $b_1\in A$ satisfies the equivalence 
\[ \forall x\in A\,\,\, (\,b\leq ' f(x) \,\Longleftrightarrow\, b_1 \leq  x\,)
\]
\vspace{-1.5em}

\noindent \ssi 
\begin{itemize}
\item on the one hand $b\leq' f(b_1)$, 
\item and on the other hand 
{$b_1= \Vi_{x:b\leq' f(x)}x$}. 
\end{itemize}
In particular, if~$b_1$ exists,  it is uniquely determined.   
\end{lemma}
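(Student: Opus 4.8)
The plan is to treat this as a standard uniqueness-of-adjoint argument for posets: I would prove the stated equivalence by splitting it into its two implications, and then deduce uniqueness directly from the defining property rather than from the infimum. Throughout I would abbreviate $E=\sotq{x\in A}{b\leq' f(x)}$, the set over which the meet is taken, so that the second bullet reads $b_1=\Vi_{x\in E}x$ and $b_1\in E$ just means $b\leq' f(b_1)$.

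For the forward direction, assume $b_1$ satisfies $\forall x\,(b\leq' f(x)\Longleftrightarrow b_1\leq x)$. Instantiating at $x=b_1$ and using reflexivity $b_1\leq b_1$ gives at once $b\leq' f(b_1)$, which is the first bullet and also records $b_1\in E$. For the second bullet I would verify that $b_1$ is the greatest lower bound of $E$: on one hand, for every $x\in E$ the equivalence yields $b_1\leq x$, so $b_1$ is a lower bound of $E$; on the other hand $b_1\in E$, so $b_1$ is in fact the least element of $E$ and therefore coincides with $\Vi_{x\in E}x$.

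For the backward direction, assume $b\leq' f(b_1)$ and $b_1=\Vi_{x\in E}x$, and prove the equivalence for an arbitrary $x$. If $b\leq' f(x)$ then $x\in E$, and since $b_1$ is a lower bound of $E$ we get $b_1\leq x$. Conversely, if $b_1\leq x$ then the hypothesis that $f$ is nondecreasing gives $f(b_1)\leq' f(x)$, and transitivity together with $b\leq' f(b_1)$ yields $b\leq' f(x)$. This monotonicity step is the only place where the assumption that $f$ is nondecreasing is actually used.

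Finally, for the uniqueness claim I would bypass the infimum entirely: if $b_1$ and $b_1'$ both satisfy the equivalence, then $b_1\leq x\Longleftrightarrow b_1'\leq x$ for all $x\in A$; instantiating at $x=b_1'$ and at $x=b_1$ and invoking reflexivity gives $b_1\leq b_1'$ and $b_1'\leq b_1$, hence $b_1=b_1'$ by antisymmetry of $\leq$. There is no genuine obstacle here; the only point deserving care is that the greatest lower bound appearing in the second bullet is witnessed by $b_1$ being simultaneously a lower bound of $E$ \emph{and} a member of $E$, so that no completeness hypothesis on $(A,\leq)$ is needed: the poset need not admit arbitrary infima for the statement to make sense.
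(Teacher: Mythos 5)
Your proof is correct and takes essentially the same approach as the paper's: reflexivity at $x=b_1$ yields the first bullet, the observation that $b_1$ is both a member and a lower bound of the set $E=\sotq{x\in A}{b\leq' f(x)}$ identifies it with the greatest lower bound, and monotonicity of $f$ plus transitivity gives the converse implication in the backward direction. The only (harmless) difference is your uniqueness argument by double instantiation and antisymmetry, where the paper instead reads uniqueness off from the fact that a greatest lower bound is unique when it exists.
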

%
\begin{proof}
If $b_1$ satisfies the equivalence, one has $b\leq' f(b_1)$ since $b_1\leq b_1$. If $z\in A$ satisfies the implication $\forall x\in A\,(b\leq ' f(x) \Rightarrow z \leq  x)$, we get $z\leq b_1$ since $b\leq' f(b_1)$. So when~$b_1$ satisfies the equivalence it is the maximum of $S_b\eqdef\bigcap_{b\leq' f(x)}\dar x\subseteq A$, \cad the g.l.b. of $\sotq{x\in A}{b\leq 'f(x)}$. \\
Conversely, if such a g.l.b.\ $b_1$ exists, it satisfies the implication  $\forall x\in A\,(b\leq ' f(x) \Rightarrow b_1 \leq  x)$ since $b_1\in S_b$. Moreover, if $b\leq 'f(b_1)$ we have the converse implication  $\forall x\in A\,( b_1 \leq  x\Rightarrow b\leq ' f(x))$
because if $b_1\leq x$ then $b\leq' f(b_1)\leq 'f(x)$.
\end{proof}
\begin{proof}[\textsl{Proof of \thref{th-dico-trdi-spec-mor3}}]~\\ 
\noindent \textsl{3} $\Rightarrow$ \textsl{2}. The \prt \textsl{\ref{i2a}} is the particular case of \textsl{\ref{i2b}} with $a=1$. The \prt \textsl{\ref{i2d}} is the particular case of \textsl{\ref{i2c}} with $b=1$. It remains to see that \textsl{\ref{i2b}} implies \textsl{\ref{i2c}}. Indeed
\[ 
\begin{aligned} 
\wi f(f(a)\vi b)     &= \Vi\nolimits_{c:f(a)\vi b\leq f(c)}c \quad \hbox{(Lemma \ref{lemcroiss})}\\
   &= \Vi\nolimits_{c:a\vi \wi f(b)\leq c}c \qquad \hbox{(item \textsl{\ref{i2b}})}  \\ 
&=a\vi \wi f(b)     
 \end{aligned}
\]
\textsl{1} $\Rightarrow$ \textsl{3}. We assume the map $f\sta:\Spec\gT'\to\Spec\gT$ to be open. If $b\in \gT'$, the \oqc $\fD_{\gT'}(b)=\fB$ has as image a \oqc of $\gT$, written as~\hbox{$f\sta(\fB)=\fD_{\gT}(\wi b)$} for a unique $\wi b\in\gT$. We write $\wi b=\wi f(b)$ and we get a map $\wi f:\gT'\to \gT$. 
\\
It remains to see that item \textsl{\ref{i2b}} is satisfied. 
For $a,c\in\gT$ let us write $\fA=\fD_{\gT}(a)$, $\fC=\fD_{\gT}(c)$ \hbox{and $g=f\sta$}. We have to prove the equivalence  \textsl{\ref{i2b}}, written as
\[ 
\begin{aligned} 
g^{-1}(\fA) \cap \fB \subseteq g^{-1}(\fC) \,\Longleftrightarrow\,  \fA \cap g(\fB) \subseteq \fC  
 \end{aligned}
\]  
For the direct implication, we consider an $x\in \fB$ such that $g(x)\in \fA$. We have to show that $g(x)\in \fC$. But $x\in g^{-1}(\fA) \cap \fB$, so $x\in g^{-1}(\fC)$, \cad $g(x)\in \fC$.\\
For the converse implication, we transform the r.h.s. by $g^{-1}$. This operation respects inclusion and intersection.
We get $g^{-1}(\fA) \cap g^{-1}(g(\fB)) \subseteq g^{-1}(\fC)$ and we conclude by noticing that $\fB\subseteq g^{-1}(g(\fB))$.

\smallskip \noindent  \textsl{2} $\Rightarrow$ \textsl{1}.
We show that $f\sta(\fD_{\gT'}(b))=\fD_{\gT}(\wi f(b))$.\\
First we show $f\sta(\fD_{\gT'}(b))\subseteq\fD_{\gT}(\wi f(b))$.
Let $\fp'\in\Spec\gT'$ with $b\notin \fp'$ and let 
$$\fp=f\sta(\fp')=f^{-1}(\fp').$$ 
If one had $\wi f(b)\in\fp$ one would have $f(\wi f(b))\in f(\fp)\subseteq \fp'$ and since $b\leq f(\wi f(b))$, $b\in\fp'$. So we have $\fp\in \fD_{\gT}(\wi f(b))$.\\
For the reverse inclusion, let us consider a $\fp\in\fD_{\gT}(\wi f(b))$. 
As $\wi f$ is non decreasing and respects~$\vu$,  the inverse image $\fq={\wi f}^{-1}(\fp)$
is an ideal. 
\\
We have $b\notin \fq$ because if $b\in \fq$ we have  
\hbox{$\wi f(b)\in\wi f({\wi f}^{-1}(\fp)) \subseteq\fp$}.  
\\
If $y\in \fq$ then $\wi f(y)=z\in\fp$ so $y\leq f(z)$ for a $z\in\fp$ (item~\textsl{\ref{i2a}}). Conversely if  $y\leq f(z)$ for a  $z\in\fp$, then $\wi f(y)\leq \wi f(f(z))\leq z$ (item~\textsl{\ref{i2d}}), so $\wi f(y)\in\fp$.
So we get
$$ 
\fq={\wi f}^{-1}(\fp)=\sotq{y\in\gT'}{\exists z\in \fp\,\,y\leq f(z)}. 
$$ 
So $f^{-1}(\fq)=\sotq{x\in\gT}{\exists z\in \fp\,\,f(x)\leq f(z)}$. But $f(x)\leq f(z)$ is equivalent to $x\vi \wi f(1)\leq z$ (item \textsl{\ref{i2b}} with $b=1$). Moreover $\wi f(1)\notin\fp$ since $\wi f(b)\leq \wi f(1)$ and $\wi f(b)\notin\fp$. So 
$$
f^{-1}(\fq)=\sotQ{x\in\gT}{\exists z\in \fp\,\,x\vi \wi f(1)\leq z}=\sotQ{x\in\gT}{x\vi \wi f(1)\in\fp}=\fp .
$$
Nevertheless it is possible that $\fq$ be not a \idep. In this case let us consider an \id $\fq'$ which is maximal among those satisfying $f^{-1}(\fq')=\fp$ and $\wi f(b)\notin\fq'$. We want to show that $\fq'$ is prime. Assume we have $y_1$ and $y_2\in\gT'\setminus\fq'$ such that $y=y_1\vi y_2\in\fq'$.
By maximality there is an \elt $z_i\in \gT\setminus \fp$ such that $f(z_i)$
is in the  \id generated by $\fq'$ and $y_i$ ($i=1,2$), i.e. $f(z_i)\leq x_i\vu y_i$ with $x_i\in\fq'$.  
Taking  $z=z_1\vi z_2$ (it is in $\gT\setminus \fp$) and $x=x_1\vu x_2$ we get $f(z_i)\leq x\vu y_i$ and $f(z)=f(z_1)\vi f(z_2)\leq x \vu y_i$, so $f(z)\leq x \vu y\in\fq'$, and finally $z\in f^{-1}(\fq')=\fp$: a contradiction.   

\smallskip \noindent   \textsl{4} $\Leftrightarrow$ \textsl{3}. Use Lemma
\ref{lemcroiss} by noticing that \textsl{\ref{i2b}} implies \textsl{\ref{i2a}}.
\end{proof}

\subsection{Dimension \prts}

\begin{theorem}[Dimension of spaces] \label{th-dico-trdi-spec-dim1}  \emph{See \cite{CL2003,Lom02}, \cite[chapter XIII]{CACM}.} In \clama 
\propeq
\begin{enumerate}
\item The spectral space $\Spec(\gT)$ is of Krull dimension $\leq n$ (with the meaning of chains of primes)  
\item  
For each sequence $(x_0,\dots,x_n)$ in $\gT$ there exists a \emph{complementary sequence}  $(y_0,\dots,y_n)$, which means
\begin{equation}\label{eqC2G}
\left.\arraycolsep3pt
\begin{array}{rcl}
1& \vda  &   y_n, x_n\\
 y_n,  x_n & \vda  &  y_{n -1}, x_{n -1}  \\
\vdots~~~~& \vdots  &~~~~  \vdots \\
  y_1, x_1& \vda  &  y_0, x_0  \\
y_0, x_0& \vda  & 0     
\end{array}
\right\}
\end{equation}
\end{enumerate}
\end{theorem}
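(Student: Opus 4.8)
The plan is to prove the two implications separately and to isolate one inductive mechanism---passage to a \emph{Krull boundary} quotient lattice---that powers the harder direction. First I would dispose of \emph{2} $\Rightarrow$ \emph{1} directly and contrapositively, reading the relations off a hypothetical chain. Suppose a chain $\fp_0\subsetneq\cdots\subsetneq\fp_{n+1}$ existed; pick $a_i\in\fp_{i+1}\setminus\fp_i$ and apply hypothesis \emph{2} to $(a_0,\dots,a_n)$ to obtain a complementary sequence $(y_0,\dots,y_n)$. Evaluating \pref{eqC2G} through the morphisms $\theta_\fp$ (recall $a\in\fp\Leftrightarrow\theta_\fp(a)=0$, and for a prime $a\vu b\in\fp\Leftrightarrow a,b\in\fp$ while $a\vi b\in\fp\Leftrightarrow a\in\fp$ or $b\in\fp$), I would check by induction that $y_i\in\fp_i$: the base $y_0\in\fp_0$ comes from $y_0\vi a_0=0\in\fp_0$ and $a_0\notin\fp_0$; for the step, $y_{i-1}\in\fp_{i-1}\subseteq\fp_i$ and $a_{i-1}\in\fp_i$ give $y_{i-1}\vu a_{i-1}\in\fp_i$, hence $y_i\vi a_i\in\fp_i$, and $a_i\notin\fp_i$ forces $y_i\in\fp_i$. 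Then $y_n\in\fp_n\subseteq\fp_{n+1}$ contradicts $y_n\notin\fp_{n+1}$, which is forced by $y_n\vu a_n=1$ and $a_n\in\fp_{n+1}$. This half is constructive.

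For \emph{1} $\Rightarrow$ \emph{2} I would introduce, for $x\in\gT$, the ideal $x\epr=\sotq{a\in\gT}{a\vi x=0}$ and let $\fb_x=\sotq{c}{c\le x\vu a,\ a\in x\epr}$ be the ideal generated by $x$ and $x\epr$. Set $\gT_{\{x\}}:=\gT/(\fb_x=0)$, a quotient lattice, so by Theorem~\ref{propSESP} its spectrum is a subspectral space of $\Spec\gT$. I would then identify it geometrically: a prime $\fp$ is adherent to $\DT(x)$ exactly when $a\notin\fp\Rightarrow a\vi x\neq 0$, i.e.\ $x\epr\subseteq\fp$, so $\ov{\DT(x)}=\VT(x\epr)$ and $\Spec\gT_{\{x\}}=\VT(x)\cap\ov{\DT(x)}$ is the topological boundary of $\DT(x)$.

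The engine is the recursion $\Kdim\Spec\gT\le n\Leftrightarrow \Kdim\Spec\gT_{\{x\}}\le n-1$ for all $x$. The direction ``$\Leftarrow$'' (contrapositive) is geometric: from a chain of length $n+1$, choose $x\in\fp_1\setminus\fp_0$, so that $\fp_1,\dots,\fp_{n+1}$ all lie in $\VT(x)\cap\ov{\{\fp_0\}}\subseteq\Spec\gT_{\{x\}}$ and form a chain of length $n$ there. The direction ``$\Rightarrow$'' is where classical logic enters and is, I expect, the main obstacle: given a chain $\fq_0\subsetneq\cdots\subsetneq\fq_n$ in $\Spec\gT_{\{x\}}$ I must manufacture a prime $\fp\subsetneq\fq_0$ of $\gT$ with $x\notin\fp$. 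The filter $G$ generated by $x$ together with $\gT\setminus\fq_0$ is proper, since $0\in G$ would give $x\vi f=0$ for some $f\notin\fq_0$, i.e.\ $f\in x\epr\subseteq\fq_0$, absurd; the prime ideal theorem (nonemptiness of the spectrum of a nontrivial lattice) then yields a prime $\fp$ disjoint from $G$, whence $\fp\subseteq\fq_0$, $x\notin\fp$, and $\fp\neq\fq_0$. The delicacy is precisely that the correct boundary lattice must involve the pseudocomplement ideal $x\epr$, which is exactly what makes $G$ proper.

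Finally I would match this to complementary sequences via the combinatorial lemma: $(x_0,\dots,x_n)$ has a complementary sequence in $\gT$ iff $(x_1,\dots,x_n)$ has one in $\gT_{\{x_0\}}$. Projecting $(y_0,\dots,y_n)$ gives $(\ov{y_1},\dots,\ov{y_n})$, the only nonautomatic point being the bottom relation, which follows from $y_0\vi x_0=0$ (so $y_0\in x_0\epr$) giving $y_1\vi x_1\le y_0\vu x_0\in\fb_{x_0}$. Conversely, lifting a complementary sequence $(\ov z_1,\dots,\ov z_n)$, all defining inequalities hold in $\gT$ up to a single error term $x_0\vu a$ with $a\in x_0\epr$ (collect the finitely many error terms into one $a$, legitimate since $x_0\epr$ is an ideal); then $y_0:=a$ and $y_i:=z_i\vu x_0\vu a$ for $i\ge 1$ satisfy \pref{eqC2G} exactly, by a short distributivity check. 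I would close by inducting on $n\ge-1$ over all distributive lattices simultaneously: the base $n=-1$ pairs ``$\Spec\gT=\emptyset\Leftrightarrow 0=_\gT1$'' with the degenerate system $1\le\Vu\emptyset=0$, and the step chains the dimension recursion, the induction hypothesis applied to $\gT_{\{x_0\}}$, surjectivity of $\gT\to\gT_{\{x_0\}}$, and the complementary-sequence recursion.
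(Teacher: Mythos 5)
Your proof is correct. The paper itself supplies no proof of this theorem---it only refers to \cite{CL2003,Lom02} and \cite[chapter XIII]{ACMC}---and your argument (the boundary quotient $\gT_{\{x\}}$ built from $x$ and $x\epr$, the prime-ideal-theorem step producing a prime strictly below $\fq_0$ to extend a chain of the boundary, and the two-way translation between complementary sequences for $(x_0,\dots,x_n)$ in $\gT$ and for $(x_1,\dots,x_n)$ in $\gT_{\{x_0\}}$, closed by induction on $n$) is essentially the standard Coquand--Lombardi proof given in those cited references.
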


E.g.\ for the dimension $n\leq 2$, the inequalities in  \pref{eqC2G} correspond to the following diagram in~$\gT$.
$$\SCO{x_0}{x_1}{x_2}{y_0}{y_1}{y_2}$$

A zero-dimensional \trdi is a \agB.

\begin{theorem}[Dimension of morphisms] \label{th-dico-trdi-spec-dim2}  \emph{See \cite{CL2001-2018}, \cite[section XIII-7]{CACM}.} 
Let $\gT\subseteq \gT'$ and $f$ be the inclusion morphism. In \clama \propeq
\begin{enumerate}
\item The morphism $\Spec(f):\Spec(\gT')\to\Spec(\gT)$ has Krull dimension $\leq n$.
\item  For any sequence  $(x_0,\dots,x_n)$ in $\gT'$
there exists an integer $k\geq 0$ and \elts $a_1,\ldots,a_k\in \gT$ such that for each partition  $(H,H')$ of $\{1,\ldots,k\}$, there exist $ y_0,\dots,y_n\in \gT'$  such that
\begin{equation} \label {eqdefDiTrRel}
\begin{array}{rclll}
\Vi_{j\in H'} a_j & \vda  &  y_n,\;x_n    \\
y_n,\;x_n& \vda  &y_{n-1},\;x_{n-1}      \\
\vdots\qquad & \vdots  & \qquad  \vdots    \\
y_1,\;x_1& \vda  &  y_0,\;x_0    \\
y_0,\;x_0& \vda  &  \Vu_{j\in H} a_j    \\
\end{array}
\end{equation}
\end{enumerate} 
\end{theorem}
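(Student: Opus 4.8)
The plan is to read statement \emph{1} fibre-wise. The morphism $\Spec(f)$ has Krull dimension $\leq n$ exactly when, for every \idep $\fp$ of $\gT$, the fibre $(\Spec f)^{-1}(\fp)$ has Krull dimension $\leq n$. This fibre is canonically homeomorphic to $\Spec(\gT'_\fp)$, where $\gT'_\fp$ is the quotient lattice of $\gT'$ obtained by forcing $a=0$ for $a\in\fp$ and $a=1$ for $a\in\gT\setminus\fp$; recall that in this quotient $\overline a\leq\overline b$ holds \ssi there are $s\in\fp$ and $t\in\gT\setminus\fp$ with $a\vi t\leq b\vu s$ in $\gT'$. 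Since \thref{th-dico-trdi-spec-dim1} turns a dimension bound on a lattice into the existence of complementary sequences, the whole proof amounts to translating these \emph{fibre-wise} complementary sequences into the single combinatorial certificate of statement \emph{2}, the universal quantifier over partitions being the finite shadow of the \qc-ness of $\Spec\gT$ for the patch topology.

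For \emph{2}~$\Rightarrow$~\emph{1} I would argue by contradiction, classically. Suppose a chain $\fq_0\subsetneq\cdots\subsetneq\fq_{n+1}$ of \ideps of $\gT'$ lies over a common \idep $\fp=f^{-1}(\fq_i)$ of $\gT$, and choose $x_i\in\fq_{i+1}\setminus\fq_i$ for $0\leq i\leq n$. Apply the certificate to $(x_0,\dots,x_n)$, obtaining $a_1,\dots,a_k\in\gT$, and take the partition induced by $\fp$, namely $H=\{j:a_j\in\fp\}$ and $H'=\{j:a_j\notin\fp\}$ (with the conventions $\Vu_\emptyset=0$, $\Vi_\emptyset=1$, which keep $\Vu_{j\in H}a_j\in\fp$ and $\Vi_{j\in H'}a_j\notin\fp$). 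Then $\Vu_{j\in H}a_j\in\fp\subseteq\fq_0$ and $\Vi_{j\in H'}a_j\notin\fq_{n+1}$. An ascending induction on the relations of \pref{eqdefDiTrRel}, using primeness of each $\fq_i$ and $x_i\notin\fq_i$, gives $y_i\in\fq_i$ for all $i$ (bottom: $y_0\vi x_0\leq\Vu_H a_j\in\fq_0$ forces $y_0\in\fq_0$; then $y_i\vi x_i\leq y_{i-1}\vu x_{i-1}\in\fq_i$ forces $y_i\in\fq_i$). In particular $y_n\in\fq_n\subseteq\fq_{n+1}$. But the top relation $\Vi_{j\in H'}a_j\leq y_n\vu x_n$ together with $\Vi_{j\in H'}a_j\notin\fq_{n+1}$ and $x_n\in\fq_{n+1}$ forces $y_n\notin\fq_{n+1}$, a contradiction.

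For \emph{1}~$\Rightarrow$~\emph{2}, fix $(x_0,\dots,x_n)$ in $\gT'$. For each \idep $\fp$ of $\gT$, the fibre bound and \thref{th-dico-trdi-spec-dim1} give a complementary sequence for the images of the $x_i$ in $\gT'_\fp$. Lifting each of its relations along the description of $\overline a\leq\overline b$ above produces finitely many witnesses; joining the ideal-witnesses into a single $S_\fp\in\fp$ and meeting the filter-witnesses into a single $T_\fp\in\gT\setminus\fp$, and replacing the lifted $y_i$ by $y_i'=(y_i\vi T_\fp)\vu S_\fp$, one checks by distributivity that $(y_i')$ satisfies the relative chain $T_\fp\vda y_n',x_n$, $\ y_i',x_i\vda y_{i-1}',x_{i-1}$, $\ y_0',x_0\vda S_\fp$ in $\gT'$ (this is the routine bookkeeping step). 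The sets $\DT(T_\fp)\cap\VT(S_\fp)$ are clopen for the patch topology and cover $\Spec\gT$, so by \qc-ness finitely many $\fp^{(1)},\dots,\fp^{(r)}$ suffice. Take $\{a_1,\dots,a_k\}=\{S_{\fp^{(1)}},\dots,S_{\fp^{(r)}},T_{\fp^{(1)}},\dots,T_{\fp^{(r)}}\}$. Given a partition $(H,H')$: either it is induced by some \idep $\fp$, which then lies in a $\DT(T_{\fp^{(l)}})\cap\VT(S_{\fp^{(l)}})$, so $S_{\fp^{(l)}}\in\fp$ and $T_{\fp^{(l)}}\notin\fp$ give $\Vu_{j\in H}a_j\geq S_{\fp^{(l)}}$ and $\Vi_{j\in H'}a_j\leq T_{\fp^{(l)}}$, and the relative sequence $(y_i')$ for that $l$ witnesses \pref{eqdefDiTrRel}; or it is not induced by any \idep, in which case $\Vi_{j\in H'}a_j\leq\Vu_{j\in H}a_j$ (otherwise a separating prime would induce exactly $(H,H')$, since ideals are closed under join and downward), and then the constant sequence $y_i=\Vi_{j\in H'}a_j$ trivially satisfies \pref{eqdefDiTrRel}.

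The main obstacle is the \emph{1}~$\Rightarrow$~\emph{2} direction, and within it the recognition of what the universally quantified partition is doing: it is precisely the finite record of a patch-compactness argument on $\Spec\gT$, together with the (easy once seen) dichotomy that partitions not arising from any prime are automatically degenerate. The lifting of a fibre complementary sequence to a clean \emph{relative} complementary sequence with a single pair $(S_\fp,T_\fp)$ is the other point requiring care; the explicit substitution $y_i'=(y_i\vi T_\fp)\vu S_\fp$ reduces it to distributivity.
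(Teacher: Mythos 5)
The paper itself contains no proof of this theorem: it is a dictionary entry, with the proof deferred to \cite{cl2} and \cite[XIII-7]{ACMC}, so I can only compare your argument against the route taken in those references (which the surrounding text of the paper reflects). Your proof is correct. In \emph{2}~$\Rightarrow$~\emph{1}, the partition induced by $\fp$ is legitimate ($\Vu_{j\in H}a_j\in\fp$ because ideals are closed under finite joins, $\Vi_{j\in H'}a_j\notin\fp$ because the complement of a prime ideal is a filter), and the ascending induction pushing each $y_i$ into $\fq_i$ followed by the contradiction at the top row is exactly what is needed. In \emph{1}~$\Rightarrow$~\emph{2}, the three load-bearing points all hold: the fibre over $\fp$ is $\Spec$ of the quotient $\gT'_\fp=\gT'/(\fp=0,\,\gT\setminus\fp=1)$ with the preorder you describe (Theorem~\ref{propSESP} plus the standard description of a quotient by an ideal and a filter); the substitution $y_i'=(y_i\vi T_\fp)\vu S_\fp$ does convert the lifted relations into a relative chain pinched between $T_\fp$ and $S_\fp$ (I checked all three shapes of inequality, it is indeed pure distributivity); and the patch-compactness argument with the patch-clopen sets $\DT(T_\fp)\cap\VT(S_\fp)$, combined with the dichotomy \guillemotleft either $(H,H')$ is induced by a prime, or $\Vi_{j\in H'}a_j\leq\Vu_{j\in H}a_j$ by the prime separation theorem\guillemotright, closes the argument, including the degenerate cases $H=\emptyset$ or $H'=\emptyset$ under your stated conventions.

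The cited references organize the proof differently: they reduce the relative dimension to the absolute dimension of an auxiliary lattice, namely the pushout of $\gT\to\gT'$ along $\gT\to\Bo(\gT)$ --- the lattice analogue of the identity $\Kdim\rho=\Kdim(\gA\bl\otimes_\gA\gB)$ recorded at the end of section~3.2 of the present paper --- and then apply Theorem~\ref{th-dico-trdi-spec-dim1} to that lattice; statement \emph{2} emerges as the explicit unwinding of complementary sequences there, a partition $(H,H')$ recording which of the elements $a_j$ of $\gT$ are evaluated at $0$ and which at $1$ by a generic point of the Boolean envelope. Your proof avoids the auxiliary lattice and works directly with fibres glued by patch compactness. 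What your route buys: the universal quantifier over partitions acquires a transparent meaning (the finite record of a patch cover of $\Spec\gT$), and no pushout machinery is needed. What the reference route buys: the lattice $\Bo(\gT)\otimes_\gT\gT'$ furnishes a point-free definition of $\Kdim f$ that makes sense constructively and transfers verbatim to commutative rings, whereas your argument is irreducibly classical --- it invokes primes and Zorn-type separation in both directions --- which is acceptable here only because the theorem is explicitly stated as an equivalence in classical mathematics.
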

E.g.\ for the relative dimension $n\leq 2$, the inequalities in  \pref{eqdefDiTrRel} correspond to the following diagram in~$\gT$.
with $u=\Vi_{j\in H'} a_j$ and $i=\Vu_{j\in H} a_j$.
$$\SCOR{x_0}{x_1}{x_2}{y_0}{y_1}{y_2}{u}{i}$$

Note that the dimension of the morphism $\gT\to\gT'$ is less than or equal to the dimension of~$\gT'$: take $k=0$ in item \textsl{2}. 

\smallskip The Krull dimension of a ring $\gA$ and of a morphism $\varphi:\gA\to\gB$ can be defined as those of $\ZarA$ and~$\Zar\varphi$. They are denoted 
$\Kdim \gA$ et  $\Kdim \varphi$. 
In \clama, this coincides with the usual \dfns for $\Kdim \gA$ and  $\Kdim \varphi$ 
(see \cite{CL2001-2018}). 

Moreover the paper \cite{CL2001-2018} 
shows that the \dfns given 
in the book \cite{CACM} coincide in \coma with \(\Kdim(\ZarA)\)
and \(\Kdim(\Zar \varphi)\). In particular we get the  result in Proposition \ref{propDimRelAc}. 

A commutative ring $\gA$ is zero-dimensional when for each $a\in\gA$ there exist $n\in\NN$
and $x\in \gA$ such that $x^n(1-xa)=0$.
A reduced zero-dimensional ring\footnote{Such a ring is also called absolutely flat or von Neumann regular.} is a ring in which any element $a$ has a \textsl{quasi-inverse} $b=a\bl$, i.e.\ such that $aba=a$ and $bab=b$.

\begin{proposition} \label{propDimRelAc}
Let $\gA\bl$ the reduced zero-dimensional ring generated by $\gA$. Then the Krull dimension of a morphism $\rho:\gA\to\gB$ equals the Krull dimension of the ring $\gA\bl\otimes_\gA \gB$. 
\end{proposition}

\subsection{Properties of spaces}

\begin{theorem} \label{th-dico-trdi-spec-esp0} \cite[Chapter II, 4.2, 4.4, 4.9]{Joh1986} \Propeq
\begin{enumerate}
\item The spectral space $\Spec(\gT)$ is Hausdorff. 
\item The topological space $\Spec(\gT)$ is compact, Hausdorff, and totally disconnected. 
\item The \trdi \(\gT\) is a \agB. 
\end{enumerate}
\end{theorem}

 A \trdi $\gT$ is said to be \textsl{normal} if each time one has $a \vu b = 1$ in $\gT$  there exist $x, y$ such that   $a \vu x = b \vu y = 1$ and  $x \vi y = 0$. See \cite{Weh2019,DST2019}.
Note that when replacing $x$ and $y$ with $x_1=x\vu(a\vi b)$ and $y_1=y\vu(a\vi b)$ we get $a \vu x_1 = b \vu y_1 = 1$ and  $x_1 \vi y_1 = a\vi b$.

\begin{theorem} \label{th-dico-trdi-spec-esp1}  \Propeq
\begin{enumerate}
\item The spectral space $\Spec(\gT)$ is normal. 
\item The \trdi \(\gT\) is normal. 
%
%
\end{enumerate}
\end{theorem}
\begin{theorem} \label{th-dico-trdi-spec-esp2}  \Propeq
\begin{enumerate}
\item The spectral space $\Spec(\gT)$ is completely normal. 
\item Each interval $[a,b]$ in $\gT$, seen as a \trdi, is normal.  
\item For all $a,b\in \gT$ there exist $x, y$ such that   $a \vu b = a \vu y = x \vu b$ and  $x \vi y = 0$.
\end{enumerate}
\end{theorem}
\begin{theorem} \label{th-dico-trdi-spec-esp3}  \Propeq
\begin{enumerate}
\item Any \oqc in $\Spec(\gT)$ is a finite union of irreducible \oqcs. 
\item For all \(a_1,\dots,a_n,b_1,\dots,b_m\) 
one has  $a_1,\dots,a_n\vdash_\gT b_1,\dots,b_m$ \ssi there is a~\(j\) such that
 $a_1,\dots,a_n\vdash_\gT b_j$. 
\item 
The \trdi $\gT$ is constructed from a  \sad corresponding to a \talg. 
%
%
\end{enumerate}
\end{theorem}

\section{Some examples}

We give in this section \cov versions of classical theorems.
Often, the theorem has exactly the same wording than the classical one.
But now, these theorems have a clear computational content, which was impossible when using classical definitions.
Sometimes the new theorem is stronger than the previously known classical results (e.g. \thref{thSerre} or \ref{thSwan} or \ref{thBassCancel2}).

\subsection{Relative dimension, lying over, going up, going down}

See \cite{CL2001-2018} and \cite[sections XIII-7 and XIII-9]{CACM}.

\begin{theorem} 
\label{thDim1} 
Let  $\rho:\gA\to\gB$ be a morphism of commutative rings or  \trdis. 
\\
If  $\Kdim\,\gA\leq m$  and $\Kdim\,\rho\leq n$, 
then  $\Kdim\,\gB\leq (m+1)(n+1)-1$.
\end{theorem}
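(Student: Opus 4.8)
The plan is to reduce everything to \trdis and then to a purely combinatorial manipulation of complementary sequences, using the two characterizations \pref{eqC2G} and \pref{eqdefDiTrRel}.

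\textbf{Reduction to lattices.} Since $\Kdim\gA=\Kdim\ZarA$, $\Kdim\gB=\Kdim\Zar\gB$ and $\Kdim\rho=\Kdim(\Zar\rho)$, and since $\Zar\rho\colon\ZarA\to\Zar\gB$ is a morphism of \trdis, it suffices to treat a morphism $f\colon\gT\to\gT'$ of \trdis. Moreover I may replace $\gT$ by its image $f(\gT)\subseteq\gT'$: the image is a quotient of $\gT$, so $\Kdim f(\gT)\le\Kdim\gT=m$ (a complementary sequence pushes forward along any lattice morphism, because $A\vda B$ is preserved, by \thref{th-dico-trdi-spec-dim1}), while the relative dimension of \thref{th-dico-trdi-spec-dim2} only sees $\gT$ through $f(\gT)$. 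Hence I assume $\gT\subseteq\gT'$ with $f$ the inclusion, so that both combinatorial characterizations are available.

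\textbf{Set-up and processing of blocks.} Put $N=(m+1)(n+1)-1$ and pick any sequence of length $N+1=(m+1)(n+1)$ in $\gT'$, written as a doubly indexed family $(x_{p,q})_{0\le p\le m,\,0\le q\le n}$ read in lexicographic order, and grouped into the $m+1$ consecutive blocks $B_p=(x_{p,0},\dots,x_{p,n})$ of length $n+1$. The goal is to produce a complementary sequence in $\gT'$ in the sense of \pref{eqC2G}. To each block $B_p$ I apply the relative dimension hypothesis $\Kdim f\le n$ (\thref{th-dico-trdi-spec-dim2}): this yields an integer $k_p$ and elements $a^{(p)}_1,\dots,a^{(p)}_{k_p}\in\gT$ such that for \emph{every} partition $(H_p,H'_p)$ of $\lrb{1..k_p}$ there exist $y^{(p)}_0,\dots,y^{(p)}_n\in\gT'$ realizing the ladder \pref{eqdefDiTrRel}, whose top entailment starts from $u_p:=\Vi_{j\in H'_p}a^{(p)}_j\in\gT$ and whose bottom entailment ends at $i_p:=\Vu_{j\in H_p}a^{(p)}_j\in\gT$. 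So, once the partitions are fixed, each block provides a chain in $\gT'$ running from $u_p$ down to $i_p$, with both endpoints living in $\gT$.

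\textbf{Gluing via the base dimension.} It remains to stitch the blocks together, i.e.\ to connect the bottom $i_p$ of block $p$ to the top $u_{p-1}$ of block $p-1$, and to cap the outer ends by $1\vda\dots$ on top of $B_m$ and $\dots\vda 0$ below $B_0$. This is exactly what $\Kdim\gT\le m$ supplies: assembling from the block data a length-$(m+1)$ sequence in $\gT$ and applying \pref{eqC2G} there produces a complementary sequence in $\gT$, whose $m+1$ splittings I use to select the partitions $(H_p,H'_p)$ coherently and whose entailments furnish the inter-block bridges and the two outer caps. Splicing these cross-block entailments with the intra-block ladders by repeated use of the cut (transitivity) rule \pref{coupure1} yields the required complementary sequence of length $N+1$ in $\gT'$, so $\Kdim\gT'\le N$.

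\textbf{Main obstacle.} The delicate point is precisely this last matching: the relative characterization quantifies \emph{universally} over the partitions $(H_p,H'_p)$, whereas the base characterization produces a \emph{single} complementary sequence, and one must choose the partitions from the base complementary sequence so that every junction entailment genuinely survives the cut. Carrying out this bookkeeping is where the whole argument lives, and it is exactly what forces the length budget to be the product $(m+1)(n+1)$ rather than a sum: each of the $m+1$ base levels absorbs one relative block of length $n+1$. Everything stays constructive, so no appeal to points or prime ideals is needed.
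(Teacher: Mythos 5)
You have the right skeleton---reduce to an inclusion of \trdis via $\Zar$, cut a sequence of length $(m+1)(n+1)$ into $m+1$ blocks of length $n+1$, treat each block by \thref{th-dico-trdi-spec-dim2} and stitch using \thref{th-dico-trdi-spec-dim1}---and this is indeed the shape of the argument in the references the paper points to (the paper itself gives no proof of \thref{thDim1}, deferring to \cite{cl2} and \cite[chapter~XIII]{ACMC}). But your text stops exactly where the proof has to begin: the stitching \emph{is} the theorem, and your description of it is both unexecuted and, in one essential respect, misdirected. Concretely: (i) each block produces a finite \emph{family} $(a^{(p)}_j)_{j\in\lrb{1..k_p}}$ in $\gT$, not a single element, so ``assembling from the block data a length-$(m+1)$ sequence in $\gT$'' is undefined---you never say which sequence is fed to \pref{eqC2G}; (ii) a complementary sequence $(b_0,\dots,b_m)$ in $\gT$ is a list of elements and entailments and has no ``splittings'' that could select partitions $(H_p,H'_p)$ of the index sets; (iii) most importantly, the mechanism you propose (choose one partition per block, then splice the ladders by cut) is not the one that can work, because the junction entailments it would need---from the bottom endpoint $\Vu_{j\in H_p}a^{(p)}_j$ of one block's ladder to the top endpoint $\Vi_{j\in H'_{p-1}}a^{(p-1)}_j$ of the next---are simply not available. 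The construction that works uses the ladders of \emph{several} partitions of the \emph{same} block simultaneously, combined with the base elements $b_p$ by lattice operations; distributivity, not transitivity, is the engine.

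The smallest nontrivial case already shows this. Take $m=1$, $n=0$ and suppose each block yields a single $a_p\in\gT$ for $x_p\in\gT'$. The relative hypothesis gives, for each $p$, elements $y_p,w_p\in\gT'$ with $1\leq y_p\vu x_p$ and $y_p\vi x_p\leq a_p$ (partition $H=\{1\}$), \emph{and} $a_p\leq w_p\vu x_p$ and $w_p\vi x_p\leq 0$ (partition $H'=\{1\}$); the base hypothesis applied to $(a_0,a_1)$ gives $(b_0,b_1)$ with $1\leq b_1\vu a_1$, $b_1\vi a_1\leq b_0\vu a_0$, $b_0\vi a_0\leq 0$. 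A complementary sequence for $(x_0,x_1)$ in $\gT'$ is then
\begin{equation*}
z_1=y_1\vi(b_1\vu w_1),\qquad z_0=(b_0\vi y_0)\vu w_0 ,
\end{equation*}
since $z_1\vu x_1=(y_1\vu x_1)\vi(b_1\vu w_1\vu x_1)\geq 1\vi(b_1\vu a_1)=1$, then $z_1\vi x_1\leq(b_1\vi a_1)\vu(w_1\vi x_1)\leq b_0\vu a_0\leq (b_0\vi y_0)\vu w_0\vu x_0=z_0\vu x_0$ (using $b_0\leq(b_0\vi y_0)\vu x_0$, which follows from $1\leq y_0\vu x_0$), and finally $z_0\vi x_0\leq(b_0\vi a_0)\vu(w_0\vi x_0)\leq 0$. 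Note that \emph{both} partitions of \emph{each} block are used at once, weighted by the $b_p$'s; no single ``coherent'' choice of partitions, followed by cut, reproduces these inequalities. Your ``Main obstacle'' paragraph correctly identifies that this bookkeeping is where the whole argument lives, but identifying the obstacle is not overcoming it: in the general case one must run such a mixing over the full families $(a^{(p)}_j)$ and all blocks, and that computation (carried out in \cite{cl2} and \cite[XIII]{ACMC}) is the actual content of the theorem. As it stands, your proposal is a correct statement of strategy plus an IOU for the proof.
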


\begin{theorem} \label{thLYGUKdim}
If a morphism $\alpha:\gA\to \gB$ of \trdis or of commutative rings is lying over and going up
(or lying over and going down) one has 
$\Kdim(\gA)\leq \Kdim(\gB)$. 
\end{theorem}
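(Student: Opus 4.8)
The plan is to reduce everything to \trdis and then argue via the complementary-sequence characterisation of Krull dimension (Theorem~\ref{th-dico-trdi-spec-dim1}). Since the Krull dimension of a ring and of a ring morphism are by definition those of the associated Zariski lattices and of $\Zar\alpha$, and since lying over and going up are properties of the dual spectral map that transfer to $\Zar\alpha$, it suffices to treat a morphism $f=\alpha:\gT\to\gT'$ of \trdis that is lying over and going up. Assume $\Kdim\gT'\leq n$; I want $\Kdim\gT\leq n$. Fix a sequence $(x_0,\dots,x_n)$ in $\gT$. Pushing it forward and applying Theorem~\ref{th-dico-trdi-spec-dim1} to $\gT'$, I obtain a complementary sequence $(z_0,\dots,z_n)$ in $\gT'$ for $(f(x_0),\dots,f(x_n))$, \cad
\[ 1\leq z_n\vu f(x_n),\qquad z_i\vi f(x_i)\leq z_{i-1}\vu f(x_{i-1})\ (1\leq i\leq n),\qquad z_0\vi f(x_0)\leq 0. \]

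The heart of the argument is to lift the $z_i$ to elements $y_i\in\gT$ using the going-up characterisation (Theorem~\ref{th-dico-trdi-spec-mor2}, item~1), namely $f(a)\leq f(c)\vu y\Rightarrow \exists x\in\gT\ (a\leq c\vu x$ and $f(x)\leq y)$, while maintaining the invariant $f(y_i)\leq z_i$. I proceed from the top down. From $f(1)\leq f(x_n)\vu z_n$, going up yields $y_n$ with $1\leq x_n\vu y_n$ and $f(y_n)\leq z_n$. Inductively, given $y_i$ with $f(y_i)\leq z_i$, one has
\[ f(y_i\vi x_i)=f(y_i)\vi f(x_i)\leq z_i\vi f(x_i)\leq f(x_{i-1})\vu z_{i-1}, \]
so going up (with $a=y_i\vi x_i$, $c=x_{i-1}$, $y=z_{i-1}$) produces $y_{i-1}$ with $y_i\vi x_i\leq x_{i-1}\vu y_{i-1}$ and $f(y_{i-1})\leq z_{i-1}$. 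This simultaneously delivers the middle inequalities of a complementary sequence for $(x_0,\dots,x_n)$ in $\gT$ and propagates the invariant.

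The one relation going up cannot reach is the bottom one, because its right-hand side is $0$ (an empty disjunction), leaving no element $c$ to play the role required by going up. Here lying over takes over: from $f(y_0)\leq z_0$ we get $f(y_0\vi x_0)=f(y_0)\vi f(x_0)\leq z_0\vi f(x_0)\leq 0$, and Lemma~\ref{lemLYO} (applied with $S=\gT$ and empty right-hand side) transfers this to $y_0\vi x_0\leq 0$ in $\gT$. Thus $(y_0,\dots,y_n)$ is a complementary sequence for $(x_0,\dots,x_n)$, whence $\Kdim\gT\leq n$. The lying-over-and-going-down case follows by duality: passing to $\gT\eci$ and $\gT'\eci$ turns going down into going up (Theorem~\ref{th-dico-trdi-spec-mor2}, item~2) and preserves both lying over (injectivity of $f$, Theorem~\ref{th-dico-trdi-spec-mor1}) and Krull dimension (the characterisation \pref{eqC2G} is self-dual under $\leq\leftrightarrow\geq$, $\vi\leftrightarrow\vu$, $0\leftrightarrow1$ and reversal of the sequence), reducing it to the case just treated.

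I expect the main obstacle to be precisely the interplay of the two hypotheses: going up must be invoked with the \emph{full} conclusion $f(y_i)\leq z_i$, since this side condition is exactly what feeds the next inductive step, and lying over is needed \emph{once}, to handle the terminal $\leq 0$ relation that going up structurally cannot address. Everything else should be routine, amounting to checking that each application of going up has its arguments in the correct lattices and that the order-reversal genuinely interchanges the going-up and going-down hypotheses.
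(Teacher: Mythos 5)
Your proof is correct and is essentially the argument this theorem rests on: the paper itself gives no proof but defers to \cite{cl2} and \cite[section XIII-9]{ACMC}, where the dimension bound is obtained by exactly this reduction to lattices via $\Zar$, followed by a top-down lifting of a complementary sequence through the going-up condition of Theorem~\ref{th-dico-trdi-spec-mor2} (keeping the side condition $f(y_i)\leq z_i$ as the inductive invariant), with lying over used once to reflect the terminal inequality $y_0\vi x_0\leq 0$, and the going-down case obtained by passing to the opposite lattices. Your execution, including the self-duality of the characterisation \pref{eqC2G} under order reversal, is sound as written.
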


\begin{lemma}\label{cor2thKdimMor}
Let  $\rho:\gA\to\gB$ be a morphism of commutative rings.
If  $\gB$ is generated by  primitively  \agq \elts\footnote{An \elt of $\gB$ is said to be primitively  \agq over $\gA$
if it annihilates a \pol in $\AX$ whose \coes are comaximal.} over $\gA$,
then $\Kdim \rho\leq0$ and so $\Kdim\gB\leq\Kdim\gA$.
\end{lemma}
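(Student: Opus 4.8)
The plan is to prove the two assertions in order, obtaining the dimension inequality as a corollary of the relative bound. The second claim, $\Kdim\gB\leq\Kdim\gA$, will be immediate from \thref{thDim1}: taking $n=0$ there gives $\Kdim\gB\leq(m+1)(0+1)-1=m$ whenever $\Kdim\gA\leq m$, which is exactly $\Kdim\gB\leq\Kdim\gA$. So the entire content lies in the first claim, $\Kdim\rho\leq 0$. For this I would use the characterisation recalled above, namely $\Kdim\rho=\Kdim(\gA\bl\otimes_\gA\gB)$. Writing $\gK=\gA\bl$, a reduced \zed ring, and $\gC=\gK\otimes_\gA\gB$, it then remains only to show that $\gC$ is \zed.

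Next I would reduce to a finitely generated situation. The ring $\gC$ is generated as a $\gK$-algebra by the images of a generating family of primitively \agq \elts of $\gB$, and each such image is again primitively \agq over $\gK$: if $p\in\AX$ annihilates a generator and its \coes are comaximal, then the base-change map $\gA\to\gK$ carries the unit-ideal relation among those \coes to a unit-ideal relation in $\gK$, so the image \pol over $\gK$ still has comaximal \coes. Now any $x\in\gC$ lies in a subalgebra $\gK[b_1,\dots,b_r]$ generated by finitely many of these \elts, and a \zed witness $x^m(1-cx)=0$ obtained inside such a subalgebra remains valid in $\gC$. Hence it suffices to prove that every finitely generated subalgebra $\gK[b_1,\dots,b_r]$, with the $b_i$ primitively \agq over $\gK$, is \zed.

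The core step treats this finitely generated case through the elementary local--global machinery of reduced \zed rings. Each $b_i$ satisfies $\sum_j c_{ij}b_i^{\,j}=0$ with the $c_{ij}$ comaximal, that is, generating the unit ideal of $\gK$. Since \zed-ity is stable under splitting $\gK$ by a fundamental system of orthogonal \idms and under finite products, I may argue as though $\gK$ were a discrete field, i.e.\ after splitting so that on each component every $c_{ij}$ is either $0$ or invertible. On such a component the highest index $j$ with $c_{ij}$ invertible yields, after dividing by that unit, a monic relation for $b_i$; thus each $b_i$ is integral there, $\gK[b_1,\dots,b_r]$ is integral over the \zed ring $\gK$, and is therefore itself \zed. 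Recombining the components gives that $\gK[b_1,\dots,b_r]$ is \zed, which is what was needed.

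The delicate point is precisely this last step. Classically one would pass to the residue field at each prime, where \gui{primitively \agq} collapses to \gui{\agq}, hence to \gui{integral}, and conclude by \gui{integral over \zed is \zed}. Constructively those residue fields are unavailable, and the real work is in replacing them by the idempotent splitting that makes the relevant leading \coe invertible; turning the comaximality of the $c_{ij}$ into an effective integral dependence relation on each component is the main obstacle. Once this is done, everything else is formal, the dimension inequality following from \thref{thDim1}.
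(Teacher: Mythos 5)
Your proof is correct, and it follows essentially the same route as the sources this paper defers to for this lemma (the paper itself states it without proof, citing \cite{cl2} and \cite[chapter XIII]{ACMC}): use the fact $\Kdim\rho=\Kdim(\gA\bl\otimes_\gA\gB)$, note that primitively algebraic elements stay primitively algebraic after base change, reduce to finitely generated subalgebras, and apply the elementary local-global machinery of reduced zero-dimensional rings to turn each generator into an integral element, concluding by ``integral over zero-dimensional is zero-dimensional'', with the final bound $\Kdim\gB\leq\Kdim\gA$ coming from \thref{thDim1} with $n=0$. The only point you leave tacit is the component on which all the coefficients $c_{ij}$ vanish: there comaximality forces $1=0$, so that component is the trivial ring and is zero-dimensional for free.
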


\begin{lemma} 
\label{lemGU1} Let $\varphi:\gA\to \gB$ be a  morphism of commutative rings. 
 The morphism $\varphi$ is lying over \ssi
for each ideal  $\fa$ of~$\gA$  and each $x\in \gA$, one has
$\, \varphi(x)\in \varphi(\fa)\gB\, \Rightarrow \, x\in\sqrt[\gA]{\fa}.
$  

\end{lemma}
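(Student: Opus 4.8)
The plan is to interpret \textbf{lying over} for the ring morphism $\varphi$ as lying over for the induced lattice morphism $\Zar\varphi:\ZarA\to\Zar\gB$, and then to move between the lattice characterisation of Lemma~\ref{lemLYO} and the ideal-theoretic condition by means of the formal \nst for the Zariski lattice. Taking the system of generators $S=\sotq{\DA(a)}{a\in\gA}$, Lemma~\ref{lemLYO} says that $\varphi$ is lying over \ssi for all $a_1,\dots,a_n,b_1,\dots,b_m\in\gA$ the reflection
\[
\varphi(a_1),\dots,\varphi(a_n)\vdash_{\Zar\gB}\varphi(b_1),\dots,\varphi(b_m)\;\Longrightarrow\;a_1,\dots,a_n\vdash_{\ZarA}b_1,\dots,b_m
\]
holds. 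By the formal \nst the hypothesis unfolds as $(\varphi(a_1)\cdots\varphi(a_n))^k\in\gen{\varphi(b_1),\dots,\varphi(b_m)}$ in $\gB$ and the conclusion as $(a_1\cdots a_n)^\ell\in\gen{b_1,\dots,b_m}$ in $\gA$, so the task reduces to matching these power-memberships with the displayed implication $\varphi(x)\in\varphi(\fa)\gB\Rightarrow x\in\sqrt[\gA]{\fa}$.

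For the direct implication I would start from a lying-over $\varphi$, take an ideal $\fa$ and $x\in\gA$ with $\varphi(x)\in\varphi(\fa)\gB$, and rewrite this membership as a finite \coli $\varphi(x)=\sum_j\beta_j\varphi(b_j)$ with $b_j\in\fa$. This is precisely the left-hand entailment in the case $n=1$, $a_1=x$, $k=1$, so the reflection delivers $x\vdash_{\ZarA}b_1,\dots,b_m$, i.e. $x^\ell\in\gen{b_1,\dots,b_m}\subseteq\fa$ for some $\ell$, giving $x\in\sqrt[\gA]{\fa}$. For the converse I would assume the ideal condition and check the reflection of Lemma~\ref{lemLYO} for arbitrary $a_i,b_j$: setting $x=a_1\cdots a_n$ and $\fa=\gen{b_1,\dots,b_m}$, and noting $\varphi(\fa)\gB=\gen{\varphi(b_1),\dots,\varphi(b_m)}$, the entailment hypothesis becomes $\varphi(x^k)\in\varphi(\fa)\gB$; the ideal condition applied to $x^k$ then yields $x^k\in\sqrt[\gA]{\fa}$, hence $x\in\sqrt[\gA]{\fa}$, which is the desired conclusion $a_1,\dots,a_n\vdash_{\ZarA}b_1,\dots,b_m$.

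Since the whole statement is a dictionary translation, there is no deep obstacle; the point requiring the most care—the step I would treat as the crux—is the bookkeeping with powers and radicals. Concretely, one must see that the general $n$-ary reflection collapses to its $n=1$ instance, because the product $a_1\cdots a_n$ can be absorbed into a single element and the radical $\sqrt[\gA]{\fa}$ swallows the exponent $k$, while conversely the ideal condition is literally the $n=1$ case of reflection. I would also make explicit the finitary reduction $\varphi(x)\in\varphi(\fa)\gB\Rightarrow\varphi(x)\in\gen{\varphi(b_1),\dots,\varphi(b_m)}$ with $b_j\in\fa$, since it is what keeps everything inside the finite entailment framework to which Lemma~\ref{lemLYO} applies.
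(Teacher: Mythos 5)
Your proof is correct and follows exactly the route the paper intends: the paper states Lemma~\ref{lemGU1} without giving a proof (deferring to its references), but the reduction of \emph{lying over} to the lattice morphism $\Zar\varphi$, the appeal to Lemma~\ref{lemLYO} with the generators $\DA(a)$, and the unfolding via the formal \nst are precisely the dictionary mechanism the paper sets up for such translations. Both directions of your argument are sound, including the two points needing care — the finitary reduction of $\varphi(x)\in\varphi(\fa)\gB$ to a finitely generated subideal $\gen{b_1,\dots,b_m}\subseteq\fa$, and the absorption of the exponents $k,\ell$ into the radical $\sqrt[\gA]{\fa}$.
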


\begin{lemma} 
\label{lemGU2} Let $\varphi:\gA\to \gB$ be a morphism of commutative rings. 
\Propeq
\begin{enumerate}
\item The morphism $\varphi$ is going up (i.e. the morphism $\Zar\,\varphi$ is going up).
\item For any ideal $\fb$ of $\gB$, with $\fa=\varphi^{-1}(\fb)$, the morphism
$\varphi_\fb:\gA/\fa\to \gB/\fb$ is lying over. 
\item The same thing with \tf \ids $\fb$.
\item (In \clama) the same thing with \ideps.
\end{enumerate}
\end{lemma}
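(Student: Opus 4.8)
The plan is to recognize Lemma~\ref{lemGU2} as the exact ring-theoretic transcription of the lattice statement Proposition~\ref{propGu}, and to obtain it by pushing that proposition through the covariant functor $\Zar$. Recall that for rings ``going up'' and ``lying over'' are \emph{defined} via the associated Zariski lattices: $\varphi$ is going up (resp.\ lying over) precisely when $\Zar\varphi:\ZarA\to\Zar\gB$ is (and lying over of $\Zar\varphi$ is characterised ring-theoretically by Lemma~\ref{lemGU1}). Thus item~\emph{1} is, by definition, the going-up condition for $\Zar\varphi$, which is item~\emph{4} of Proposition~\ref{propGu} read through Theorem~\ref{th-dico-trdi-spec-mor2}\,\emph{(1)}.

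The technical heart is the dictionary between ideals of $\gB$ and ideals of the lattice $\Zar\gB$. I would set up the order isomorphism $\fb\mapsto I_\fb:=\sotq{\rD_\gB(b_1,\dots,b_k)}{b_1,\dots,b_k\in\fb}$ between radical ideals of $\gB$ and lattice ideals of $\Zar\gB$, with inverse $I\mapsto\sotq{b\in\gB}{\rD_\gB(b)\in I}$, and record its three key features: (i) it sends a \tf ideal $\fb=\gen{y_1,\dots,y_r}$ to the principal lattice ideal $\dar\rD_\gB(y_1,\dots,y_r)$, and (classically) a \idep to a prime lattice ideal; (ii) the quotient lattice identifies as $\Zar\gB/(I_\fb=0)\cong\Zar(\gB/\fb)$, so that forcing $\rD_\gB(b)=0$ for $b\in\fb$ is the same as passing to $\gB/\fb$; (iii) with $\fa=\varphi^{-1}(\fb)$ one has $I_\fa=(\Zar\varphi)^{-1}(I_\fb)$, and functoriality of $\Zar$ gives $\Zar(\varphi_\fb)=(\Zar\varphi)_{I_\fb}$ under these identifications. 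Since lying over of a ring morphism means lying over of its $\Zar$, the ring statement ``$\varphi_\fb$ is lying over'' becomes exactly ``$(\Zar\varphi)_{I_\fb}$ is lying over''.

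With the dictionary in hand the proof is a matching of items. Lemma item~\emph{2} (all ideals $\fb$) corresponds to Proposition~\ref{propGu} item~\emph{2} (all lattice ideals $I$), using that every lattice ideal is some $I_\fb$ and that $\varphi_\fb$ depends on $\fb$ only through $\sqrt{\fb}$, hence only through $I_\fb$. Lemma item~\emph{3} (\tf ideals $\fb$) corresponds to Proposition~\ref{propGu} item~\emph{3} (principal lattice ideals $\dar y$) via feature~(i). Lemma item~\emph{4} (\ideps, in \clama) corresponds to Proposition~\ref{propGu} item~\emph{1} (prime lattice ideals), again by~(i) together with the classical homeomorphism $\Spec\gB\cong\Spec(\Zar\gB)$. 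Since Proposition~\ref{propGu} asserts the equivalence of its items, whose items \emph{2,3,4} are equivalent constructively and whose item~\emph{1} joins them in \clama, chaining through these correspondences yields the constructive equivalence of Lemma items \emph{1,2,3} and the classical equivalence with item~\emph{4}.

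The main obstacle is not the logical chaining but the careful verification of the dictionary facts (ii) and (iii): namely that the quotient of $\Zar\gB$ by the lattice ideal attached to $\fb$ really is $\Zar(\gB/\fb)$, and that $\Zar$ applied to the induced ring map $\varphi_\fb$ is the induced lattice map. These rest on the formal \nst for $\Zar$ and on functoriality, and are the place where one must be attentive to the passage to radicals, so that the \tf-versus-principal correspondence and the behaviour of $\varphi^{-1}$ are tracked correctly. The only genuinely non-constructive point is the equivalence with item~\emph{4}, which, as flagged, requires enough \ideps and hence \clama.
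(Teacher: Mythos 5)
Your proposal is correct and takes essentially the route the paper intends: the paper states Lemma~\ref{lemGU2} without proof (pointing to \cite{cl2} and \cite[section XIII-9]{ACMC}), and the intended derivation is exactly your reduction of the ring statement to the lattice statement of Proposition~\ref{propGu} via the $\Zar$ functor, using the correspondence between radical ideals of $\gB$ and lattice ideals of $\Zar\gB$ (finitely generated ideals matching principal lattice ideals, primes matching primes, and $\Zar(\gB/\fb)\simeq\Zar\gB/(I_\fb=0)$). The one point to keep in mind is that Proposition~\ref{propGu} is asserted in the paper only in classical mathematics, so to obtain the constructive equivalence of items \emph{1}--\emph{3} of Lemma~\ref{lemGU2} you must use, as the cited references do, that the equivalence of items \emph{2}, \emph{3}, \emph{4} of Proposition~\ref{propGu} is itself constructive, with only item \emph{1} (primes) requiring classical logic; your item-by-item matching then reproduces exactly the claimed constructive/classical split.
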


\begin{lemma} \label{lemGuAnFp}
Let $\gA\subseteq \gB$ be a faithfully flat $\gA$-\alg. The morphism $\gA\to \gB$ is lying over and going up. So $\Kdim\, \gA\leq \Kdim\, \gB$.
\end{lemma}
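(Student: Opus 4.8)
The plan is to establish the two topological properties separately and then invoke Theorem~\ref{thLYGUKdim} to get the dimension inequality. Write $\varphi:\gA\to\gB$ for the inclusion. The one fact about faithful flatness I would isolate at the outset is that for every ideal $\fa$ of $\gA$ one has $\varphi^{-1}(\fa\gB)=\fa$. This follows by tensoring the exact sequence $0\to\fa\to\gA\to\gA/\fa\to 0$ with the flat module $\gB$, which identifies $\gB/\fa\gB$ with $(\gA/\fa)\otimes_\gA\gB$, and then using that faithful flatness makes the canonical map $\gA/\fa\to(\gA/\fa)\otimes_\gA\gB$ injective; the kernel of $\gA\to\gB/\fa\gB$ is then exactly $\fa$.

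First I would prove lying over via the criterion of Lemma~\ref{lemGU1}. Let $\fa$ be an ideal of $\gA$ and $x\in\gA$ with $\varphi(x)\in\varphi(\fa)\gB=\fa\gB$. By the fact just recorded, $x\in\varphi^{-1}(\fa\gB)=\fa\subseteq\sqrt[\gA]{\fa}$, which is precisely the conclusion required by Lemma~\ref{lemGU1}. Hence $\varphi$ is lying over (indeed one obtains the sharper conclusion $x\in\fa$).

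For going up I would use the equivalence between items \emph{1} and \emph{2} of Lemma~\ref{lemGU2}: it suffices to show that for every ideal $\fb$ of $\gB$, writing $\fa=\varphi^{-1}(\fb)$, the induced morphism $\bar\varphi:\gA/\fa\to\gB/\fb$ is lying over. The natural route is to reduce this to the first part by checking that $\gB/\fb$ is again faithfully flat over $\gA/\fa$, and then repeat the lying-over argument above verbatim in the quotient. Unwinding the criterion of Lemma~\ref{lemGU1} for $\bar\varphi$, the inequality to be proved is $\varphi^{-1}(\fc\gB+\fb)\subseteq\sqrt[\gA]{\fc}$ for every ideal $\fc\supseteq\fa$ of $\gA$; so the concrete task is to contract the ideal $\fc\gB+\fb$ of $\gB$ back to $\gA$ and bound it by $\sqrt[\gA]{\fc}$.

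The hard part will be exactly this descent of faithful flatness to the quotient, that is, controlling the contraction of $\fc\gB+\fb$. Flatness handles $\fc\gB$ alone cleanly (it gives $\varphi^{-1}(\fc\gB)=\fc$), but a quotient of a flat module need not be flat, so the extra ideal $\fb$ is the genuine obstacle: one must verify that adjoining $\fb$ does not enlarge the contraction beyond $\sqrt[\gA]{\fc}$, equivalently that $\gB/\fb$ remains faithfully flat over $\gA/\fa$. This is where the whole content of the going-up assertion sits, and it is the step I expect to require the real work. Once both lying over and going up are secured, Theorem~\ref{thLYGUKdim} applies directly and yields $\Kdim\,\gA\leq\Kdim\,\gB$.
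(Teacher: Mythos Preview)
The paper states this lemma without proof (it only gives the references \cite{cl2} and \cite[section XIII-9]{ACMC}), so there is no in-paper argument to compare against. Your treatment of lying over is correct and is exactly the standard one: faithful flatness gives $\varphi^{-1}(\fa\gB)=\fa$, and Lemma~\ref{lemGU1} then applies.

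There is a genuine gap in the going-up part, and it cannot be filled along the lines you propose. The reduction you want, namely that $\gB/\fb$ be faithfully flat over $\gA/\fa$ for an arbitrary ideal $\fb$ of $\gB$ with $\fa=\varphi^{-1}(\fb)$, is simply false. Take $\gA=\ZZ$, $\gB=\ZZ[X]$ (free, hence faithfully flat), and $\fb=(2X-1)$. Then $\fa=(0)$ and $\gB/\fb\simeq\ZZ[1/2]$, which is not faithfully flat over $\ZZ$ (tensoring with $\ZZ/2$ kills it). Worse, this same example shows that going up itself fails: the prime $(2X-1)$ lies over $(0)$, but there is no prime of $\ZZ[X]$ above $(2)$ containing $(2X-1)$, since $2$ and $2X-1$ generate the unit ideal. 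Equivalently, in the language of Lemma~\ref{lemGU2}, the induced map $\ZZ\to\ZZ[1/2]$ is not lying over (take $\fc=(2)$ and $x=1$ in Lemma~\ref{lemGU1}).

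So the obstacle you flagged as ``the real work'' is not merely hard; it is an actual obstruction, and the going-up claim in the lemma is erroneous as stated. The correct companion to lying over for faithfully flat maps is \emph{going down} (this is Theorem~\ref{thGDplat}, item~\emph{1}), and lying over together with going down still yields $\Kdim\,\gA\leq\Kdim\,\gB$ by Theorem~\ref{thLYGUKdim}. Your write-up should replace ``going up'' by ``going down'' and invoke Theorem~\ref{thGDplat} (or redo the argument: flatness gives the going-down criterion of Lemma~\ref{lem1Gdown} directly); the dimension conclusion is then immediate.
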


\begin{lemma}[A classical Going Up] 
\label{corLy1} 
 Let $\gA\subseteq \gB$ be commutative rings with $\gB$ 
integral over $\gA$. Then the morphism $\gA\to \gB$ is lying over and going up. 
So $\Kdim\, \gA\leq \Kdim\, \gB$.
\end{lemma}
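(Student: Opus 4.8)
The plan is to verify the two properties \emph{lying over} and \emph{going up} separately, using the commutative-ring criteria already established above, and then to invoke Theorem~\ref{thLYGUKdim} for the dimension inequality.

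First I would establish lying over through the criterion of Lemma~\ref{lemGU1}: it suffices to show that for every ideal $\fa$ of $\gA$ and every $x\in\gA$ with $x\in\fa\gB$ one has $x\in\sqrt[\gA]{\fa}$. Writing $x=\sum_{i=1}^n a_ib_i$ with $a_i\in\fa$ and $b_i\in\gB$, I would pass to the $\gA$-subalgebra $M=\gA[b_1,\dots,b_n]$, which is a finitely generated $\gA$-module because each $b_i$ is integral over $\gA$. Since $b_im\in M$ for every $m\in M$, we get $xM\subseteq\fa M$, and moreover $1\in M$. The determinant trick (Cayley--Hamilton) applied to multiplication by $x$ on a finite generating family of $M$ then yields a relation $x^k+c_1x^{k-1}+\dots+c_k=0$ with all $c_j\in\fa$; evaluating at $1\in M$ shows this equality already holds in $\gA$, whence $x^k\in\fa$ and $x\in\sqrt[\gA]{\fa}$.

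For going up I would use the equivalence in Lemma~\ref{lemGU2}, item~\emph{2}: it is enough to check that for every ideal $\fb$ of $\gB$, setting $\fa=\fb\cap\gA$, the induced morphism $\gA/\fa\to\gB/\fb$ is lying over. But $\gA/\fa\to\gB/\fb$ is again injective (by the very choice $\fa=\fb\cap\gA$) and $\gB/\fb$ is integral over $\gA/\fa$ (reduce the integral dependence relations modulo $\fb$). So the lying-over statement just proved applies verbatim to this quotient extension, and going up follows.

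Finally, having shown that $\gA\to\gB$ is both lying over and going up, Theorem~\ref{thLYGUKdim} gives $\Kdim\,\gA\leq\Kdim\,\gB$. The only real content is the determinant-trick step for lying over; everything else is a transfer to quotients and an appeal to the quoted results. The point to watch constructively is the passage to the finitely generated faithful module $M$ and the fact that the characteristic-polynomial coefficients $c_j$ land in $\fa$ (indeed in $\fa^j$), which is exactly what forces a \emph{power} of $x$ into $\fa$ rather than $x$ itself.
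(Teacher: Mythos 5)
Your proof is correct: the determinant-trick (Cayley--Hamilton) argument establishes lying over exactly in the element-wise form required by Lemma~\ref{lemGU1}, the passage to quotient extensions (which remain injective and integral) gives going up via item~\emph{2} of Lemma~\ref{lemGU2}, and Theorem~\ref{thLYGUKdim} then yields $\Kdim\,\gA\leq\Kdim\,\gB$. The paper itself states this lemma without an inline proof, deferring to \cite{cl2} and \cite[section XIII-9]{ACMC}, and your argument is precisely the standard constructive one used in those references, so there is nothing to fault here.
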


\begin{lemma}\label{lem1Gdown}
\label{lemGD2} Let $\varphi:\gA\to \gB$  be a morphism of commutative rings. 
\Propeq
\begin{enumerate}
\item The morphism $\varphi$ is going down.
\item For $b$, $a_1$, \ldots, $a_q\in \gA$ and $y\in \gB$ such that~$\varphi(b)y\in\sqrt[\gB]{\gen{\varphi(a_1,\dots,a_q)}}$,
there exist 
$x_1$, \dots, $x_p\in \gA$ such that
$$
\gen{bx_1,\dots,bx_p}\subseteq  \sqrt[\gA]{\gen{a_1,\dots,a_q}} \;\hbox{ and }\;y\in\sqrt[\gB]{\gen{\varphi(x_1), \dots, \varphi(x_p)}}.
$$
\item (In \clama) for each \idep $\fp$ of $\gB$ \hbox{with $\fq=\varphi^{-1}(\fp)$} the morphism $\gA_\fq\to \gB_\fp$ is lying over.
\end{enumerate}
\end{lemma}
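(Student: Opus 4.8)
The plan is to prove the constructive equivalence $1\Leftrightarrow 2$ by unfolding, inside the Zariski lattice, the definition of \emph{going down} for $\Zar\varphi$ provided by Theorem~\ref{th-dico-trdi-spec-mor2}(2), and to obtain $1\Leftrightarrow 3$ by reading that spectral property classically on prime ideals. I use throughout that $\ZarA$ is the set of ideals $\DA(\ux)=\sqrt[\gA]{\gen{\ux}}$, that $\DA(a)\leq\DA(c_1,\dots,c_m)$ means $a\in\sqrt[\gA]{\gen{c_1,\dots,c_m}}$, that $\DA(\fj_1)\vi\DA(\fj_2)=\DA(\fj_1\fj_2)$ and $\DA(\fj_1)\vu\DA(\fj_2)=\DA(\fj_1+\fj_2)$, and that $\Zar\varphi(\DA(\ux))=\rD_\gB(\varphi(\ux))$ (and symmetrically for $\gB$).

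For $1\Rightarrow 2$ I would specialize the going-down implication of Theorem~\ref{th-dico-trdi-spec-mor2}(2), with $f=\Zar\varphi$, to $a=\DA(a_1,\dots,a_q)$, $c=\DA(b)$ and $y=\rD_\gB(y)$. Its hypothesis $\Zar\varphi(a)\geq\Zar\varphi(c)\vi\rD_\gB(y)$ then reads exactly $\varphi(b)\,y\in\sqrt[\gB]{\gen{\varphi(a_1,\dots,a_q)}}$, and its conclusion delivers an $x=\DA(x_1,\dots,x_p)$ with $a\geq c\vi x$ (that is, $\gen{bx_1,\dots,bx_p}\subseteq\sqrt[\gA]{\gen{a_1,\dots,a_q}}$) and $\Zar\varphi(x)\geq\rD_\gB(y)$ (that is, $y\in\sqrt[\gB]{\gen{\varphi(x_1),\dots,\varphi(x_p)}}$); this is item~2.

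For $2\Rightarrow 1$ the point is that item~2 only treats principal $c=\DA(b)$ and principal $y=\rD_\gB(y)$, whereas Theorem~\ref{th-dico-trdi-spec-mor2}(2) quantifies over all $c=\Vu_i\DA(b_i)$ and $y=\Vu_j\rD_\gB(y_j)$. I would close this by two routine join reductions (applied first in $y$, then in $c$), using that $\Zar\varphi$ preserves $\vu$ and that the lattices are distributive: arbitrary $y$ is handled by taking $x=\Vu_j x_j$ from the principal witnesses $x_j$ (since $\Zar\varphi(x)=\Vu_j\Zar\varphi(x_j)\geq y$ and $c\vi x=\Vu_j(c\vi x_j)\leq a$), and then arbitrary $c$ by taking $\xi=\Vi_i\xi_i$ from the witnesses $\xi_i$ for the $\DA(b_i)$ (since $\Zar\varphi(\xi)\geq y$ and $c\vi\xi\leq\Vu_i(\DA(b_i)\vi\xi_i)\leq a$).

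Finally, $1\Leftrightarrow 3$ is the classical reading of the same property. Under the canonical homeomorphisms $\Spec\ZarA\cong\SpecA$ and $\Spec(\Zar\gB)\cong\Spec\gB$, the dual of $\Zar\varphi$ is $\Spec\varphi:\fp\mapsto\varphi^{-1}(\fp)$, and in \clama going down of $\Zar\varphi$ is the usual going-down property: for every prime $\fp$ of $\gB$, with $\fq=\varphi^{-1}(\fp)$, and every prime $\fq'\subseteq\fq$, there is a prime $\fp'\subseteq\fp$ with $\varphi^{-1}(\fp')=\fq'$. Since $\Spec\gA_\fq$ (resp. $\Spec\gB_\fp$) is the set of primes of $\gA$ below $\fq$ (resp. of $\gB$ below $\fp$), this says precisely that $\Spec\gB_\fp\to\Spec\gA_\fq$ is onto, that is, that $\gA_\fq\to\gB_\fp$ is lying over. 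I expect the main obstacle to be organizational rather than deep: matching the meet/join dictionary in the $1\Leftrightarrow 2$ step (in particular the reduction from principal to arbitrary $c$ and $y$), and keeping classical logic confined to the passage to and from item~3.
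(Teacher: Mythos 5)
The paper itself states this lemma without proof, referring to \cite{cl2} and \cite[section XIII-9]{ACMC}, so there is no internal proof to compare against line by line; judged on its own, your proof is correct and is essentially the argument those references intend. Your key steps all check out: item \emph{2} is precisely the instance of the lattice-theoretic going-down condition of Theorem~\ref{th-dico-trdi-spec-mor2}(\emph{2}) for $f=\Zar\varphi$ in which $c$ and $y$ are generated by a single element, using that every element of $\ZarA$ has the form $\DA(x_1,\dots,x_p)$, that $\DA(b)\vi\DA(x_1,\dots,x_p)=\DA(bx_1,\dots,bx_p)$, and that $\rD_\gB(\varphi(b))\vi\rD_\gB(y)=\rD_\gB(\varphi(b)y)$; the two join reductions restoring arbitrary $c$ and $y$ are valid because $\Zar\varphi$ preserves finite joins and meets and the lattices are distributive (your witnesses $x=\Vu_j x_j$ and $\xi=\Vi_i\xi_i$ do the job); and the classical equivalence with item \emph{3} is the standard translation via Theorem~\ref{th-dico-trdi-spec-mor1}(\emph{1}) together with the identification of $\Spec(\gA_\fq)$ (resp.\ $\Spec(\gB_\fp)$) with the primes contained in $\fq$ (resp.\ $\fp$). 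The only point worth making explicit in a final write-up is the one you silently (and correctly) adopt: item \emph{1} must be read, as in Lemma~\ref{lemGU2} for going up, as \gui{the morphism $\Zar\varphi$ is going down}, since that is how the paper transfers spectral notions to ring morphisms.
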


\begin{theorem}[Going down] 
\label{thGDplat}  
Let $\gA\subseteq \gB$ be commutative rings. The inclusion morphism $\gA\to \gB$ is going down in the following cases. 
\begin{enumerate}
\item $\gB$ is flat over $\gA$.
\item $\gB$ is a domain integral over $\gA$, and $\gA$ is integrally closed.
\end{enumerate}
\end{theorem}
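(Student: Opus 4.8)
The plan is to reduce everything to the concrete, pointfree criterion for going down given by the equivalence \emph{1}~$\Leftrightarrow$~\emph{2} in Lemma~\ref{lemGD2}. In both cases we fix $b,a_1,\dots,a_q\in\gA$ and $y\in\gB$ with $by\in\sqrt[\gB]{\gen{a_1,\dots,a_q}}$ (I drop $\varphi$ since $\gA\subseteq\gB$), abbreviate $\fa=\gen{a_1,\dots,a_q}$, and must manufacture $x_1,\dots,x_p\in\gA$ such that $\gen{bx_1,\dots,bx_p}\subseteq\sqrt[\gA]{\fa}$ and $y\in\sqrt[\gB]{\gen{x_1,\dots,x_p}}$. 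The two hypotheses then feed the same template in two different ways.

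\emph{Case 1 ($\gB$ flat over $\gA$).} From $by\in\sqrt[\gB]{\fa\gB}$ I pick $n$ with $(by)^n=b^ny^n\in\fa\gB$, so there are $\beta_i\in\gB$ with $b^ny^n-\sum_i a_i\beta_i=0$. Reading this as a $\gB$-linear relation among the \elts $b^n,a_1,\dots,a_q$ of $\gA$ (with coefficients $y^n,-\beta_1,\dots,-\beta_q$), the equational criterion for flatness (\cite{ACMC}) descends it: there are $\lambda_{0k},\lambda_{1k},\dots,\lambda_{qk}\in\gA$ and $\mu_k\in\gB$ ($k\in\lrbp$) with $\lambda_{0k}b^n+\sum_{i\ge1}\lambda_{ik}a_i=0$ for each $k$, and $y^n=\sum_k\lambda_{0k}\mu_k$. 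Setting $x_k=\lambda_{0k}$, the first family of equalities gives $b^nx_k\in\fa$, whence $(bx_k)^n=(b^nx_k)\,x_k^{\,n-1}\in\fa$, i.e.\ $bx_k\in\sqrt[\gA]{\fa}$; and $y^n=\sum_k x_k\mu_k\in\gen{x_1,\dots,x_p}\gB$ gives $y\in\sqrt[\gB]{\gen{x_1,\dots,x_p}}$. So item \emph{2} of Lemma~\ref{lemGD2} holds.

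\emph{Case 2 ($\gB$ a domain integral over $\gA$, $\gA$ integrally closed).} Put $z=by$; it is integral over $\gA$, and since $\gA$ is integrally closed in $K=\Frac(\gA)$ the minimal polynomial of $y$ over $K$ has the form $Y^d+e_{d-1}Y^{d-1}+\cdots+e_0$ with all $e_i\in\gA$. Two observations close the argument. First, $y^d=-\sum_{i=0}^{d-1}e_iy^i\in\gen{e_0,\dots,e_{d-1}}\gB$, so with $\{x_1,\dots,x_d\}=\{e_0,\dots,e_{d-1}\}$ one already has $y\in\sqrt[\gB]{\gen{x_1,\dots,x_d}}$. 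Second, the minimal polynomial of $z=by$ is $Z^d+\sum_{i=0}^{d-1}b^{\,d-i}e_i\,Z^{i}$, and the crucial input is that, $\gA$ being integrally closed and $z\in\sqrt[\gB]{\fa\gB}$, every non-leading coefficient of this minimal polynomial lies in $\sqrt[\gA]{\fa}$. Granting this, $b^{\,d-i}e_i\in\sqrt[\gA]{\fa}$ for $i=0,\dots,d-1$, hence $(be_i)^{\,d-i}=(b^{\,d-i}e_i)\,e_i^{\,d-i-1}\in\sqrt[\gA]{\fa}$ and therefore $be_i\in\sqrt[\gA]{\fa}$; thus $\gen{bx_1,\dots,bx_d}\subseteq\sqrt[\gA]{\fa}$, and again item \emph{2} of Lemma~\ref{lemGD2} holds.

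The one nontrivial ingredient — and the step I expect to be the real obstacle — is the italicized claim in Case 2: that for a \emph{normal} $\gA$ an element integral over $\gA$ and lying in $\sqrt[\gB]{\fa\gB}$ has minimal polynomial with non-leading coefficients in $\sqrt[\gA]{\fa}$. Classically this is the statement that such a $z$ is \emph{integral over the ideal $\fa$} and that, $\gA$ being integrally closed, the minimal-polynomial coefficients lie in the integral closure of $\fa$ in $\gA$, which is contained in $\sqrt[\gA]{\fa}$ (an \elt $w$ with $w^m+u_1w^{m-1}+\cdots+u_m=0$, $u_j\in\fa^{\,j}$, satisfies $w^m\in\fa$). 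The delicate point for a \cov treatment is that the naive proof passes to $L=\Frac(\gB)$ and to conjugates of $z$; this must be replaced by the explicit constructive theory of integral dependence over an ideal (\cite{ACMC}), which produces the coefficient membership by an algebraic identity rather than by splitting fields. Everything else is the purely formal extraction above.
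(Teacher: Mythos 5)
The paper itself gives no proof of this theorem: it is stated with a pointer to \cite{cl2} and \cite[section XIII-9]{ACMC}, so your attempt has to be measured against the constructive proofs in those references. Your reduction of both cases to item \emph{2} of Lemma~\ref{lemGD2} is the right frame, and your Case~1 is complete, correct and fully constructive: descending the relation $b^ny^n=\sum_i a_i\beta_i$ through the equational characterisation of flatness and taking $x_k=\lambda_{0k}$ is exactly the argument one finds in the references. Case~2 is also correct \emph{as a classical proof}, granting the Atiyah--Macdonald-type result you quote.

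In Case~2, however, there is a genuine gap for the purposes of this (constructive) paper, and it is larger than the one you flag. Your argument is organised around the minimal polynomial of $y$ over $K=\Frac\gA$, and that object is not constructively available: for discrete fields $K\subseteq L$ one cannot in general decide whether $1,y,\dots,y^k$ are $K$-linearly dependent, so no algorithm produces \emph{the} minimal polynomial even when a monic annihilator is known. Moreover, minimality is load-bearing in your argument: the coefficient claim is false for an arbitrary monic annihilator of $z=by$ (take $z=0\in\sqrt[\gB]{\fa\gB}$, which annihilates $Z^2+uZ$ for every $u\in\gA$). Consequently the plan \gui{keep the skeleton and cite \cite{ACMC} for the crucial claim} cannot be executed as stated: the constructive theory of integral dependence does not furnish minimal polynomials, it reorganises the proof so as to avoid them. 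Concretely, the constructive route is this. The determinant trick applied to the \tf $\gA$-module $\gA[y,\beta_1,\dots,\beta_q]$ turns $(by)^n=\sum_i a_i\beta_i$ into a monic $h\in\gA[T]$ with all non-leading coefficients in $\fa$ and $h(by)=0$; an integral dependence relation $q$ for $y$, of degree $e$, gives a second monic annihilator $q_b(T)=b^e\,q(T/b)$ of $by$, whose coefficient of $T^{e-k}$ lies in $b^k\gA$. Since $K$ is discrete, the monic gcd $g$ (say of degree $d$) of $h$ and $q_b$ in $K[T]$ is computable and satisfies $g(by)=0$. Applying the graded (ideal-theoretic) form of Kronecker's theorem to the factorisation $h=g\,h_1$ gives $g\in\gA[T]$ with non-leading coefficients $g_k\in\sqrt[\gA]{\fa}$, and applying it to $q_b=g\,q_1$ gives $g_k=b^kx_k$ with $x_k\in\gA$. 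These $x_k$ witness item \emph{2} of Lemma~\ref{lemGD2}: when $b$ is regular, $g(by)=0$ yields $y^{d}\in\gen{x_1,\dots,x_d}\gB$ after cancelling $b^d$, and $(bx_k)^k=(b^kx_k)\,x_k^{k-1}\in\sqrt[\gA]{\fa}$ gives $bx_k\in\sqrt[\gA]{\fa}$ (the case $b=0$ is trivial). Your $e_i$ are exactly these $x_k$, but only the gcd-plus-Kronecker route produces them constructively --- it is the \gui{algebraic identity rather than splitting fields} you anticipated, except that it must replace not only the conjugates argument but the minimal polynomial itself.
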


\subsection{Kronecker, Forster-Swan, Serre and Bass Theorems}

References: \cite{CLQ2004,CLQ2006} and \cite[chapter XIV]{CACM}.

\begin{theorem}[Kronecker-Heitmann theorem,
with Krull dimension, without Noetherianity]
\label{thKroH} ~
\begin{enumerate}
\item Let $n\geq 0$.
If $\Kdim \gA <n$ and $b_1$, \dots, $b_n\in\gA$,  there exist
$x_1$, \dots, $x_n$ such that for all $a\in\gA$,
$\DA(a,b_1,\dots,b_n) = \DA(b_1+ax_1,\dots,b_n+ax_n)$.
\item Consequently in a ring wuth Krull dimension $\leq n$, every \itf has the same nilradical as an \id generated by at most $n+1$ \elts.
\end{enumerate}
\end{theorem}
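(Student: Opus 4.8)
The plan is to derive part~2 from part~1 and to prove part~1 by induction on $n$, reading everything through the Zariski lattice, since $\Kdim\gA=\Kdim\ZarA$ and $\DA(\fj_1)\vu\DA(\fj_2)=\DA(\fj_1+\fj_2)$, $\DA(\fj_1)\vi\DA(\fj_2)=\DA(\fj_1\fj_2)$. For part~2, if $\Kdim\gA\le n$ then $\Kdim\gA<n+1$, so part~1 (with $n+1$ in place of $n$) applied to $a=c_m$ and $b_1=c_1,\dots,b_{n+1}=c_{n+1}$ shows, whenever a \itf is given by $m\ge n+2$ generators $c_1,\dots,c_m$, that the $n+2$ elements $c_1,\dots,c_{n+1},c_m$ may be replaced by the $n+1$ elements $c_i+c_mx_i$ without changing the join $\DA(c_1,\dots,c_m)$, hence without changing the nilradical. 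This lowers the number of generators by one, and iterating brings any \itf down to $n+1$ generators.

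So everything is in part~1. The inclusion $\DA(b_1+ax_1,\dots,b_n+ax_n)\le\DA(a,b_1,\dots,b_n)$ is immediate, and since $b_i=(b_i+ax_i)-ax_i$ the reverse inclusion reduces to producing $x_1,\dots,x_n$ with $a\in\sqrt[\gA]{J}$, $J=\gen{b_1+ax_1,\dots,b_n+ax_n}$. The hypothesis $\Kdim\gA<n$ enters through Theorem~\ref{th-dico-trdi-spec-dim1} applied in $\ZarA$ to the sequence $(\DA(b_1),\dots,\DA(b_n))$: it yields $y_0,\dots,y_{n-1}\in\gA$ with $1\in\gen{y_{n-1},b_n}$, with $y_ib_{i+1}\in\sqrt[\gA]{\gen{y_{i-1},b_i}}$ for $1\le i\le n-1$, and with $y_0b_1$ nilpotent. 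The $x_i$ are to be manufactured from these $y_i$, and I would organise the induction on $n$ through the Krull boundary, the one-element form of the same dimension characterization: for $\fb=\gen{b_n}+\sotq{w\in\gA}{\DA(wb_n)=0}$ one has $\Kdim(\gA/\fb)\le n-2$. Since $b_n\equiv0$ in $\gC=\gA/\fb$, the induction hypothesis applied there to $a,b_1,\dots,b_{n-1}$ produces $x_1,\dots,x_{n-1}\in\gC$, which I lift to $\gA$; the trivial ring settles the base case $\Kdim\gA<0$.

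The crux, and the step I expect to resist, is the choice of the last coordinate $x_n$ and the proof that $a\in\sqrt[\gA]{J}$ \emph{globally}. Unwinding the solved problem in $\gC$ gives an identity $a^N=\sum_{i<n}c_i(b_i+ax_i)+d\,b_n+w$ in $\gA$ with $\DA(wb_n)=0$; using $b_n=(b_n+ax_n)-ax_n$ this becomes $a^N+a\,d\,x_n\equiv w\pmod J$, while $b_n\equiv-ax_n\pmod J$ makes $wax_n$ nilpotent modulo $J$. It remains to choose $x_n$ so that these force $\DA(a)=0$ in $\gA/J$. The model is $n=1$: there $\Kdim\gA\le0$ gives an idempotent $e$ with $\DA(e)=\DA(a)$ that splits $\gA$, up to nilpotents, into a factor where $a$ is invertible — and one takes $ax_1\equiv1-b_1$, making $b_1+ax_1$ a unit — and a factor where $a$ is nilpotent, where any $x_1$ works. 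The general step is the relative version of this splitting along $\fb$: the induction has already handled the ``$b_n$ nilpotent'' side inside $\gC$, and $x_n$ must neutralize the complementary ``$b_n$ invertible'' side, i.e.\ force $w$ into $\sqrt[\gA]{J}$. Carrying out this elimination is the one genuinely computational point; once $a\in\sqrt[\gA]{J}$ is secured, the radical equality, and then part~2, follow formally.
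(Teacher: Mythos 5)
Your reduction of part~2 to part~1, and of part~1 to producing $x_1,\dots,x_n$ with $a\in\sqrt[\gA]{J}$ where $J=\gen{b_1+ax_1,\dots,b_n+ax_n}$, is correct. But what follows is not a proof: you organise an induction through the boundary quotient $\gC=\gA/\fb$ and then stop at exactly the decisive point --- the choice of $x_n$ and the verification that $\DA(a)=0$ in $\gA/J$ --- which you yourself flag as ``the step I expect to resist''. There is also a structural defect in the route itself: the theorem asserts $\exists\, x_1,\dots,x_n\;\forall a$ (the $x_i$ depend only on $b_1,\dots,b_n$), whereas your induction is applied ``to $a,b_1,\dots,b_{n-1}$'' and extracts $x_n$ from a certificate $a^N=\sum_{i<n}c_i(b_i+ax_i)+d\,b_n+w$, $\DA(wb_n)=0$, whose data depend on $a$. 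For the record, the missing elimination can be carried out on your route: take $x_n=w$; then modulo $J$ one has $a^N\equiv w(1-da)$ and $aw^2$ nilpotent, so $\DA(a)\le\DA(w)$ and $\DA(a)\vi\DA(w)=0$, whence $\DA(a)=0$. But since $w$ depends on $a$, this proves only the weaker form $\forall a\;\exists\, x_1,\dots,x_n$, which suffices for part~2 but not for the statement as written.

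The sources the paper cites for this theorem (\cite{CLQ2004}, \cite[ch.~XIV]{ACMC}) proceed much more directly, and you already hold every ingredient: the complementary sequence $(y_0,\dots,y_{n-1})$ of $(b_1,\dots,b_n)$ that you extracted from Theorem~\ref{th-dico-trdi-spec-dim1} and then set aside (``the $x_i$ are to be manufactured from these $y_i$'') is itself the answer. Take $x_i=y_{i-1}$, i.e.\ $J=\gen{b_1+ay_0,\dots,b_n+ay_{n-1}}$; no induction and no boundary ideals are needed, and the $x_i$ visibly depend only on the $b_i$, giving the required uniformity in $a$. Indeed, modulo $J$ one has $\DA(b_i)=\DA(ay_{i-1})=\DA(a)\vi\DA(y_{i-1})$, so the images in $\Zar(\gA/J)$ of the complementary-sequence inequalities read
\begin{align*}
1&=\DA(y_{n-1})\vu\bigl(\DA(a)\vi\DA(y_{n-1})\bigr)=\DA(y_{n-1}),\\
\DA(a)\vi\DA(y_i)&\le\DA(y_{i-1})\vu\bigl(\DA(a)\vi\DA(y_{i-1})\bigr)=\DA(y_{i-1})\qquad(1\le i\le n-1),\\
\DA(a)\vi\DA(y_0)&=0.
\end{align*}
Cascading from the top: $\DA(a)=\DA(a)\vi\DA(y_{n-1})\le\DA(y_{n-2})$, hence $\DA(a)=\DA(a)\vi\DA(y_{n-2})\le\DA(y_{n-3})$, and so on down to $\DA(a)=\DA(a)\vi\DA(y_0)=0$ in $\Zar(\gA/J)$, i.e.\ $a\in\sqrt[\gA]{J}$. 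This closes part~1 with the correct quantifier order, and your derivation of part~2 then goes through unchanged.
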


For a commutative ring $\gA$ we define $\Jdim\gA$ (J-dimension of $\gA$) as being $\Kdim(\Heit\gA)$. In \clama it is the dimension of the Heitmann \hbox{J-spectrum} $\Jspec(\gA)$.

Another dimension, called Heitmann dimension and denoted by $\Hdim(\gA)$,
has been introduced in \cite{CLQ2004,CLQ2006}. One has always 
$\Hdim(\gA)\leq \Jdim(\gA)\leq \Kdim(\gA)$. The following results with $\Jdim$ hold also for $\Hdim$.

\begin{definition}\label{defiStableRange}
 A ring $\gA$ is said to have \textsl{stable range} \textsl{(of Bass)} less than or equal to $n$ when  
unimodular vectors of length $n+1$ may be shortened in the following meaning
$$1 \in \gen {a,\an}\;\Longrightarrow\;\exists \,\xn,\;1 \in \gen {a_1 + x_1a, \ldots, a_n + x_na}.$$
\end{definition}

\begin{theorem}[Bass-Heitmann Theorem, without
Noetherianity]
\label{Bass}  
Let $n\geq 0$. If $\Jdim \gA <n$, then $\gA$ has \emph{stable range} $\leq n$. 
In particular  each stably free \Amo of rank $\geq n$ is free.
\end{theorem}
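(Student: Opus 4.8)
The plan is to read this statement as the Heitmann-lattice analogue of the Kronecker--Heitmann theorem \ref{thKroH} and to run the same dimension-theoretic argument inside $\HeA$ instead of $\ZarA$. First I would translate both membership conditions into $\HeA=\He(\ZarA)$, using that $\HeA$ is identified with the ideals $\JA(\ux)$ and that $\JA(\fj_1)\vu\JA(\fj_2)=\JA(\fj_1+\fj_2)$. The basic remark is the equivalence
\[
1\in\gen{z_1,\dots,z_k}\iff \JA(z_1)\vu\cdots\vu\JA(z_k)=1\ \text{in}\ \HeA,
\]
which follows from \eqref{eqRadJac}: if $1\in\JA(\fJ)$ then, taking $y=-1$, the element $0=1+(-1)\cdot 1$ is invertible modulo $\fJ$, forcing $\fJ=\gA$, and the converse is immediate. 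Hence the hypothesis $1\in\gen{a,a_1,\dots,a_n}$ reads $\JA(a)\vu\JA(a_1)\vu\cdots\vu\JA(a_n)=1$, while the conclusion we seek is $\JA(a_1+x_1a)\vu\cdots\vu\JA(a_n+x_na)=1$. So it suffices to produce $x_1,\dots,x_n$ with $\JA(a,a_1,\dots,a_n)=\JA(a_1+x_1a,\dots,a_n+x_na)$, which is precisely the Heitmann counterpart of \ref{thKroH}.

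For the core of the argument I would invoke $\Jdim\gA=\Kdim(\HeA)\le n-1$ and apply the complementary-sequence characterisation of Krull dimension (Theorem \ref{th-dico-trdi-spec-dim1}) to the length-$n$ sequence $(\JA(a_1),\dots,\JA(a_n))$ in $\HeA$. This yields finitely generated ideals $\fc_1,\dots,\fc_n$ of $\gA$ representing a complementary sequence, with
\[
\JA(\fc_n+\gen{a_n})=1,\quad \JA(\fc_ia_i)\le \JA(\fc_{i-1}+\gen{a_{i-1}})\ (2\le i\le n),\quad \JA(\fc_1a_1)=\JA(0),
\]
where $\JA(\fc_ia_i)$ abbreviates $\JA(\fc_i\gen{a_i})$ and I have used $\JA(\fj_1)\vi\JA(\fj_2)=\JA(\fj_1\fj_2)$ and $\JA(\fj_1)\vu\JA(\fj_2)=\JA(\fj_1+\fj_2)$ to rewrite the meets and joins of the chain as ideal expressions. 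I would then construct each $x_i\in\gA$ from $a$ and the generators of the $\fc_j$ by exactly the telescoping recipe used for \ref{thKroH}, arranged so that $\JA(a_i+x_ia)$ dominates both $\JA(a_i)$ and the part of $\JA(a)$ visible through $\JA(\fc_i)$; adding up the chain inequalities then yields $\JA(a)\le\bigvee_i\JA(a_i+x_ia)$ and hence the required ideal equality. An induction on $n$, peeling off one generator per unit drop of $\Jdim$ through the Heitmann boundary quotient, organises this bookkeeping, and combined with the first paragraph it gives stable range $\le n$.

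The hard part will be this lifting of a purely order-theoretic fact — the complementary sequence in $\HeA$ — to a genuinely ring-theoretic shortening $a_i\rightsquigarrow a_i+x_ia$: the lattice $\HeA$ only records Jacobson-radical ideals, whereas the operation $+\,x_ia$ takes place in $\gA$ itself. What rescues the argument is precisely the ring-compatibility of the lattice operations of $\HeA$, namely $\JA(\fj_1)\vu\JA(\fj_2)=\JA(\fj_1+\fj_2)$ and $\JA(\fj_1)\vi\JA(\fj_2)=\JA(\fj_1\fj_2)$, which let every join and meet of the complementary chain be realised by sums and products of honest ideals; the delicate point is to verify that the explicit elements $x_i$ make the telescoping inequalities hold at the level of $\JA$, exactly as in \ref{thKroH} but with $\JA$ replacing $\DA$.

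Finally, the assertion about stably free modules is the standard cancellation consequence of stable range. By Definition \ref{defiStableRange}, stable range $\le n$ means every unimodular vector of length $n+1$ can be shortened; iterating this shortens any unimodular row of length $>n$ and thereby splits a free direct summand off any module. Concretely, if an \Amo $P$ satisfies $P\oplus\gA^s\simeq\gA^{r+s}$ with $r\ge n$, repeated shortening of the relevant unimodular rows cancels the factor $\gA^s$ and exhibits $P$ as free; I would simply cite this classical Bass cancellation argument.
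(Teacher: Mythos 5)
Your opening and closing paragraphs are fine: since $\JA(\fJ)=1$ in $\HeA$ exactly when $1\in\fJ$, stable range would indeed follow from an equality $\JA(a,a_1,\dots,a_n)=\JA(a_1+x_1a,\dots,a_n+x_na)$, and the deduction of ``stably free of rank $\geq n$ implies free'' from stable range is standard. The genuine gap is the central step, where you claim the telescoping recipe of Theorem~\ref{thKroH} can be run on a complementary sequence in $\HeA$. In $\ZarA$ that recipe works because every inequality in a complementary sequence carries a \emph{finite algebraic certificate}: $u\in\DA(\fc+\gen{b})$ means $u^m=c+\beta b$ for one specific element $c\in\fc$, and it is these specific elements — not the finitely generated ideals $\fc$ themselves — that get telescoped into the $x_i$. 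In $\HeA$ the corresponding inequality $\JA(\fc_i\gen{a_i})\leq\JA(\fc_{i-1}+\gen{a_{i-1}})$ unwinds, by \pref{eqRadJac}, to: for every $y\in\gA$, $1+gy$ is invertible modulo $\fc_{i-1}+\gen{a_{i-1}}$ (with $g$ running over generators of $\fc_i\gen{a_i}$); the element of $\fc_{i-1}$ witnessing invertibility varies with $y$, so no single element of $\fc_{i-1}$ can be extracted to feed the recipe. The identities $\JA(\fj_1)\vu\JA(\fj_2)=\JA(\fj_1+\fj_2)$ and $\JA(\fj_1)\vi\JA(\fj_2)=\JA(\fj_1\fj_2)$, which you say ``rescue'' the argument, are true but do not supply these certificates: what is missing is an existential (finite) description of the order relation of $\HeA$, analogous to $a\in\DA(\fb)\Leftrightarrow\exists m\;a^m\in\fb$, and there is none.

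This is not a repairable technicality. The ability to choose complementary sequences for $\JA$ made of \emph{single ring elements} is exactly what the Heitmann dimension $\Hdim$ of \cite{CLQ2004,CLQ2006} captures; one has $\Hdim\gA\leq\Jdim\gA$, but equality is not known. Likewise, your remark that one can peel off ``one generator per unit drop of $\Jdim$ through the Heitmann boundary quotient'' silently assumes that $\Jdim$ drops by one in the quotient $\gA\big/\big(a\gA+(\JA(0):a)\big)$ — that is the \emph{defining} property of $\Hdim$, unproven for $\Jdim$. Moreover, if your telescoping worked from $\Jdim\gA<n$, it would establish the Kronecker equality with $\Jdim$ in place of $\Kdim$, which is an open problem raised by the authors of \cite{CLQ2006}; only the $\Kdim$ version (Theorem~\ref{thKroH}) is known. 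Note finally that the present paper contains no proof of Theorem~\ref{Bass}; it quotes it from \cite{CLQ2004,CLQ2006} and \cite[chapter XIV]{ACMC}, and the proof there avoids any Kronecker-type statement: it proves the stronger $\Hdim$ version by induction on $n$, passing to a Heitmann boundary quotient (where $\Hdim$ drops by construction) and lifting; at the lifting step one uses the one certificate the Jacobson radical does provide, namely $x\in\JA(0)\Rightarrow 1+xy\in\Ati$ for all $y$, which suffices precisely because the stable range conclusion asserts only $1\in\gen{a_1+x_1a,\dots,a_n+x_na}$ and not an equality of radicals. That boundary-quotient induction, not the complementary-sequence telescoping, is the argument to reconstruct.
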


A matrix is said to be of rank $\geq k$ when the minors of size $k$ are comaximal.

\begin{theorem}
\label{thSerre} \emph{(Serre's Splitting Off \tho, for $\Jdim$)}\\
Let $k\geq 1$ and $M$ be a \pro \Amo   of rank $\geq k$, or more generally isomorphic to the image of a matrix of rank $\geq k$.\\
Assume that $\Jdim  \gA< k$.
Then $M\simeq N\oplus \gA$ for a suitable module~$N$ isomorphic to the image of a matrix of rank $\geq k-1$. 
\end{theorem}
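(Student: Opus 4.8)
The plan is to reduce the statement to the production of a \emph{unimodular element} of $M$ and then to build such an element by an induction controlled by the J-dimension.

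First I would record the standard reduction. Suppose we can find $s\in M$ and a linear form $\varphi\colon M\to\gA$ with $\varphi(s)=1$. Then $s\varphi$ is idempotent, $\gA s\simeq\gA$ is a free direct summand, and with $N=\ker\varphi=\Im(\Id-s\varphi)$ we get $M\simeq N\oplus\gA$. The module $N$ is again the image of a matrix (namely $\Id-s\varphi$ composed with the generating matrix $F$ of $M$), and that this matrix has rank $\geq k-1$ is a determinantal-ideal bookkeeping: splitting off a free rank-one summand shifts the determinantal ideals by one, so from $\cD_k=\gen{1}$ for $M$ one reads $\cD_{k-1}=\gen{1}$ for $N$. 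Thus everything comes down to exhibiting a unimodular element in a module which is the image of a matrix $F$ of rank $\geq k$, under $\Jdim\gA<k$.

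The heart of the matter is this existence. Since $\Jdim\gA=\Kdim(\HeA)$, the hypothesis reads $\Kdim(\HeA)\leq k-1$, so by \thref{th-dico-trdi-spec-dim1} every sequence $(x_0,\dots,x_{k-1})$ of $\HeA$ admits a complementary sequence satisfying the relations \pref{eqC2G}. I would argue by induction on $k$. Let $C_1,\dots,C_m$ be the columns of $F$, generating $M\subseteq\gA^n$; the hypothesis $\cD_k(F)=\gen{1}$ says the size-$k$ minors are comaximal. After elementary column operations I would arrange a configuration in which a candidate vector $C_1$ is unimodular outside a controlled closed part of $\Jspec\gA$, and then feed the coordinates of that vector (read in $\HeA$) into the complementary sequence: the relations \pref{eqC2G}, interpreted through the Jacobson radical $\JA$, furnish explicit certificates that let me correct $C_1$ by an $\gA$-combination of the remaining columns so as to remove the obstruction one dimension at a time. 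A Bass-style shortening, exactly as in \thref{Bass}, is what recombines the successive corrections into a single $\mathbf{t}\in\gA^m$ with $F\mathbf{t}$ unimodular; the rank-$\geq k$ hypothesis is what makes the topmost relation of the complementary sequence usable and is consumed precisely once per step.

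The main obstacle is this constructive production of the unimodular element. Classically one would note that a rank-$\geq k$ bundle over the at-most-$(k-1)$-dimensional space $\Jspec\gA$ has a nowhere-vanishing global section obtained by avoidance on $\Max\gA$; constructively there may be no points to avoid, and the delicate work is to replace the avoidance argument by the complementary-sequence manipulation while keeping the determinantal bookkeeping exact through each comaximal localization, so that the hypothesis $\cD_k(F)=\gen{1}$ is genuinely spent one unit at a time and the recombined vector is unimodular in $\gA$ itself rather than merely in a localization. Once the unimodular element is secured, the reduction of the first step delivers $M\simeq N\oplus\gA$ with $N$ the image of a matrix of rank $\geq k-1$.
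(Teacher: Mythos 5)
Your opening reduction is correct, and its determinantal bookkeeping can be made exact in one line: if $G$ is the matrix with columns $C_j-\varphi(C_j)s$, then $(s\mid G)$ is obtained from $(s\mid F)$ by elementary column operations, and Laplace expansion of any order-$k$ minor of $(s\mid G)$ along the column $s$ yields $\gen{1}=\cD_k\big((s\mid G)\big)\subseteq\cD_{k-1}(G)$, so $N$ is indeed the image of a matrix of rank $\geq k-1$. But this is the routine half. The entire theorem lives in the step you describe next, and what you offer there is a plan rather than a proof --- as you in effect concede (``the main obstacle'', ``the delicate work''). Concretely: first, ``arrange a configuration in which $C_1$ is unimodular outside a controlled closed part of $\Jspec\gA$'' is precisely the classical avoidance argument that has to be eliminated, since constructively $\Jspec\gA$ may have no points; second, no mechanism is given for converting a complementary sequence \pref{eqC2G} for $\Kdim(\HeA)\leq k-1$ into the actual correcting coefficients --- the assertion that the relations ``furnish explicit certificates'' is located exactly where the proof ought to be.

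Moreover, the proposed recombining device cannot work as stated: \thref{Bass} shortens a vector that is \emph{already} unimodular, whereas the task here is to \emph{manufacture} a unimodular vector inside $\Im F$, and no amount of column manipulation reduces the second problem to the first. History makes this point sharply: Heitmann \cite{Hei84} proved \thref{Bass} (stable range for $\Jdim$, without Noetherianity) and nevertheless had to leave the present statement as a conjecture. The proofs this paper points to (\cite{CLQ2004,CLQ2006} and \cite[chapter XIV]{ACMC}; the paper itself gives no proof) do not derive the theorem from \thref{Bass}, nor directly from complementary sequences in $\HeA$: they introduce the Heitmann dimension $\Hdim$, defined by an induction on boundary quotients built from the Jacobson radical $\JA$, prove as key lemma that $\Hdim\gA<k$ together with $\cD_k(F)=\gen{1}$ permits one to produce, by induction on $k$, an explicit combination of the columns of $F$ which is unimodular, and conclude via the inequality $\Hdim\gA\leq\Jdim\gA$. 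That inductive lemma --- the resolution of Heitmann's conjecture --- is precisely what is missing from your proposal.
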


\begin{corollary}\label{corthSerre}
Let $\gA$ be a ring such that $\Jdim \gA\le h$  and  $M$ be an \Amo 
isomorphic to the image of a matrix of rank $\geq h+s$.  Then~$M$
has a direct summand which is a free submodule of rang~$s$.  Precisely, if~$M$ is
the image of a matrix $F\in\gA^ {n\times m}$  of rank $\geq h+s$, one has $M= N\oplus L$
where $L$ is a direct summand that is free of rank $s$ in $\gA^n$, and $N$ the image of a matrix of \hbox{rank $\geq h$}.
\end{corollary}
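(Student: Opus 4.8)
The plan is to prove the statement by induction on $s$, peeling off one free rank-one summand at a time via \thref{thSerre}. For the base case $s=0$ there is nothing to split: we simply take $N=M$ and $L=\{0\}$, the free module of rank $0$, and $M$ is already the image of a matrix of rank $\geq h$ by hypothesis.

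For the inductive step, suppose the claim holds for $s-1$ and that $M=\Im F$ with $F\in\gA^{n\times m}$ of rank $\geq h+s$. Since $s\geq 1$ we have $\Jdim\gA\leq h<h+s$, so the hypothesis $\Jdim\gA<k$ of \thref{thSerre} is met with $k=h+s$. Applying that theorem splits off one copy of $\gA$: we get $M\simeq N_1\oplus\gA$ where $N_1$ is isomorphic to the image of a matrix of rank $\geq h+s-1$. Now $N_1$ satisfies the hypotheses of the corollary with the same $h$ and with $s-1$ in place of $s$, so the induction hypothesis gives $N_1=N\oplus L_1$ with $L_1$ free of rank $s-1$ and $N$ the image of a matrix of rank $\geq h$. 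Combining, $M\simeq N\oplus(L_1\oplus\gA)$, and $L_1\oplus\gA$ is free of rank $s$, which yields the asserted decomposition. Tracking the rank bound, the $i$-th application of \thref{thSerre} (for $i=1,\dots,s$) uses $k=h+s-i+1$, and the binding requirement $\Jdim\gA<h+1$ occurs at $i=s$; this is exactly $\Jdim\gA\leq h$, so all $s$ applications are legitimate and the final module $N$ is the image of a matrix of rank $\geq h$.

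The step that needs care --- and where I would spend the real work --- is the \emph{precise} (concrete) part: realizing $L$ as a genuine free direct summand of rank $s$ sitting inside $\gA^n$, rather than as an abstract isomorphic copy of $\gA^s$. For this I would run the induction through the concrete form of the splitting-off theorem, which at each stage produces not merely an abstract factor $\gA$ but an explicit unimodular vector $v_i\in M\subseteq\gA^n$ together with a decomposition $M=M_i\oplus\gA v_i$ in which $M_i$ is again the image of a matrix (of rank lowered by one) contained in $\gA^n$. Collecting the vectors $v_1,\dots,v_s$ obtained along the way, I would set $L=\gA v_1\oplus\cdots\oplus\gA v_s$ and $N=M_s$. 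The remaining obstacle is the bookkeeping showing that the $v_i$ together span a free rank-$s$ direct summand of $\gA^n$ (and not just of $M$): each $v_i$ is unimodular, hence spans a free rank-one summand of $\gA^n$, and one must check that splitting successively inside the complements keeps the accumulated vectors in direct sum and complemented in $\gA^n$. This is where the constructive content of \thref{thSerre} is used, and it is the only place where more than formal manipulation is required.
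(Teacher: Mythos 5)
Your proof is correct and follows essentially the intended route: the paper states this corollary without proof (deferring to \cite{CLQ2004,CLQ2006} and \cite[chapter XIV]{ACMC}), and the argument there is precisely your induction on $s$, applying \thref{thSerre} in its concrete form so that each step yields a unimodular vector $v_i\in M\subseteq\gA^n$, a linear form $\lambda_i$ with $\lambda_i(v_i)=1$, and an internal splitting of $M$ whose complement is again the image of a matrix (of rank lowered by one) inside $\gA^n$. The final bookkeeping you flag is handled exactly as you describe: since the later splittings take place inside $\Ker\lambda_i$, the modular law shows the accumulated sum $L=\gA v_1\oplus\cdots\oplus\gA v_s$ is free of rank $s$ and a direct summand of $\gA^n$, with $M=N\oplus L$.
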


In the following theorem we use the notion of \tf module \textsl{locally generated by~$k$ elements}.
In \clama this means that after \lon at any \idema,~$M$ is generated by $k$ \elts. A classically equivalent \cov \dfn  is that the $k$-th Fitting \id of $M$ is equal to~$\gen{1}$.

\begin{theorem}[Forster-Swan \tho for $\Jdim$]
\label{thSwan} If $\Jdim\gA\leq k$  and if the \Amo $M=\gen{y_1 , \dots,  y_{k+r+s}}$ is  \lot generated by $r$ \elts, then it is 
generated by $k+r$
\elts: one can compute 
$
z_1,\, \ldots, \,z_{k+r}$  
in  $\gen{y_{k+r+1},\ldots,y_{k+r+s}}
$
such that
$M$  is generated by $(y_1+z_1, \ldots, y_{k+r}+z_{k+r})$.
\end{theorem}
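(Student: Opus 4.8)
The plan is to deduce the statement from a single \emph{elimination step}, iterated $s$ times. The step I would isolate is the following key lemma: \emph{if $\Jdim\gA\le k$ and $M=\gen{y_1,\dots,y_n}$ is \lot generated by $r$ \elts with $n\ge k+r+1$, then there exist $t_1,\dots,t_{n-1}\in\gA$ with $M=\gen{y_1+t_1y_n,\dots,y_{n-1}+t_{n-1}y_n}$}; equivalently, $y_n$ lies in the submodule generated by the $n-1$ modified \elts, so $y_n$ becomes superfluous while $M$ itself is unchanged. Granting this, I apply it to $M=\gen{y_1,\dots,y_{k+r+s}}$ to absorb $y_{k+r+s}$, then again to the resulting family of length $k+r+s-1$, and so on. Since the lengths run $k+r+s,\dots,k+r+1$ and the lemma only requires length $\ge k+r+1$, exactly $s$ applications bring the family down to length $k+r$. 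Crucially $M$ is the same module at every stage, so the hypothesis \gui{\lot generated by $r$ \elts} never has to be re-established.

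It remains to check that the accumulated corrections land in the prescribed submodule. At the first step each $y_i$ (for $i\le k+r+s-1$) is modified by $t_iy_{k+r+s}\in\gen{y_{k+r+s}}$. Writing $y_i'=y_i+t_iy_{k+r+s}$, every later generator $y_j'$ with $j>k+r$ lies in $\gen{y_{k+r+1},\dots,y_{k+r+s}}$, hence so does every correction produced by the subsequent steps applied to $(y_1',\dots,y_{k+r+s-1}')$. Adding the first-step and later contributions, the total correction $z_i$ added to $y_i$ (for $i\le k+r$) lies in $\gen{y_{k+r+1},\dots,y_{k+r+s}}$, and by construction $M=\gen{y_1+z_1,\dots,y_{k+r}+z_{k+r}}$, which is the asserted conclusion; the $z_i$ are produced explicitly, so the argument stays \cov.

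The whole weight of the theorem thus falls on the key lemma, and here the dimension hypothesis enters through the \trdi $\HeA$. I would first rewrite \gui{\lot generated by $r$ \elts} as the Fitting-\id equality $\cF_r(M)=\gen{1}$ (as recalled just before \thref{thSwan}), and then prove the lemma by induction on $k=\Jdim\gA=\Kdim(\HeA)$, using the recursive boundary characterisation of Krull dimension that underlies the complementary-sequence description of \thref{th-dico-trdi-spec-dim1}. In the base case $k=0$ the lattice $\HeA$ is a \agB and the lemma reduces to a direct computation there. For the inductive step I would expand $1\in\cF_r(M)$ over the generators of the $r$-th Fitting \id (suitable minors of a presentation of $M$ on $y_1,\dots,y_n$); each generator $\mu$ determines a Heitmann boundary modulo which $\Jdim$ drops by one while the Fitting condition becomes sharper, so the inductive hypothesis supplies coefficients there, and these partial solutions are recombined by a cut/patching step of the same shape as the reconstruction of $\DA(a,b_1,\dots,b_n)$ in \thref{thKroH}.

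The main obstacle is precisely this elimination lemma: propagating the pair $(k,r)$ correctly through the boundary induction, and verifying that $\cF_r(M)=\gen{1}$ is both preserved and sharpened at each descent, is the delicate bookkeeping. Everything outside the lemma—the outer iteration on $s$ and the tracking of where the corrections live—is formal. The technique is the module-theoretic lift of the one used for \thref{thKroH}, and the parallel splitting statement \thref{thSerre} can be read as its projective-module counterpart.
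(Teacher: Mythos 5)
Your outer layer is correct and complete: the reduction of \thref{thSwan} to a one-generator elimination lemma, the iteration with $n$ running from $k+r+s$ down to $k+r+1$, and the check that all accumulated corrections land in $\gen{y_{k+r+1},\dots,y_{k+r+s}}$ are exactly the formal part of the argument, and this is also how the proof runs in the sources this paper cites for the theorem (\cite{CLQ2004,CLQ2006} and \cite[Chapter XIV]{ACMC}; the paper itself gives no proof, only these references). But, as you say yourself, the whole weight then falls on the elimination lemma, and for that lemma you give only an outline whose decisive steps are asserted by analogy rather than proved.

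The concrete gap is the claim that \gui{each generator $\mu$ determines a Heitmann boundary modulo which $\Jdim$ drops by one}. Since $\Jdim\gA$ is defined as $\Kdim(\HeA)$, and neither $\He$ nor the Jacobson radical interacts transparently with quotients of $\gA$, this dimension drop is not available as a black box: you would have to (i) specify the ring-level boundary ideal (the Heitmann boundary $a\gA + (\JA(0):a)$, which you never write down), (ii) prove that $\He$ of this boundary quotient ring is a quotient of the Krull-boundary quotient of the lattice $\HeA$, so that its Krull dimension is $\leq \Jdim\gA-1$ --- this is in substance the inequality $\Hdim\gA\leq\Jdim\gA$ of \cite{CLQ2006}, a theorem and not a formality; and (iii) carry out the lifting step, i.e.\ show how a solution of the generation problem modulo the boundary ideal produces the $t_i$ over $\gA$ itself --- this is where the component $(\JA(0):a)$ and the formula \pref{eqRadJac} are actually used, and in your sketch it is replaced by an appeal to the shape of \thref{thKroH}. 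The cited literature avoids (ii) by \emph{defining} the Heitmann dimension $\Hdim$ through the boundary recursion, proving the elimination lemma by induction on $\Hdim$, and then deducing the $\Jdim$ statement from $\Hdim\leq\Jdim$; that is precisely what the paper's remark \gui{the following results with Jdim hold also for Hdim} is pointing to. Your proposal becomes a proof only once (i)--(iii) are supplied or that detour through $\Hdim$ is taken; as written, the mathematical core is missing.
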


Given two modules $M$ and $L$ we say that $M$ \textsl{is cancellative for $L$} if $M\oplus L\simeq N\oplus L$ implies $M \simeq N$.%

\begin{theorem}[Bass' cancellation \tho, with $\Jdim$]
\label{thBassCancel2} 
Let $M$ be a \ptf \Amo of rank $\geq k$.
If $\Jdim \gA< k$,
then $M$ is cancellative for every \ptf \Amo. I.e.\
 if $Q$ is \ptf and $M\oplus Q\simeq N\oplus Q$, then~$M\simeq N$.
\end{theorem}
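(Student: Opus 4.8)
The plan is to reduce the statement to the cancellation of a single free summand and then to feed the rank hypothesis into Serre's Splitting Off theorem. First I would record that $N$ is \ptf: since $M\oplus Q\simeq N\oplus Q$, the module $N$ is a direct summand of $N\oplus Q$, hence projective, and comparing ranks (rank is additive and locally constant on $\Spec$) gives $\mathrm{rank}(N)=\mathrm{rank}(M)\geq k$. Choosing $Q'$ with $Q\oplus Q'\simeq\gA^\ell$ and adding $Q'$ on both sides yields $M\oplus\gA^\ell\simeq N\oplus\gA^\ell$. It therefore suffices to prove the rank-one cancellation statement: if $P,P'$ are \ptf of rank $\geq k$ with $\Jdim\gA<k$ and $P\oplus\gA\simeq P'\oplus\gA$, then $P\simeq P'$. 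Applying this repeatedly with $P=M\oplus\gA^{j}$ and $P'=N\oplus\gA^{j}$ (each of rank $\geq k$, so the hypothesis persists) peels off the $\ell$ copies of $\gA$ one at a time.

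For the rank-one case I would reformulate cancellation in terms of unimodular elements (an element of a module on which some linear form takes the value $1$). Fix an isomorphism $\theta:P\oplus\gA\to P'\oplus\gA$ and set $u=\theta(0,1)$, a unimodular element of $P'\oplus\gA$. Since $P\simeq(P\oplus\gA)/\gA(0,1)$ and $P'\simeq(P'\oplus\gA)/\gA(0,1)$, it suffices to produce an automorphism $\sigma$ of $P'\oplus\gA$ with $\sigma(u)=(0,1)$: then $\sigma\theta$ maps $\gA(0,1)$ onto $\gA(0,1)$ and descends to an isomorphism $P\simeq P'$. Writing $u=(m_0,a_0)$, the elementary automorphisms at my disposal are the transvections $\tau_{m_1}\colon(m,a)\mapsto(m+am_1,a)$ for $m_1\in P'$ and $\sigma_\nu\colon(m,a)\mapsto(m,a+\nu(m))$ for $\nu\in\Hom(P',\gA)$.

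The key lemma is that the rank hypothesis lets one arrange the $P'$-component to become unimodular: there is $m_1\in P'$ such that $m_2:=m_0+a_0m_1$ is unimodular in $P'$, say $\nu(m_2)=1$. Granting this, the endgame is three transvections with no further input: $\tau_{m_1}$ sends $(m_0,a_0)$ to $(m_2,a_0)$; then $\sigma_{(1-a_0)\nu}$ sends it to $(m_2,a_0+(1-a_0))=(m_2,1)$; and $\tau_{-m_2}$ sends it to $(0,1)$.

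The main obstacle is precisely this key lemma, which is where the dimension hypothesis is consumed. Unimodularity of $(m_0,a_0)$ forces the image of $m_0$ to be unimodular in the reduced module $P'/a_0P'$, which is projective of rank $\geq k$ over $\gA/a_0\gA$, a ring with $\Jdim<k$. The problem is thus to lift this unimodular element back to a unimodular element of $P'$ lying in the coset $m_0+a_0P'$; that such a lift exists is a surjective-stability statement for unimodular elements, and it is exactly here that the inequality rank $>\Jdim$ must be used, via Serre's Splitting Off theorem (\thref{thSerre}, \thref{corthSerre}) together with the stable range bound of \thref{Bass} to correct the naive lift. Carrying out this lifting carefully — rather than the elementary transvection bookkeeping of the previous paragraph — is the genuine content of the theorem.
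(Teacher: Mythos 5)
First, a remark on the comparison itself: the paper gives no proof of \thref{thBassCancel2}; it is a survey statement referred to \cite{CLQ2004,CLQ2006} and \cite[chapter XIV]{ACMC}, so your proposal must be measured against the proofs given there. Your reduction is the standard Bass skeleton and, as far as it goes, it is correct: $N$ is \ptf of the same rank as $M$; adding $Q'$ with $Q\oplus Q'\simeq\gA^\ell$ reduces to cancelling free summands; peeling one copy of $\gA$ at a time reduces to rank-one cancellation; and the three-transvection endgame ($\tau_{m_1}$, then $\sigma_{(1-a_0)\nu}$, then $\tau_{-m_2}$) completely settles that case \emph{granted} the key lemma: if $(m_0,a_0)$ is unimodular in $P'\oplus\gA$ and $P'$ has rank $\geq k>\Jdim\gA$, then some $m_1\in P'$ makes $m_0+a_0m_1$ unimodular in $P'$.

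The genuine gap is that this key lemma is the entire content of the theorem, and you neither prove it nor reduce it to the quoted results. Two concrete problems with the route you sketch. First, the passage modulo $a_0$ is circular: unimodularity of the image of $m_0$ in $P'/a_0P'$ is literally the relation $\nu(m_0)+ba_0=1$ you started from (the inequality $\Jdim(\gA/a_0\gA)\leq\Jdim\gA$ you implicitly use is correct, classically and constructively, since $\He(\gA)$ surjects onto $\He(\gA/a_0\gA)$), and the assertion that some lift in the coset $m_0+a_0P'$ is unimodular is a restatement of the key lemma, not a reduction of it. Second, the black boxes do not combine as claimed: \thref{Bass} concerns unimodular vectors in \emph{free} modules, while Serre splitting (Corollary~\ref{corthSerre}) only yields $P'\simeq N\oplus\gA^{s}$ with $s=k-\Jdim\gA$ free coordinates and $N$ of rank $\geq\Jdim\gA$; to shorten by $a_0$ via the stable range one would need roughly $s>\Jdim\gA$, i.e.\ $k\geq 2\,\Jdim\gA$, which is not given, and modifications of the $N$-component are confined to values of linear forms on $N$, so they cannot be prescribed freely. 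In the cited references the lemma (transvections of $P'\oplus\gA$ act transitively on unimodular elements once the rank exceeds the dimension) is not a corollary of Serre's theorem plus stable range: it is proved, for the finer dimension $\Hdim\leq\Jdim$, by the same induction on boundary ideals that yields \thref{thSerre} and \thref{Bass} themselves — it is a sibling of those results. So your proposal is a correct formal reduction of the theorem to its classical core, but the core argument, where the hypothesis $\Jdim\gA<k$ actually does its work, is missing.
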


Theorems \ref{thSerre}, \ref{thSwan} and \ref{thBassCancel2} were conjectured  in \cite{Hei84} (Heitmann proved these theorems for the Krull dimension without Noetherianity assumption).

\subsection{Other results concerning Krull dimension}

In \cite{CACM} Theorem XII-6.2 gives the following important characterisation.
\textsl{An integrally closed \coh \ri $\gA$ of Krull dimension at most $1$ is a Prüfer domain.} This explains in a constructive way the nowadays classical definition of Dedekind domains as Noetherian, integrally closed domains of Krull dimension $\leq 1$, and the fact that, from this \dfn, in \clama, one is able to prove that \tf nonzero ideals are invertible.

In \cite[chapter XVI]{CACM} there is  a \prco of the Lequain-Simis \tho. This proof uses the Krull dimension. 

In \cite[section 2.6]{Yen2015} we find the following new result, with a  \prco. \textsl{If $\gA$ is a ring of Krull dimension $\leq d$, then the stably free modules of rank $>d$ over $\AX$ are free.}


\subsection{Open map}

A \tho by Grothendieck in commutative \alg says that if a ring morphism $\varphi:\gA\to\aqo{\AXn}{\lfs}$ is flat, then the dual morphism  $\Spec \varphi$ is an open map.

It should be interesting to give a constructive proof of this result. The first case should be  $\aqo{\AXn}{f}$: we know that $\varphi$ is flat \ssi the ideal generated by the  \coes of $f$ is generated by an idempotent.

\rdb
\addcontentsline{toc}{section}{References}

\small

\endgroup
\stopcontents[english]

\clearpage
\newpage
\thispagestyle{empty}
~
\clearpage
\newpage

\setcounter{page}{1}\renewcommand\thepage{F\arabic{page}}\renewcommand\theHsection{F\arabic{section}}

\clearpage\setcounter{section}{0}\selectlanguage{french}\def\frenchproofname{\textsl{Démonstration}}


\newcommand \Glio {\MA{\mathsf{Glio}}}

\newcommand{\vou}{\MA{\tsbf{ ou }}}
\newcommand{\Vou}{\MA{\tsbf{OU}}}
\newcommand \EXists[1] {\tsbf{Introduire }{#1}\tsbf{ tel que }\,}
\newcommand \vet {\tsbf{,}\;}
\newcommand \Atcl {\mathrm{Atcl}}



\newcounter{MF}
\newcommand\stMF{\stepcounter{MF}}

\newcommand{\lec}{\stMF\ifodd\value{MF}lecteur \else 
lectrice \fi}
\newcommand{\lecz}{\stMF\ifodd\value{MF}lecteur\else lectrice\fi}

\newcommand{\lecs}{\stMF\ifodd\value{MF}lecteurs \else 
lectrices \fi}
\newcommand{\lecsz}{\stMF\ifodd\value{MF}lecteurs\else 
lectrices\fi}

\newcommand{\alec}{\stMF\ifodd\value{MF}au lecteur \else%
à la lectrice \fi}
\newcommand{\alecz}{\stMF\ifodd\value{MF}au lecteur\else%
à la lectrice\fi}

\newcommand{\dlec}{\stMF\ifodd\value{MF}du lecteur \else%
de la lectrice \fi}
\newcommand{\dlecz}{\stMF\ifodd\value{MF}du lecteur\else%
de la lectrice\fi}

\newcommand{\llec}{\stMF\ifodd\value{MF}le lecteur \else la lectrice \fi}
\newcommand{\llecz}{\stMF\ifodd\value{MF}le lecteur\else la lectrice\fi}

\newcommand{\Llec}{\stMF\ifodd\value{MF}Le lecteur \else La lectrice \fi}

\newcommand{\lui}{\ifodd\value{MF}lui \else
elle \fi}
\newcommand{\luiz}{\ifodd\value{MF}lui\else
elle\fi}

\newcommand{\celui}{\ifodd\value{MF}celui \else
celle \fi}

\newcommand{\ceux}{\ifodd\value{MF}ceux \else
celles \fi}

\newcommand{\er}{\ifodd\value{MF}er \else
ère \fi}

\newcommand{\eux}{\ifodd\value{MF}eux \else
elles \fi}

\newcommand{\eUx}{\ifodd\value{MF}eux \else
euse \fi}

\newcommand{\leux}{\ifodd\value{MF}leux \else
leuse \fi}

\newcommand{\il}{\ifodd\value{MF}il \else
elle \fi}

\newcommand{\ien}{\ifodd\value{MF}ien \else
ienne \fi}

\newcommand{\e}{\ifodd\value{MF} \else e \fi}
\newcommand{\ez}{\ifodd\value{MF}\else e\fi}

\newcommand{\n}{\ifodd\value{MF}n \else nne \fi}
\newcommand{\nz}{\ifodd\value{MF}n\else nne\fi}

\makeatletter
\newcommand{\la}{\@ifstar{\ifodd\value{MF}le\else
la\fi}{\stMF\ifodd\value{MF}le \else la \fi}}
\makeatother

\newcommand \rem{\rdb
\noi{\sl Remarque. }}

\newcommand \REM[1]{\rdb
\noi{\sl Remarque#1. }}

\newcommand \rems{\rdb
\noi{\sl Remarques. }}

\newcommand \exl{\rdb
\noi{\bf Exemple. }}

\newcommand \EXL[1]{\rdb
\noi{\bf Exemple: #1. }}

\newcommand \exls{\rdb
\noi{\bf Exemples. }}

\newcommand \thref[1] {théorème~\ref{#1}}
\newcommand \paref[1] {page~\pageref{#1}}
\newcommand \pstfref[1] {Posi\-tiv\-stel\-lensatz formel~\ref{#1}}
\newcommand \pstref[1] {Posi\-tiv\-stel\-lensatz~\ref{#1}}

\newcommand\oge{\leavevmode\raise.3ex\hbox{$\scriptscriptstyle\langle\!\langle\,$}}
\newcommand\feg{\leavevmode\raise.3ex\hbox{$\scriptscriptstyle\,\rangle\!\rangle$}}

\newcommand\gui[1]{\oge{#1}\feg}

\newcommand \facile{\begin{proof}
La démonstration est laissée \alecz.
\end{proof}
}

\newcommand \num {{n$^{\mathrm{ o}}$}}

\newcommand\comm{\rdb
\noi{\it Commentaire. }}

\newcommand\COM[1]{\rdb
\noi{\it Commentaire #1. }}

\newcommand\comms{\rdb
\noi{\it Commentaires. }}

\newcommand\Pb{\rdb
\noi{\bf Problème. }}

\newcommand\eoq{\hbox{}\nobreak
\vrule width 1.4mm height 1.4mm depth 0mm}

\newcommand \Cad {C'est-à-dire\xspace}
\newcommand \recu {récur\-rence\xspace}
\newcommand \hdr {hypo\-thèse de \recu}
\newcommand \cad {c'est-à-dire\xspace}
\newcommand \cade {c'est-à-dire en\-co\-re\xspace}
\newcommand \ssi {si, et seu\-lement si, }
\newcommand \ssiz {si, et seu\-lement si,~}
\newcommand \cnes {con\-di\-tion néces\-sai\-re et suf\-fi\-san\-te\xspace}
\newcommand \spdg {sans per\-te de géné\-ra\-lité\xspace}
\newcommand \Spdg {Sans per\-te de géné\-ra\-lité\xspace}

\newcommand \Propeq {Les pro\-pri\-é\-tés sui\-van\-tes sont 
équi\-va\-len\-tes.}
\newcommand \propeq {les pro\-pri\-é\-tés sui\-van\-tes sont 
équi\-va\-len\-tes.}

\newcommand \Kev {$\gK$-\evc}
\newcommand \Kevs {$\gK$-\evcs}

\newcommand \Lev {$\gL$-\evc}
\newcommand \Levs {$\gL$-\evcs}

\newcommand \Qev {$\QQ$-\evc}
\newcommand \Qevs {$\QQ$-\evcs}

\newcommand \kev {$\gk$-\evc}
\newcommand \kevs {$\gk$-\evcs}

\newcommand \lev {$\gl$-\evc}
\newcommand \levs {$\gl$-\evcs}

\newcommand \Alg {$\gA$-\alg}
\newcommand \Algs {$\gA$-\algs}

\newcommand \Blg {$\gB$-\alg}
\newcommand \Blgs {$\gB$-\algs}

\newcommand \Clg {$\gC$-\alg}
\newcommand \Clgs {$\gC$-\algs}

\newcommand \klg {$\gk$-\alg}
\newcommand \klgs {$\gk$-\algs}

\newcommand \llg {$\gl$-\alg}
\newcommand \llgs {$\gl$-\algs}

\newcommand \Klg {$\gK$-\alg}
\newcommand \Klgs {$\gK$-\algs}

\newcommand \Llg {$\gL$-\alg}
\newcommand \Llgs {$\gL$-\algs}

\newcommand \QQlg {$\QQ$-\alg}
\newcommand \QQlgs {$\QQ$-\algs}

\newcommand \Rlg {$\gR$-\alg}
\newcommand \Rlgs {$\gR$-\algs}

\newcommand \RRlg {$\RR$-\alg}
\newcommand \RRlgs {$\RR$-\algs}

\newcommand \ZZlg {$\ZZ$-\alg}
\newcommand \ZZlgs {$\ZZ$-\algs}

\newcommand \Amo {$\gA$-mo\-du\-le\xspace}
\newcommand \Amos {$\gA$-mo\-du\-les\xspace}

\newcommand \Bmo {$\gB$-mo\-du\-le\xspace}
\newcommand \Bmos {$\gB$-mo\-du\-les\xspace}

\newcommand \Cmo {$\gC$-mo\-du\-le\xspace}
\newcommand \Cmos {$\gC$-mo\-du\-les\xspace}

\newcommand \kmo {$\gk$-mo\-du\-le\xspace}
\newcommand \kmos {$\gk$-mo\-du\-les\xspace}

\newcommand \Kmo {$\gK$-mo\-du\-le\xspace}
\newcommand \Kmos {$\gK$-mo\-du\-les\xspace}

\newcommand \Lmo {$\gL$-mo\-du\-le\xspace}
\newcommand \Lmos {$\gL$-mo\-du\-les\xspace}

\newcommand \Ali {appli\-ca\-tion $\gA$-\lin}
\newcommand \Alis {appli\-ca\-tions $\gA$-\lins}

\newcommand \Kli {appli\-ca\-tion $\gK$-\lin}
\newcommand \Klis {appli\-ca\-tions $\gK$-\lins}

\newcommand \Bli {appli\-ca\-tion $\gB$-\lin}
\newcommand \Blis {appli\-ca\-tions $\gB$-\lins}

\newcommand \Cli {appli\-ca\-tion $\gC$-\lin}
\newcommand \Clis {appli\-ca\-tions $\gC$-\lins}

\newcommand \ac{algé\-bri\-quement clos\xspace}  

\newcommand \acl {an\-neau \icl}
\newcommand \acls {an\-neaux \icl}

\newcommand \adp {an\-neau de Pr\"u\-fer\xspace}
\newcommand \adps {an\-neaux de Pr\"u\-fer\xspace}

\newcommand \adpc {\adp \coh}
\newcommand \adpcs {\adps \cohs}

\newcommand \adu {\alg de décom\-po\-sition univer\-selle\xspace}
\newcommand \adus {\algs de décom\-po\-sition univer\-selle\xspace}

\newcommand \adv {anneau de valuation\xspace}
\newcommand \advs {anneaux de valuation\xspace}

\newcommand \advl {anneau \dvla} 
\newcommand \advls {anneaux \dvlas} 

\newcommand \Afr {Anneau \frl}
\newcommand \Afrs {Anneaux \frls}
\newcommand \afr {anneau \frl}
\newcommand \aFr {\hyperref[theorieAfr]{anneau \frl}\xspace}
\newcommand \afrs {anneaux \frls}

\newcommand \afrr {\afr réduit\xspace}
\newcommand \afrrs {\afrs réduits\xspace}
\newcommand \Afrrs {\Afrs réduits\xspace}

\newcommand \afrvr {\afr avec \ravs}
\newcommand \aFrvr {\hyperref[theorieAfrrv]{\afrvr}\xspace}
\newcommand \afrvrs {\afrs avec \ravs}

\newcommand \aftr {anneau réticulé \ftm réel\xspace}
\newcommand \aftrs {anneaux réticulés \ftm réels\xspace}

\newcommand \aG {\alg galoisienne\xspace}
\newcommand \aGs {\algs galoisiennes\xspace}

\newcommand \agB {\alg de Boole\xspace}
\newcommand \agBs {\algs de Boole\xspace}

\newcommand \agH {\alg de Heyting\xspace}
\newcommand \agHs {\algs de Heyting\xspace}

\newcommand \agq{algé\-bri\-que\xspace}
\newcommand \agqs{algé\-bri\-ques\xspace}

\newcommand \agqt{algé\-bri\-que\-ment\xspace}

\newcommand \aKr {anneau de Krull\xspace}
\newcommand \aKrs {anneaux de Krull\xspace}

\newcommand \alg {algè\-bre\xspace}
\newcommand \algs {algè\-bres\xspace}

\newcommand \algo{algo\-rithme\xspace}
\newcommand \algos{algo\-rithmes\xspace}

\newcommand \algq{al\-go\-rith\-mi\-que\xspace}
\newcommand \algqs{al\-go\-rith\-mi\-ques\xspace}

\newcommand \ali {appli\-ca\-tion \lin}
\newcommand \alis {appli\-ca\-tions \lins}

\newcommand \alo {an\-neau lo\-cal\xspace}
\newcommand \alos {an\-neaux lo\-caux\xspace}

\newcommand \algb {an\-neau \lgb}
\newcommand \algbs {an\-neaux \lgbs}

\newcommand \alrd {\alo \dcd}
\newcommand \alrds {\alos \dcds}

\newcommand \anar {anneau \ari}
\newcommand \anars {anneaux \aris}

\newcommand \anor {an\-neau nor\-mal\xspace}
\newcommand \anors {an\-neaux nor\-maux\xspace}

\newcommand \apf {\alg \pf}
\newcommand \apfs {\algs \pf}

\newcommand \apG {\alg pré\-galoisienne\xspace}
\newcommand \apGs {\algs pré\-galoisiennes\xspace}

\newcommand \arc {anneau réel clos\xspace}
\newcommand \aRc {\hyperref[theorieArc]{\arc}\xspace}
\newcommand \arcs {anneaux réels clos\xspace}

\newcommand \ari{arith\-mé\-tique\xspace}  
\newcommand \aris{arith\-mé\-tiques\xspace}  

\newcommand \Asr {Anneau \str}
\newcommand \Asrs {Anneaux \strs}
\newcommand \asr {anneau \str}
\newcommand \asrs {anneaux \strs}

\newcommand \asrvr {\asr avec \ravs}
\newcommand \asrvrs {\asrs avec \ravs}

\newcommand \atf {\alg \tf}
\newcommand \atfs {\algs \tf}

\newcommand \auto {auto\-mor\-phisme\xspace}
\newcommand \autos {auto\-mor\-phismes\xspace}


\newcommand \bdg {base de Gr\"obner\xspace}
\newcommand \bdgs {bases de Gr\"obner\xspace}

\newcommand \bdp {base de \dcn partielle\xspace}
\newcommand \bdps {bases de \dcn partielle\xspace}

\newcommand \bdf {base de \fap\xspace}

\newcommand \Bif {Borne infé\-rieure\xspace} %
\newcommand \bif {borne infé\-rieure\xspace} %
\newcommand \bifs {bornes infé\-rieures\xspace} %

\newcommand \bsp {borne supé\-rieure\xspace} %
\newcommand \bsps {borne supé\-rieures\xspace} %


\newcommand \cac{corps \ac}  

\newcommand \calf{calcul formel\xspace}  

\newcommand \cara{carac\-té\-ris\-tique\xspace}  
\newcommand \caras{carac\-té\-ris\-tiques\xspace}  

\newcommand \carn{carac\-té\-ri\-sation\xspace}  
\newcommand \carns{carac\-té\-ri\-sations\xspace}  

\newcommand \carar{carac\-té\-riser\xspace}

\newcommand \carf{de carac\-tère fini\xspace}  

\newcommand \cdi{corps discret\xspace}
\newcommand \cdis{corps discrets\xspace}
  
\newcommand \cdv{changement de variables\xspace}  
\newcommand \cdvs{changements de variables\xspace}  

\newcommand \cli {clô\-ture inté\-grale\xspace}

\newcommand \codi {corps ordonné discret\xspace}
\newcommand \codis {corps ordonnés discrets\xspace}

\newcommand \coe {coef\-fi\-cient\xspace}
\newcommand \coes {coef\-fi\-cients\xspace}

\newcommand \coh {co\-hé\-rent\xspace}
\newcommand \cohs {co\-hé\-rents\xspace}

\newcommand \cohc {co\-hé\-rence\xspace}

\newcommand \coli {combi\-nai\-son \lin}
\newcommand \colis {combi\-nai\-sons \lins}

\newcommand \com {co\-ma\-xi\-maux\xspace}
\newcommand \come {co\-ma\-xi\-males\xspace}

\newcommand \coo {coor\-donnée\xspace}
\newcommand \coos {coor\-données\xspace}

\newcommand \cop {complé\-men\-taire\xspace}
\newcommand \cops {complé\-men\-taires\xspace}

\newcommand \cosv {conser\-vative\xspace}
\newcommand \cosvs {conser\-vatives\xspace}

\newcommand \cOsv {\hyperref[defithconserv]{conser\-vative}\xspace}
\newcommand \cOsvs {\hyperref[defithconserv]{conser\-vatives}\xspace}

\newcommand \covr {corps ordonné avec \ravs}
\newcommand \covrs {corps ordonnés avec \ravs}

\newcommand \cpb {compa\-tible\xspace} 
\newcommand \cpbs {compa\-tibles\xspace} 

\newcommand \cpbt {compa\-tibi\-lité\xspace} 
\newcommand \cpbtz {compa\-tibi\-lité} 

\newcommand \crc {corps réel clos\xspace}
\newcommand \crcs {corps réels clos\xspace}

\newcommand \crcd {corps réel clos discret\xspace}
\newcommand \crcds {corps réels clos discrets\xspace}


\newcommand \dcd {rési\-duel\-lement dis\-cret\xspace}
\newcommand \dcds {rési\-duel\-lement dis\-crets\xspace}

\newcommand \dcn {décom\-po\-sition\xspace}
\newcommand \dcns {décom\-po\-sitions\xspace}

\newcommand \dcnb {\dcn bornée\xspace}

\newcommand \dcnc {\dcn complète\xspace}

\newcommand \dcnp {\dcn partielle\xspace}

\newcommand \dcp {décom\-posa\-ble\xspace}
\newcommand \dcps {décom\-posa\-bles\xspace}

\newcommand \ddk {dimension de~Krull\xspace}
\newcommand \ddi {de dimension infé\-rieure ou égale à~}

\newcommand \ddp {domaine de Pr\"u\-fer\xspace}
\newcommand \ddps {domaines de Pr\"u\-fer\xspace}

\newcommand \Demo{Démon\-stra\-tion\xspace}     

\newcommand \demo{démon\-stra\-tion\xspace}     
\newcommand \demos{démon\-stra\-tions\xspace}     

\newcommand \dems{démons\-tra\-tions\xspace}

\newcommand \deno{déno\-mi\-nateur\xspace}     
\newcommand \denos{déno\-mi\-nateurs\xspace}   

\newcommand \deter {déter\-mi\-nant\xspace}  
\newcommand \deters {déter\-mi\-nants\xspace}  
  
\newcommand \Dfn{Défi\-nition\xspace}  
\newcommand \Dfns{Défi\-nitions\xspace}  
\newcommand \dfn{défi\-nition\xspace}  
\newcommand \dfns{défi\-nitions\xspace}  

\newcommand \dftr {droite réticulée \ftm réelle\xspace}
\newcommand \dftrs {droites réticulées \ftm réelles\xspace}
  
\newcommand \dil{diffé\-rentiel\xspace}  
\newcommand \dils{diffé\-rentiels\xspace}  
\newcommand \dile{diffé\-ren\-tielle\xspace}  
\newcommand \diles{diffé\-ren\-tielles\xspace}  

\newcommand \dip{diviseur principal\xspace}
\newcommand \dips{diviseurs principaux\xspace}

\newcommand \discri{discri\-minant\xspace}  
\newcommand \discris{discri\-minants\xspace}  

\newcommand \divle {dimension divisorielle\xspace}

\newcommand \dit{distri\-bu\-ti\-vité\xspace}

\newcommand \dlg{d'élar\-gis\-sement\xspace}  

\newcommand \dok {domaine de Dedekind\xspace}
\newcommand \doks {domaines de Dedekind\xspace}

\newcommand \dvla {à diviseurs\xspace}
\newcommand \dvlas {à diviseurs\xspace}

\newcommand \dvld {\dvlt décom\-posé\xspace} %
\newcommand \dvlds {\dvlt décom\-posés\xspace} %

\newcommand \dvlg {diviso\-riel\xspace} 
\newcommand \dvlgs {diviso\-riels\xspace} 

\newcommand \dvli {\dvlt inver\-sible\xspace} 
\newcommand \dvlis {\dvlt inver\-sibles\xspace} 

\newcommand \dvlt {diviso\-riel\-lement\xspace} %

\newcommand \dvz {di\-viseur de zéro\xspace}
\newcommand \dvzs {di\-viseurs de zéro\xspace}

\newcommand \dve {divi\-si\-bi\-lité\xspace}

\newcommand \dvee {à \dve explicite\xspace}

\newcommand \dvr {diviseur\xspace}
\newcommand \dvrs {diviseurs\xspace}


\newcommand \Eds {Exten\-sion des sca\-laires\xspace}
\newcommand \edss {exten\-sions des sca\-laires\xspace}
\newcommand \eds {exten\-sion des sca\-laires\xspace}

\newcommand \eco {\elts \com}

\newcommand \egmt {éga\-lement\xspace}

\newcommand \egt {éga\-li\-té\xspace}
\newcommand \egts {éga\-li\-tés\xspace}

\newcommand \eli{élimi\-nation\xspace}  

\newcommand \elr{élé\-men\-taire\xspace}  
\newcommand \elrs{élé\-men\-taires\xspace}  

\newcommand \elrt{élé\-men\-tai\-rement\xspace}  

\newcommand \elt{élé\-ment\xspace}  
\newcommand \elts{élé\-ments\xspace}  

\def \endo {en\-do\-mor\-phisme\xspace}
\def \endos {en\-do\-mor\-phismes\xspace}

\newcommand \entrel {rela\-tion impli\-ca\-tive\xspace}
\newcommand \entrels {rela\-tions impli\-ca\-tives\xspace}

\newcommand\evc{es\-pa\-ce vec\-to\-riel\xspace} 
\newcommand\evcs{es\-pa\-ces vec\-to\-riels\xspace} 

\newcommand \eqv {équi\-valent\xspace}  
\newcommand \eqve {équi\-va\-lente\xspace}  
\newcommand \eqvs {équi\-valents\xspace}  
\newcommand \eqves {équi\-val\-entes\xspace}  

\newcommand \eqvc {équi\-va\-lence\xspace}  
\newcommand \eqvcs {équi\-va\-lences\xspace}  

\newcommand \esid {essen\-tiel\-lement iden\-tique\xspace}  
\newcommand \esids {essen\-tiel\-lement iden\-tiques\xspace}  

\newcommand \Esid {\hyperref[defitdyesidentiques]{\esid}\xspace}  
\newcommand \Esids {\hyperref[defitdyesidentiques]{\esids}\xspace}  

\newcommand \eseq {essen\-tiel\-lement \eqve}  
\newcommand \eseqs {essen\-tiel\-lement \eqves}  

\newcommand \Eseq {\hyperref[defitheseq]{\eseq}\xspace}  
\newcommand \Eseqs {\hyperref[defitheseq]{\eseqs}\xspace}

\newcommand \fab {\fcn bornée\xspace}
\newcommand \fabs {\fcns bornées\xspace}

\newcommand \fat {\fcn totale\xspace}
\newcommand \fats {\fcn totales\xspace}

\newcommand \fap {\fcn partielle\xspace}
\newcommand \faps {\fcns partielles\xspace}

\newcommand \fip {filtre pre\-mier\xspace}
\newcommand \fips {filtres pre\-miers\xspace}

\newcommand \fipma {\fip maxi\-mal\xspace}
\newcommand \fipmas {\fips maxi\-maux\xspace}

\newcommand \fcn {factorisation\xspace}
\newcommand \fcns {factorisations\xspace}

\newcommand \fdi {for\-te\-ment dis\-cret\xspace}
\newcommand \fdis {for\-te\-ment dis\-crets\xspace}

\newcommand \fsa {fermé \sagq}
\newcommand \fsas {fermés \sagqs}

\newcommand \fsagc {fonction \sagc}
\newcommand \fsagcs {fonctions \sagcs}

\newcommand \fmt {formel\-lement\xspace}

\newcommand \frl {for\-tement réticulé\xspace}
\newcommand \frle {for\-tement réticulée\xspace}
\newcommand \frls {for\-tement réticulés\xspace}

\newcommand \ftm {fortement\xspace}

\newcommand\gmt{géométrie\xspace}  
\newcommand\gmts{géométries\xspace}  

\newcommand\gaq{\gmt \agq}  

\newcommand\gmq{géomé\-trique\xspace}  
\newcommand\gmqs{géomé\-triques\xspace}  

\newcommand\gmqt{géomé\-tri\-quement\xspace}  

\newcommand\gne{géné\-ra\-lisé\xspace}  
\newcommand\gnee{géné\-ra\-lisée\xspace}  
\newcommand\gnes{géné\-ra\-lisés\xspace}  
\newcommand\gnees{géné\-ra\-lisées\xspace}  

\newcommand\gnl{géné\-ral\xspace}  
\newcommand\gnle{géné\-rale\xspace}  
\newcommand\gnls{géné\-raux\xspace}  
\newcommand\gnles{géné\-rales\xspace}  

\newcommand\gnlt{géné\-ra\-lement\xspace}  

\newcommand\gnn{géné\-ra\-li\-sa\-tion\xspace}  
\newcommand\gnns{géné\-ra\-li\-sa\-tions\xspace}  

\newcommand\gnq {géné\-rique\xspace}  
\newcommand\gnqs {géné\-riques\xspace}  

\newcommand\gnr{géné\-ra\-liser\xspace}  

\newcommand \gns{géné\-ra\-lise\xspace}

\newcommand \gnt{géné\-ra\-lité\xspace}
\newcommand \gnts{géné\-ra\-lités\xspace}

\newcommand \grl{groupe \rtl}
\newcommand \grls{groupes \rtls}

\newcommand \gRl {\hyperref[theorieGrl]{\grl}\xspace}
\newcommand \gRls {\hyperref[theorieGrl]{\grls}\xspace}

\newcommand\gtr{géné\-ra\-teur\xspace}  
\newcommand\gtrs{géné\-ra\-teurs\xspace}  


\newcommand \homo {ho\-mo\-mor\-phisme\xspace}
\newcommand \homos {ho\-mo\-mor\-phismes\xspace}

\newcommand \hmg {homo\-gène\xspace}
\newcommand \hmgs {homo\-gènes\xspace}

\newcommand \icftr {intervalle compact réticulé \ftm réel\xspace}
\newcommand \icftrs {intervalles compacts réticulés \ftm réels\xspace}

\newcommand \icl {inté\-gra\-lement clos\xspace}
\newcommand \icle {inté\-gra\-lement close\xspace}

\newcommand \icsr {intervalle compact \stm réticulé\xspace}
\newcommand \icsrs {intervalles compacts \stm réticulés\xspace}

\newcommand \icrc {intervalle compact réel clos\xspace}
\newcommand \icrcs {intervalles compact réels clos\xspace}

\newcommand \id {idéal\xspace}
\newcommand \ids {idéaux\xspace}

\newcommand \ida {\idt \agq}
\newcommand \idas {\idts \agqs}

\newcommand \idc  {\idt de Cramer\xspace}
\newcommand \idcs {\idts de Cramer\xspace}

\newcommand \idd {idéal déter\-minan\-tiel\xspace}
\newcommand \idds {idéaux déter\-minan\-tiels\xspace}

\newcommand \idema {idéal maxi\-mal\xspace}
\newcommand \idemas {idéaux maxi\-maux\xspace}

\newcommand \idep {idéal pre\-mier\xspace}
\newcommand \ideps {idéaux pre\-miers\xspace}

\newcommand \idemi {\idep minimal\xspace}
\newcommand \idemis {\ideps minimaux\xspace}

\newcommand \idf {idéal de Fitting\xspace}
\newcommand \idfs {idéaux de Fitting\xspace}

\newcommand \idif {idéal \dvlg fini\xspace}
\newcommand \idifs {idéaux \dvlgs finis\xspace}

\newcommand \idli {idéal \dvli\xspace} 
\newcommand \idlis {idéaux \dvlis\xspace} 

\newcommand \idm {idem\-potent\xspace}
\newcommand \idms {idem\-potents\xspace}
\newcommand \idme {idem\-potente\xspace}
\newcommand \idmes {idem\-potentes\xspace}

\newcommand \idp {idéal prin\-ci\-pal\xspace}
\newcommand \idps {idé\-aux prin\-ci\-paux\xspace}

\newcommand \idt {iden\-ti\-té\xspace}
\newcommand \idts {iden\-ti\-tés\xspace}

\newcommand \idtr {indé\-ter\-minée\xspace}
\newcommand \idtrs {indé\-ter\-minées\xspace}

\newcommand \ifr {idéal frac\-tion\-naire\xspace}
\newcommand \ifrs {idéaux frac\-tion\-naires\xspace}

\newcommand \imd {immé\-diat\xspace}
\newcommand \imde {immé\-diate\xspace}
\newcommand \imds {immé\-diats\xspace}
\newcommand \imdes {immé\-diates\xspace}

\newcommand \imdt {immé\-dia\-te\-ment\xspace}

\newcommand \indtr {inf-demi-treillis\xspace} 

\newcommand \inteq {intui\-ti\-vement \eqve}
\newcommand \inteqs {intui\-ti\-vement \eqves}

\newcommand \Inteq {\hyperref[defextintequiv]{\inteq}\xspace}
\newcommand \Inteqs {\hyperref[defextintequiv]{\inteqs}\xspace}

\newcommand \ing {in\-ver\-se \gne}
\newcommand \ings {in\-ver\-ses \gnes}

\newcommand \iMP {in\-ver\-se de Moo\-re-Pen\-ro\-se\xspace}
\newcommand \iMPs {in\-ver\-ses de Moo\-re-Pen\-ro\-se\xspace}

\newcommand \ipp {\idep poten\-tiel\xspace}
\newcommand \ipps {\ideps poten\-tiels\xspace}

\newcommand \ird {irré\-duc\-tible\xspace}
\newcommand \irds {irré\-duc\-tibles\xspace}

\newcommand \iso {iso\-mor\-phisme\xspace}
\newcommand \isos {iso\-mor\-phismes\xspace}

\newcommand \itf {idéal \tf}
\newcommand \itfs {idé\-aux \tf}

\newcommand \itid {intui\-ti\-vement iden\-tique\xspace}
\newcommand \itids {intui\-ti\-vement iden\-tiques\xspace}

\newcommand \iv {inversible\xspace}
\newcommand \ivs {inversibles\xspace}

\newcommand \ivdg {inverse divisoriel\xspace} 
\newcommand \ivdgs {inverses divisoriels\xspace} 

\newcommand \ivde {inverse divisorielle\xspace} 
\newcommand \ivdes {inverses divisorielles\xspace} 

\newcommand \ivda {inverse divisoriel\xspace} 
\newcommand \ivdas {inverses divisoriels\xspace} 


\newcommand \lgb {local-global\xspace}
\newcommand \lgbe {locale-globale\xspace}
\newcommand \lgbs {local-globals\xspace}

\newcommand \lin {liné\-aire\xspace}
\newcommand \lins {liné\-aires\xspace}

\newcommand \lint {liné\-ai\-rement\xspace}

\newcommand \lmo {\lot mono\-gène\xspace}
\newcommand \lmos {\lot mono\-gènes\xspace}

\newcommand \lnl {\lot \nl}
\newcommand \lnls {\lot \nls}

\newcommand \lot {loca\-lement\xspace}

\newcommand \lon {loca\-li\-sation\xspace}
\newcommand \lons {loca\-li\-sations\xspace}

\newcommand \lop {\lot prin\-cipal\xspace}
\newcommand \lops {\lot prin\-cipaux\xspace}

\newcommand \lsdz {\lot \sdz}

\newcommand \mdi {mo\-dule des \diles}

\newcommand \mlm {mo\-dule \lmo}
\newcommand \mlms {mo\-dules \lmos}

\newcommand \mlmo {ma\-tri\-ce de loca\-li\-sation
mono\-gène\xspace}
\newcommand \mlmos {ma\-tri\-ces de loca\-li\-sation
mono\-gène\xspace}

\newcommand \mlp {ma\-tri\-ce de loca\-li\-sation
prin\-ci\-pa\-le\xspace}
\newcommand \mlps {ma\-tri\-ces de loca\-li\-sation
prin\-ci\-pa\-le\xspace}

\newcommand \mo {mo\-no\"{\i}de\xspace}
\newcommand \mos {mo\-no\"{\i}des\xspace}

\newcommand \moco {\mos \com}

\newcommand \molo {morphisme de \lon\xspace}
\newcommand \molos {morphismes de \lon\xspace}

\newcommand \mom {mo\-nô\-me\xspace}
\newcommand \moms {mo\-nô\-mes\xspace}

\newcommand \moquo {morphisme de passage au quotient\xspace}
\newcommand \moquos {morphismes de passage au quotient\xspace}

\newcommand \mpf {mo\-dule \pf}
\newcommand \mpfs {mo\-dules \pf}

\newcommand \mpl {mo\-dule plat\xspace}
\newcommand \mpls {mo\-dules plats\xspace}

\newcommand \mpn {ma\-trice de \pn}
\newcommand \mpns {ma\-trices de \pn}

\newcommand \mprn {ma\-trice de \prn}
\newcommand \mprns {ma\-trices de \prn}

\newcommand \mptf {mo\-dule \ptf}
\newcommand \mptfs {mo\-dules \ptfs}

\newcommand \mrc {mo\-dule \prc}
\newcommand \mrcs {mo\-dules \prcs}

\newcommand \mtf {mo\-du\-le \tf}
\newcommand \mtfs {mo\-du\-les \tf}


\newcommand \ncr{néces\-saire\xspace}       
\newcommand \ncrs{néces\-saires\xspace}       

\newcommand \ncrt{néces\-sai\-rement\xspace}       

\newcommand \ndz {régu\-lier\xspace}
\newcommand \ndzs {régu\-liers\xspace}

\newcommand \nl {simple\xspace}
\newcommand \nls {simples\xspace}

\newcommand \noco {\noe \coh}
\newcommand \nocos {\noes \cohs}

\newcommand \Noe {Noether\xspace}

\newcommand \noe {noethé\-rien\xspace}
\newcommand \noes {noethé\-riens\xspace}
\newcommand \noee {noethé\-rienne\xspace}
\newcommand \noees {noethé\-riennes\xspace}

\newcommand \noet {noethé\-ria\-nité\xspace}

\newcommand \nst {Null\-stellen\-satz\xspace}
\newcommand \nsts {Null\-stellen\-s\"atze\xspace}

\newcommand \op{opé\-ra\-tion\xspace}  
\newcommand \ops{opé\-ra\-tions\xspace}
\newcommand \opari{\op\ari}  
\newcommand \oparis{\ops\aris}  
\newcommand \oparisv{\ops\arisv}  

\newcommand \oqc {ouvert \qc}
\newcommand \oqcs {ouverts \qcs}

\newcommand \ort{or\-tho\-go\-nal\xspace}  
\newcommand \orte{or\-tho\-go\-na\-le\xspace}  
\newcommand \orts{or\-tho\-go\-naux\xspace}  
\newcommand \ortes{or\-tho\-go\-na\-les\xspace}  


\newcommand \pa {couple saturé\xspace}
\newcommand \pas {couples saturés\xspace}
 
\newcommand \paral{paral\-lèle\xspace}  
\newcommand \parals{paal\-lèles\xspace}  

\newcommand \paralm{paral\-lè\-lement\xspace}

\newcommand \pb{pro\-blè\-me\xspace}  
\newcommand \pbs{pro\-blè\-mes\xspace}  

\newcommand \peq {purement équa\-tion\-nelle\xspace}
\newcommand \peqs {purement équa\-tion\-nelles\xspace}

\newcommand \pf {de \pn finie\xspace}

\newcommand \plc {rési\-duel\-lement \zed}
\newcommand \plcs {rési\-duel\-lement \zeds}

\newcommand \Plg {Prin\-cipe \lgb}
\newcommand \plg {prin\-cipe \lgb}
\newcommand \plgs {prin\-cipes \lgbs}

\newcommand \plga {\plg abs\-trait\xspace}
\newcommand \plgas {\plgs abs\-traits\xspace}

\newcommand \Plgc {\Plg con\-cret\xspace}
\newcommand \plgc {\plg con\-cret\xspace}
\newcommand \plgcs {\plgs con\-crets\xspace}

\newcommand \pn {présen\-ta\-tion\xspace}
\newcommand \pns {présen\-ta\-tions\xspace}

\newcommand \pog {\pol \hmg\xspace}
\newcommand \pogs {\pols \hmgs\xspace}

\newcommand \Pol {Poly\-nôme\xspace}
\newcommand \Pols {Poly\-nômes\xspace}

\newcommand \pol {poly\-nôme\xspace}
\newcommand \pols {poly\-nômes\xspace}

\newcommand \poll{poly\-nomial\xspace}  
\newcommand \polls{poly\-nomiaux\xspace}  
\newcommand \polle{poly\-no\-miale\xspace}  
\newcommand \polles{poly\-no\-miales\xspace}  

\newcommand \pollt{poly\-no\-mia\-lement\xspace}  

\newcommand \polfon {\pol fon\-da\-men\-tal\xspace}
\newcommand \polmu {\pol rang\xspace}
\newcommand \polmus {\pols rang\xspace}
\newcommand \polcar {\pol carac\-té\-ris\-tique\xspace}
\newcommand \polmin {\pol mini\-mal\xspace}

\newcommand \prc {\pro de rang constant\xspace}
\newcommand \prcs {\pros de rang constant\xspace}

\newcommand \prcc {prin\-ci\-pe de \rcc}
\newcommand \prca {prin\-ci\-pe de \rca}
\newcommand \prce {prin\-ci\-pe de \rce}

\newcommand \prmt {préci\-sé\-ment\xspace}
\newcommand \Prmt {Préci\-sé\-ment\xspace}

\newcommand \prn {pro\-jec\-tion\xspace}
\newcommand \prns {pro\-jec\-tions\xspace}

\newcommand \pro {pro\-jec\-tif\xspace}
\newcommand \pros {pro\-jec\-tifs\xspace}

\newcommand \prr {pro\-jec\-teur\xspace}
\newcommand \prrs {pro\-jec\-teurs\xspace}

\newcommand \Prt {Pro\-pri\-été\xspace}
\newcommand \Prts {Pro\-pri\-étés\xspace}
\newcommand \prt {pro\-pri\-été\xspace}
\newcommand \prts {pro\-pri\-étés\xspace}

\newcommand \ptf {\pro \tf}
\newcommand \ptfs {\pros \tf}

\newcommand \qc {quasi-compact\xspace}
\newcommand \qcs {quasi-compacts\xspace}

\newcommand \qi {qua\-si in\-tè\-gre\xspace}
\newcommand \qis {qua\-si in\-tè\-gres\xspace}

\newcommand \qnl {quasi-\nl}
\newcommand \qnls {quasi-\nls}

\newcommand \ralg {règle \agq}
\newcommand \ralgs {règles \agqs}

\newcommand \rav {racine virtuelle\xspace}
\newcommand \ravs {racines virtuelles\xspace}

\newcommand \rcc {\rcm con\-cret\xspace}
\newcommand \rca {\rcm abs\-trait\xspace}
\newcommand \rce {\rcc des é\-ga\-li\-tés\xspace}

\newcommand \rcm {recol\-lement\xspace}
\newcommand \rcms {recol\-lements\xspace}

\newcommand \rcv {recou\-vrement\xspace} 
\newcommand \rcvs {recou\-vrements\xspace}

\newcommand \rde {rela\-tion de dépen\-dance\xspace}
\newcommand \rdes {rela\-tions de dépen\-dance\xspace}

\newcommand \rdi {\rde inté\-grale\xspace}
\newcommand \rdis {\rdes inté\-grales\xspace}

\newcommand \rdl {\rde \lin}
\newcommand \rdls {\rdes \lins}

\newcommand \rdt {rési\-duel\-lement\xspace}

\newcommand \rdy {règle dyna\-mique\xspace}
\newcommand \rdys {règles dyna\-miques\xspace}

\newcommand \red {règle directe\xspace}
\newcommand \reds {règles directes\xspace}

\newcommand \rex {règle exis\-ten\-tielle simple\xspace}
\newcommand \rexs {règles exis\-ten\-tielles simples\xspace}

\newcommand \reX {\hyperref[defexistsimple]{règle exis\-ten\-tielle simple}\xspace}
\newcommand \reXs {\hyperref[defexistsimple]{règles exis\-ten\-tielles simples}\xspace}

\newcommand \rexri {règle exis\-ten\-tielle rigide\xspace}
\newcommand \rexris {règles exis\-ten\-tielles rigides\xspace}

\newcommand \rsim {règle de simplification\xspace}
\newcommand \rsims {règles de simplification\xspace}

\newcommand \rtl {réti\-culé\xspace}
\newcommand \rtls {réti\-culés\xspace}

\newcommand \rmq {\rcm de quotients\xspace} 
\newcommand \rvq {\rcv par quotients\xspace} 
\newcommand \rmqs {\rcms de quotients\xspace} %
\newcommand \rvqs {\rcvs par quotients\xspace} %

\newcommand \rpf {réduite-de-présen\-tation-finie\xspace}
\newcommand \rpfs {réduites-de-présen\-tation-finie\xspace}


\newcommand \sad {\salg dynamique\xspace}
\newcommand \sads {\salgs dynamiques\xspace}

\newcommand \sagq {semi\agq}
\newcommand \sagqs {semi\agqs}

\newcommand \sagc {\sagq continue\xspace}
\newcommand \sagcs {\sagqs continues\xspace}

\newcommand \salg {structure \agq}
\newcommand \salgs {structures \agqs}

\newcommand \scentrel {relation semi-implicative\xspace}
\newcommand \scentrels {relations semi-implicatives\xspace}

\newcommand \scf {schéma fini\-taire\xspace}
\newcommand \scfs {schémas fini\-taires\xspace}

\newcommand \scl {schéma \elr}
\newcommand \scls {schémas \elrs}

\newcommand \sdo {\sdr \orte}
\newcommand \sdos {\sdrs \ortes}

\newcommand \sdr {somme directe\xspace}
\newcommand \sdrs {sommes directes\xspace}

\newcommand \sdz {sans \dvz}

\newcommand \sfio {sys\-tème fondamental d'\idms ortho\-gonaux\xspace}
\newcommand \sfios {sys\-tèmes fondamentaux d'\idms ortho\-gonaux\xspace}

\newcommand \sgr {\sys \gtr}
\newcommand \sgrs {\syss \gtrs}

\newcommand \slgb {stricte\-ment \lgb}
\newcommand \slgbs {stricte\-ment \lgbs}

\newcommand \sli {\sys \lin}
\newcommand \slis {\syss \lins}

\newcommand \smq {symé\-trique\xspace}
\newcommand \smqs {symé\-triques\xspace}

\newcommand \spb {sépa\-rable\xspace}  
\newcommand \spbs {sépa\-rables\xspace}

\newcommand \spe {spéci\-fi\-cation\xspace}
\newcommand \spes {spéci\-fi\-cations\xspace}

\newcommand \spi {\spe incomplète\xspace}
\newcommand \spis {\spes incomplètes\xspace}

\newcommand \spl {sépa\-rable\xspace}  
\newcommand \spls {sépa\-rables\xspace}

\newcommand \spo {semipolynôme\xspace}
\newcommand \spos {semipolynômes\xspace}

\newcommand \spt{sépa\-ra\-bi\-lité\xspace}

\newcommand \srg {suite régu\-lière\xspace}
\newcommand \srgs {suites régu\-lières\xspace}

\newcommand \stf {strictement fini\xspace}
\newcommand \stfs {strictement finis\xspace}
\newcommand \stfe {strictement finie\xspace}
\newcommand \stfes {strictement finies\xspace}

\newcommand \stl {stablement libre\xspace}
\newcommand \stls {stablement libres\xspace}

\newcommand \stm {strictement\xspace}

\newcommand \str {\stm réticulé\xspace}
\newcommand \stre {\stm réticulée\xspace}
\newcommand \strs {\stm réticulés\xspace}
\newcommand \stres {\stm réticulées\xspace}

\newcommand \sul {supplé\-men\-taire\xspace}
\newcommand \suls {supplé\-men\-taires\xspace}

\newcommand \Sut {Support\xspace}
\newcommand \Suts {Supports\xspace}
\newcommand \sut {support\xspace}

\newcommand \syc {\sys de coordon\-nées\xspace}
\newcommand \sycs {\syss de coordon\-nées\xspace}

\newcommand \syp {\sys \poll}
\newcommand \syps {\syss \polls}

\newcommand \sys {sys\-tème\xspace}
\newcommand \syss {sys\-tèmes\xspace}

\newcommand \talg {théorie \agq}
\newcommand \talgs {théories \agqs}

\newcommand \tco {théorie cohé\-rente\xspace}
\newcommand \tcos {théories cohé\-rentes\xspace}

\newcommand \tdy {théorie dyna\-mique\xspace}
\newcommand \tdys {théories dyna\-miques\xspace}

\newcommand \tel {théorie exis\-ten\-tielle\xspace}
\newcommand \tels {théories exis\-ten\-tielles\xspace}

\newcommand \telri {théorie exis\-ten\-tielle rigide\xspace}
\newcommand \telris {théories exis\-ten\-tielles rigides\xspace}

\newcommand \tf {de type fini\xspace}

\newcommand \tfo {théorie formelle\xspace}
\newcommand \tfos {théorie formelles\xspace}

\newcommand \tgm {théorie \gmq}
\newcommand \tgms {théories \gmqs}

\newcommand \Tho {Théo\-rème\xspace}
\newcommand \Thos {Théo\-rèmes\xspace}
\newcommand \tho {théo\-rème\xspace}
\newcommand \thos {théo\-rèmes\xspace}

\newcommand \thoc {théo\-rème$\mathbf{^*}$~}

\newcommand \tpe {théorie \peq}
\newcommand \tpes {théories \peqs}

\newcommand \trdi {treil\-lis dis\-tri\-bu\-tif\xspace}
\newcommand \trdis {treil\-lis dis\-tri\-bu\-tifs\xspace}

\newcommand \trel {trans\-for\-mation \elr}
\newcommand \trels {trans\-for\-mations \elrs}

\newcommand \umd {unimo\-du\-laire\xspace}
\newcommand \umds {unimo\-du\-laires\xspace}

\newcommand \unt {uni\-taire\xspace}
\newcommand \unts {uni\-taires\xspace}

\newcommand \uvl {uni\-versel\xspace}
\newcommand \uvle {uni\-ver\-selle\xspace}
\newcommand \uvls {uni\-versels\xspace}
\newcommand \uvles {uni\-ver\-selles\xspace}


\newcommand \vfn {véri\-fi\-cation\xspace}
\newcommand \vfns {véri\-fi\-cations\xspace}

\newcommand \vmd {vec\-teur \umd}
\newcommand \vmds {vec\-teurs \umds}

\newcommand \zed {z\'{e}ro-di\-men\-sionnel\xspace}
\newcommand \zede {z\'{e}ro-di\-men\-sion\-nelle\xspace}
\newcommand \zeds {z\'{e}ro-di\-men\-sion\-nels\xspace}
\newcommand \zedes {z\'{e}ro-di\-men\-sion\-nelles\xspace}

\newcommand \zedr {\zed réduit\xspace}
\newcommand \zedrs {\zeds réduits\xspace}

\newcommand \zmt {\tho de Zariski-Grothen\-dieck\xspace}


\newcommand \cof {cons\-truc\-tif\xspace}
\newcommand \cofs {cons\-truc\-tifs\xspace}

\newcommand \cov {cons\-truc\-tive\xspace}
\newcommand \covs {cons\-truc\-tives\xspace}

\newcommand \coma {\maths\covs}
\newcommand \clama {\maths clas\-siques\xspace}

\renewcommand \cot {cons\-truc\-ti\-vement\xspace}

\newcommand \matn {mathé\-ma\-ticien\xspace}
\newcommand \matne {mathé\-ma\-ti\-cienne\xspace}
\newcommand \matns {mathé\-ma\-ticiens\xspace}
\newcommand \matnes {mathé\-ma\-ti\-ciennes\xspace}

\newcommand \maths {mathé\-ma\-tiques\xspace}
\newcommand \mathe {mathé\-ma\-tique\xspace}

\newcommand \prco {démons\-tration \cov}
\newcommand \prcos {démons\-trations \covs}



\theoremstyle{plain}
\newtheorem{ftheorem}{Théorème}[section]
\newtheorem{fthdef}[ftheorem]{Théorème et définition}
\newtheorem{fpstf}[ftheorem]{Positivstellensatz formel}
\newtheorem{fpst}[ftheorem]{Positivstellensatz}
\newtheorem{flemma}[ftheorem]{Lemme}
\newtheorem{fcorollary}[ftheorem]{Corolaire}
\newtheorem{fconjecture}[ftheorem]{Conjecture}
\newtheorem{fproposition}[ftheorem]{Proposition}
\newtheorem{fprpta}[ftheorem]{Propriétés attendues}
\newtheorem{fpropdef}[ftheorem]{Proposition et définition}
\newtheorem{ffact}[ftheorem]{Fait}
\newtheorem{fconvention}[ftheorem]{Convention}
\newtheorem{fplcc}[ftheorem]{Principe local-global concret}

\newtheorem{ftheoremc}[ftheorem]{Th\'{e}or\`{e}me\etoz}
\newtheorem{flemmac}[ftheorem]{Lemme\etoz}
\newtheorem{fcorollaryc}[ftheorem]{Corolaire\etoz}
\newtheorem{fproprietec}[ftheorem]{Propri\'{e}t\'{e}\etoz}
\newtheorem{fpropositionc}[ftheorem]{Proposition\etoz}
\newtheorem{ffactc}[ftheorem]{Fait\etoz}

\newtheorem{fatheorem}{Théorème}[section]
\newtheorem{falemma}[fatheorem]{Lemme}
\newtheorem{facorollary}[fatheorem]{Corolaire}
\newtheorem{fapropriete}[fatheorem]{Propriété}
\newtheorem{faproposition}[fatheorem]{Proposition}
\newtheorem{fapropdef}[fatheorem]{Proposition et définition}
\newtheorem{fafact}[fatheorem]{Fait}

\theoremstyle{definition}
\newtheorem{frstr}[ftheorem]{Règles structurelles}
\newtheorem{frstra}[ftheorem]{Règles structurelles admissibles}
\newtheorem{fdefinition}[ftheorem]{Définition}
\newtheorem{fdfni}[ftheorem]{Définition informelle}
\newtheorem{fdefinitions}[ftheorem]{Définitions}
\newtheorem{fexample}[ftheorem]{Exemple}
\newtheorem{fexamples}[ftheorem]{Exemples}
\newtheorem{fnotation}[ftheorem]{Notation}
\newtheorem{fproblem}[ftheorem]{Problème}
\newtheorem{fquestion}[ftheorem]{Question}
\newtheorem{fquestions}[ftheorem]{Questions}

\newtheorem{fdefinitionc}[ftheorem]{Définition\etoz}
\newtheorem{fdefinota}[ftheorem]{Définition et notation} 
\newtheorem{faquestion}[fatheorem]{Question}
\newtheorem{fadefinition}[fatheorem]{Définition}

\theoremstyle{remark}
\newtheorem{fremark}[ftheorem]{Remarque}
\newtheorem{fremarks}[ftheorem]{Remarques}
\newtheorem{faremark}[fatheorem]{Remarque}
\newtheorem{faremarks}[fatheorem]{Remarques}
\newtheorem{fcomment}[ftheorem]{Commentaire}

\newcommand{\sadr}{\sa{AP}}
\newcommand{\sadrp}{\sa{APp}}
\newcommand{\rdc}{rdr-clos}
\newcommand{\irdc}{idéal \rdc}
\renewcommand{\di}{\,{\vert}\,}
\newcommand{\ndi}{\nmid}
\newcommand{\dr}{\,\vert\hspace{-.22ex}\vert\,}
\newcommand{\drp}{\,\vert\hspace{-.22ex}\vert'\,}
\newcommand{\drd}{\;\cdot\hspace{-.7ex}\mid\,}
\newcommand \drI[1]{\,\big\vert\hspace{-.22ex}\big\vert_#1}
\newcommand \dri[1]{\,\vert\hspace{-.22ex}\vert_#1}
\newcommand \dre[1]{\,\big\vert\hspace{-.22ex}\big\vert^#1}
\newcommand \dreu[1]{\dre{{#1\uparrow\,}}}
\newcommand \dred[1]{\dre{{#1\downarrow\,}}}

\FrenchFootnotes

\title{Treillis distributifs et espaces spectraux,\\ un petit dictionnaire}
\author{Henri Lombardi, (\thanks{Laboratoire de Mathématiques de Besançon, Université Marie et Louis Pasteur. \url{http://hlombardi.free.fr/}
email: \texttt{henri.lombardi@univ-fcomte.fr}. 
}~) \today}
\date{}
\emptythanks\setcounter{footnote}{0}
\setcounter{equation}{0}
\maketitle

\rdb
\label{beginfrench}

\vspace{-1em}

\begin{abstract}
La catégorie des treillis distributifs et celle des espaces spectraux
sont antiéquivalentes (en \clama). Nous proposons ici un petit dictionnaire
pour cette antiéquivalence. Nous indiquons comment un certain nombre de théorèmes étranges des \clama obtiennent un contenu \cof grâce à cette antiéquivalence, même dans le cas, fréquent, où les points des espaces spectraux considérés n'ont pas de contenu \cof clair. 
\end{abstract}

\noindent Note: Cet article est une version française légèrement développée du chapitre
\textsl{Spectral spaces versus distributive lattices: a dictionary},
 dans le livre \gui{Advances in rings, modules and factorizations. Selected papers based on the presentations at the international conference on rings and factorizations, Graz, Austria, February 19--23, 2018}, paru en 2020 chez Springer, ISBN 978-3-030-43415-1. On a aussi corrigé quelques bugs typographiques et amélioré la bibliographie.

\newpage
\setcounter{tocdepth}{4}

\startcontents[french]
\printcontents[french]{}{1}{}

\section*{Introduction} 
\addcontentsline{toc}{section}{Introduction}

On donne un dictionnaire entre \clama et \coma pour les \prts liées aux espaces spectraux d'une part et à leurs traductions en termes des \trdis duaux d'autre part.

Ce travail se situe dans le style des \coma à la Bishop (\cite{fBi67,fBB85,fBR1987,fACMC,fMRR,fYen2015}).

\section{Treillis distributifs et espaces spectraux: généralités}\label{fsecdival0}
Références: \cite{fCC00,fCL05,fDST2019,fLom06,fLom-tgac,fSto37}, \cite[Chapter 4]{fBW74} et \cite[Chapitres XI et XIII]{fACMC}.

\subsection{L'article fondateur de Stone}

En langage moderne l'article \cite{fSto37} dit à très peu près ceci.
{
\sl La catégorie des \trdis est, en \clama, anti\eqve à la catégorie des espaces spectraux.}

\medskip En \clama
un {\sl \id premier} $\fp$ d'un \trdi $\gT\neq \Un$ est un \id dont
le complé\-mentaire $\ff$ est un filtre (qui est alors un {\sl
filtre premier}).  On a alors le treillis quotient $\gT/(\fp=0,\ff=1)\simeq\Deux$.  Il
revient au même de se donner un \idep de~$\gT$ ou un morphisme de
\trdis $\gT\rightarrow \Deux$.

Nous noterons $\theta_\fp:\gT\to\Deux$ l'\homo
associé à l'\idep $\fp$.

On vérifie facilement que si $S$ est une partie génératrice du
\trdi $\gT$, un \idep~$\fp$ de $\gT$ est complètement
caractérisé par sa trace sur $S$ (cf.  \cite{fCC00}).

\smallskip Le \textsl{spectre d'un \trdi $\gT$} est l'ensemble $\Spec
\,\gT$ de ses \ideps, muni de la topologie suivante: une base
d'ouverts est donnée par les $\DT(a)$ définis comme suit ($a\in \gT$)
$$
\DT(a)\eqdefi\sotq{\fp\in\Spec
\,\gT}{a\notin\fp}=\sotq{\fp}{\theta_\fp(a)=1} .
$$
On vérifie que
\begin{equation} \label{feqDa}
\left.\begin{array}{rclcrcl}
  \DT(a\vi b)   & =  & \DT(a)\cap \DT(b) ,&\quad & \DT(0)  & =  & 
\emptyset  ,\\
  \DT(a\vu b)   & =  & \DT(a)\cup \DT(b) ,&&  \DT(1) & =  &  
\Spec\,\gT.
  \end{array}
\right\}
\end{equation}

Le complémentaire de $\DT(a)$ est un fermé que l'on note $\VT(a)$.
L'adhérence de $\so\fp$ est l'ensemble des $\fq$ qui contiennent $\fp$.
Les points fermés sont les \idemas.

On étend la notation $\VT(a)$ comme suit: pour $I\subseteq\gT$, on
pose $\VT(I)\eqdefi\bigcap_{x\in I}\VT(x)$.  \hbox{Si $\cI_\gT(I)=\fII$}, on
a $\VT(I)=\VT(\fII)$.  On dit parfois que $\VT(I)$ est \textsl{la
sous-variété de $\SpecT$ associée à $I$}.

\medskip\noindent
{\bf Définition.} 
Un espace topologique homéomorphe à un espace $\Spec(\gT)$
est appelé un \textsl{espace spectral}. 
Tout \homo $\varphi :\gT\rightarrow \gT'$ de \trdis fournit
\textsl{par dualité}\footnote{On définit $\Spec(\varphi)(\fp)=\varphi^{-1}(\fp)$: on a alors avec $\fq=\Spec(\varphi)(\fp)$, $\theta_\fq=\theta_\fp\circ \varphi$.} une application continue $\Spec(\varphi):\Spec\,\gT'\rightarrow \Spec \,\gT$, (souvent notée $\varphi\sta$) qui est appelée une
\textsl{application spectrale}. 

\medskip Les espaces spectraux proviennent de l'article  \cite{fSto37}. Ils sont appelés  \textsl{espaces cohérents} dans \cite{fJoh1986}. \cite{fBW74} les appelle des \textsl{Stone spaces}.  
Le nom d'espace spectral vient de l'article  \cite{fHoc1969} qui démontre que tout espace spectral peut être obtenu comme spectre de Zariski d'un anneau commutatif.
Néanmoins ce résultat précis semble sans aucune importance réelle.

Avec la logique classique et l'axiome du choix, l'espace $\Spec \,\gT$
a \gui{suffisamment de points}: on peut retrouver le treillis $\gT$
à partir de son spectre.  Voici comment.

Tout d'abord on a le \tho de Krull.


\medskip\noindent
{\bf \Tho de Krull }\label{fThKrull} (en \clama).\\ 
{\sl Supposons que $\fJ$ est un \id, $\fF$ un filtre et 
$\fJ\cap\fF=\emptyset$.
Alors il existe un \idep $\fP$ tel \hbox{que $\fJ\subseteq\fP$} et
$\fP\cap\fF=\emptyset$.
  }

\medskip
On en déduit que:
\begin{itemize}
\item l'application $a\in\gT\,\mapsto\,\DT(a)\in\cP(\Spec\,\gT)$
est injective: elle identifie $\gT$ à un treillis d'ensembles 
(\textsl{\tho de
représentation de Birkhoff});
\item si $\varphi : \gT\to\gT'$ est un \homo injectif l'application
$\varphi\sta:\Spec\,\gT'\to\Spec\,\gT$ obtenue par dualité est 
surjective;
\item tout \id de $\gT$ est intersection des \ideps qui le 
contiennent;
\item L'application $\fII\mapsto \VT(\fII)$, des \ids de $\gT$ vers 
les fermés
de $\Spec\,\gT$ est un \iso d'ensembles ordonnés (pour l'inclusion 
et
l'inclusion renversée).
\end{itemize}

\smallskip On montre aussi que les \oqcs de $\Spec \,\gT$ sont exactement les
$\DT(a)$.  D'après les \egts (\ref{feqDa}) les \oqcs de $\Spec \,\gT$
forment un \trdi de parties de~$\Spec \,\gT$, isomorphe à $\gT$.

À partir d'un espace spectral $X$ on peut considérer le \trdi
$\OQC(X)$ formé par ses \oqcs.  Puisque pour tout \trdi $\gT$,
$\OQC(\Spec(\gT))$ est canoniquement isomorphe à $\gT$, pour tout
espace spectral $X$, $\Spec(\OQC(X))$ est canoniquement 
homéo\-morphe~à~$X$.

\smallskip   Pour qu'une application continue entre
espaces spectraux soit spectrale il faut et il suffit que l'image
réciproque de tout \oqc soit un \oqc.

L'article fondateur de Stone démontre pour l'essentiel que la catégorie spectrale ainsi définie (espaces spectraux et applications spectrales) est antiéquivalente à
celle des \trdis \cite[{II-3.3}, coherent locales]{fJoh1986}, \cite{fSto37}. En fait, cette antiéquivalence signifie pour l'essentiel que les \oqcs d'un 
espace~$\Spec\gT$ forment un \trdi canoniquement isomorphe à $\gT$. 
Pour les morphismes, il faut
se restreindre, parmi les applications continues, aux applications spectrales. L'énoncé devient nettement plus subtil lorsque l'on donne une \dfn des espaces spectraux en termes purement d'espaces topologiques, comme dans la remarque qui suit.


\medskip 
\rem
Une \dfn purement topologique des espaces spectraux 
est la sui\-vante~(\cite{fSto37}).
\begin{itemize}
\item L'espace est un espace de Kolmogoroff (i.e., de type $\mathrm{T}_0$): 
étant donnés deux points il existe un voisinage de l'un des deux qui ne contient pas l'autre.
\item L'espace est \qc.
\item L'intersection de deux \oqcs est un \oqc.
\item Tout ouvert est réunion d'\oqcs.
\item Pour tout fermé $F$ et pour tout ensemble $S$ d'\oqcs tels 
que 
$$\textstyle F\cap
\bigcap_{U\in S'} U\neq \emptyset\,\hbox{   pour toute partie finie  }\,S'
\,\hbox{  de  }\,S
$$ 
on a aussi
$F\cap \bigcap_{U\in S} U\neq \emptyset$.
\end{itemize}
En présence des quatre premières propriétés, la dernière peut se
reformuler comme suit d'après \cite{fHoc1969}. 
\begin{itemize}
\item Tout fermé irréductible admet un point
générique.
\end{itemize}

\smallskip En \coma les espaces spectraux manquent souvent de points, et on essaie de traduire les discours des \clama sur les espaces spectraux (très fréquents en algèbre) en des discours \cofs sur les \trdis correspondants.
On a en fait un dictionnaire assez complet que nous exposons dans la section \ref{fsecAntiEquiv}.

\smallskip  Deux autres topologies spectrales sont obtenues sur l'ensemble $\SpecT$ en variant les ouverts de base: on peut prendre les complémentaires
$\fV(a)$ des ouverts de base $\fD(a)$, ou encore les combinaisons booléennes d'ouverts $\fD(a)$. Dans ce dernier cas on parle de la \gui{topologie constructible} (dans la littérature anglaise, de la \gui{patch topology}).
Les \oqcs de ces espaces sont les \trdis respectivement isomorphes à $\gT\eci$,  (le treillis opposé à $\gT$) et à $\Bo(\gT)$ (l'\agB engendrée par $\gT$). 

\subsubsection*{Sous-espaces spectraux et treillis quotients} 
\addcontentsline{toc}{subsubsection}{Sous-espaces spectraux et treillis quotients}

Références: \cite{fBW74,fCLQ2006}.

 Le théorème suivant explique que la notion de \textsl{sous-espace spectral} est traduite par la notion de \textsl{\trdi quotient}. Quelques détails \suls sont ajoutés. Voir aussi le \thref{fth-dico-trdi-spec-mor1}.  

\begin{ftheorem}[Définition et caractérisations des sous-espaces 
spectraux]
\label{fpropSESP}  Soit $\gT'$ un treillis quotient de $\gT$ et $\pi:\gT\to\gT'$ la
projection canonique.  Notons $X'=\Spec\,\gT'$, $X=\Spec\,\gT$ et
$\pi^\star:X'\to X$ l'injection duale de $\pi$.  
\begin{enumerate}
\item  L'injection $\pi^\star$ identifie  $X'$ à un \textsl{sous-espace
topologique} de $X$. En outre $\OQC(X')=\sotq{U\cap X'}{U\in\OQC(X)}$. 
On dit que \emph{$X'$ est un sous-espace spectral de~$X$.}
\item Pour qu'une partie $X'$ d'un espace spectral $X$ soit un sous-espace
spectral il faut et suffit que les conditions suivantes soient 
vérifiées: \\
-- La topologie induite par $X$  fait de $X'$ un espace spectral, et\\
--  $\OQC(X')=\sotq{U\cap X'}{U\in\OQC(X)}$.
\item Une partie $X'$ d'un espace spectral $X$ est un sous-espace 
spectral \ssi
elle est fermée pour la topologie constructible.
\item Si $Z$ est une partie arbitraire d'un espace spectral 
$X=\Spec\,\gT$ son
adhérence pour la topologie constructible est égale à 
$X'=\Spec\,\gT'$ où
$\gT'$ est le treillis quotient de~$\gT$ défini par la relation de 
préordre
$\preceq$ suivante:
\begin{equation} \label{feqSSES}
a\preceq b\quad \Longleftrightarrow\quad (\DT(a)\cap Z)\subseteq 
(\DT(b)\cap Z)
\end{equation}
\end{enumerate}
\end{ftheorem}

\begin{fproposition}[Ouverts et fermés de base]
\label{fpropositionOFBSES}  Soit $\gT$ un \trdi et $X=\Spec\,\gT$.
\begin{enumerate}
\item $\DT(a)$ est un sous-espace spectral de $X$ canoniquement 
homéomorphe
à $
\Spec(\gT/(a=1))$.
\item  $\VT(b)$ est un sous-espace spectral de $X$ canoniquement 
homéomorphe
à $
\Spec(\gT/(b=0))$.
\end{enumerate}
\end{fproposition}

\begin{fproposition}[Sous-ensembles fermés de $\Spec\,\gT$]
\label{fpropositionFSES} ~
\begin{enumerate}
\item Un fermé arbitraire de $\Spec\,\gT$ est de la forme
$\VT(\fJ):=\bigcap_{x\in \fJ}\VT(x)$ où $\fJ$ est un \id arbitraire 
de $\gT$.
C'est un sous-espace spectral et il correspond au quotient 
$\gT/(\fJ=0)$.
\item
L'intersection d'une famille de fermés correspond au sup de la 
famille d'idéaux. La réunion de deux fermés correspond à l'intersection des deux idéaux.
\item \`A l'adhérence de $\DT(x)$ correspond le quotient 
$\gT/((0:x)=0)$.
\item\label{fenumtra}
Le treillis $\gT/((a:b)=0)$ est le quotient correspondant à 
$\ov{\VT(a)\cap
\DT(b)}$.
\end{enumerate}
\end{fproposition}

\subsubsection*{Recollement de treillis quotients et de sous-espaces spectraux}
\addcontentsline{toc}{subsubsection}{Recollement de treillis quotients et de sous-espaces spectraux}

En \alg commutative, si $(\xn)$ est un système d'\eco d'un anneau~$\gA$
le morphisme $\gA\to\prod_{i\in\lrbn}\gA[1/x_i]$ identifie $\gA$ à un sous-produit
fini de ses localisés. Ce résultat peut être considéré comme la version \cov d'un \plg abstrait qui dit que certaines \prts de $\gA$ sont satisfaites \ssi elles sont satisfaites après localisation en n'importe quel \idep.

\smallskip De la même manière on peut récupérer un \trdi 
à partir
d'un nombre fini de ses quotients,
si l'information qu'ils contiennent est \gui{suffisante}. On peut 
voir ceci au
choix comme une procédure de recollement (de passage du local au 
global), ou
comme une version du théorème des restes chinois pour les \trdis. Voyons 
les choses
plus précisément.

\begin{fdefinition}
\label{fdefRecolTD}
Soit $\gT$ un \trdi, $(\fa_i)_{i\in\lrbn}$ (resp. 
$(\ff_i)_{i\in\lrbn}$)
une famille finie d'\ids (resp. de filtres)  de $\gT$.  On dit que 
les \ids
$\fa_i$ \textsl{recouvrent $\gT$} si \hbox{$\bigcap_i\fa_i=\so{0}$}. De 
même on dit
que les filtres $\ff_i$ \textsl{recouvrent $\gT$} si 
$\bigcap_i\ff_i=\so{1}$.
\end{fdefinition}

Pour un \id $\fb$ nous écrivons $x\equiv y\mod\fb$ comme 
abréviation
pour  $x\equiv y\alb\mod (\fb=0)$. Rappelons que pour $s\in\gT$ le quotient $\gT/(s=0)$ est isomorphe au filtre principal $\uar s$ que l'on voit comme un \trdi dont l'\elt $0$ est $s$.

\begin{ffact}
\label{ffactRecolTD}
Soit $\gT$ un \trdi, $(\fa_i)_{i\in\lrbn}$ une famille finie 
d'\ids principaux ($\fa_i=\dar s_i$)  de
$\gT$ et $\fa=\bigcap_i\fa_i$.
\begin{enumerate}
\item Si $(x_i)$ est une famille d'\elts de $\gT$ telle que pour 
chaque $i,j$ on
a $x_i\equiv x_j\;\mod\;\fa_i\vu\fa_j$, alors il existe un unique $x$ 
modulo
$\fa$ vérifiant:  $x\equiv x_i\;\mod\;\fa_i\;(i=1,\ldots ,n)$.
\item Notons $\gT_i=\gT/(\fa_i=0)$, 
$\gT_{ij}=\gT_{ji}=\gT/(\fa_i\vu\fa_j=0)$,
$\pi_i:\gT\to\gT_i$ et $\pi_{ij}:\gT_i\to\gT_{ij}$ les projections 
canoniques.
Si les $\fa_i$ recouvrent $\gT$, $(\gT,(\pi_i)_{i\in\lrbn})$  est 
la limite
projective du diagramme $((\gT_i)_{1\leq i\leq n},(\gT_{ij})_{1\leq 
i<j\leq
n};(\pi_{ij})_{1\leq i\neq j\leq n})$ (voir la figure ci-après)
\item Soit maintenant $(\ff_i)_{i\in\lrbn}$ une famille finie de 
filtres principaux,
notons $\gT_i=\gT/(\ff_i=1)$, 
$\gT_{ij}=\gT_{ji}=\gT/(\ff_i\cup\ff_j=1)$,
$\pi_i:\gT\to\gT_i$ et $\pi_{ij}:\gT_i\to\gT_{ij}$ les projections 
canoniques.
Si les $\ff_i$ recouvrent $\gT$, $(\gT,(\pi_i)_{i\in\lrbn})$  est 
la limite
projective du diagramme $((\gT_i)_{1\leq i\leq n},(\gT_{ij})_{1\leq 
i<j\leq
n};(\pi_{ij})_{1\leq i\neq j\leq n})$.
\end{enumerate}
\end{ffact}
 {\small\hspace*{10em}{
\xymatrix @R=2em @C=7em{
          &  \gT \ar[rd]^{\pi _{k}}\ar[d]^{\pi _{j}}\ar[ld]_{\pi _{i}}\\
 \gT _i\ar[d]_{\pi _{ij}}\ar@/-0.75cm/[dr]^{\pi _{ik}} &
     \gT _j\ar@/-.8cm/[dl]_{\pi _{ji}}\ar@/-.8cm/[dr]^{\pi _{jk}} &
        \gT _k\ar@/-0.75cm/[dl]_{\pi _{ki}}\ar[d]^{\pi _{kj}} &
\\
 \gT _{ij}  & 
    \gT _{ik}   & 
      \gT _{jk}   
}
}}

\medskip Il y a aussi une procédure de recollement proprement dit\footnote{En algèbre commutative une telle procédure de recollement fonctionne pour les modules. Par contre recoller des anneaux commutatifs de manière formelle nécessite de passer à la catégorie des schémas de Grothendieck.}.

\begin{fproposition}[Recollement abstrait de \trdis]
\label{fpropRecolTD} 
  Supposons donnés un ensemble fini totalement ordonné~$I$ et dans la catégorie des \trdis  un diagramme

\snic{\big((\gT_i)_{i\in I},(\gT_{ij})_{i<j\in I},(\gT_{ijk})_{i<j<k\in I};
(\pi_{ij})_{i\neq j},(\pi_{ijk})_{i< j, j\neq k\neq i}\big)}

\noindent 
comme dans la figure ci-après, 
ainsi qu'une famille d'\elts 

\snic
{(s_{ij})_{i\neq j\in I}\in \prod\nolimits_{i\neq j\in I}\gT_{i}}

\noindent satisfaisant les conditions suivantes:
\begin{itemize}
\item le diagrammme est commutatif ($\pi_{ijk}\circ \pi_{ij}=\pi_{ikj}\circ \pi_{ik}$ pour tous $i$, $j$, $k$ distincts), 
\item pour $i\neq j$, $\pi_{ij}$ est un morphisme de passage au quotient par l'\id $\dar s_{ij}$,
\item pour $i$, $j$, $k$ distincts, $\pi_{ij}(s_{ik})=\pi_{ji}(s_{jk})$ et  $\pi_{ijk}$ est un morphisme de passage au quotient par \hbox{l'\id $\dar\pi_{ij}(s_{ik})$}.   
\end{itemize}

\medskip  {\small\hspace*{10em}
\xymatrix @R=2em @C=7em{
 \gT_i\ar[d]_{\pi _{ij}}\ar@/-0.75cm/[dr]^{\pi _{ik}} &
     \gT_j\ar@/-.8cm/[dl]_{\pi _{ji}}\ar@/-.8cm/[dr]^{\pi _{jk}} &
        \gT_k\ar@/-0.75cm/[dl]_{\pi _{ki}}\ar[d]^{\pi _{kj}} &
\\
 ~\gT_{ij}~ \ar[rd]_{\pi _{ijk}} & 
    ~\gT_{ik}~  \ar[d]^{\pi _{ikj}} & 
      ~\gT_{jk}~  \ar[ld]^{\pi _{jki}} 
\\
   &  ~\gT_{ijk}~ 
\\
}
}

\smallskip \noindent Alors si $\big(\gT\,;\,(\pi_i)_{i\in I}\big)$ est la limite projective du diagramme, les~\hbox{$\pi_i:\gT\to \gT_i$} forment un recouvrement par quotients principaux de $\gT$, et le diagramme est isomorphe à celui obtenu
dans le fait~\ref{ffactRecolTD}.
Plus précisément, il existe des $s_i\in\gT$ tels que chaque~$\pi_i$ est un morphisme de passage au quotient par l'\id $\dar s_i$ et $\pi_i(s_j)=s_{ij}$ pour tous $i\neq j$.

\noindent Le \tho analoque est valable pour les quotients par des filtres principaux.
\end{fproposition}

\begin{fremark} \label{fremRecSSSpec} 
{\rm Nous laissons \alec le soin de traduire les résultats précédents
en termes de recollements de sous-espaces spectraux en utilisant le dictionnaire fourni pas le théorème \ref{fpropSESP}.   
}\end{fremark}

\subsubsection*{Treillis et J-spectrum de Heitmann}
\addcontentsline{toc}{subsubsection}{Treillis et J-spectrum  de Heitmann}

Un \id $\fm$ d'un \trdi $\gT$ non trivial (i.e. distinct de $\Un$) est
dit \textsl{maximal} \hbox{si $\gT/(\fm=0)\,=\,\Deux$}, \cad si $1\notin\fm$ et
$\Tt x\in\gT\;(x\in\fm$ ou $\Ex y\in \fm \;x\vu y=1)$.

Il revient au même de dire qu'il s'agit d'un \id \gui{maximal
parmi les \ids stricts}.

En \clama on a le lemme suivant, qui vaut aussi bien pour le treillis trivial~$\Un$.
\begin{flemma}
\label{flemHspec1}
Dans un \trdi $\gT\neq\Un$ l'intersection des \idemas est égale à 
l'\id
$$ \sotq{a\in\gT}{\forall x\in\gT \;(a\vu x = 1 \Rightarrow  
x=1)}.
$$
On l'appelle le \textsl{radical de Jacobson de $\gT$}. On le note 
$\JT(0)$. \\
Plus généralement l'intersection des \idemas contenant un \id 
strict $\fJ$
est égale à l'\id
\begin{equation} \label{feqRJJ}
\JT(\fJ)\,=\, \sotq{a\in\gT}{\forall x\in\gT \;(a\vu x  = 1 
\Rightarrow \Ex
z\in \fJ \;\;z\vu x=1)}
\end{equation}
On l'appelle le \textsl{radical de Jacobson de l'\id $\fJ$}. En 
particulier:
\begin{equation} \label{feqRJb}
\JT(\dar b)=\sotq{a\in\gT}{\forall x\in\gT \;(\,a\vu x = 1\; 
\Rightarrow
\;\;b\vu x=1\,)}
\end{equation}
\end{flemma}

En \coma les \egts \pref{feqRJJ} et \pref{feqRJb} servent de \dfn.

Un quotient intéressant de $\gT$, qui n'est ni un quotient par un \id ni un 
quotient
par un filtre, est le treillis de Heitmann.


On appelle \textsl{treillis de Heitmann de $\gT$} et on note $\He(\gT)$ 
le treillis quotient de~$\gT$ obtenu en remplaçant sur $\gT$ la 
relation d'ordre $\leq_\gT $  par la relation de préordre 
$\preceq_{\He(\gT)}$
définie comme suit

\vspace{-1em}
\begin{equation} \label{feqdefHeT}
\begin{array}{rcl}\qquad 
a\preceq_{\He(\gT)} b & \equidef  &   \JT(a)\subseteq\JT(b)  
  \end{array}
  \end{equation}
Ce treillis quotient peut être identifié à l'ensemble des 
\ids $\JT(a)$,
avec la projection canonique
$$ \gT\longrightarrow \He(\gT),\quad a\longmapsto \JT(a)$$

Dans un article remarquable (\cite{fHei84}) R. Heitmann explique que la notion
usuelle de j-spectrum pour un anneau commutatif n'est pas la bonne 
dans le cas
non noethérien car elle ne correspond pas à un espace spectral au sens de
Stone. Il introduit la modification suivante de la \dfn usuelle: au lieu
de considérer l'ensemble des \ideps qui sont intersections 
d'\idemas il propose de considérer l'adhérence du spectre maximal dans le 
spectre premier, adhérence à prendre au sens de la topologie 
constructible (la
\gui{patch topology}).

\begin{fdefinitions}
\label{fdefHspec1}
Soit $\gT$ un \trdi.
\begin{enumerate}
\item  On note $\Max\,\gT$ le sous-espace topologique de $\Spec\,\gT$ formé par les \idemas de~$\gT$. 
On l'appelle le \textsl{spectre maximal de~$\gT$}.
\item  On note $\jspec\,\gT$ le sous-espace topologique de 
$\Spec\,\gT$ formé
par les $\fp$ qui vérifient l'égalité \hbox{$\JT(\fp)=\fp$}, \cad les  \ideps $\fp$ 
qui sont
intersections d'\idemas (c'est le \textsl{j-spectrum} \gui{usuel}).
\item On appelle \textsl{$\rJ$-spectre de Heitmann de $\gT$} et on note
$\Jspec\,\gT$ l'adhérence du spectre maximal dans $\Spec\,\gT$, 
adhérence à prendre au sens de la topologie constructible. Ce spectre est un sous-espace spectral de $\Spec\,\gT$, canoniquement homéomorphe à $\Spec(\He(\gT))$.
\item  On note $\Min\,\gT$ le sous-espace topologique de $\Spec\,\gT$ 
formé par les \ideps minimaux de $\gT$. On l'appelle le \textsl{spectre  minimal de $\gT$}.
\end{enumerate}
\end{fdefinitions}

Notez que malgré leurs dénominations, $\Max\,\gT$,  $\jspec\,\gT$ 
et
$\Min\,\gT$ ne sont pas en général des espaces spectraux.

\begin{ftheorem}
\label{fthDK3}
Pour tout \trdi $\gT$ l'espace  $\Jspec\,\gT$ est un sous-espace 
spectral de
$\Spec\,\gT$ canoniquement homéomorphe à $\Spec(\He(\gT))$. 
\end{ftheorem}

\subsection{Treillis distributifs et relations implicatives}

Une règle particulièrement importante
pour les \trdis, dite \textsl{coupure}, est la
suivante
\begin{equation}\label{fcoupure1}
 \,x\vi a\; \leq\;  b\,\vet\;  \,a\; \leq\; x\vu  b\,
 \vd  a \leq\;  b.
\end{equation}

Si $A\in\Pfe(\gT)$ (ensemble des parties finiment énumérées de~$\gT$)  on notera
$$\ndsp \Vu  A:=\Vu _{x\in A}x\qquad {\rm et}\qquad \Vi  A:=\Vi _{\!x\in A}x.
$$

On  note $A \vda B$ ou $A \vdash_\gT B$ la relation définie comme suit  sur l'ensemble $\Pfe(\gT)$:

\snic{A \vda B \; \; \equidef\; \; \Vi  A\;\leq \;
\Vu  B.}

Cette relation vérifie les axiomes suivants, dans lesquels on
écrit $x$ pour $\{x\}$ et $A, B$  \hbox{pour $ A\cup B$}.

\vspace{-.5em}
$$\arraycolsep3pt\begin{array}{rcrclll}
&    & x  &\vda& x    &\; &(R)     \\[1mm]
 \hbox{si } A \vda B &   \hbox{alors}  & A,A' &\vda& B,B'   &\; &(M)     \\[1mm]
\hbox{si } (A,x \vda B) \hbox{ et }  (A \vda B,x) 
& \hbox{alors} & A &\vda& B &\;
&(T).
\end{array}$$
On dit que la relation est \textsl{réflexive}, \label{fremotr} \textsl{monotone} et
\textsl{transitive}.
La troisième règle (transitivité) peut être vue comme une
\gnn de la règle (\ref{fcoupure1}) et s'appelle \egmt la
règle de \textsl{coupure}.

\begin{fdefinition}
\label{fdefEntrel}
Pour un ensemble $S$ arbitraire, une relation binaire sur  $\Pfe(S)$  qui est
réflexive, monotone et transitive est
appelée une {\sl \entrel} (en anglais, {\sl entailment relation}).
On note en général une telle relation avec le symbole $\vda$ ou $\,\vdash_S\,$.
\end{fdefinition}

Le \tho suivant est fondamental. Il dit que les
trois propriétés des \entrels sont exactement ce qu'il faut pour que
l'interprétation d'une relation implicative comme la trace de celle d'un \trdi soit adéquate.

\begin{ftheorem}[\Tho fondamental des \entrels]
\label{fthEntRel1} {\rm  Voir \cite[Theorem 1]{fCC00}, \cite[\hbox{XI-5.3}]{fACMC}, \cite[Satz~7]{fLor1951}}.\\
Soit un ensemble $S$  avec une \entrel
$\vdash_S$ sur $\Pfe(S)$. On considère le \trdi~$\gT$ défini par
\gtrs et relations comme suit: les \gtrs sont les
\elts de $S$ et les relations sont les

\snic {A\; \vdash_\gT \;  B}

\noindent chaque fois que $A\; \vdash_S \; B$.  Alors, pour tous $A$,
 $B$ dans $\Pfe(S)$,  on a

\snic {\hbox{si }A\; \vdash_\gT \;  B
\hbox{ alors } A\; \vdash_S \;  B.}

\noindent 
En particulier, deux \elts $x$ et $y$ de $S$ définissent le même \elt de $\gT$ \ssi
\hbox{on a} $x\; \vdash_S \;  y$ et $y\; \vdash_S \;  x$.
\end{ftheorem}

\smallskip 
\rem La relation $x\vdash_S y$ est à priori  une relation de préordre, et non une relation d'ordre, sur~$S$. Notons $\ov x$ l'\elt $x$ vu dans l'ensemble ordonné~$\ov{S}$ associé à ce préordre, et pour toute partie~$A$ de~$S$ notons $\ov A=\sotq{\ov x}{x\in A}$.
Dans l'énoncé du \tho on considère un \trdi $\gT$ qui donne sur $S$ la même \entrel que $\vdash_S$.
En toute rigueur, on aurait dû noter $ \ov A\; \vdash_\gT \; \ov B$
plutôt que $A\; \vdash_\gT \;  B$ pour tenir compte du fait que l'\egt dans $\gT$ est plus grossière que dans $S$. En particulier c'est $\ov{S}$, et non pas $S$, qui s'identifie à une partie de $\gT$.
\eoe

\section{Espaces spectraux en \alg}

Les espaces spectraux couramment utilisés en \alg peuvent très souvent (voire toujours) être compris
comme les spectres de \trdis qui proviennent de structures \agqs décrites par des théories \textsl{cohérentes}. 

Dans cette section nous donnons une description générale de ce type de situation et nous montrons comment cela fonctionne pour quelques espaces spectraux usuels.
\subsection{Structures \agqs dynamiques, treillis distributifs et spectres associés}

Références: \cite{fCLR01,fLom-tgac}.
Dans l'article \cite{fCLR01} sont introduites les notions de \gui{dynamical theory} et de \gui{dynamical proof}.
Voir \egmt l'article \cite{fBC2005} qui décrit un certain nombre d'avantages fournis par cette approche.

\Subsubsection{Théories dynamiques et \sads}

Les \textsl{\tdys}, introduites dans \cite{fCLR01}, sont une version purement calculatoire \gui{sans logique} des théories (formelles du premier ordre) cohérentes.

Elles utilisent uniquement des  \rdys, \cad des règles de la forme
\[
\Gamma  \vd   \Exists{\und{y^1}}\, \Delta_1
\vou \cdots\vou \Exists{\und{y^m}}\,\Delta_m
\]
où $\Gamma$ et les $\Delta_i$ sont des listes de formules atomiques du langage considéré.

Les axiomes d'une \tdy sont des \rdys et les \gui{\thos} sont les \rdys validées à partir des axiomes selon un processus \elr bien défini.
Le contenu calculatoire de \gui{$\Exists{\und{y}}\, \Delta$} est \gui{$\EXists{\hbox{\ des\ nouveaux\ param\`etres\ }\und{y}}\, \Delta$}.
Le contenu calculatoire de \gui{$ U \vou  V\,\vou W$} est \gui{ouvrir trois branches de calculs, dans la première les prédicats de la liste $U$ sont valides,  \dots}. 

Une \textsl{\sad} pour une \tdy $\sa{T}$ est obtenue en ajoutant une présentation $(G,R)$ \gui{par générateurs et relations}: les \gtrs sont les \elts de~$G$
et sont ajoutés comme constantes dans le langage. Les relations sont les \elts de~$R$, ce sont des \rdys sans variables libres ajoutées aux axiomes de la théorie.

Les structures \agqs purement équationnelles, comme celles des groupes, des \trdis, des \grls, des anneaux commutatifs, correspondent à des \tdys particulièrement simples, où tous les axiomes sont simplement des équations.

Les \tdys dont les axiomes ne contiennent ni $\vou$, ni $\Exists$, ni $\Bot$ au second membre sont appelées des \textsl{théories de Horn} (\talgs dans \cite{fCLR01}).
Par exemple les anneaux \zedrs ou les anneaux \qis sont décrits par des théories de Horn.

Un \rdy est dite \textsl{existentielle simple} si le second membre (la conclusion) est de la forme $\Exists \ux\; \Delta$ où $\Delta$ est une liste finie de formules atomiques.  Une \tdy est dite \textsl{existentielle} si ses axiomes sont tous des \ralgs ou existentielles simples (une \textsl{regular theory} dans la littérature anglaise).

Les théories \textsl{existentiellement rigides} sont les \tdys dans lesquelles les axiomes existentiels sont simples et correspondent à des existences uniques. 
Une théorie existentielle existentiellement rigide
est dite \textsl{cartésienne}.

\smallskip 
Une théorie cohérente est aussi appelée une \tgm finitaire.
Les \tgms générales admettent en plus des axiomes avec des disjonctions infinies dans le second membre. Nous parlerons dans cet article uniquement des \tgms finitaires.

Un résultat fondamental pour les \tdys dit que si l'on ajoute la logique classique à une \tdy on ne change pas les règles valides: la théorie formelle du premier ordre classique est une extension conservative de la \tdy \cite[Theorem 1.1]{fCLR01}.

\Subsubsection{Treillis distributifs associés à une \sad}

Considérons une  \sad $\gA=\big((G,R),\sa{T}\big)$ pour une \tdy $\sa{T}=(\cL,\cA)$.

\paragraph{Premier exemple.} Si $P(x,y)$ est un prédicat dans la signature, et si $\mathit{Tcl}=\mathrm{Tcl}(\gA)$ est l'ensemble des termes clos  de~$\gA$, on obtient une \entrel $\vdash_{\gA,P}$ sur $\mathit{Tcl}\times \mathit{Tcl}$ en posant
\begin{equation} \label{feq1}
\begin{aligned} 
 (a_1,b_1),\dots,(a_n,b_n)  &\,\,\vdash_{\gA,P}    (c_1,d_1),\dots,(c_m,d_m)   
  \qquad\quad     \equidef     \\[.3em] 
   P(a_1,b_1)\vet \dots\vet  P(a_n, b_n) & \Vdi{\gA}   P(c_1, d_1)\vou \dots\vou   P(c_m, d_m) 
 \end{aligned}
\end{equation}

Intuitivement le \trdi $\gT$ engendré par cette \entrel est le treillis des \gui{valeurs de vérité} du prédicat~$P$ dans la \sad $\gA$.
En effet, donner un \elt $\alpha:\gT\to\Deux$ de $\Spec\gT$ revient à attribuer la valeur $\Top$ ou la valeur $\Bot$ à $P(a,b)$ 
selon que $\alpha(a,b)=1$ ou $0$. Comme cette structure est incomplètement spécifiée, dynamique, les valeurs de vérité sont des \elts du treillis $\gT$ plutôt que de $\so{\Bot,\Top}$

\paragraph{Deuxième exemple, le treillis de Zariski d'un anneau commutatif.}~

\smallskip \noindent On considère une \tdy des anneaux locaux non triviaux \sa{Al}, par exemple basée sur la signature $(\,\cdot=0,\U(\cdot)\mathrel{;}\cdot+\cdot,\cdot\times \cdot,-\,\cdot,0,1\,)$.

La théorie est une extension de la théorie des anneaux commutatifs.
Ici $\U(x)$ est défini comme le prédicat d'inversibilité de $x$ au moyen des axiomes

\DeuxRegles{
\labu $\,\,\U(x)\vd \Exists y\,  xy=1$
}
{
\labu $\,\,xy=1\vd \U(x)$
}

\noindent et les axiomes des anneaux locaux non triviaux sont les suivants

\DeuxRegles {\Lab{fAL} $\,\, \U(x+y) \Vd \U(x) \vou \U(y)$
}
{
\labu $\,\,\U(0)\vd \,\Bot$}
  
Le \textsl{treillis de Zariski} $\ZarA$
d'un anneau commutatif $\gA$ est défini comme le \trdi engendré par la \entrel $\vdash_{\ZarA}$ sur $\gA$ définie par
\begin{equation} \label{feqZarclass}
\begin{aligned} 
 a_1,\dots,a_n  &\,\,\vdash_{\ZarA}    c_1,\dots,c_m   
  \qquad\quad     \equidef     \\[.3em] 
\U(a_1)\vet \dots\vet  \U(a_n) & \Vdi{\sA{Al}(\gA)}   \U(c_1)\vou \dots\vou   \U(c_m) 
 \end{aligned}
\end{equation}

Ici $\sa{Al}(\gA)$ est la \sad de type $\sa{Al}$ sur l'anneau $\gA$.

On a alors l'\eqvc suivante (que l'on appelle un \textsl{Nullstellensatz formel}) 
$$a_1,\dots,a_n \,\vdash_{\ZarA}    c_1,\dots,c_m \Longleftrightarrow \exists k>0\;\;(a_1 \cdots a_n)^k\in\gen{c_1,\dots,c_m}
$$

Vu le \nst formel, le treillis  $\ZarA$ s'identifie
à l'ensemble des idéaux $\DA(\xn)=\sqrt[\gA]{\gen{\xn}}$, avec
$\DA(\fj_1)\vi\DA(\fj_2)=\DA(\fj_1\fj_2)$ et
$\DA(\fj_1)\vu\DA(\fj_2)=\DA(\fj_1+\fj_2)$.

Le spectre de Zariski usuel est (canoniquement homéomorphe à) le spectre de $\ZarA$. Donner un point de $\SpecA$ (un \idep) revient en effet à donner un épimorphisme $\gA\to\gB$ où $\gB$ est un anneau local, ou encore à donner un modèle minimal de la théorie des anneaux locaux basés sur l'anneau $\gA$.
Cela correspond à l'intuition \gui{forcer l'anneau à être un anneau local}.

\paragraph{Plus généralement.} On peut considérer un ensemble $S$ de formules atomiques closes de la \sad~$\gA=\big((G,R),\sa T\big)$ et définir une \entrel correspondante (avec les $A_i$ et $B_j\in S$)

\vspace{-.6em}
\begin{equation} \label{feq2}
\begin{aligned}
 A_1,\dots,A_n  &\,\,\vdash_{\gA,S} B_1,\dots,B_m   
   \qquad\quad \equidef     \\[.3em] 
    A_1\vet \dots\vet  A_n &\Vdi{\gA} B_1\vou\dots\vou B_m 
 \end{aligned}
\end{equation}
On pourra noter $\Zar(\gA,S)$ le \trdi ainsi construit.

En fait, pour un anneau commutatif \(\gA\), nous avons défini \(\Zar\gA\) selon ce schéma, avec la \tdy \sa{Al}, la \sad $\sa {Al}(\gA)=\big((G,R),\sa Al\big)$, où \((G,R)\) est le diagramme positif de \(\gA\) comme anneau commutatif, et où \(S=\sotq{\rU(a)}{a\in\gA}\).

\paragraph{Points d'un spectre et modèles en \clama.} En général on choisit
pour ensemble $S$ les formules atomiques closes construites sur un seul ou sur un petit nombre de prédicats du langage, en s'arrangeant pour que les autres prédicats puissent être définis à partir de ceux de $S$ dans la théorie formelle du premier ordre correspondante (en \clama). Dans ce cas donner un point du spectre
$\Spec(\Zar(\gA,S))$ revient à peu près à donner un modèle de la \sad $\gA$.
Précisément, dans le cas d'une théorie existentiellement rigide, donner un point du spectre
$\Spec(\Zar(\gA,S))$ revient exactement à donner un modèle minimal de la \sad $\gA$. Le modèle est minimal au sens où tous ses \elts sont construits de manière unique à partir des \gtrs~$G$ de la \sad au moyen des symboles de fonction d'une part et par utilisation des axiomes existentiels rigides d'autre part. Le choix de l'ensemble~$S$ influe alors sur la topologie
de l'espace spectral associé. Deux choix différents de~$S$ donnent les mêmes points mais pas la même topologie sur l'espace spectral correspondant.

\paragraph{Le treillis de Zariski (complet) d'une \sad $\gA$} est défini en prenant pour~$S$ l'ensemble $\mathrm{Atcl}(\gA)$ de toutes les formules atomiques closes de~$\gA$. On le note $\Zar(\gA,\sa T)$ ou par un nom particulier correspondant à la théorie \sa{T}, par exemple $\Val\gA$
pour la théorie \sa{Val}.
L'espace spectral dual est appelé le \textsl{spectre de Zariski de la \sad $\gA$} ou avec un nom particulier. 

Lorsque la théorie $\sa{T}$ est sans axiomes existentiels, $\Zar(\gA)$
est (à isomorphisme canonique près) le \trdi correspondant à la \entrel sur $\mathrm{Atcl}(\gA)$ engendrée par les instantiations des axiomes de $\sa{T}$ obtenues en y remplaçant les variables par des termes clos arbitraires.

\subsection{Un cas très simple} 
\label{fsubsecspecbanal}

Considérons une \talg $\sa{T}$.
Toute \sad $\gS=\big((G,R),\sa{T}\big)$ de type \sa{T} définit une structure \agq ordinaire $\gB$, le modèle générique de $\gS$. 
Il n'y a alors pas de différence significative entre  \sads et  structures \agqs usuelles. Nous supposons que (l'ensemble sous-jacent à) $\gB$ est non vide.

Les modèles minimaux de $\gS$ sont (identifiables avec) les structures
quotients $\gC=\gB /\!\!\sim$ (au sens usuel d'une structure quotient pour une structure \agq).
Considérons le treillis de Zariski complet $\Zar (\gB) $,
les points de \(\Spec(\gB):=\Spec(\Zar(\gB))\) sont donc identifiables avec ces structures quotients de $\gB$. Et un \elt de \(\Zar(\gS)=\Zar(\gB)\) s'identifie à un quotient  de \(\gB\) obtenu en ajoutant un nombre fini de relations dans la \pn.  

\smallskip Nous examinons maintenant un exemple simple, mais tout à fait significatif. On considère la théorie $\sa{T}=\sa{Mod}_\gA$ (théorie \peq des modules sur un anneau commutatif fixé~$\gA$). 

Pour tout \Amo $M$, les modèles minimaux de la \sad $\sa{Mod}_\gA(M)$ sont les quotients de $M$. On peut donc identifier un point de \(\Spec(M)\) au sous-module~$N$ de~$M$ tel que~$M/N$ soit le quotient considéré.
Le \trdi correspondant $\Zar (M)$ est très simple, il est défini par la \entrel $\ym\vdash \xn$ sur~$M$ équivalente à

\Regles{\labu $\,\, y_1= 0,\dots\vet y_m= 0\vd x_1= 0\vou\dots\vou x_n= 0$}

\noindent \cad encore à 

\Regles{\labu 
$\,\,$l'un des $x_k$ est dans le sous-module $\gA y_1+\dots+\gA y_m=\gen{\ym}$ (\textsl{\nst formel pour l'algèbre linéaire}).}

Une base d'\oqcs de \(\Spec(M)\) est définie à partir des sous-modules \tf de \(M\)
$$
\fD(y_1\vi\dots\vi y_m):=\fD(y_1)\cap\dots\cap\fD(y_m) =\sotQ{N\subseteq M}{\gen{y_i}_{i\in\lrbm}\subseteq N} \quad \quad \hbox{pour des }y_i\in M.
$$

\smallskip Il reste à savoir si la considération de tels (\trdis et) espaces spectraux, un peu trop simples, peut produire des résultats intéressants en \clama.

\smallskip\noindent  \textsl{Note.} On peut préférer le treillis opposé à celui défini ci-dessus, ce qui donne une autre topologie sur $\Spec M$.

\subsection{Treillis et spectre réel d'un anneau commutatif} 
\label{fsubsecspecreel}
Le spectre réel d'un anneau commutatif correspond à l'intuition de \gui{forcer l'anneau à être un corps ordonné (discret\footnote{On demande que la relation d'ordre, et a fortiori la relation d'\egt, soit décidable.})}. Un point du spectre est ici considéré comme donné par un morphisme $\varphi:\gA\to\gK$ où $(\gK,\gC)$ est un corps ordonné\footnote{$\gC$ est le cône formé par les \elts $\geq 0$.} \gui{engendré par\footnote{En termes plus savants, $\varphi$ est un épimorphisme de la catégorie des anneaux commutatifs.}} l'image de~$\gA$ dans~$\gK$.  En outre, deux morphismes $\varphi:\gA\to\gK$ et $\varphi':\gA\to\gK'$ définissent le même point du spectre s'il existe un \iso de corps ordonnés $\psi:\gK\to\gK'$ qui fait commuter le diagramme convenable.

Si l'on note \gui{$x\geq 0$} le prédicat sur  $\gA$ correspondant à \gui{$\varphi(x)\geq 0$ dans $\gK$}, ce prédicat vérifie notamment les règles suivantes
.

\DeuxRegles{
\labu $\vd x^2\geq 0$
\labu $\,\,x\geq0\vet y\geq0\vd x+y\geq0$
\labu $\,\,x\geq0\vet y\geq0\vd xy\geq0$
}
{
\labu $\,\,-1\geq0\vd \Bot$
\labu $\,\,-xy\geq0\vd x\geq 0\vou y\geq 0$
}

On reconnait en fait les axiomes pour un cône premier dans un anneau. 
Autrement dit, si un prédicat (noté $x\geq 0$) sur $\gA$ satisfait ces règles, il définit un unique point du spectre réel de $\gA$, tel que défini précédemment. 

Mais pour obtenir la topologie usuelle du spectre réel,
il faut plutôt considérer le prédicat opposé $x<0$, et pour le confort de 
la lecture on prend le prédicat $x>0$, et les règles qu'il satisfait sont les duales de celle satisfaites par $-x\geq 0$.

\DeuxRegles{
\labu $\,\,-x^2>0\vd \Bot$
\labu $\,\,x+y>0\vd x>0\vou y>0$
\labu $\,\,xy>0\vd x>0 \vou -y>0$
}
{
\labu $\vd 1 >0$
\labu $\,\,x>0\vet y>0\vd xy>0$
}

On définit donc le \textsl{treillis réel} de l'anneau $\gA$, noté $\Reel(\gA)$
comme le \trdi défini par la \entrel sur (l'ensemble sous-jacent à) $\gA$ engendrée par les relations suivantes.

\DeuxRegles{
\labu $\,\,-x^2 \vdash  $
\labu $\,\,x+y \vdash x , y $
\labu $\,\,xy \vdash x  , -y $
}
{
\labu $\,\,\vdash 1  $
\labu $\,\,x , y \vdash xy $
}

On trouve alors que $\Spec(\Reel(\gA))$, noté $\Sper\gA$, s'identifie en \clama avec l'ensemble des cônes premiers de $\gA$, muni de la topologie spectrale définie par les ouverts 
$$\fR(\an)=\sotq{\fc\in \Sper\gA}{\&_{i=1}^n -a_i\notin \fc}.$$

Cette approche du spectre réel est proposée dans~\cite{fCC00}.

Du point de vue \cof on essaie alors de remplacer tout discours des \clama sur l'espace $\SperA$ par un discours \cof sur le treillis $\Reel \gA $.

Pour cela, un point important est le \textsl{Positivstellensatz formel} dont nous donnons ici l'énoncé \cof. Dans l'énoncé en \clama on remplacerait le point~\textsl{2} par: \textsl{pour tout morphisme d'anneaux commutatifs $\varphi$ de $\gA$ vers un corps ordonné $\gK$, si les~$\varphi(x_i)$ sont tous~$>0$, au moins l'un des $\varphi(a_j)$ est $>0$.} 
\begin{ftheorem}[Positivstellensatz formel] \label{fthPstformelreel} 
\Propeq 
\begin{enumerate}
\item On a $$
x_1,\dots,x_k\vdash a_1,\dots,a_n 
$$
dans le treillis $\Reel \gA$.
\item On a $$
x_1>0\vet\dots \vet x_k>0\vd a_1> 0\vou\dots\vou  a_n> 0 
$$ 
dans la théorie des corps ordonnés \gui{sur $\gA$}.
\item On a $$
x_1>0\vet\dots \vet x_k>0\vet a_1\leq  0\vet\dots\vet  a_n\leq  0\vd \Bot 
$$ 
dans la théorie des corps ordonnés \gui{sur $\gA$}.
\item On a une \egt $s+p=0$  dans $\gA$, où $s$ est dans le \mo engendré par les $x_i$ et~$p$ est dans le cône engendré par les~$x_i$ et les~$-a_j$.
\end{enumerate}
\end{ftheorem}
%
\begin{proof}[\textsl{Démonstration directe de 1 $\Rightarrow$ 4}]~\\ 
 On laisse le soin \alec de vérifier
que les axiomes indiqués plus haut pour le prédicat~\hbox{$x>0$}, se traduisent tous
par des \egts $s+p=0$ décrites dans le point~\textsl{3}. On vérifie aussi que la condition exprimée en~\textsl{4} définit une \entrel $\vdash'$ sur~$\gA$: réflexivité et monotonie sont faciles. Voyons la coupure, avec $X\vdash' z,A$ et $X,z\vdash' A$. On a par hypothèse deux \egts, pour~$z$ avant ou après $\vdash'$, 
 $$
(a)\; z^ks_1+p_1=-zq_1 ,\quad (b)\;s_2+p_2=zq_2.
 $$
Les $s_i$, $p_i$ et  $q_i$  ($i=1,2$) ne font intervenir que $X$ et $A$.
Dans $(a)$ on peut supposer $k=2\ell$ pair (sinon on multiplie par $z$).
En multipliant $(a)$ et $(b)$ on obtient une \egt du type $z^{k}s_3+p_3=-z^2q_1q_2$, i.e. aussi $z^{k}s_3=-q_3$. En élevant $(b)$ à la puissance $k$
on obtient une \egt $s_4+p_4=z^kq_4$. En combinant les deux dernières \egts
on obtient
\[
z^kq_4s_3=-q_3q_4=s_3s_4+s_3p_4,\quad \hbox{d'où } s_3s_4+s_3p_4+q_3q_4=0.
\]
\vspace{-1em}      
\end{proof}
%

\subsection{Treillis et spectre linéaire d'un groupe (abélien) réticulé} 

La structure de groupe (abélien) réticulé est obtenue à partir de la structure de groupe abélien en ajoutant une loi
$\vu$ de sup-demi treillis. La théorie \sa{Grl}  des \grls sur la  signature $$
(\cdot=0;\cdot+\cdot,-\cdot,\cdot\vu\cdot,0)
$$ 
est \peq.
Les identités suivantes expriment 
le fait que~$\vu$  définit un sup-demi treillis ainsi que la compatibilité de $\vu$ avec $+$.

\DeuxRegles{
\lab{sdt1} $\vd x\vu x=x $
\lab{sdt2} $\vd  x\vu y=y\vu x$
}
{
\lab{sdt3} $\vd  (x\vu y)\vu z=x\vu (y \vu z)$
\lab{grl} $\vd  x+(y\vu z)=(x+y)\;\vu\;(x+z)$
}

On obtient la théorie \sa{Glio} des groupes abéliens totalement ordonnés en ajoutant à \sa{Grl} l'axiome $\Vd x\geq 0 \vou -x\geq 0$. On en déduit en particulier la règle 

\Regles{\labu $\,\,a\geq 0\vet b\geq 0\vet a\vii b=0\vd a=0 \vou b=0$.}

Le spectre linéaire d'un \grl $\Gamma$ correspond à l'intuition de \gui{forcer le groupe à être totalement ordonné} (on dit aussi, surtout dans la littérature anglaise \gui{linéairement ordonné}). Un point du spectre est ici considéré comme donné par un groupe totalement ordonné~$G$ quotient de~$\Gamma$, ou ce qui revient au même par le noyau $H$ du morphisme canonique~\hbox{$\pi:\Gamma\to G$}.
Ce sous groupe $H$ est un \textsl{sous-groupe solide} (comme tout noyau d'un morphisme de \grls) \textsl{premier} (i.e. le quotient est totalement ordonné).

Le \textsl{treillis linéaire} de $\Gamma$, noté $\Glio(\Gamma)$ est alors celui engendré par la \entrel  sur (l'ensemble sous-jacent à) $\Gamma$ définie comme suit.
\[ 
\begin{aligned} 
 a_1,\dots,a_n    &\,\,\vdash_{\Glio \Gamma } \;b_1,\dots,b_m   
\qquad  \equidef     \\ 
 a_1\geq 0\vet \dots\vet a_n\geq 0 &\Vdi{\sA{Glio}(\Gamma)} 
 \, b_1\geq 0\vou\dots\vou b_m\geq 0
 \end{aligned}
\]   
Sur la deuxième ligne, $\sa{Glio}(\Gamma)$ est la \sad de type \sa{Glio} avec pour \pn le diagramme positif de $\Gamma$. 

La \rdy ci-dessus est successivement \eqve aux règles suivantes (en notant \(a^-= (-a) \vu 0\))

\Regles{\labu $\,\, {a_1^- = 0} \vet \dots \vet {a_n^-= 0} \vd 
 b_1^-= 0\vou\dots\vou b_m^-= 0$
\labu $\,\, (a_1^-\vu \dots\vu a_n^-)= 0 \vd 
 (b_1^-\vi\dots\vi b_m^-)= 0$}

On obtient ainsi un \textsl{\nst formel} pour cette \entrel.
\[ 
\begin{aligned} 
 a_1,\dots,a_n\vdash_{\Glio \Gamma } b_1,\dots,b_m   
\;\Longleftrightarrow\;      
\exists k>0\;     (b_1^- \vi \dots\vi b_m^-)\leq k(a_1^- \vu \dots\vu a_n^-)
 \end{aligned}
\]

\subsection{Treillis et spectre valuatif d'un anneau commutatif}
\label{fsubsecspecval}

Le spectre valuatif d'un anneau commutatif correspond à l'intuition de \gui{forcer l'anneau à être un corps valué}. Un point du spectre noté $\Spev\gA$ est ici considéré comme donné par un morphisme $\varphi:\gA\to\gK$ où $(\gK,\gV)$ est un corps valué\footnote{$\gV$ est un anneau de valuation de $\gK$.} \gui{engendré par\footnote{En termes plus savants, $\varphi$ est un épimorphisme de la catégorie des anneaux commutatifs.}} l'image de~$\gA$ dans~$\gK$.  En outre, deux morphismes $\varphi:\gA\to\gK$ et $\varphi':\gA\to\gK'$ définissent le même point du spectre s'il existe un \iso de corps valués $\psi:\gK\to\gK'$ qui fait commuter le diagramme convenable.

Si l'on note $x\di y$ le prédicat sur  $\gA\times \gA$ correspondant à \gui{$\varphi(x)$ divise\footnote{I.e., $\exists z\in \gV\,z\varphi(x)=\varphi(y)$.} $\varphi(y)$ dans $\gK$}, ce prédicat vérifie notamment les règles suivantes, qui définissent la \tdy \(\sa{val}\).

\DeuxRegles{
\labu $\vd 1 \di  0$
\labu $\vd -1 \di  1$
\labu $\,\,a \di  b \vd ac \di  bc$
\labu $\vd a \di  b \vou b\di a$
}
{
\labu $\,\,0 \di  1\vd \Bot$ 
\labu $\,\,a \di  b \vet b \di  c \vd a \di  c$
\labu $\,\,a \di  b \vet a \di  c \vd a \di  b + c$
\labu $\,\,ax \di  bx  \vd a \di  b \vou 0 \di x$
}
 
Si \(\gA\) est un anneau commutatif, la \sad $\sa{val}(\gA)$ est obtenue en ajoutant le diagramme positif de l'anneau \(\gA\).
 
Si un prédicat (noté $x\di y$) sur $\gA\times \gA$ satisfait les 
axiomes de \(\sa{val}(\gA)\), il définit un unique point du spectre valuatif de $\gA$, tel que défini précédemment. 

On définit donc le \textsl{treillis valuatif} de l'anneau $\gA$, noté $\val(\gA)$
comme le \trdi défini par la \entrel sur (l'ensemble sous-jacent à) $\gA\times \gA$ engendrée par les relations suivantes.

\DeuxRegles{
\labu $\,\,\vdash (1 ,  0)$
\labu $\,\,\vdash (-1 ,  1)$
\labu $\,\,(a ,  b) \vdash (ac ,  bc)$
\labu $\,\,\vdash (a ,  b) , (b, a)$
}
{
\labu $\,\,(0,1)\vdash $  
\labu $\,\,(a ,  b) , (b ,  c) \vdash (a ,  c)$
\labu $\,\,(a ,  b) , (a ,  c) \vdash (a ,  b + c)$
\labu $\,\,(ax ,  bx)  \vdash (a ,  b) , (0 , x)$
}

On trouve alors que les points de l'espace spectral $\Spec(\val\gA)$ s'identifient à ceux de $\Spev\gA$. La topologie spectrale de $\Spec(\val\gA)$ correspond dans $\Spev\gA$ à la topologie  engendrée par les ouverts de base 
$
\fD (a,b)=\sotq{\varphi\in \Spev\gA}{\varphi(a)\di\varphi(b)}.
$

On a enfin le \textsl{Valuativstellensatz formel} suivant.

\begin{ftheorem}[Valuativstellensatz formel pour la théorie $\sa{val}(\gA)$] \label{fthPstformelVal}~\\
Pour tout anneau commutatif $\gA$ \propeq 
\begin{enumerate}
\item   Dans la \sad~\(\sA{val}(\gA)\), on a 
\[a_1\di b_1\vet \dots\vet a_n\di b_n  \vd    c_1\di d_1 \vet \dots\vet c_m\di d_m\]  

\item Dans le treillis $\val\gA$, on a 
$$
 (a_1,b_1),\dots,(a_n,b_n)  \,\vdash \,   (c_1,d_1),\dots,(c_m,d_m) 
$$

\item En introduisant des \idtrs $x_i$ ($i\in\lrbn$) et $y_j$ ($j\in\lrbm$)
on a dans l'anneau  $\gA[\ux,\uy]$ une \egt de la forme 
$$
 d \big(1+\som_{j=1}^my_jP_j(\ux,\uy)\big)\in \gen{(x_ia_i-b_i)_{i\in\lrbn},(y_jd_j-c_j)_{j\in\lrbm}} 
$$ 
où $d$ est dans le \mo engendré par les $d_j$, et les $P_j(\xn,\ym)$ sont dans $\ZZ[\ux,\uy]$.
\end{enumerate}
\end{ftheorem}

\subsection{Treillis et J-spectrum de Heitmann d'un anneau commutatif}

Dans un anneau commutatif, le \textsl{radical de Jacobson d'un \id 
$\fJ$} est (du point de vue des \clama) l'intersection des \idemas qui contiennent $\fJ$. On
le note $\JA(\fJ)=\rJ(\gA,\fJ)$, ou encore $\rJ(\fJ)$ si le contexte est 
clair.
En \coma on utilise la \dfn suivante, classiquement équivalente:
\begin{equation} \label{feqRadJac}
\JA(\fJ)\eqdefi\sotq{x\in\gA}{\Tt y\in\gA,\;\; 1+xy
\hbox{ est inversible modulo } \fJ}
\end{equation}
On notera $\JA(x_1,\ldots ,x_n)=\rJ(\gA,x_1,\ldots ,x_n)$ pour
$\JA(\gen{x_1,\ldots ,x_n})$.  L'\id  $\JA(0)$ est appelé le
\textsl{radical de Jacobson de l'anneau $\gA$}.

On appelle \textsl{treillis de Heitmann} d'un anneau commutatif $\gA$ 
le treillis
$\He(\ZarA)$, on le note $\HeA$.  En fait $\HeA$ s'identifie
à l'ensemble des $\JA(x_1,\ldots ,x_n)$, avec
$\JA(\fj_1)\vi\JA(\fj_2)=\JA(\fj_1\fj_2)$ et
$\JA(\fj_1)\vu\JA(\fj_2)=\JA(\fj_1+\fj_2)$.

On note $\Jspec(\gA)$ l'espace spectral $\Jspec(\ZarA)=\Spec(\HeA)$. En \clama c'est l'adhérence du spectre maximal, pour la topologie constructible,
dans $\SpecA$. On l'appelle le \hbox{\textsl{{\rm J}-spectrum de Heitmann}}.
Dans le cas où $\gA$ est \noe, $\Jspec(\gA)$ coïncide avec $\jspec(\gA)$,
lequel est le sous-espace topologique de $\SpecA$ formé par les \ideps
qui sont intersections d'\idemas. Heitmann a montré que dans le cas non \noe 
utiliser $\Jspec(\gA)$ permet de généraliser des résultats obtenus avec 
$\jspec(\gA)$ dans le cas \noe.

\smallskip \rems ~\\
1. On a, avec $\gT=\ZarA$, $\Zar(\gA/\JA(0))\simeq\gT/\big(\JT(0)=0\big)$, et $\Heit(\gA)\simeq \Heit(\gA/\JA(0))$. Par 
contre il ne
semble pas qu'il y ait une $\gA$-algèbre $\gB$ naturellement 
attachée à
$\gA$ pour laquelle on ait $\Zar\,\gB\simeq\Heit\,\gA$.\\
2. En général $\JA(x_1,\ldots ,x_n)$ est un \id 
radical mais pas
le radical d'un \itf. \eoe

\section{Petit dictionnaire de l'anti\eqvc de catégories}
\label{fsecAntiEquiv}

Références: le théorème de Krull page~\pageref{fThKrull}, \cite{fBW74,fCC00,fCL2001-2018}.

\smallskip Le contexte de cette section est le suivant:  $f:\gT\to\gT'$ est un morphisme de treillis distributifs et $\Spec(f)$, noté $f\sta$, est le morphisme dual, de $X'=\Spec(\gT')$ vers $X=\Spec(\gT)$, dans la catégorie des espaces spectraux.

Les résultats énoncés pour les \trdis sont valides pour les anneaux commutatifs en considérant chaque fois le treillis ou le morphisme $\Zar(\bullet)$.

\smallskip On rappelle tout d'abord quelques \dfns usuelles en \clama.
\begin{itemize}
\item Le morphisme $f$ est dit \textsl{lying over} (en français, \textsl{il possède la propriété de relèvement}) lorsque~$f\sta$ est surjectif: tout \idep de $\gT$ est image réciproque d'un \idep de~$\gT'$.
\item  Le morphisme $f$ est dit \textsl{going up} (en français, \textsl{il possède la propriété de montée pour les chaînes d'\ideps}) lorsque l'on a: \textsl{si $\fq\in X'$, $f\sta(\fq)=\fp$, et $\fp\subseteq\fp_2$ dans~$ X $, il existe  $\fq_2\in X'$ tel que
$\fq\subseteq\fq_2$ et $f\sta(\fq_2)=\fp_2$}.
\item  De même $f$ est dit \textsl{going down} (en français, i\textsl{l possède la propriété de descente pour les chaînes d'\ideps}) lorsque l'on a: \textsl{si $\fq\in X'$, $f\sta(\fq)=\fp$, et $\fp\supseteq\fp_2$ dans~$ X $, il existe  $\fq_2\in X'$ tel que $\fq\supseteq\fq_2$ et  $f\sta(\fq_2)=\fp_2$}.
\item  On dit que le morphisme $f$ \textsl{possède la propriété d'incomparabilité} lorsque ses  \gui{fibres} sont formées d'\ideps deux à deux incomparables: si $\fq_1\subseteq \fq_2\in X$ et $f\sta(\fq_1)=f\sta(\fq_2)$ dans~$X'$ alors $\fq_1= \fq_2$.
\item  L'espace spectral $ X $ est dit \textsl{normal} si l'adhérence d'un point contient un et un seul point fermé (tout \idep de $\gT$ est contenu dans un unique \idema).
\item L'espace spectral  $\SpecT$ est dit \textsl{complètement normal} si pour tous $x,y,z$ avec $x\in\ov{\so z}$ et $y\in\ov{\so z}$ on a $x\in\ov{\so y}$ ou 
$y\in\ov{\so x}$. 
\end{itemize}

\subsection{Propriétés des morphismes}

\begin{ftheorem} \label{fth-dico-trdi-spec-mor1} \emph{\cite[Theorem~IV-2.6]{fBW74}}
 En \clama on a les équivalences suivantes.   
\begin{enumerate}
\item $f\sta$ est surjectif ($f$ est lying over) $\Longleftrightarrow$  $f$ est injectif $\Longleftrightarrow$ $f$ est un monomorphisme $\Longleftrightarrow$ $f\sta$ est un épimorphisme.
\item $f$ est un épimorphisme $\Longleftrightarrow$ $f\sta$ est un monomorphisme $\Longleftrightarrow$ $f\sta$ est injectif.
\item $f$ est surjectif\footnote{Autrement dit, puisque c'est une structure équationnelle, $f$  est un morphisme de passage au quotient.} $\Longleftrightarrow$ $f\sta$ est un isomorphisme sur son image, qui est un
sous-espace spectral de $Y$.
\end{enumerate} 
\end{ftheorem}

Il y a des épimorphismes de \trdis non surjectifs (voir \cite[\hbox{section V-8}]{fBW74}).
Cela correspond à la possibilité d'un morphisme bijectif entre espaces spectraux qui ne soit pas un isomorphisme. Par exemple le morphisme spectral bijectif  $\Spec(\Bo(\gT))\to\Spec\,\gT$ n'est pas (en général) un \iso et le morphisme de treillis $\gT\to\Bo(\gT)$\footnote{$\Bo(\gT)$ est l'\agB librement engendrée par $\gT$.} est un \gui{épimono} qui n'est pas (en général) surjectif.

\begin{flemma} \label{flemLYO}
Soit  $S$ un système générateur de $\gT$. Pour que $f$ soit lying over, il faut et suffit que pour tous
$a_1,\dots,a_n,b_1,\dots,b_m\in S$ soit satisfaite l'implication
$$
f(a_1),\dots,f(a_n)\vdi {\gT'} f(b_1),\dots,f(b_m)
\;\;\Rightarrow\;\;
a_1,\dots,a_n\vdi {\gT} b_1,\dots,b_m
$$ 
\end{flemma}

\begin{fproposition}[Going up versus lying over] \label{fpropGu} En \clama \propeq
\begin{enumerate}
\item  Pour tout \idep $\fq$ de $\gT' $, en notant $\fp=f^{-1}(\fq)$, le morphisme 
   $$f':\gT/(\fp=0)\to \gT'/(\fq=0)$$  
est injectif (i.e., lying over).
\item  Pour tout idéal $I$ de $\gT'$, et $J:=f^{-1}(I)$, le morphisme 
$f_I:\gT/(J=0)\to \gT'/(I=0)$ est injectif. 
\item  Pour tout $y\in \gT'$, avec $J=f^{-1}(\dar y)$ le morphisme 
$f_y:\gT/(J=0)\to \gT'/(y=0)$ est injectif. 
\item  Pour tous $a,c\in \gT$ et $y\in \gT'$ on a
$$
f(a)\,\vdash_{\gT'}\, f(c),\, y \quad\Longrightarrow\quad\exists x\in \gT \;\;\; a \vdash_{\gT}\, c ,\, x \;\;\; \hbox{et}\;\;\; f(j)\leq_{\gT'} x.
$$
\end{enumerate} 
\end{fproposition}
\begin{proof}
Voir \cite{fCL2001-2018}.
\end{proof}

\begin{ftheorem} \label{fth-dico-trdi-spec-mor2} 
 En \clama on a les équivalences suivantes.   
\begin{enumerate}
\item $f$ est going up $\Longleftrightarrow$  pour
tous $a,c\in\gT$ et $y\in\gT'$ on a
$$
f(a)\leq f(c)\vu y \;\Rightarrow\;\exists x\in\gT\; (a\leq c \vu x \hbox{ et } f(x)\leq y).
$$ 
\item $f$ est going down $\Longleftrightarrow$  pour
tous $a,c\in\gT$ et $y\in\gT'$ on a
$$
f(a)\geq f(c)\vi y \;\Rightarrow\;\exists x\in\gT\; (a\geq c \vi x \hbox{ et } f(x)\geq y).
$$
\item $f$ possède la propriété d'incomparabilité $\Longleftrightarrow$ $f$ est zéro-dimensionnel\footnote{Voir ci-dessous \ref{fth-dico-trdi-spec-dim2}.}.   
\end{enumerate} 
\end{ftheorem}
%
\begin{proof}
Voir \cite{fCL2001-2018}.
\end{proof}
%

\begin{ftheorem} \label{fth-dico-trdi-spec-mor3} 
En \clama \propeq   
\begin{enumerate}
\item $\Spec(f)$ est une application ouverte.
\item  Il existe une application $\wi f:\gT'\to \gT$
satisfaisant les \prts suivantes.
\begin{enumerate}
\item \label{fi2a} Pour tous $c\in\gT$ et $b\in\gT'$, $b\leq f(c) \Leftrightarrow \wi f(b)\leq c$. \\
En particulier $b\leq f(\wi f(b))$ et $\wi f(b_1\vu b_2)=\wi f(b_1)\vu \wi f(b_2)$.
\item \label{fi2b} Pour tous $a,c\in\gT$ et $b\in\gT'$, $f(a)\vi b\leq f(c) \Leftrightarrow a\vi\wi f(b)\leq c $.
\item \label{fi2c} Pour tous $a\in\gT$ et $b\in\gT'$, $\wi f(f(a)\vi b)=a\vi \wi f(b)$.
\item \label{fi2d} Pour tout $a\in\gT$, $\wi f(f(a))=\wi f(1)\vi a$. 
\end{enumerate}
\item  Il existe une application $\wi f:\gT'\to \gT$
satisfaisant la \prt \ref{fi2b}.  
\item  Pour tout $b\in \gT$ la \bif \smash{$\Vi\limits_{b\leq f(c)} c$} existe, et si on la note $\wi f(b)$, 
 la \prt \ref{fi2b} est satisfaite.  
\end{enumerate} 
\end{ftheorem}
Pour un traitement dans le cadre de la théorie des locales on peut voir \cite[section~1.6]{fBor3}. Nous donnons maintenant une \demo pour les espaces spectraux.
Les implications qui concernent le point \textsl{1} sont démontrées en \clama. Le reste est \cof.

Dans un ensemble ordonné $(A,\leq )$ nous notons $\dar a=\sotq{x\in A}{x\leq a}$.
\begin{flemma} \label{flemcroiss}
Soit $f:A\to A'$ une application croissante entre les ensembles ordonnés $(A,\leq )$ et $(A',\leq')$. Soit $b\in A'$,  un \elt $b_1\in A$ satisfait l'\eqvc 
\[ \forall x\in A\,\,\, (\,b\leq ' f(x) \,\Longleftrightarrow\, b_1 \leq  x\,)
\]
\vspace{-1.5em}

\noindent \ssi 
\begin{itemize}
\item d'une part $b\leq' f(b_1)$, 
\item et d'autre part 
{$b_1= \Vi_{x:b\leq' f(x)}x$}. 
\end{itemize}
En particulier, si~$b_1$ existe,  il est uniquement déterminé.   
\end{flemma}
%
\begin{proof}
Si $b_1$ satisfait l'\eqvc on a $b\leq' f(b_1)$ car $b_1\leq b_1$. Si $z\in A$ satisfait l'implication $\forall x\in A\,(b\leq ' f(x) \Rightarrow z \leq  x)$, on obtient $z\leq b_1$ car $b\leq' f(b_1)$. Ainsi lorsque~$b_1$ satisfait l'\eqvc il est l'\elt maximum de $S_b\eqdef\bigcap_{b\leq' f(x)}\dar x\subseteq A$, \cad la borne inférieure de $\sotq{x\in A}{b\leq 'f(x)}$. \\
Inversement, si un telle \bif $b_1$ existe, elle satisfait tout d'abord l'implication  $\forall x\in A\,(b\leq ' f(x) \Rightarrow b_1 \leq  x)$ car $b_1\in S_b$. Ensuite, si $b\leq 'f(b_1)$ on a l'implication réciproque $\forall x\in A\,( b_1 \leq  x\Rightarrow b\leq ' f(x))$
parce que si $b_1\leq x$ alors $b\leq' f(b_1)\leq 'f(x)$.
\end{proof}
\begin{proof}[\textsl{\Demo du \tho \ref{fth-dico-trdi-spec-mor3}}]~\\ 
\noindent \textsl{3} $\Rightarrow$ \textsl{2}. La \prt \textsl{\ref{fi2a}} est le cas particulier de \textsl{\ref{fi2b}} avec $a=1$. La \prt \textsl{\ref{fi2d}} est le cas particulier de \textsl{\ref{fi2c}} avec $b=1$. Il reste à voir que \textsl{\ref{fi2b}} implique \textsl{\ref{fi2c}}. En effet
\[ 
\begin{aligned} 
\wi f(f(a)\vi b)     &= \Vi\nolimits_{c:f(a)\vi b\leq f(c)}c \quad \hbox{(lemme \ref{flemcroiss})}\\
   &= \Vi\nolimits_{c:a\vi \wi f(b)\leq c}c \qquad \hbox{(point \textsl{\ref{fi2b}})}  \\ 
&=a\vi \wi f(b)     
 \end{aligned}
\]
\textsl{1} $\Rightarrow$ \textsl{3}. On suppose que l'application $f\sta:\Spec\gT'\to\Spec\gT$ est ouverte. Si $b\in \gT'$, l'\oqc $\fD_{\gT'}(b)=\fB$ a pour image un \oqc de $\gT$, qui est de la forme~\hbox{$f\sta(\fB)=\fD_{\gT}(\wi b)$} pour un unique $\wi b\in\gT$. On note $\wi b=\wi f(b)$ et cela fournit une application $\wi f:\gT'\to \gT$. 
\\
Il reste à voir que le point \textsl{\ref{fi2b}} est vérifié. 
Pour $a,c\in\gT$ on note $\fA=\fD_{\gT}(a)$, $\fC=\fD_{\gT}(c)$ \hbox{et $g=f\sta$}. L'\eqvc  \textsl{\ref{fi2b}} à vérifier s'écrit
\[ 
\begin{aligned} 
g^{-1}(\fA) \cap \fB \subseteq g^{-1}(\fC) \,\Longleftrightarrow\,  \fA \cap g(\fB) \subseteq \fC  
 \end{aligned}
\]  
Pour l'implication directe, on considère un $x\in \fB$ tel que $g(x)\in \fA$, on doit montrer que $g(x)\in \fC$. Or $x\in g^{-1}(\fA) \cap \fB$, donc $x\in g^{-1}(\fC)$, \cad $g(x)\in \fC$.\\
Pour l'implication réciproque, on transforme le second membre par $g^{-1}$. Cette opération respecte l'inclusion et l'intersection.
On obtient $g^{-1}(\fA) \cap g^{-1}(g(\fB)) \subseteq g^{-1}(\fC)$ et on conclut en notant que $\fB\subseteq g^{-1}(g(\fB))$.

\smallskip \noindent  \textsl{2} $\Rightarrow$ \textsl{1}.
On va montrer que $f\sta(\fD_{\gT'}(b))=\fD_{\gT}(\wi f(b))$.\\
Montrons tout d'abord $f\sta(\fD_{\gT'}(b))\subseteq\fD_{\gT}(\wi f(b))$.
Soit $\fp'\in\Spec\gT'$ avec $b\notin \fp'$ et soit 
$$\fp=f\sta(\fp')=f^{-1}(\fp').$$ 
Si on avait $\wi f(b)\in\fp$ on aurait $f(\wi f(b))\in f(\fp)\subseteq \fp'$ et, puisque $b\leq f(\wi f(b))$, $b\in\fp'$. Ainsi on a bien $\fp\in \fD_{\gT}(\wi f(b))$.\\
Pour l'inclusion opposée, considérons un $\fp\in\fD_{\gT}(\wi f(b))$. 
Comme $\wi f$ est croissante et respecte  les~$\vu$,  l'image réciproque $\fq={\wi f}^{-1}(\fp)$
est un idéal. 
\\
On a $b\notin \fq$ car sinon,  
\hbox{$\wi f(b)\in\wi f({\wi f}^{-1}(\fp)) \subseteq\fp$}.  
\\
Si $y\in \fq$ alors $\wi f(y)=z\in\fp$ donc $y\leq f(z)$ pour un $z\in\fp$ (point~\textsl{\ref{fi2a}}). Inversement si  $y\leq f(z)$ pour un $z\in\fp$ alors $\wi f(y)\leq \wi f(f(z))\leq z$ (point~\textsl{\ref{fi2d}}), donc $\wi f(y)\in\fp$.
Ainsi
$$ 
\fq={\wi f}^{-1}(\fp)=\sotq{y\in\gT'}{\exists z\in \fp\,\,y\leq f(z)}. 
$$ 
Donc $f^{-1}(\fq)=\sotq{x\in\gT}{\exists z\in \fp\,\,f(x)\leq f(z)}$. Mais $f(x)\leq f(z)$ équivaut à $x\vi \wi f(1)\leq z$ (point \textsl{\ref{fi2b}} avec $b=1$). En outre $\wi f(1)\notin\fp$ car $\wi f(b)\leq \wi f(1)$ et $\wi f(b)\notin\fp$. Ainsi 
$$
f^{-1}(\fq)=\sotQ{x\in\gT}{\exists z\in \fp\,\,x\vi \wi f(1)\leq z}=\sotQ{x\in\gT}{x\vi \wi f(1)\in\fp}=\fp .
$$
Il se peut cependant que $\fq$ ne soit pas un \idep. Considérons alors un \id $\fq'$ maximal parmi ceux qui vérifient $f^{-1}(\fq')=\fp$ et $\wi f(b)\notin\fq'$. On veut montrer que $\fq'$ est premier. Supposons donc qu'on ait $y_1$ et $y_2\in\gT'\setminus\fq'$ tels que $y=y_1\vi y_2\in\fq'$.
Par maximalité il y a un \elt $z_i\in \gT\setminus \fp$ de tel que $f(z_i)$
est dans l'\id engendré par $\fq'$ et $y_i$ ($i=1,2$), i.e. $f(z_i)\leq x_i\vu y_i$ avec $x_i\in\fq'$.  
En prenant  $z=z_1\vi z_2$ (qui est dans $\gT\setminus \fp$) et $x=x_1\vu x_2$ on obtient $f(z_i)\leq x\vu y_i$ puis $f(z)=f(z_1)\vi f(z_2)\leq x \vu y_i$, donc $f(z)\leq x \vu y\in\fq'$, et enfin $z\in f^{-1}(\fq')=\fp$: absurde.   

\smallskip \noindent   \textsl{4} $\Leftrightarrow$ \textsl{3}. D'après le lemme
\ref{flemcroiss} en notant que \textsl{\ref{fi2b}} implique \textsl{\ref{fi2a}}.
\end{proof}

\subsection{Propriétés de dimension}

\begin{ftheorem}[Dimension des espaces] \label{fth-dico-trdi-spec-dim1}  \emph{Voir \cite{fCL2003,fLom02}, \cite[chapitre XIII]{fACMC}.} En \clama \propeq
\begin{enumerate}
\item L'espace spectral $\Spec(\gT)$ est de dimension $\leq n$ (au sens des chaînes d'idéaux premiers)  
\item  
Pour toute suite $(x_0,\dots,x_n)$ dans $\gT$ il existe une suite \emph{complémentaire}  $(y_0,\dots,y_n)$ au sens suivant
\begin{equation}\label{feqC2G}
\left.\arraycolsep3pt
\begin{array}{rcl}
1& \vda  &   y_n, x_n\\
 y_n,  x_n & \vda  &  y_{n -1}, x_{n -1}  \\
\vdots~~~~& \vdots  &~~~~  \vdots \\
  y_1, x_1& \vda  &  y_0, x_0  \\
y_0, x_0& \vda  & 0     
\end{array}
\right\}
\end{equation}
\end{enumerate}
\end{ftheorem}

Par exemple, pour $n=2$ les inégalités dans le point \textsl{2} correspondent au dessin suivant dans~$\gT$.
$$\SCO{x_0}{x_1}{x_2}{y_0}{y_1}{y_2}$$

\begin{ftheorem}[Dimension des morphismes] \label{fth-dico-trdi-spec-dim2}  \emph{Voir \cite{fCL2001-2018}, \cite[section XIII-7]{fACMC}.} 
Soit $\gT\subseteq \gT'$ et $f$ le morphisme d'inclusion. En \clama \propeq
\begin{enumerate}
\item Le morphisme $\Spec(f):\Spec(\gT')\to\Spec(\gT)$ est de dimension $\leq n$.
\item  Pour toute liste $(x_0,\dots,x_n)$ dans $\gT'$
il existe un entier $k\geq 0$ et des éléments $a_1,\ldots,a_k\in \gT$ tels que  pour tout couple de parties 
complémentaires $(H,H')$ de $\{1,\ldots,k\}$, il existe $ y_0,\dots,y_n\in \gT'$ tels 
que
\begin{equation} \label{feqdefDiTrRel}
\begin{array}{rclll}
\Vi_{j\in H'} a_j & \vda  &  y_n,\;x_n    \\
y_n,\;x_n& \vda  &y_{n-1},\;x_{n-1}      \\
\vdots\qquad & \vdots  & \qquad  \vdots    \\
y_1,\;x_1& \vda  &  y_0,\;x_0    \\
y_0,\;x_0& \vda  &  \Vu_{j\in H} a_j    \\
\end{array}
\end{equation}
\end{enumerate} 
\end{ftheorem}
Par exemple pour la dimension relative $\leq 2$ cela correspond au dessin suivant dans $B$ avec $u=\Vi_{j\in H'} a_j$ et $i=\Vu_{j\in H} a_j$.
$$\SCOR{x_0}{x_1}{x_2}{y_0}{y_1}{y_2}{u}{i}$$

Notons que la dimension du morphisme $\gT\to\gT'$ est inférieure à la dimension de $\gT'$: prendre la liste vide ($k=0$) dans le point \textsl{2} du \tho~\ref{fth-dico-trdi-spec-dim2}. 

\smallskip Pour ce qui concerne les anneaux commutatifs, la dimension de Krull d'un anneau $\gA$ et la dimension de Krull relative d'un morphisme $\varphi:\gA\to\gB$  sont par \dfn celles $\ZarA$ et de~$\Zar\varphi$. Elles sont notées $\Kdim \gA$ et  $\Kdim \varphi$. 

On démontre en \clama  (voir \cite{fCL2001-2018}) que les \dfns classiques usuelles de $\Kdim \gA$ et  $\Kdim \varphi$ coïncident avec celles données ci-dessus. 

En outre l'article \cite{fCL2001-2018} démontre que les dimensions introduites dans l'ouvrage \cite{fACMC} coïncident avec celles définies ci-dessus. En particulier on a le résultat suivant.

\begin{fproposition} \label{fpropDimRelAc}
Soit  $\varphi:\gA\to\gB$ un morphisme d'anneaux commutatifs. Notons $\Abul$
l'anneau \zedr engendré par $\gA$. Alors,   $\Kdim\varphi=
\Kdim(\Abul\otimes_\gA\gB )$. 
\end{fproposition}

\subsection{Propriétés des espaces}

\begin{ftheorem} \label{fth-dico-trdi-spec-esp0} \cite[Chapter II, 4.2, 4.4, 4.9]{fJoh1986} \Propeq
\begin{enumerate}
\item L'espace spectral $\Spec(\gT)$ est séparé. 
\item L'espace topologique $\Spec(\gT)$ est un compact totalement discontinu. 
\item Le \trdi \(\gT\) est une \agB. 
\end{enumerate}
\end{ftheorem}

 Un \trdi $\gT$ est dit \textsl{normal} lorsque chaque fois que $a \vu b = 1$ dans $\gT$  il existe $x, y$ tels que   $a \vu x = b \vu y = 1$ et  $x \vi y = 0$. Voir \cite{fWeh2019,fDST2019}.
Notons qu'en remplaçant $x$ et $y$ par $x_1=x\vu(a\vi b)$ et $y_1=y\vu(a\vi b)$
on obtient $a \vu x_1 = b \vu y_1 = 1$ et  $x_1 \vi y_1 = a\vi b$.

\begin{ftheorem} \label{fth-dico-trdi-spec-esp1}  \Propeq
\begin{enumerate}
\item L'espace spectral $\Spec(\gT)$ est normal. 
\item Le \trdi \(\gT\) est normal. 
%
%
\end{enumerate}
\end{ftheorem}
\begin{ftheorem} \label{fth-dico-trdi-spec-esp2}  \Propeq
\begin{enumerate}
\item L'espace spectral $\Spec(\gT)$ est complètement normal. 
\item Tout intervalle $[a,b]$ de $\gT$, vu comme \trdi, est normal.  
\item Pour tous $a,b\in \gT$ il existe $x, y$ tels que   $a \vu b = a \vu y = x \vu b$ et  $x \vi y = 0$.
\end{enumerate}
\end{ftheorem}
\begin{ftheorem} \label{fth-dico-trdi-spec-esp3}  \Propeq
\begin{enumerate}
\item Dans $\Spec(\gT)$ tout \oqc est une réunion finie d'\oqcs irréductibles. 
\item Pour tous \(a_1,\dots,a_n,b_1,\dots,b_m\) 
on a  $a_1,\dots,a_n\vdash_\gT b_1,\dots,b_m$ \ssi on a un~\(j\) tel que
 $a_1,\dots,a_n\vdash_\gT b_j$. 
\item 
Le \trdi $\gT$ est construit à partir d'une  \sad pour une \talg. 
%
%
\end{enumerate}
\end{ftheorem}

\section{Quelques exemples}

Les théorèmes classiques donnés dans cette section ont maintenant un contenu \cof clair. C'était impossible avant les \dfns \covs des concepts utilisés.
Certaines versions que nous donnons sont nouvelles ou plus fortes que les résultats connus auparavant en \clama.

\subsection{Dimension relative, lying over, going up, going down}

Voir \cite{fCL2001-2018} et \cite[sections XIII-7 et XIII-9]{fACMC}.

\begin{ftheorem} 
\label{fthDim1} 
Soit  $\rho:\gA\to\gB$ un morphisme d'anneaux commutatifs ou de \trdis. 
 Supposons que  $\Kdim\,\gA\leq m$  et $\Kdim\,\rho\leq n$, 
alors  $\Kdim\,\gB\leq (m+1)(n+1)-1$.
\end{ftheorem}

\begin{ftheorem} \label{fthLYGUKdim}
Si un morphisme $\alpha:\gA\to \gB$ de \trdis ou d'anneaux commutatifs est lying over et going up
(ou bien lying over et going down) on a 
$\Kdim(\gA)\leq \Kdim(\gB)$. 
\end{ftheorem}

\begin{flemma}\label{fcor2thKdimMor}
On consid\`ere une \alg $\rho:\gA\to\gB$.
 Supposons que  $\gB$ est engendrée par des \elts primitivement  \agqs\footnote{Un \elt de $\gB$ est dit primitivement  \agq sur $\gA$
 s'il annule un \pol de $\AX$ dont les \coes sont \com.} sur $\gA$,
alors $\Kdim \rho\leq0$ et donc $\Kdim\gB\leq\Kdim\gA$.
\end{flemma}

\begin{flemma} 
\label{flemGU1} Soit $\varphi:\gA\to \gB$ un morphisme d'anneaux commutatifs. 
\Propeq
\begin{enumerate}
\item Le morphisme $\varphi$ est lying over.
\item Pour tout idéal  $\fa$ de~$\gA$  et tout $x\in \gA$, on a:
$\, \varphi(x)\in \varphi(\fa)\gB\, \Rightarrow \, x\in\sqrt[\gA]{\fa}.
$  
\end{enumerate}

\end{flemma}

\begin{flemma} 
\label{flemGU2} Soit $\varphi:\gA\to \gB$ un morphisme d'anneaux commutatifs. 
\Propeq
\begin{enumerate}
\item Le morphisme $\varphi$ est going up (i.e. le morphisme $\Zar\,\varphi$ est going up).
\item Pour tout idéal $\fb$ de $\gB$, en notant $\fa=\varphi^{-1}(\fb)$, le morphisme
$\varphi_\fb:\gA/\fa\to \gB/\fb$ obtenu par factorisation est lying over. 
\item Même chose en se limitant aux \ids $\fb$  de type fini.
\end{enumerate}
En \clama, on peut se limiter aux \ideps dans le point 2.
\end{flemma}

\begin{flemma} \label{flemGuAnFp}
Soit $\gA\subseteq \gB$ une $\gA$-\alg 
fidèlement plate sur $\gA$. Le morphisme $\gA\to \gB$ est lying over et going up. En conséquence $\Kdim\, \gA\leq \Kdim\, \gB$.
\end{flemma}

\begin{flemma}[Un going up classique] 
\label{fcorLy1} 
 Soit $\gA\subseteq \gB$ des anneaux commutatifs avec $\gB$ 
entier sur~$\gA$. Alors le morphisme $\gA\to \gB$ est lying over et going up. 
En conséquence $\Kdim\, \gA\leq \Kdim\, \gB$.
\end{flemma}

\begin{flemma}\label{flem1Gdown}
\label{flemGD2} Soit $\varphi:\gA\to \gB$ un morphisme d'anneaux commutatifs. 
\Propeq
\begin{enumerate}
\item Le morphisme $\varphi$ est going down.
\item Pour tous $b$, $a_1$, \ldots, $a_q\in \gA$ et $y\in \gB$ tels que~$\varphi(b)y\in\sqrt[\gB]{\gen{\varphi(a_1,\dots,a_q)}}$,
il existe 
$x_1$, \dots, $x_p\in \gA$ tels que:
$$
\gen{bx_1,\dots,bx_p}\subseteq  \sqrt[\gA]{\gen{a_1,\dots,a_q}} \;\hbox{ et }\;y\in\sqrt[\gB]{\gen{\varphi(x_1), \dots, \varphi(x_p)}}.
$$
\item (en \clama) Pour tout \idep $\fp$ de $\gB$, \hbox{avec $\fq=\varphi^{-1}(\fp)$}, le morphisme $\gA_\fq\to \gB_\fp$ obtenu par passage au quotient est lying over.
\end{enumerate}
\end{flemma}

\begin{ftheorem}[Going Down] 
\label{fthGDplat}  
Soit $\gA\subseteq \gB$ des anneaux. Le morphisme $\gA\to \gB$ est going down dans les deux cas suivants. 
\begin{enumerate}
\item $\gB$ est plat sur $\gA$.
\item $\gB$ est intègre et entier sur $\gA$, et $\gA$ est intégralement clos.
\end{enumerate}
\end{ftheorem}

\subsection{Théorèmes Kronecker, Forster-Swan, Serre et Bass}

Références: \cite{fCLQ2004,fCLQ2006} et \cite[chapitre XIV]{fACMC}.

\begin{ftheorem}[\Tho de Kronecker-Heitmann,
avec la dimension de Krull, non \noe]
\label{fthKroH} ~
\begin{enumerate}
\item Soit $n\geq 0$.
Si $\Kdim \gA <n$ et $b_1$, \dots, $b_n\in\gA$,  il existe
$x_1$, \dots, $x_n$ tels que pour tout $a\in\gA$,
$\DA(a,b_1,\dots,b_n) = \DA(b_1+ax_1,\dots,b_n+ax_n)$.
\item En conséquence, dans un anneau de dimension de Krull $\leq n$, tout
\itf a m\^{e}me nilradical qu'un \id engendré par au plus $n+1$ \elts.
\end{enumerate}
\end{ftheorem}

Pour un anneau commutatif $\gA$ on définit $\Jdim\gA$ (la J-dimension de $\gA$) comme égale à $\Kdim(\Heit\gA)$. On démontre en \clama que c'est la dimension du \hbox{J-spectrum} de Heitmann $\Jspec(\gA)$.

Dans les références citées ci-dessus, une autre notion de dimension est introduite, la dimension de Heitmann, notée $\Hdim(\gA)$. On a toujours 
$\Hdim(\gA)\leq \Jdim(\gA)\leq \Kdim(\gA)$. Nous donnons dans la suite les énoncés avec la J-dimension, mais il sont \egmt vrais avec la dimension de Heitmann.

\begin{fdefinition}\label{fdefiStableRange}
Soit $n\geq 0$. On dit qu'un anneau $\gA$ est de \textsl{stable range} \textsl{(de Bass)}  $\leq n$ lorsque l'on peut 
\gui{raccourcir} les \vmds de longueur $n+1$ au sens suivant:
$$1 \in \gen {a,\an}\;\Longrightarrow\;\exists \,\xn,\;1 \in \gen {a_1 + x_1a, \ldots, a_n + x_na}.$$
\end{fdefinition}

\begin{ftheorem}[\Tho de Bass-Heitmann, sans
\noet]
\label{fBass}  
Soit $n\geq 0$. Si $\Jdim \gA <n$, alors $\gA$ est de \emph{stable range} $\leq n$. 
En particulier  tout \Amo stablement libre de rang $\geq n$ est libre.
\end{ftheorem}

\begin{ftheorem}[\Tho Splitting Off de Serre, pour la $\Jdim$]
\label{fthSerre} ~\\
Soit $k\geq 1$ et soit $M$  un \Amo  \pro de rang $\geq k$, ou plus \gnlt
 isomorphe à l'image d'une matrice de rang $\geq k$.\\
Supposons que $\Jdim  \gA< k$.
Alors $M\simeq N\oplus \gA$ pour un certain module~$N$ isomorphe à l'image d'une matrice de rang $\geq k-1$. 
\end{ftheorem}

\begin{fcorollary}\label{fcorthSerre}
Soit un anneau $\gA$ tel que $\Jdim \gA\le h$  et soit $M$ un \Amo 
isomorphe à l'image d'une matrice de rang $\geq h+s$.  Alors~$M$
contient en facteur direct un sous-module libre de rang~$s$.  \Prmt, si~$M$
est l'image d'une matrice $F\in\gA^ {n\times m}$ de rang $\geq h+s$, on a $M= N\oplus L$
o\`u $L$ est libre de rang $s$ en facteur direct dans $\gA^n$, et $N$ est
l'image d'une matrice de \hbox{rang $\geq h$}.
\end{fcorollary}

Dans le théorème suivant, intervient la notion d'un \mtf   \lot
engendré par $r$ \elts. En \clama cela signifie qu'après \lon en n'importe quel \idema,  $M$ est engendré
par $k$ \elts. Une \dfn \cov \eqve consiste à dire que le $k$-ème \id de Fitting de $M$ est égal à $\gen{1}$.

\begin{ftheorem}[\Tho de Forster-Swan pour la $\Jdim$]
\label{fthSwan}  Soit $k\geq 0$ \hbox{et $r\geq 1$}.
Si $\Jdim  \gA\leq   k$,  et
si un \Amo $M=\gen{y_1 , \dots,  y_{k+r+s}}$ est   \lot
engendré par $r$ \elts, alors il est engendré par $k+r$
\elts, plus \prmt 
  on peut calculer 
$$
z_1,\, \ldots, \,z_{k+r}  
\in \gen{y_{k+r+1},\ldots,y_{k+r+s}}
$$
 tels que
$M$  soit engendré par $(y_1+z_1, \ldots, y_{k+r}+z_{k+r})$.
\end{ftheorem}

Étant donnés deux modules $M$ et $L$ on dira que $M$
\textsl{est simplifiable pour $L$} si $M\oplus L\simeq N\oplus L$
implique   $M \simeq N$.

\begin{ftheorem}[\Tho de simplification de Bass, pour la $\Jdim$]
\label{fthBassCancel2} ~\\
Soit $M$ un \Amo
\ptf de rang $\geq k$.
Si $\Jdim \gA< k$,
alors $M$ est simplifiable pour tout  \Amo \ptf:
 si $Q$ est \ptf et 
$M\oplus Q\simeq N\oplus Q$, alors~$M\simeq N$.
\end{ftheorem}

Les théorèmes \ref{fthSerre}, \ref{fthSwan} et \ref{fthBassCancel2} étaient conjecturés dans \cite{fHei84}.

\subsection{Autres résultats utilisant la dimension de Krull}
Dans l'ouvrage \cite{fACMC} le théorème XII-6.2 donne une \carn importante.
\textsl{Un anneau \coh \icl $\gA$ de \ddk $\leq 1$ est un domaine de Prüfer.} Cela explique \cot la \dfn aujourd'hui classique des \doks comme  anneaux \noes \icl de \ddk $1$, et le fait que, avec cette \dfn, en \clama, on est capable de démontrer que les \itfs sont \ivs. Il fallait pour cela remplacer dans l'hypothèse la \noet par la \cohc. Et il n'y a pas de \prco du fait que la \noet implique la \cohc.

On trouve  dans \cite[chapitre XVI]{fACMC} une \prco du \tho de Lequain-Simis, utilisant la dimension de Krull. 

On trouve dans \cite[section 2.6]{fYen2015} le résultat suivant (qui n'était pas connu auparavant) avec une \prco. \textsl{Si $\gA$ est un anneau de dimension de Krull $\leq d$, les modules stablement libres de rang $>d$ sur $\AX$ sont libres.}

\subsection{Application ouverte}

Un \tho d'\alg commutative dû à Grothendieck affirme que si un homomorphisme d'anneaux $\varphi:\gA\to\aqo{\AXn}{\lfs}$ est plat, le morphisme dual $\Spec \varphi$ est une application ouverte.

Cela serait intéressant d'en donner une \prco ainsi qu'une application
concrète, à trouver dans la littérature. On pourrait commencer par examiner le cas $\aqo{\AXn}{f}$: on sait que $\varphi$ est plat \ssi l'\id engendré par les \coes de $f$ est engendré par un idempotent.

\rdb
\addcontentsline{toc}{section}{Références}

\small

\stopcontents[french]

\end{document}